\newcommand{\abs}[1]{\left| #1\right|}
\newcommand{\ind}{\mathbbm{1}}
\newcommand{\pr}{\mathbb{P}}
\renewcommand{\Pr}{\mathbb{P}}
\newcommand{\R}{\mathbb{R}}
\newcommand{\E}{\mathbb{E}}
\newcommand{\ceil}[1]{\left\lceil #1 \right\rceil}
\newcommand{\Cov}{\mathrm{Cov}}
\newcommand{\Var}{\mathrm{Var}}
\newcommand{\eqdist}{\stackrel{d}{=}}
\newcommand{\given}{\,|\,}
\newcommand{\independent}{\mbox{${}\perp\mkern-11mu\perp{}$}}
\newtheorem{theorem}{Theorem}
\newtheorem{corollary}[theorem]{Corollary}
\newtheorem{proposition}[theorem]{Proposition}
\newtheorem{lemma}[theorem]{Lemma}
\begin{document}
	
	\begin{frontmatter}
		\title{Average partial effect estimation using double machine learning}
		\begin{aug}
			\author[A]{\fnms{Harvey}~\snm{Klyne}\ead[label=e1]{hck33@cantab.ac.uk}}
						\and
			\author[A]{\fnms{Rajen D.}~\snm{Shah}\ead[label=e2]{r.shah@statslab.cam.ac.uk}}
			\address[A]{Statistical Laboratory, University of Cambridge, United Kingdom\printead[presep={,\ }]{e1,e2}}
		\end{aug}
		\date{\today}
		
	\maketitle

\begin{abstract}
	Single-parameter summaries of variable effects in regression settings are desirable for ease of interpretation. However (partially) linear models for example, which would deliver these, may fit poorly to the data. On the other hand, an interpretable summary of the contribution of a given predictor is provided by the so-called average partial effect---the average slope of the regression function with respect to the predictor of interest. Although one can construct a doubly robust procedure for estimating this quantity, it entails estimating the derivative of the conditional mean and also the conditional score of the predictor of interest given all others, tasks which can be very challenging in moderate dimensions: in particular, popular decision tree based regression methods cannot be used.
	
	In this work we introduce an approach for estimating the average partial effect whose accuracy depends primarily on the estimation of certain regression functions, which may be performed by user-chosen machine learning methods that produce potentially non-differentiable estimates. Our procedure involves resmoothing a given first-stage regression estimator to produce a differentiable version, and modelling the conditional distribution of the predictor of interest through a location--scale model. We show that with the latter assumption, surprisingly the overall error in estimating the conditional score is controlled by a sum of errors of estimating the conditional mean and conditional standard deviation, and the estimation error in a much more tractable univariate score estimation problem. Our theory makes use of a new result  on the sub-Gaussianity of Lipschitz score functions that may be of independent interest. We demonstrate the attractive numerical performance of our approach in a variety of settings including ones with misspecification.
	An R package \texttt{drape} implementing the methodology is available on CRAN.
\end{abstract}
	\end{frontmatter}

	\section{Introduction}
	\label{sec:intro}
	
	A common goal of practical data analysis is to quantify the effect that a particular predictor or set of predictors $X$ has on a response $Y$, whilst accounting for the contribution of a vector of other predictors $Z$. Single-parameter summaries are often desirable for ease-of-interpretability, as demonstrated by the popularity of (partially) linear models. Such models, however, may not adequately capture the conditional mean of the response, potentially invalidating conclusions drawn. Indeed the successes of model-agnostic regression methods such as XGBoost~\citep{xgboost}, random forests~\citep{breiman2001random} and deep learning~\citep{goodfellow2016deep} 
	in machine learning competitions such as those hosted by Kaggle \citep{bojer2021kaggle} suggest that such models fitting poorly is to be expected in many contemporary datasets of interest.
	%
	
	When $X \in \R$ is a continuous random variable and the conditional mean $f(x, z) := \E(Y \given X=x, Z=z)$ is differentiable in the $x$-direction, a natural quantity of interest is the average of the slope with respect to $x$. This is known as the average partial effect (or average derivative effect), defined as 
	\begin{equation*}
		\theta := \E\bigg[\frac{\partial}{\partial x} f(X,Z)\bigg].
	\end{equation*}
Historically, motivation for this estimand came from semiparametric single-index models, i.e., where $f(x, z) = G(\beta x + \gamma^{\top} z)$; the coefficient $\beta$ is then proportional to $\theta$ \citep{StokerIndex,PowellIndex}. The average partial effect also recovers the linear coefficient in a partially linear model
		\begin{equation*}
			f(x,z)=\theta x + g(z).
		\end{equation*}
		Thus the average partial effect may be thought of as a generalisation of the coefficient in a partially linear model that appropriately measures the association of $X$ and the response, while controlling for $Z$, even when there may be complex interactions between $X$ and $Z$ present.
		Indeed, $\theta$ is also the average slope of the so-called partial dependence plot, popular in the field of interpretable machine learning and often used in conjunction with flexible regression methods that place no explicit restrictions on the form of regression functions to be estimated \citep{FriedmanGreedy, ZhaoInterpret, MolnarInterpret}.

	
	\citet{RothenhauslerICE} provide a causal interpretation of the average partial effect in the form of an average outcome change if the `treatment' $X$ of all subjects were changed by an arbitrarily small quantity. More precisely, let us denote by $Y(x)$ the potential outcome \citep{rubin1974estimating} were $X$ to be assigned the value $x$. Then under so-called weak ignorability, that is $\{Y(x) : x \in \R\} \independent X \given Z$ (in fact this can be weakened substantially to permit some forms of confounding, see \citet[Assumption~1]{RothenhauslerICE}), continuity of the joint density of $(X, Z)$ and conditional density of $X \given Z$,
	and mild regularity conditions,
	\[
	\theta = \lim_{\delta \to 0} \frac{1}{\delta} \E \{ Y(X + \delta) - Y(X)\}.
	\]
	In this sense, $\theta$ may be thought of as a continuous analogue of the well-studied average treatment effect functional
	$
	\E(Y(1) - Y(0)) = \E\{\E( Y \given Z, X=1) - \E(Y \given Z, X = 0)\} = \E\{f(1, Z) - f(0, Z)\}
	$
	in the case where $X$ is discrete, only taking values $0$ or $1$, and under analogous assumptions as above  \citep{RRZ94, RR95, SRR99}.
	
	While the average partial effect estimand is attractive from the perspective of interpretability, estimating the derivative of a conditional mean function is challenging. Regression estimators for $f$ which have been trained to have good mean-squared prediction error can produce arbitrarily bad derivative estimates, if they are capable of returning these at all. For example, highly popular tree-based methods give piecewise constant estimated regression functions and so clearly provide unusable estimates for the derivative of $f$.
	
	Moreover, even if the rate of convergence of the derivative estimates was comparable to the mean-squared prediction error when estimating $f$ nonparametrically, an estimator of $\theta$ formed through their empirical average would typically suffer from plug-in bias and fail to attain the parametric rate of convergence. As well as poor estimation quality, this would also make inference, that is, performing hypothesis tests or forming confidence intervals, particularly problematic. The rich theory of semiparametric statistics \citep{bickel1993efficient,tsiatis2006semiparametric} addresses the issue of such plug-in biases more generally, and supports the construction of debiased estimators based on (efficient) influence functions. This basic approach forms a cornerstone of what has become known as debiased or double machine learning \citep{ChernozhukovTreatment}: a collection of methodologies involving user-chosen machine learning methods to produce estimates of nuisance parameters that are used in the construction of estimators of functionals that enjoy parametric rates of convergence (see for example the review article \citet{kennedy2022semiparametric} and references therein).
	
	Procedurally, this often involves modelling both the conditional expectation $f$ and a function of the joint distribution of the predictors $(X, Z)$, with the bias of the overall estimator controlled by a product of biases relating to each of these models \citep{rotnitzky2021characterization}. 
	For the average partial effect, as shown by \citet{PowellIndex,NeweyEfficiency}, the predictor-based quantity to be estimated is the so-called score function (sometimes termed the negative score function)
	\begin{align*}
		\rho(x,z)&:= \frac{\frac{\partial}{\partial x} p(x\given  z) }{p(x\given  z)} = \frac{\partial}{\partial x} \log p(x \given z) 
	\end{align*}
	where $p(x \given z)$ is assumed to be differentiable in $x$. This has been studied in parametric settings \citep{StokerIndex} and in univariate settings (i.e. without any $Z$ present)  using estimators based on splines \citep{CoxSpline, NgSpline, BeraScore}. Nonparametric estimation of the score in the multivariate setting  is particularly challenging owing to the complex nature of potential interactions. Direct estimation through plugging in a kernel density estimate of the joint density $p(x, z)$, for example, can be plagued by stability issues where the estimated density is small. \citet{HardleAverageDerivative} consider thresholding such an estimate to avoiding dividing by quantities close to zero. Such an approach is studied further in \citet{wibisono2024optimal} where a version of the procedure is shown to achieve an expected squared error of order $n^{-\frac{2s}{p+3+2s}}$ up to polylogarithmic factors when $(X, Z) \in \R \times \R^p$ is sub-Gaussian, possesses a joint density and the full multivariate score (also differentiating the joint density with respect to $z$) is $(L, s)$-H\"older continuous.  Moreover, this rate is shown to be the minimax lower bound, thus formalising the difficulty of the problem when the dimension of $Z$ is large.
	 \citet{SriperumbudurInfinite} has considered an approach for multivariate score estimation based on infinite-dimensional exponential families parametrised by a reproducing kernel Hilbert space, and  
	\citet{chernozhukov2020adversarial,ChernozhukovRieszNet} have adapted deep learning architectures and tree splitting criteria to develop neural network and random forest-based approaches for estimating~$\rho$.
	
	
	 One approach to tackling the challenges associated with estimating the derivative of the regression function $f$ and the conditional score is to assume that $f$ and its derivative are sufficiently well-approximated by sparse linear combinations of basis functions \citep{RothenhauslerICE, ChernozhukovRiesz}. Similarly to the case with the debiased Lasso \citep{ZhangDebiased}, where regression coefficients can be estimated without placing explicit sparsity assumptions relating to the conditional distribution of $X$ given $Z$ (see for example \citet{shah2023double}), in this case, fewer assumptions need to be placed on the estimator of $\rho$ \citet[Remark~4.1]{ChernozhukovRiesz}. A related approach relies on $\rho$ itself being well-approximated by a sparse linear combination of basis functions; see  \citet{chernozhukov2020adversarial, ChernozhukovAutoNet, ChernozhukovRobust,ChernozhukovAutoDML, ChernozhukovRiesz, ChernozhukovGeneral} for examples of both of these approaches.
		\cite{HirshbergMinimax} assume that the regression estimation error lies within some absolutely convex class of functions, and perform a  convex optimisation to choose weights that minimise the worst-case mean-squared error over this class. In practice, the class of functions may often be taken as sparse linear combinations of basis functions, and in general it may not always be clear how such basis functions may be chosen.
		\citet{HirshbergSingle} and \citet{wooldridge2020inference} consider parametric single index models for the conditional expectation $f$; this results in a helpful simplification of the problem in the high-dimensional setting these works consider, but may appear overly restrictive in the more moderate-dimensional settings we have in mind here.
		The difficulties of estimating $\theta$ have led \citet{VansteelandtLean} and \citet{hines2021parameterising} to propose interesting alternative estimands that aim to capture some notion of a conditional association of $Y$ and $X$, given $Z$, but whose estimation avoids the challenges of nonparametric conditional score estimation.

	\subsection{Our contributions and organisation of the paper}
	In this paper, we take a different approach, and develop new methods for addressing the two main challenges in estimating the average partial effect $\theta$ using a double machine learning framework as outlined above, namely estimation of the derivative of the conditional mean function $f$ and the conditional score $\rho$.
	
		In Section~\ref{sect:general} we first give a uniform asymptotic convergence result for such doubly robust estimators of $\theta$ requiring user-chosen estimators for $f$ and $\rho$. We argue that uniform results as opposed to pointwise results are particularly important in nonparametric settings such as those considered here. Indeed, considering the problem of testing for a non-zero partial effect, one can show that this is fundamentally hard: when $Z$ is a continuous random variable, any test must have power at most its size. This comes as a consequence of noting that the null in question contains the null that $X \independent Y \given Z$, which is known to suffer from this form of impossibility \citep[Thm.~2]{ShahGCM}. This intrinsic hardness means that any non-trivial test must restrict the null further with the form of these additional conditions, which would be revealed in a uniform result but may be absent in a pointwise analysis, providing crucial guidance on the suitability of tests in different practical settings.
		
		In our case, the conditions for our results require rates of convergence for estimation of the conditional mean $f$, the score $\rho$ and also a condition on the quality of our implied estimate of the derivative of $f$. While estimation of conditional means is a task statisticians are familiar with tackling using machine learning methods for example, 
		the latter two remain challenging to achieve. In contrast to existing work, rather than relying on well-chosen basis function expansions or developing bespoke estimation tools, we aim to leverage once again the predictive ability of modern machine learning methods, which have a proven track record of success in practice. In particular, we wish to accommodate the use of tree-based regression methods such as random forest and XGboost, which produce piecewise constant regression functions and hence cannot directly be used to estimate the derivative of the true regression function. Such methods are popular due to their ability to model interactions and also regularise effectively in multivariate and high-dimensional settings, where other more classical approaches may suffer from the curse of dimensionality \citep{hastie2009elements}. Added practical benefits of these approaches are that they have very well-developed implementations \citep{wright2017ranger,xgboost}, and can handle mixtures of categorical and numerical data gracefully.

	For derivative estimation, we propose a post-hoc kernel smoothing procedure applied to the output of the chosen regression method for estimating $f$.
 In Section~\ref{sect:resmooth} we show that under mild conditions,
	our resmoothing method achieves consistent derivative estimation (in terms of mean-square error) at no asymptotic cost to estimation of $f$ when comparing to the convergence rate enjoyed by the original regression method. Importantly, we do not require the use of a specific differentiable estimator $\hat{f}$ or any explicit assumptions on its complexity or stability properties. 
	This contrasts in particular with some of the literature on estimation of the derivative of a regression function; see for example \citet{dai2016optimal} and references therein, and also \citet{da2008tree,fonseca2018boost} for  smoothing approaches  using sigmoid functions specific to tree-based estimators.
	
	Turning to score estimation, we aim to address the curse of dimensionality in multivariate score estimation such as formalised by the minimax lower bound of \citet{wibisono2024optimal} discussed earlier. To this end, we seek to reduce the problem of conditional score estimation to that of  univariate (unconditional) score estimation, which as explained above, is better studied and more tractable.
	In Section~\ref{sect:score} we advocate modelling the conditional distribution of $X\given  Z$ as a location--scale model (see for example \citet[Sec.~5]{kennedy2017non} who work with this in an application requiring conditional density estimation),
	\begin{equation} \label{eq:loc_scale}
		X=m(Z)+\sigma(Z)\varepsilon,
	\end{equation}
	where $\varepsilon$ is mean-zero and independent of $Z$, $m(z) := \E(X \given Z=z)$, and $\sigma(z): = \sqrt{\Var(X \given Z=z)}$. Through estimating the conditional mean $m$ and standard deviation $\sigma$ via some $\hat{m}$ and $\hat{\sigma}$, one can form scaled residuals $\{X-\hat{m}(Z)\} / \hat{\sigma}(Z)$ which may be fed into an unconditional score estimator. Model~\eqref{eq:loc_scale} has the following attractive features: (i) it is expansive enough to allow for heteroscedasticity, and (ii) by using flexible regression methods to estimate $m$ and $\sigma$, the challenging problem of conditional score estimation may be reduced to unconditional score estimation.
	
	Theoretically, we consider settings where $\varepsilon$ is sub-Gaussian and $\sigma$ is nonparametric, and also the case where $\varepsilon$ is allowed to be heavy-tailed and $\sigma=1$ (i.e.\ a location only model where the errors $X - m(Z) \independent X$). We also demonstrate good numerical performance in heterogeneous, heavy-tailed settings. Given how even the unconditional score involves a division by a density, one concern might be that any errors in estimating $m$ and $\sigma$ may propagate unfavourably to estimation of the score. We show however that the estimation error for the multivariate $\rho(x,z)$ may be bounded by the sum of the estimation errors for the conditional mean $m$, the conditional scale $\sigma$ and the unconditional score function for the residual $\varepsilon$ alone. In this way we reduce the problem of conditional score estimation to unconditional score estimation, plus regression and heterogeneous scale estimation, all of which may be relatively more straightforward.
	Our results rely on proving a sub-Gaussianity property of Lipschitz score functions, which may be of independent interest.

	While a location--scale model may well be a reasonable approximation to the true underlying conditional distribution, with large-scale data, the bias in this modelling assumption may become relevant. In Section~\ref{sec:local} we outline a scheme based on partitioning $\mathcal{Z}$ into regions where the normalised errors $\{X - m(Z)\} / \sigma(Z)$ are approximately independent of $Z$ such that the location--scale model can be applied locally. We note however that more generally, any conditional score estimation procedure may be used in our estimator.
		
	Numerical comparisons of our methodology to existing approaches are contained in Section~\ref{sect:numerical}, where we demonstrate in particular that the coverage properties of confidence intervals based on our estimator have favourable coverage over a range of settings, both where our theoretical assumptions are met and where they are not satisfied. We conclude with a discussion in Section~\ref{sect:discussion}. Proofs and additional results are relegated to the appendix. We provide an implementation of our methods in the R package \texttt{drape} (Doubly Robust Average Partial Effects) available from \url{https://cran.r-project.org/package=drape}.

	\subsection{Notation} \label{sec:notation}
	Let $(Y,X,Z)$ be a random triple taking values in $\R\times \R^d\times\mathcal{Z}$, where $\mathcal{Z}$ is an arbitrary measurable space: this allows for $\mathcal{Z}\subseteq \R^p$ but also permits $Z$ to include functional data, for example. We allow $X$ to be multivariate to enable estimation of average partial effects with respect to multiple variables simultaneously.
	In order to present results that are uniform over a class of distributions $P$ for $(Y,X,Z)$, we will often subscript associated quantities by $P$. For example when $(Y, X, Z) \sim P$, we denote by $\pr_P((Y, X, Z) \in A)$, the probability that $(Y, X, Z)$ lies in a (measurable) set $A$, and write $f_P(x,z) := \E_P(Y\given  X=x, Z=z)$ for the conditional mean function.
	
	Let $\mathcal{P}_0$ be the set of distributions $P$ for $(Y,X,Z)$ where both $f_P$ and  the conditional density of the predictors (with respect to Lebesgue measure) $p_P(x\given  z)$ exist, and the functions $x \mapsto f_P(x, z)$ and $x \mapsto p_P(x \given z)$ are differentiable on $\R^d$ and $p_P(x\given  z)$ has full support, for almost every $z\in\mathcal{Z}$ .

	Write $\nabla$ for the $d$-dimensional differentiation operator with respect to the $x$, which we replace with $'$ if we are considering the case $d=1$. For each $P\in\mathcal{P}_0$, define the  score function $\rho_P(x,z) := \nabla \log p_P(x\given  z)$, where $p_P(x\given  z)$ is the conditional density of $X$ given $Z$ under $P$.
	Denote by $\Phi$ the standard $d$-dimensional normal cumulative distribution function (c.d.f.), and understand inequalities between vectors to apply elementwise.

	We will sometimes introduce standard Gaussian random variables $W\sim N(0,1)$ independent of $(X,Z)$. Recall that a random variable $X\in \R$ is sub-Gaussian with parameter $\sigma$ if it satisfies $\E[\exp(\lambda X)] \leq \exp(\lambda^2\sigma^2/2)$ for all $\lambda\in\R$. A vector $V\in\R^d$ is sub-Gaussian with parameter $\sigma$ if $u^{\top}V$ is sub-Gaussian with parameter $\sigma$ for any $u\in\R^d$ satisfying $\|u\|_2=1$.
	
		As in \cite{LundborgPartial}, given a family of sequences of real-valued random variables $(W_{P,n})_{P\in\mathcal{P}, n\in\mathbb{N}}$ taking values in a finite-dimensional vector space and whose distributions are determined by $P\in\mathcal{P}$, we write
	$W_{P,n} = o_{\mathcal{P}}(1)$ if $\sup_{P\in\mathcal{P}}\Pr_P(|W_{P,n}| > \epsilon) \to 0$ for every $\epsilon > 0$. Similarly, we write $W_{P,n} = O_{\mathcal{P}}(1)$
	if, for any $\epsilon > 0$, there exist $M_\epsilon, N_\epsilon > 0$ such that $\sup_{n\geq N_\epsilon} \sup_{P\in\mathcal{P}} \Pr_P (|W_{P,n}| > M_\epsilon) < \epsilon$. Given a second family of sequences of random variables $(V_{P,n})_{P\in\mathcal{P}, n\in\mathbb{N}}$, we write
	$W_{P,n} = o_\mathcal{P}(V_{P,n})$ if there exists $R_{P,n}$ with $W_{P,n} = V_{P,n} R_{P,n}$ and $R_{P,n}=o_\mathcal{P}(1)$; likewise, we write $W_{P,n} = O_\mathcal{P}(V_{P,n})$
	if $W_{P,n} = V_{P,n} R_{P,n}$ and $R_{P,n} = O_\mathcal{P}(1)$. If $W_{P,n}$ is vector or matrix-valued, we write $W_{P,n} = o_\mathcal{P}(1)$ if $\|W_{P,n}\| = o_\mathcal{P}(1)$ for some norm, and similarly $O_\mathcal{P}(1)$. By the equivalence of norms for finite-dimensional vector spaces, if this holds for some norm then it holds for all norms.
	
	\section{Doubly robust average partial effect estimator}
	\label{sect:general}
	
	We consider a nonparametric model
	\begin{equation*}
		\E_P(Y\given  X,Z) =: f_P(X,Z),
	\end{equation*}
	for a response $Y\in \R$, continuous predictors of interest $X\in\R^d$, and additional predictors $Z\in\mathcal{Z}$ of arbitrary type. We assume that $(Y,X,Z)\sim P\in\mathcal{P}_0$ (see Section~\ref{sec:notation}), and so the conditional mean $f_P(x,z)$ and the conditional density $p_P(x\given  z)$ are differentiable with respect to $x$.  Our goal is to make inference on the average partial effect
	\begin{equation*}
		\theta_P := \E_P [ \nabla f_P(X,Z) ].
	\end{equation*}
	Recall that the score function $\rho_P : \R^d \times \mathcal{Z}\to \R^d$ plays an important role when considering estimation of $\theta_P$ because it acts like the differentiation operator in the following sense; see also \citet{StokerIndex, NeweyEfficiency}.
	\begin{proposition}
		\label{prop:intbyparts}
		Let the conditional density of the predictors $p_P(\cdot \given  z)$ with respect to Lebesgue measure exist, have full support and be differentiable in the $j$th coordinate $x_j$ for every $x_{-j} \in \R^{d-1}$ and almost every $z\in\mathcal{Z}$. Let $g:\R^d \times \mathcal{Z}\to \R$ similarly be differentiable with respect to $x_j$ at almost every $z\in\mathcal{Z}$ and satisfy
		\begin{equation} \label{eq:intbyparts}
			\E_P[|\nabla_j g(X,Z) + \rho_{P,j}(X,Z) g(X,Z)|]<\infty.
		\end{equation}
		Suppose that for almost every $(x_{-j}, z)$ there exist sequences $a_n\to -\infty$, $b_n\to\infty$ (potentially depending measurably on $(x_{-j}, z)$) such that
		\begin{equation}
		    \label{eq:boundary}
            \lim_{n\to \infty} \{g(b_n,x_{-j},z)p_P(b_n, x_{-j} \given  z) - g(a_n,x_{-j},z)p_P(a_n, x_{-j} \given  z)\} = 0;
		\end{equation}
		abusing notation, we write for example $g(a, x_{-j}, z)$ for $g(x_1, \ldots, x_{j-1}, a, x_{j+1},\ldots, x_d, z)$. Then
		\begin{equation*}
			\E_P[\nabla_j g(X,Z) + \rho_{P,j}(X,Z)g(X,Z)] = 0.
		\end{equation*}
	\end{proposition}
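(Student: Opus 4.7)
The plan is to reduce the claim to integration by parts in the single variable $x_j$ via Fubini, and then apply the fundamental theorem of calculus together with the boundary hypothesis~\eqref{eq:boundary}.

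First I would observe the product-rule identity
\begin{equation*}
\partial_{x_j}\bigl[g(x,z)\, p_P(x\given z)\bigr]
= \nabla_j g(x,z)\, p_P(x\given z) + g(x,z)\, \partial_{x_j} p_P(x\given z)
= \bigl[\nabla_j g(x,z) + \rho_{P,j}(x,z)\, g(x,z)\bigr] p_P(x\given z),
\end{equation*}
valid for almost every $(x,z)$ because $g$ and $p_P(\cdot\given z)$ are differentiable in $x_j$ and $p_P(x\given z)>0$. Writing the expectation as an iterated integral against $dP_Z(z)\,dx$ and using the integrability hypothesis~\eqref{eq:intbyparts}, Fubini's theorem justifies exchanging the order so that
\begin{equation*}
\E_P[\nabla_j g(X,Z) + \rho_{P,j}(X,Z)\,g(X,Z)]
= \int_{\mathcal{Z}} \int_{\R^{d-1}} \int_{\R} \partial_{x_j}\bigl[g(x,z)\, p_P(x\given z)\bigr]\,dx_j\, dx_{-j}\, dP_Z(z).
\end{equation*}

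Next, for $dx_{-j}\,dP_Z$-almost every $(x_{-j},z)$, the function $x_j\mapsto g(x,z)\, p_P(x\given z)$ is differentiable on $\R$ with derivative integrable on $\R$ (by the same integrability condition combined with Fubini). Hence the fundamental theorem of calculus gives
\begin{equation*}
\int_{a_n}^{b_n} \partial_{x_j}\bigl[g(x,z)\, p_P(x\given z)\bigr]\,dx_j
= g(b_n,x_{-j},z)\, p_P(b_n,x_{-j}\given z) - g(a_n,x_{-j},z)\, p_P(a_n,x_{-j}\given z),
\end{equation*}
and by hypothesis~\eqref{eq:boundary} the right-hand side tends to $0$ as $n\to\infty$. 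On the other hand, integrability of $\partial_{x_j}[g\, p_P]$ in $x_j$ together with dominated convergence (bounding $|\mathbbm{1}_{[a_n,b_n]}\partial_{x_j}[g\, p_P]|$ by $|\partial_{x_j}[g\, p_P]|$) implies that the same integral converges to $\int_{\R}\partial_{x_j}[g\, p_P]\,dx_j$. Therefore this integral vanishes for almost every $(x_{-j},z)$, and integrating over $(x_{-j},z)$ gives the claim.

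The one genuinely delicate point is the use of the fundamental theorem of calculus: everywhere-differentiability alone does not imply absolute continuity. Here however the pointwise derivative $\partial_{x_j}[g\,p_P]$ coincides with the integrable function $[\nabla_j g + \rho_{P,j} g]\,p_P$ almost everywhere in $x_j$ (for almost every $(x_{-j},z)$), which together with differentiability is enough to invoke the Lebesgue differentiation / Denjoy–Luzin form of the FTC and conclude absolute continuity on each bounded interval $[a_n,b_n]$. This is the step that most warrants care; the remainder of the argument is bookkeeping with Fubini and dominated convergence.
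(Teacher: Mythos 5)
Your argument is correct and follows essentially the same route as the paper's proof: product rule, fundamental theorem of calculus on $[a_n,b_n]$, the boundary hypothesis plus dominated convergence to conclude the inner integral vanishes, and Fubini to finish. Your extra care about why the FTC applies (everywhere-differentiability together with an integrable derivative giving absolute continuity) addresses a point the paper passes over silently, but it does not change the structure of the argument.
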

	Proposition~\ref{prop:intbyparts}, which follows from integration by parts, allows one to combine estimates of $f_P$ and $\rho_P$ to produce a doubly-robust estimator as we now explain. Suppose we have some fixed function estimates $(\hat f,\hat\rho)$, for example computed using some independent auxiliary data, and $g := f_P - \hat f$ obeys the conditions of Proposition~\ref{prop:intbyparts}. Then 
	\begin{equation*} 
		\begin{split}
	&	\E_P\big[\nabla \hat f(X,Z) - \hat\rho(X,Z)\big\{Y-\hat f(X,Z)\big\}\big] - \theta_P \\
	=&\, \E_P\big[ - \rho_P(X, Z) \hat{f}(X, Z)  - \hat\rho(X,Z)\big\{f_P(X, Z)-\hat f(X,Z)\big\}\big] + \E_P \big[ \rho_P(X, Z)f_P(X, Z)  \big] \\
= &\, \E_P\big[\big\{\rho_P(X,Z)-\hat\rho(X,Z)\big\}\big\{f_P(X,Z)-\hat f(X,Z)\big\}\big],
		\end{split}
	\end{equation*}
which will be zero if either $\hat{f}$ or $\hat\rho$ equal $f_P$ or $\rho_P$ respectively.
	Given independent, identically distributed (i.i.d.) samples $(y_i,x_i,z_i)~\sim~P$ for $i=1,\ldots,n$, this motivates an average partial effect estimator  of the form
	\begin{equation*} 
		\frac1n \sum_{i=1}^n \nabla \hat f(x_i,z_i) - \hat\rho(x_i,z_i)\big\{y_i-\hat f(x_i,z_i)\big\}.
	\end{equation*}
	From the penultimate display 
	and the Cauchy--Schwarz inequality (applied componentwise), we see that the squared-bias of such an estimator is at worst the product of the mean-squared error rates of the conditional mean and score function estimates. A consequence of this (see Theorem~\ref{thm:clt} below) is that the average partial effect estimate can achieve root-$n$ consistency even when both conditional mean and score function estimators converge at a slower rate. Such an estimator is typically called doubly robust \citep{SRR99, RobinsCommentMurphy, RobinsCommentBickel}.
	Note that this relies on the boundary condition \eqref{eq:boundary} being satisfied. Indeed, as noted in \citet{NeweyEfficiency}, in a setting where $p_p(\cdot \given z)$ has compact support and such a condition where $a_n$ and $b_n$ approach the boundary fails, root-$n$ consistent estimation of the average partial effect will not be possible.
	
	In practice, the function estimates $\hat f$ and $\hat \rho$ would not be fixed and must be computed from the same data. For our theoretical analysis, it is helpful to have independence between the function estimates and the data points on which they are evaluated. For this reason we mimic the setting with auxiliary data by employing a
	sample-splitting scheme known as cross-fitting \citep{schick1986asymptotically,ChernozhukovTreatment}, which works as follows.
	%
%
%
%
	Given a sequence of i.i.d.\ data sets $\{(y_i,x_i,z_i)\;:\; i=1,\ldots,n\}$, define a $K$-fold partition $\big(I^{(n,k)}\big)_{k=1,\ldots,K}$ of $\{1,\ldots,n\}$ for some $K$ fixed (in all our numerical experiments we take $K=5$). For simplicity of our exposition, we assume that $n$ is a multiple of $K$ and each subset is of equal size $n/K$. Let the pair of function estimates $\big(\hat f^{(n,k)},\hat\rho^{(n,k)}\big)$ be estimated using data
	\begin{equation*}
		D^{(n,k)} := \big\{(y_i,x_i,z_i)\;:\; i\in \{1,\ldots,n\}\setminus I^{(n,k)}\big\}.
	\end{equation*} The cross-fitted, doubly-robust estimator is
\begin{equation} \label{eqn:est}
		\hat\theta^{(n)} := \frac1n \sum_{k=1}^K  \sum_{i\in I^{(n,k)}} \left[ \nabla \hat f^{(n,k)}(x_i,z_i) - \hat\rho^{(n,k)}(x_i,z_i)\big\{y_i-\hat f^{(n,k)}(x_i,z_i)\big\} \right], 
	\end{equation}
	with corresponding variance estimator
	\begin{equation} \label{eq:Sigma_est}
		\begin{split}
					\hat \Sigma^{(n)} := \frac1n \sum_{k=1}^K \sum_{i\in I^{(n,k)}} &\Big[\nabla \hat f^{(n,k)}(x_i,z_i) - \hat\rho^{(n,k)}(x_i,z_i)\big\{y_i-\hat f^{(n,k)}(x_i,z_i)\big\} -\hat\theta^{(n)}\Big]\\
			&\cdot \Big[\nabla \hat f^{(n,k)}(x_i,z_i) - \hat\rho^{(n,k)}(x_i,z_i)\big\{y_i-\hat f^{(n,k)}(x_i,z_i)\big\} -\hat\theta^{(n)}\Big]^{\top}.
		\end{split}
	\end{equation}

	
	\subsection{Uniform asymptotic properties}
	The estimator \eqref{eqn:est} may be viewed as a special case of those considered in 
	a flurry of recent work \citep{chernozhukov2020adversarial,  ChernozhukovAutoNet, ChernozhukovRobust, ChernozhukovRieszNet, ChernozhukovAutoDML, ChernozhukovRiesz,    ChernozhukovGeneral}. These works look at doubly-robust inference on a broad range of functionals of the conditional mean function, satisfying a moment equation of the form
	\begin{equation*}
				\E_P[\Psi(X,Z; f_P)] = \beta_P,
			\end{equation*}
		for a known operator $\Psi$ and unknown target parameter $\beta_P$. This encompasses estimation of $\theta_P$ by taking $\Psi(x,z;\Delta) = \partial_x \Delta(x,z)$.
		The general theory in this line of work however typically relies on a 
		 mean-squared continuity assumption of the form
		\begin{equation*}
				\E_P\big[\{\Psi(X,Z; \Delta)\}^2\big] \leq C\big\{\E_P\big[\Delta^2(X,Z)\big]\big\}^q,
			\end{equation*} 
		for some $q>0$ and all $\Delta\in \mathcal{F}$, where $\mathcal{F}$ contains the conditional mean estimation errors $f_P - \hat f$. This therefore requires $\hat{f}$ to be differentiable, and its derivative to approximate that of $f_P$ sufficiently well. Such an assumption would not hold for popular tree-based estimators which are piecewise constant.
We address this crucial issue in Section~\ref{sect:resmooth}, but first
		give below a uniform asymptotic result relating to estimators $\hat{\theta}^{(n)}$ \eqref{eqn:est} and $\hat{\Sigma}^{(n)}$ \eqref{eq:Sigma_est},
%
%
%
	not claiming any substantial novelty, but  so as to introduce the quantities $A_f^{(n)}, A_\rho^{(n)}, E_f^{(n)}, E_\rho^{(n)}$ which we will seek to bound in later sections. We stress that it is in achieving these bounds in a model-agnostic way that our major contributions lie.
	
	Recall that the optimal variance bound is equal to the variance of the efficient influence function, which for the average partial effect in the nonparametric model $P\in\mathcal{P}_0$ is equal to
	\begin{equation*}
		\psi_P(y,x,z):=\nabla f_P(x,z) - \rho_P(x,z)\{y-f_P(x,z)\} - \theta_P,
	\end{equation*}
	provided that $\Sigma_P:=\E_P\big[\psi_P(Y,X,Z)\psi_P(Y,X,Z)^{\top}\big]$ exists and is non-singular \citep[Thm.~3.1]{NeweyEfficiency}.
	The theorem below shows that $\hat{\theta}^{(n)}$ achieves this variance bound asymptotically.
	\begin{theorem}
		\label{thm:clt}
		Define the following sequences of random variables:
		\begin{align*}
			A_f^{(n)} &:= \E_P \big[\{f_P(X,Z)-\hat f^{(n,1)}(X,Z)\}^2 \; \big| \; D^{(n,1)}\big],\\
			A_\rho^{(n)} &:= \max_{j=1,\ldots,d} \E_P\big[\{\rho_{P,j}(X,Z)-\hat \rho^{(n,1)}_j(X,Z)\}^2 \; \big| \; D^{(n,1)}\big],\\
			E_f^{(n)} &:= \max_{j=1,\ldots,d} \E_P\Big(\big[ \nabla_j f_P(X,Z)-\nabla_j \hat f^{(n,1)}(X,Z)  \\
			& \qquad +\rho_{P,j}(X,Z)\{f_P(X,Z)-\hat f^{(n,1)}(X,Z)\}\big]^2 \; \Big| \; D^{(n,1)}\Big),\\
			E_\rho^{(n)} &:= \max_{j=1,\ldots,d}\E_P\big[\{\rho_{P,j}(X,Z)-\hat \rho^{(n,1)}_j(X,Z)\}^2 \; \Var_P(Y \given  X,Z) \; \big| \; D^{(n,1)}\big];
		\end{align*}
	note that we have suppressed $P$-dependence in the quantities defined above.
		Let $\mathcal{P} \subset \mathcal{P}_0$ and the chosen regression method producing $\hat{f}^{(n,1)}$ be such that all of the following hold. The covariance matrix $\Sigma_P$ exists for every $P\in\mathcal{P}$, with minimum eigenvalue at least $c_1>0$. Furthermore, 
		\begin{align*}
			\sup_{P\in\mathcal{P}} \E_P\big[\|\psi_{P}(Y,X,Z)\|_2^{2+\eta}\big]&\leq c_2
		\end{align*}
		for some $c_2,\eta>0$. Finally, suppose the remainder terms defined above satisfy:
		\begin{equation}
			\label{eqn:cltrates}
			A_f^{(n)} = O_\mathcal{P}(1);\quad A_f^{(n)}A_\rho^{(n)} = o_{\mathcal{P}}(n^{-1});\quad
			E_f^{(n)}=o_{\mathcal{P}}(1);\quad
			E_\rho^{(n)}=o_{\mathcal{P}}(1).
		\end{equation}
		Then $\hat{\theta}^{(n)}$ \eqref{eqn:est} is root-$n$ consistent, asymptotically Gaussian and efficient:
		\begin{equation*}
			\lim_{n\to\infty}\sup_{P\in\mathcal{P}} \sup_{t\in\R^d} \Big| \Pr_P\big[\sqrt{n}(\Sigma_P)^{-1/2} (\hat\theta^{(n)}-\theta_P)\leq t\big]-\Phi(t)\Big| = 0.
		\end{equation*}
		Moreover the covariance estimate \eqref{eq:Sigma_est} satisfies $\hat\Sigma^{(n)} = \Sigma_P + o_{\mathcal{P}}(1)$, and one may perform asymptotically valid inference (e.g.\ constructing confidence intervals) using
		\begin{equation*}
			\lim_{n\to\infty}\sup_{P\in\mathcal{P}} \sup_{t\in\R^d} \Big| \Pr_P\big[\sqrt{n}(\hat\Sigma^{(n)})^{-1/2} (\hat\theta^{(n)}-\theta_P)\leq t\big]-\Phi(t)\Big| = 0.
		\end{equation*}
	\end{theorem}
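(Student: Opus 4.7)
The plan is to decompose $\sqrt n(\hat\theta^{(n)} - \theta_P)$ into the empirical average of the efficient influence function $\psi_P$ plus two remainder terms controlled by the four quantities in \eqref{eqn:cltrates}, then invoke a uniform CLT on the leading term, verify $\hat\Sigma^{(n)} = \Sigma_P + o_\mathcal{P}(1)$, and close via Slutsky. Writing $g_P(y,x,z) := \nabla f_P(x,z) - \rho_P(x,z)\{y-f_P(x,z)\}$ and $\hat g^{(n,k)}$ for its plug-in analogue (so $\psi_P = g_P - \theta_P$ and $\E_P[g_P(Y,X,Z)]=\theta_P$ by the tower property), I would split
\[
\sqrt n(\hat\theta^{(n)} - \theta_P) = \frac{1}{\sqrt n}\sum_{i=1}^n \psi_P(Y_i,X_i,Z_i) + R^{(n)}_{\mathrm{stoch}} + R^{(n)}_{\mathrm{bias}},
\]
where $R^{(n)}_{\mathrm{bias}} := (\sqrt n/K)\sum_k \E_P[(\hat g^{(n,k)} - g_P)(Y,X,Z)\mid D^{(n,k)}]$ and $R^{(n)}_{\mathrm{stoch}}$ is the corresponding centred, cross-fitted fluctuation.

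For the bias, the algebraic identity derived in the text immediately following Proposition~\ref{prop:intbyparts} (applied with $\hat f, \hat\rho$ held fixed by conditioning on $D^{(n,k)}$) gives $\E_P[(\hat g^{(n,k)} - g_P)(Y,X,Z) \mid D^{(n,k)}] = \E_P[(\rho_P - \hat\rho^{(n,k)})(f_P - \hat f^{(n,k)}) \mid D^{(n,k)}]$; componentwise Cauchy--Schwarz then bounds the right-hand side by $\sqrt{A_f^{(n)} A_\rho^{(n)}} = o_\mathcal{P}(n^{-1/2})$, whence $R^{(n)}_{\mathrm{bias}} = o_\mathcal{P}(1)$. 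For the stochastic remainder, cross-fitting ensures that conditional on $D^{(n,k)}$ the summands indexed by $I^{(n,k)}$ are i.i.d.\ mean-zero, and conditional Chebyshev (applied separately within each fold and summed via the triangle inequality) yields $R^{(n)}_{\mathrm{stoch}} = O_\mathcal{P}\bigl(\max_k \sqrt{\E_P[\|\hat g^{(n,k)} - g_P\|_2^2 \mid D^{(n,k)}]}\bigr)$. The algebraic expansion
\[
\hat g_j - g_{P,j} = -\bigl\{\nabla_j f_P - \nabla_j\hat f + \rho_{P,j}(f_P-\hat f)\bigr\} + (\rho_{P,j}-\hat\rho_j)(Y-f_P) + (\rho_{P,j}-\hat\rho_j)(f_P-\hat f)
\]
has conditional squared $L^2$-norms equal to $E_f^{(n)}$, $E_\rho^{(n)}$, and a mixed term respectively; the first two are $o_\mathcal{P}(1)$ by assumption, and the third is treated below.

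For the leading term I would apply a multivariate Berry--Esseen bound, which under $\E_P[\|\psi_P\|_2^{2+\eta}]\leq c_2$ and the eigenvalue lower bound $c_1$ on $\Sigma_P$ delivers convergence of $n^{-1/2}\Sigma_P^{-1/2}\sum_i\psi_P(Y_i,X_i,Z_i)$ to the standard $d$-dimensional Gaussian c.d.f.\ uniformly in $P\in\mathcal{P}$; combining with the remainder bounds and the uniform boundedness of $\Sigma_P^{-1/2}$ yields the first display. For $\hat\Sigma^{(n)}$ I would write it as a cross-fitted empirical second moment of $\hat g^{(n,k)}$ minus $\hat\theta^{(n)}\hat\theta^{(n)\top}$, decompose $\hat g^{(n,k)} = g_P + (\hat g^{(n,k)} - g_P)$, and use the uniform-in-$P$ $L^2$ control of $\hat g^{(n,k)} - g_P$ together with the $(2+\eta)$-th moment bound on $\psi_P$ and a uniform weak law; Slutsky then gives the second display. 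The main obstacle is the mixed term $\E_P[(\rho_P-\hat\rho)^2(f_P-\hat f)^2 \mid D]$, which is not one of the four quantities in \eqref{eqn:cltrates}: I expect to control it by a further Cauchy--Schwarz step combined with mild additional $L^4$ integrability of the nuisance estimates (inherited e.g.\ from the $(2+\eta)$-moment bound on $\psi_P$ together with implicit boundedness properties of $\mathcal{P}$) or, alternatively, by a truncation argument. A secondary technical point is upgrading the scalar Lyapunov statement to a uniform multivariate Berry--Esseen version so that the CLT convergence is genuinely uniform in $P\in\mathcal{P}$.
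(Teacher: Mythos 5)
Your overall architecture (influence-function decomposition, doubly-robust identity for the conditional bias, uniform CLT for the leading term, Slutsky for the studentised statement) matches the paper's, but the point you flag as ``the main obstacle'' is a genuine gap, and your proposed fixes would not close it under the stated hypotheses. The mixed term $(\rho_{P,j}-\hat\rho_j)(f_P-\hat f)$ cannot be handled by placing it inside $R^{(n)}_{\mathrm{stoch}}$ and applying conditional Chebyshev, because its conditional second moment $\E_P[(\rho_{P,j}-\hat\rho_j)^2(f_P-\hat f)^2\mid D^{(n,k)}]$ is controlled by none of $A_f^{(n)}, A_\rho^{(n)}, E_f^{(n)}, E_\rho^{(n)}$; and the theorem grants no extra $L^4$ integrability (the $(2+\eta)$-moment bound is on $\psi_P$, not on the estimation errors), so neither ``inherited'' fourth moments nor truncation is available. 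The paper's resolution is to treat this term in $L^1$ rather than $L^2$: do not centre it, and bound the conditional first absolute moment of the whole fold sum,
$\E_P\big[\big|n^{-1/2}\sum_{i\in I^{(n,k)}}(\rho_{P,j}-\hat\rho_j)(f_P-\hat f)\big|\;\big|\;D^{(n,k)}\big]\le \sqrt{n A_f^{(n,k)}A_\rho^{(n,k)}}=o_{\mathcal{P}}(1)$
by Cauchy--Schwarz, then conclude by a uniform conditional Markov inequality; only the other two pieces of your expansion (the $E_f^{(n)}$ and $E_\rho^{(n)}$ terms) are conditionally mean-zero and handled by conditional second moments. The same issue recurs in the variance-estimator step, where the paper bounds the empirical product term by the product of the two empirical sums via $\sum_i a_ib_i\le(\sum_i a_i)(\sum_i b_i)$ for non-negative $a_i,b_i$, giving an $O_{\mathcal{P}}(nA_f^{(n)}A_\rho^{(n)})=o_{\mathcal{P}}(1)$ bound with no fourth moments.

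Two smaller points. First, asserting that the $E_f$-type term is conditionally mean-zero requires verifying the integrability and boundary hypotheses of Proposition~\ref{prop:intbyparts} for $g=f_P-\hat f^{(n,k)}$ conditionally on $D^{(n,k)}$; the paper does this on high-probability events using $A_f^{(n)}=O_{\mathcal{P}}(1)$ and $E_f^{(n)}=o_{\mathcal{P}}(1)$ together with a continuity/boundary lemma, and your sketch should at least acknowledge that this is where those two assumptions enter beyond rate control. Second, your ``secondary technical point'' about uniformity of the CLT is handled in the paper not by a Berry--Esseen bound but by extracting a near-extremal sequence $P_n$ and applying the Lindeberg--Feller theorem along it; either route works, but the subsequence argument needs only the $(2+\eta)$-moment bound you already have.
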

	Let us discuss the assumptions of the result. Firstly, 	considering the case $d=1$ for simplicity, a sufficient condition for the variance bound $\Sigma$ to be finite is that $\rho_P'(x, z)$ is bounded, $\E_P f'(X, Z)^2 < \infty$ and $\E|Y-f(X, Z)|^{2+\eta} < \infty$; see Section~\ref{sec:var_bound} for a derivation of this fact. The assumption of $\rho_P$ being Lipschitz is something we discuss further in Section~\ref{sect:score}. 
	The minimum eigenvalue of $\Sigma_P$ being bounded away from zero and the stronger assumption of the $(2+\eta)$th moment of the influence function being bounded are required to obtain a uniform, rather than a weaker pointwise result, the latter condition in particular being required for an application of the Lyapunov central limit theorem.
	The assumptions on $A_f^{(n)}$, $A_\rho^{(n)}$ and $E_\rho^{(n)}$ are relatively weak and standard; for example they are satisfied if the conditional variance $\Var_P(Y \given  X,Z)$ is bounded almost surely and each of $A_f^{(n)}$, $A_\rho^{(n)}$ converge at the nonparametric rate $o_{\mathcal{P}}(n^{-1/2})$; see Section~\ref{sect:score} for our scheme on score estimation. For example, consider the case where $\mathcal{Z} = \R^p$ and $f_P$ is $(C,s)$-H\"older smooth for $C,s>0$, i.e., writing $m:=\ceil{s} -1$, for every $\alpha :=(\alpha_1, \ldots, \alpha_{d + p})$ with $\alpha_1 + \cdots + \alpha_d = m$ and $\alpha_j \in \mathbb{Z}_{\geq 0}$, the partial derivatives (assumed to exist) satisfy
	\begin{align*}
	&\abs{\frac{\partial^\alpha f_P (x, z)}{\partial^{\alpha_1} x_1 \cdots \partial^{\alpha_d} x_d \, \partial^{\alpha_{d+1}} z_1 \cdots \partial^{\alpha_{d+p}} z_p} - \frac{\partial^\alpha f_P (x', z')}{\partial^{\alpha_1} x_1 \cdots \partial^{\alpha_d} x_d \, \partial^{\alpha_{d+1}} z_1 \cdots \partial^{\alpha_{d+p}} z_p}}\\
	\leq &  \, C \| (x, z) - (x', z')\|_2^{s - m}
	\end{align*}
	for all $P \in \mathcal{P}$ and $(x, z), (x', z') \in \R^{d+p}$.
	Then we can expect that $A_f^{(n)} = O_{\mathcal{P}}(n^{-2s / (2s + d + p)})$ for appropriately chosen regression procedures; see for example \citet{gyorfi2002distribution}. Then when $s > (d+p)/2$, this is $o_{\mathcal{P}}(n^{-1/2})$.
	Moreover, a faster rate for $A_f^{(n)}$ permits a slower rate for $A_\rho^{(n)}$ and vice versa.
	
	However assuming $E_f^{(n)}=o_{\mathcal{P}}(1)$ needs justification. In particular, while the result aims to give guarantees for a version of $\hat{\theta}^{(n)}$ constructed using arbitrary user-chosen regression function and score estimators $\hat{f}^{(n,1)}$ and $\hat{\rho}^{(n,1)}$, in particular it requires $\hat{f}^{(n,1)}$ to be differentiable in the $x$ coordinates, which for example is not the case for regression tree-based estimates of $f_P$. In the next section, we  address this issue by proposing a resmoothing scheme to yield a suitable estimate of $f_P$ that can satisfy the requirements on $A_f^{(n)}$ and $E_f^{(n)}$ simultaneously.
	


	\section{Resmoothing}
	\label{sect:resmooth}

	In this section we propose performing derivative estimation via a kernel convolution applied to an arbitrary initial regression function estimate. This is inspired by similar approaches for edge detection in image analysis \citep{CannyEdge}. We do this operation separately for each coordinate $x_1,\ldots,x_d$ of interest, so without loss of generality in this section we take the dimension of $x$ to be $d=1$ (so $\nabla(\cdot) = (\cdot)'$). Motivated by Theorem~\ref{thm:clt}, we seek a class of differentiable regression procedures so that the errors
	\begin{align*}
		A_f^{(n)} &:= \E_P \big[\{f_P(X,Z)-\hat f^{(n,1)}(X,Z)\}^2 \; \big| \; D^{(n,1)}\big],\\
		E_f^{(n)} &:= \E_P\Big(\big[ f_P'(X,Z)-(\hat{f}^{(n,1)})'(X,Z) +\rho_{P}(X,Z)\{f_P(X,Z)-\hat f^{(n,1)}(X,Z)\}\big]^2 \; \Big| \; D^{(n,1)}\Big),
	\end{align*}
	satisfy \begin{equation*}
		A_f^{(n)} = O_{\mathcal{P}}(n^{-\alpha});\quad
		E_f^{(n)}=o_{\mathcal{P}}(1),
	\end{equation*}
	for $\alpha>0$ as large as possible.
	
	Consider regressing $Y$ on $(X,Z)$ using some favoured machine learning method, whatever that may be. By training on $D^{(n,k)}$ we get a sequence of estimators $\tilde f^{(n,k)}$ of the conditional mean function $f_P(x,z):=\E(Y\given  X=x, Z=z)$, which we expect to have a good mean-squared error convergence rate but that are not necessarily differentiable (or that their derivatives are hard to compute, or numerically unstable). Additional smoothness may be achieved by convolving $\tilde f^{(n,k)}$ with a kernel, yielding readily computable derivatives. Let $K:\R\to\R$ be a differentiable kernel function, i.e., such that $\int_{-\infty}^{\infty} K(u)\, du=1$. The convolution yields a new regression estimator
	\begin{equation*}
		\hat f^{(n,k)}(x,z) = \{\tilde f^{(n,k)}(\cdot, z)*K_{h_n}\}(x) = \int_{\R} \tilde f^{(n,k)}(u,z) K_{h_n}(x-u) \; du,
	\end{equation*}
	where $K_{h_n}(t)=h_n^{-1} K(h_n^{-1}t)$ for a sequence of bandwidths $h_n>0$. Here we will use a standard Gaussian kernel,\begin{equation*}
		K(u)=\frac1{\sqrt{2\pi}} \exp\bigg(-\frac{u^2}{2}\bigg);
	\end{equation*}
	the Gaussian kernel is positive, symmetric, and satisfies $ K'(u) = -u K(u)$, which makes it convenient for our theoretical analysis.
	
	\subsection{Theoretical results}
	Our goal in this section is to demonstrate that for some sequence of bandwidths $h_n$ and a class of distributions 
	 $\mathcal{P}\subset \mathcal{P}_0$ that will encode any additional assumptions we need to make, we have relationships akin to 
	  \begin{equation}
		A_f^{(n)} = O_{\mathcal{P}}\Big(\E_P\Big[\big\{f_P(X,Z)-\tilde f^{(n,1)}(X,Z)\big\}^2 \;\Big| \; D^{(n,1)}\Big]\Big) \quad \text{and} \quad E_f^{(n)} = o_{\mathcal{P}}(1). \label{eqn:hratetarget}
	\end{equation}
This means that we can preserve the mean squared error properties of the original $\tilde{f}^{(n,1)}$ but also achieve the required convergence to zero of the term $E_f^{(n)}$; see Theorem~\ref{thm:clt}. A result of this flavour is given by the following theorem. Recall that we have assumed that the dimension of $x$ is $1$ here; without loss of generality other components of $x$ may be absorbed into $z$.

	\begin{theorem}
		\label{thm:hrate}
		Let
		\[
		\tilde{A}_f^{(n)} := \E_P\Big[\big\{f_P(X,Z)-\tilde f^{(n,1)}(X,Z)\big\}^2 \;\Big| \; D^{(n,1)}\Big],
		\] where
		the randomness is over the training data set $D^{(n,1)}\sim P$. Let $\mathcal{P} \subset \mathcal{P}_0$ and the chosen regression method producing the fitted regression function $\tilde{f}^{(n, 1)}$ be such that all of the following hold.
		\begin{enumerate}[label=(\roman*)]
			\item The regression error of $\tilde f^{(n,1)}$ is bounded everywhere with high probability:
			\begin{equation*}
				\sup_{x,z}\big|f_P(x,z) - \tilde f^{(n,1)}(x,z)\big|  = O_{\mathcal{P}}(1).
			\end{equation*} 
			\item For each $P\in\mathcal{P}$ and almost every $z\in\mathcal{Z}$ the conditional density $p_P(\cdot\given  z)$ is twice continuously differentiable and
			\begin{equation} \label{eq:score_Lipschitz}
				\sup_{P\in \mathcal{P}} \sup_{x,z} \big| \rho_P'(x,z)\big| < \infty.
			\end{equation}
			\item There exists a class of functions $C_P:\mathcal{Z}\to\R$, $P\in\mathcal{P}$ such that $\sup_{P\in\mathcal{P}} \E_P\big[C_P^2(Z)\big] < \infty$ and \begin{equation}
				\sup_{x} \big|f_P''(x,z)\big| \leq C_P(z) \label{eqn:resmooththmderivbounds}
			\end{equation} 
			for almost every $z\in\mathcal{Z}$, for each $P\in\mathcal{P}$.
		\end{enumerate}
		If $\tilde{A}_f^{(n)} = O_{\mathcal{P}}\big(n^{-\alpha}\big)$ for some $\alpha > 0$
		then the choice $h_n = c n^{-\gamma}$ for any $c>0$ and $\gamma \in [\alpha/4, \alpha/2)$
		achieves $A_f^{(n)} = O_{\mathcal{P}}\big(n^{-\alpha}\big)$ and $E_f^{(n)} = O_{\mathcal{P}}\big(n^{2\gamma-\alpha}\big) = o_{\mathcal{P}}(1)$.
	\end{theorem}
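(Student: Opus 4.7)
The plan is to decompose $\hat f^{(n,1)} - f_P = (\hat f^{(n,1)} - \bar f) + (\bar f - f_P)$, where $\bar f(x,z) := \int f_P(u,z) K_{h_n}(x-u)\, du$ smooths the true conditional mean in its $x$-argument. This separates both $A_f^{(n)}$ and $E_f^{(n)}$ into a deterministic smoothing bias involving only $f_P$, and a stochastic term driven by the base regression error $g := \tilde f^{(n,1)} - f_P$, to which I apply analogous arguments on both sides.

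For the bias, second-order Taylor expansion in $x$ combined with the symmetry $\int v K_{h_n}(v)\, dv = 0$ of the Gaussian kernel yields $|\bar f(x,z) - f_P(x,z)| \leq \tfrac{1}{2} h_n^2 C_P(z)$ by assumption~(iii), contributing $O_{\mathcal{P}}(h_n^4)$ to $A_f^{(n)}$. For $E_f^{(n)}$, integration by parts in $u$ (justified by the at-most-quadratic growth of $f_P$ from (iii) and Gaussian decay of $K_{h_n}$) rewrites $\bar f'$ as $f_P' * K_{h_n}$, so Lipschitz continuity of $f_P'$ with constant $C_P(z)$ gives $|\bar f'(x,z) - f_P'(x,z)| \leq h_n C_P(z)\sqrt{2/\pi}$, for an $O_{\mathcal{P}}(h_n^2)$ contribution; the $\rho_P(\bar f - f_P)$ correction is of smaller order after invoking the Fisher-information bound $\E_P[\rho_P^2(X,Z)\mid Z] \leq L$, which follows from (ii) by a further integration by parts.

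For the stochastic term, $\hat f^{(n,1)} - \bar f = g * K_{h_n}$, so pointwise Jensen applied to the convolution integral gives $(\hat f^{(n,1)} - \bar f)^2(x,z) \leq (g^2 * K_{h_n})(x,z)$. The analogous calculation for $E_f^{(n)}$ uses the defining Gaussian identity $K_{h_n}'(t)/K_{h_n}(t) = -t/h_n^2$ and the moment computation
\begin{equation*}
\int \bigl(K_{h_n}'(v)/K_{h_n}(v) + \rho_P(x,z)\bigr)^2 K_{h_n}(v)\, dv = \rho_P(x,z)^2 + h_n^{-2},
\end{equation*}
which via Cauchy--Schwarz weighted by $K_{h_n}(x-\cdot)\, du$ produces
\begin{equation*}
\bigl[(\hat f^{(n,1)} - \bar f)'(x,z) + \rho_P(x,z)\{\hat f^{(n,1)}(x,z) - \bar f(x,z)\}\bigr]^2 \leq \bigl(\rho_P(x,z)^2 + h_n^{-2}\bigr) (g^2 * K_{h_n})(x,z).
\end{equation*}
Both stochastic contributions thus reduce to controlling $\E_P[(g^2 * K_{h_n})(X,Z)]$, with the extra factor $\rho_P^2 + h_n^{-2}$ for $E_f^{(n)}$ handled by splitting at $\{h_n^2 \rho_P^2 \leq 1\}$: on this tame set the factor is at most $2 h_n^{-2}$, while on the complementary tail event the sub-Gaussianity of $\rho_P$ shrinks the probability exponentially in $h_n^{-2}$, and $|g| \leq M$ from (i) makes the remaining piece negligible.

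The main obstacle is then to show $\E_P[(g^2 * K_{h_n})(X,Z)] = O_{\mathcal{P}}(\tilde A_f^{(n)})$ uniformly, which by Fubini amounts to comparing the smoothed density $\tilde p_{h_n}(u\mid z) := \int K_{h_n}(x-u) p_P(x \mid z)\, dx$ to $p_P(u \mid z)$ without losing any polynomial factor in $h_n$. A Taylor bound on $\log p_P$ using (ii) yields $\tilde p_{h_n}(u \mid z) \leq \sqrt{2}\, p_P(u \mid z) \exp(h_n^2 \rho_P(u,z)^2)$ once $h_n^2 L \leq 1/2$. A second splitting of the integral at $\{h_n^2 \rho_P^2 \leq 1\}$ then gives a bounded density ratio on the tame set with contribution $O_{\mathcal{P}}(\tilde A_f^{(n)})$, while on the tail set the sub-Gaussianity of Lipschitz scores flagged in the abstract (yielding $\Pr_P(|\rho_P(X,Z)| > t) = O(e^{-c t^2})$) forces the $\tilde p_{h_n}$-mass to be exponentially small in $h_n^{-2}$, leaving a residual absorbed by the uniform bound $|g| \leq M$ from (i). Combining all estimates gives $A_f^{(n)} = O_{\mathcal{P}}(h_n^4 + \tilde A_f^{(n)}) = O_{\mathcal{P}}(n^{-\alpha})$ since $\gamma \geq \alpha/4$, and $E_f^{(n)} = O_{\mathcal{P}}(h_n^2 + h_n^{-2}\tilde A_f^{(n)}) = O_{\mathcal{P}}(n^{2\gamma - \alpha}) = o_{\mathcal{P}}(1)$ since $\gamma < \alpha/2$.
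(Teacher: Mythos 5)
Your decomposition $\hat f^{(n,1)}-f_P=(f_P*K_{h_n}-f_P)+\{(\tilde f^{(n,1)}-f_P)*K_{h_n}\}$ and your treatment of the deterministic bias (second-order Taylor for $A_f^{(n)}$, the identity $(f_P*K_h)'=f_P'*K_h$ plus Lipschitzness of $f_P'$ for $E_f^{(n)}$, and the Fisher-information bound $\E_P[\rho_P^2\given Z]\leq C$ for the cross term) coincide with the paper's. Where you genuinely diverge is in the stochastic term, and your route is sound. The paper sets $\phi_{P,n}(t)=\E_P[\Delta_{P,n}^2(X+t,Z)\given D^{(n,1)}]$, Taylor-expands to second order, and bounds $\phi_{P,n}''$ via the density-ratio lemma and the sub-Gaussian moment generating function; this forces the introduction of the fourth-moment quantity $\tilde B_f^{(n)}=(\E_P[\Delta_{P,n}^4\given D^{(n,1)}])^{1/2}=O_{\mathcal P}(n^{-\alpha/2})$ (which is where assumption (i) enters) and yields $\tilde A_f^{(n)}+O(h^2\tilde B_f^{(n)})$ and $h^{-2}\tilde A_f^{(n)}+O(\tilde B_f^{(n)})$; the score-weighted term $\E_P[\rho_P^2\{\Delta_{P,n}*K_h\}^2]$ is then reduced to the other two via the integration-by-parts identity of Proposition~\ref{prop:intbyparts}. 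You instead make a zeroth-order multiplicative comparison of the smoothed conditional density $p_P(\cdot\given z)*K_h$ with $p_P(\cdot\given z)$ (via Lemma~\ref{lem:densityratiobound} and the Gaussian identity $\E[e^{aW+bW^2}]=(1-2b)^{-1/2}e^{a^2/(2(1-2b))}$, correctly giving the factor $\sqrt 2\,e^{h^2\rho_P^2}$ when $h^2C\leq 1/2$), split at $\{h^2\rho_P^2\leq 1\}$, and absorb the tail using sub-Gaussianity of the score together with the uniform bound from (i); and your single Cauchy--Schwarz step using $\int\{K_h'(v)/K_h(v)+\rho\}^2K_h(v)\,dv=\rho^2+h^{-2}$ handles the derivative and the score-weighted piece simultaneously, bypassing Proposition~\ref{prop:intbyparts} entirely. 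The payoff of your version is that no fourth moment or cross term $h^2\tilde B_f^{(n)}$ appears, assumption (i) is used only to kill super-polynomially small remainders, and the bookkeeping is lighter; the paper's version is more self-contained in that it never needs to interpret the smoothed density probabilistically. Both give the same rates.

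One step deserves a line more care: the $\tilde p_{h_n}(\cdot\given z)$-mass of the tail set $\{u:h_n^2\rho_P^2(u,z)>1\}$ equals $\Pr_P(h_n^2\rho_P^2(X+h_nW,z)>1\given Z=z)$, whereas Theorem~\ref{thm:scoresubg} controls the tails of $\rho_P(X,z)$, not of $\rho_P(X+h_nW,z)$. The fix is immediate from \eqref{eq:score_Lipschitz}: $|\rho_P(X+h_nW,z)|\leq|\rho_P(X,z)|+Ch_n|W|$, so the event is contained in $\{|\rho_P(X,z)|>(2h_n)^{-1}\}\cup\{|W|>(2Ch_n^2)^{-1}\}$, both of probability $O(e^{-c/h_n^2})$ uniformly over $\mathcal P$. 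With that inserted, the argument goes through.
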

The result shows that
we have a large range of possible bandwidth sequences, whose decay to zero varies in orders of magnitude, for which we have the desirable conclusion that the associated
smoothed estimator $\hat{f}^{(n,1)}$ enjoys $A_f^{(n)} = O_{\mathcal{P}}\big(n^{-\alpha}\big)$  as in the case of the original $\tilde{f}^{(n,1)}$, but crucially also $E_f^{(n)} = o_{\mathcal{P}}(1)$ as required by Theorem~\ref{thm:clt}. Note that in order to satisfy $A_f^{(n)}A_\rho^{(n)} = o_{\mathcal{P}}(n^{-1})$ \eqref{eqn:cltrates}, we may expect that $1/2 < \alpha \leq 1$, in which case $\gamma=1/4$ always lies within the permissible interval of bandwidths ensuring $E_f^{(n)} = o_{\mathcal{P}}(1)$.

The additional assumptions required by the result are a Lipschitz property of the score function \eqref{eq:score_Lipschitz} and a particular bound on the second derivative of the regression function $f_P$ \eqref{eqn:resmooththmderivbounds}, both of which we consider to be relatively mild. Nevertheless, in particular densities whose tails decay faster than that of a Gaussian are ruled out by the former assumption. Moreover for such densities the variance of the efficient influence function could still be finite, suggesting that this assumption is not necessary for efficient estimation of the average partial effect; see Section~\ref{sec:var_bound} for further discussion.

As  shown in  Theorem~\ref{thm:scoresubg} to follow, \eqref{eq:score_Lipschitz} implies a sub-Gaussianity property of the score function. To gain some intuition for how the condition on the second derivative of $f_P$ is helpful, observe that for instance
\[
f_P' - (\tilde{f}^{(n,1)} * K_h)' = \{f_P' - (f_P*K_h)'\} + \{(f_P-\tilde{f}^{(n,1)}) * K_h\}'.
\]
The first term on the right-hand side is zero when $f_P''=0$ and more generally can be controlled by the size of this second derivative. On the other hand, the second term may be controlled by the original regression error $f_P-\tilde{f}^{(n,1)}$. We give a brief sketch of how the assumed Lipschitz property of the score proves useful in our arguments in Section~\ref{sect:score}. 

%
%
%
	
	With this result on the insensitivity of the bandwidth choice with respect to the adherence to the conditions of Theorem~\ref{thm:clt} in hand, we now discuss a simple practical scheme for choosing an appropriate bandwidth.
	
	
	\subsection{Practical implementation}
	\label{sect:resmoothimplementation}
	
	There are two practical issues that require consideration. First, we must decide how to compute the convolutions. Second, we discuss bandwidth selection for the kernel for which we suggest a data-driven selection procedure that involves picking the largest resmoothing bandwidth achieving a cross-validation score within some specified tolerance of the original regression.
	
	Now for $W\sim N(0,1)$ independent of $(X,Z)$, we have
	\begin{align*}
		\hat f^{(n,k)}(x,z) &=  \E\big[\tilde f^{(n,k)}(x+hW,z) \; \big| \; D^{(n,k)}\big],\\
		\hat f^{(n,k)\prime}(x,z) &=  \frac1h \E\big[W\tilde f^{(n,k)}(x+hW,z)\; \big|\; D^{(n,k)}\big].
	\end{align*}
	The latter expression follows from differentiating under the integral sign; see Lemma~\ref{lem:convolution} in the Appendix for a derivation.
	While this indicates it is possible to compute Gaussian expectations to any degree of accuracy by Monte Carlo, the regression function $\tilde f^{(n,k)}$ may be expensive to evaluate too many times, and we have found that derivative estimates are sensitive to sample moments deviating from their population values. These issues can be alleviated by using antithetic variates, however we have found the simpler solution of grid-based numerical integration (as is common in image processing \citep{CannyEdge}) to be very effective. This involves finding a deterministic set of pairs $\{(w_j, q_j)\;:\;j=1,\ldots,J\}$ such that, for functions $g$, \begin{equation*}
		\E [g(W)] \approx \sum_{j=1}^J g(w_j)q_j.
	\end{equation*} We suggest taking the $\{w_j\}$ to be an odd number of equally spaced grid points covering the first few standard deviations of $W$, and $\{q_j\}$ to be proportional to the corresponding densities, such that $\sum_{j=1}^J q_j~=~1$. This ensures that the odd sample moments are exactly zero and the leading even moments are close to their population versions. In all of our numerical experiments presented in Section~\ref{sect:numerical} we take $J=101$.
	
	Recall that the goal of resmoothing is to yield a differentiable regression estimate without sacrificing the good prediction properties of the first-stage regression. With this intuition, we suggest choosing the largest bandwidth such that quality of the regression estimate in terms of squared error, as measured by cross-validation score, does not substantially decrease.
	
Specifically, the user first specifies a non-negative tolerance and a collection of positive trial bandwidths, for instance an exponentially spaced grid up to the empirical standard deviation of $X$. Next, we find the bandwidth $h_{\min}$ minimising the cross-validation error across the given set of bandwidths including bandwidth $0$ (corresponding to the original regression function). Then, for each positive bandwidth at least as large as $h_{\min}$, we find the largest bandwidth $h$ such that the corresponding cross-validation score $\text{CV}(h)$ exceeds $\text{CV}(h_{\min})$ by no more than some tolerance times an estimate of the standard deviation of the difference $\text{CV}(h)-\text{CV}(h_{\min})$; if no such $h$ exists, we pick the minimum positive bandwidth. Given a sufficiently small minimum bandwidth, this latter case should typically not occur.
	
	The procedure is summarised in Algorithm~\ref{alg:resmooth} below.
	We suggest computing all the required evaluations of $\tilde f^{(n,k)}$ at once, since this only requires loading a model once per fold. In all of our numerical experiments presented in Section~\ref{sect:numerical} we used $K=5$ and set the tolerance to be $2 \approx \Phi^{-1}(0.975)$, though the results were largely unchanged for a wide range of tolerances. Overall, the computation time for this was negligible compared to that required to train the regression models involved in the construction of our estimator.
	
\begin{algorithm}[h]
        \KwIn{Data set $D^{(n)}$, number of folds $K\in\mathbb{N}$, set of $L$ positive potential bandwidths $\mathcal{H} := \{h_1, \ldots, h_L\}$, tolerance $\text{tol} \geq 0$ controlling the permissible increase in regression error.}
	\KwOut{Bandwidth $\hat h \geq 0$.}
        Partition $D^{(n)}$ into $K$ folds.\\
		\For{each fold $k=1,\ldots,K$}{
			Train $\tilde f^{(n,k)}$ on the out-of-fold data $D^{(n,k)}$.\\
For each $i \in I^{(n,k)}$, set $\text{err}_{i}(0):= \{Y_i - \tilde f^{(n,k)}(X_i,Z_i)\}^2$.\\
			\For{each trial bandwidth $h\in\{h_1,\ldots, h_L\}$}{
				\For{each in-fold data point $(X_i,Z_i)$, $i\in I^{(n,k)}$}{
					Compute $\hat f^{(n,k)}(X_i,Z_i) =  \sum_{j=1}^J \tilde f^{(n,k)}(X_i+h w_j,Z_i)q_j$.\\
					Set $\text{err}_{i}(h) := \{Y_i - \hat f^{(n,k)}(X_i,Z_i)\}^2$
				}
			}
		}
		Writing $h_0 := 0$, for each $l=1,\ldots,L$, set $\text{CV}(h_l)$ to be the mean of the $\{\text{err}_i(h_l)\}_{i=1}^n$.\\
		Set $h_{\min} := \text{argmin}_h \text{CV}(h)$.\\
		For each $h \in \mathcal{H}$ such that $h \geq h_{\min}$, set $\text{se}(h)$ to be the empirical standard deviation of  $\{\text{err}_{i}(h_{\min}) - \text{err}_{i}(h)\}_{i=1}^n$ divided by $\sqrt{n}$.\\
		Set $\hat{h}$ to be the largest $h \in \mathcal{H}$ with $h \geq h_{\min}$ such that $\text{CV}(h) \leq \text{CV}(h_{\min}) + \text{tol} \times \text{se}(h)$, or set $\hat{h} := \min \mathcal{H}$ if no such $h$ exists.
		\caption{Cross validation selection procedure for the resmoothing bandwidth.}
  \label{alg:resmooth}
	\end{algorithm}

	\section{Score estimation}
	\label{sect:score}
	In this section we consider the problem of constructing an estimator of the score function $\rho_P$ of a random variable $X$ conditional on $Z$ as required in the estimator $\hat{\theta}^{(n)}$ \eqref{eqn:est} of the average partial effect $\theta_P$. However, score function estimation is also of independent interest more broadly, for example in distributional testing, particularly for assessing tail behaviour \citep{BeraScore}. More recently, there has been renewed interest in score estimation in the context of  generative modelling via diffusion models \citep{song2020score}.
	 The latter have achieved remarkable success in a variety of machine learning tasks including text-to-speech generation \citep{popov2021diffusion}, image inpainting and super-resolution \citep{song2021solving} for example. In these very high-dimensional applications, the score is estimated by deep neural networks that can leverage underlying structure in the data. We however also have in mind lower-dimensional problems that may have less structure and may benefit from a nonparametric approach.	
	This nonparametric conditional score estimation problem that we seek to address has received less attention than the simpler problem of an unconditional score estimation on a single univariate random variable; the latter may equivalently be expressed as the problem of estimating the ratio of the derivative of a univariate density and the density itself. In Section~\ref{sect:scorelsfamilies}, we propose a location--scale model that then reduces our original problem to the latter, and  in \ref{sec:location} by strengthening our modelling assumption to a location-only model, we weaken requirements on the tail behaviour of the errors. In Section~\ref{sec:local} we discuss how this simplifying assumption may be relaxed by only considering using it locally.
	
	Note that 
	\begin{align*}
		\rho_{P,j}(x,z)&=\nabla_j \log p_P(x\given  z)\\
		&= \nabla_j \log \{p_P(x_j \given  x_{-j}, z) \; p_P(x_{-j}\given  z)\}\\
		&= \nabla_j \log p_P(x_j \given  x_{-j}, z).
	\end{align*}
	Therefore each component of $\rho_P$ may be estimated separately using the conditional distribution of $X_j$ given $(X_{-j},Z)$. This means that we can consider each variable separately, so for the rest of this section we assume that $d=1$ without loss of generality. 
	
	Before we discuss location--scale families, we first present a theorem on the sub-Gaussianity of Lipschitz  score functions that is key to the results to follow and may be of independent interest.
	An interesting property of score functions is that their tail behaviour is ``nicer'' for heavy-tailed random variables. If a distribution has Gaussian tails, its score has linear tails. If a distribution has exponential tails, the score function has constant tails. If a distribution has polynomial (power law) tails, the score function tends to zero. This trade-off has a useful consequence: $\rho_P(X,Z)$ can be sub-Gaussian even when $X\given  Z$ is not. This is shown in the following result, which is proved using repeated integration by parts. We state it here for a univariate (unconditional) score. However we note that when the conditional distribution of $X$ given $Z$ satisfies the conditions of Theorem \ref{thm:scoresubg}, the same conclusions hold conditionally on $Z$.
	\begin{theorem}
		\label{thm:scoresubg}
		Let $X$ be a univariate random variable with density $p$ twice differentiable on $\R$ and score function $\rho$ satisfying $\sup_{x\in\R} |\rho'(x)| \leq C<\infty$. Then for all positive integers $k$, \begin{equation*}
			\E\big[\rho^{2k}(X)\big] \leq C^k (2k-1)!!,
		\end{equation*}
		where $m!!$ denotes the double factorial of $m$, that is the product of all positive integers up to $m$ that have the same parity as $m$. Furthermore,  the random variable $\rho(X)$ is sub-Gaussian with parameter $\sqrt{2C}$. If additionally $X$ is symmetrically distributed, then the sub-Gaussian parameter may be reduced to $\sqrt{C}$.
	\end{theorem}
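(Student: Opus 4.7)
The plan is to first establish the moment bound $\E[\rho^{2k}(X)] \leq C^k (2k-1)!!$ by induction on $k$ using repeated integration by parts, and then derive sub-Gaussianity from this bound via a Taylor expansion of the moment generating function. The base case $k=0$ is trivial with the convention $(-1)!!=1$, and the key inductive observation is that $p'(x) = \rho(x) p(x)$, so
\begin{equation*}
\E[\rho^{2k}(X)] = \int_{-\infty}^{\infty} \rho^{2k-1}(x) p'(x)\, dx.
\end{equation*}
Integration by parts over a bounded interval $[a_n,b_n]$ yields
\begin{equation*}
\int_{a_n}^{b_n} \rho^{2k-1}(x) p'(x)\, dx = \bigl[\rho^{2k-1}(x) p(x)\bigr]_{a_n}^{b_n} - (2k-1)\int_{a_n}^{b_n} \rho^{2k-2}(x)\rho'(x) p(x)\, dx.
\end{equation*}
Because the right-hand integral stays bounded as $a_n\to -\infty,\ b_n\to \infty$ (using $|\rho'|\leq C$ and the inductive hypothesis $\E[\rho^{2k-2}(X)]\leq C^{k-1}(2k-3)!!$), the boundary contributions must vanish along some sequence, in the spirit of Proposition~\ref{prop:intbyparts}. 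Combining this with $|\rho'|\leq C$ gives $\E[\rho^{2k}(X)]\leq (2k-1)C\,\E[\rho^{2k-2}(X)]$, and iterating yields the claimed moment bound.

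For sub-Gaussianity I would expand the MGF of $\rho(X)$ and insert the moment bound. Using the identity $(2k)! = 2^k k!(2k-1)!!$, the even-order terms of the series sum to
\begin{equation*}
\sum_{k=0}^\infty \frac{\lambda^{2k} \E[\rho^{2k}(X)]}{(2k)!} \leq \sum_{k=0}^\infty \frac{\lambda^{2k} C^k (2k-1)!!}{(2k)!} = \sum_{k=0}^\infty \frac{(\lambda^2 C/2)^k}{k!} = e^{\lambda^2 C/2}.
\end{equation*}
In the symmetric case, symmetry of $p$ forces $\rho$ to be odd, so $\rho(X)$ is symmetric and all odd moments vanish. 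Hence $\E[e^{\lambda\rho(X)}] \leq e^{\lambda^2 C/2}$, delivering the claimed parameter $\sqrt{C}$.

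For the general case, the odd-order terms must be controlled. I would first show $\E[\rho(X)] = \int p'(x)\,dx = 0$ via the same truncation argument as above, and then bound $|\E[\rho^{2k+1}(X)]|$ by Cauchy--Schwarz against the even-moment bound. Feeding the resulting estimate back into the MGF expansion and combining terms (using in particular that $\E[\rho(X)]=0$ removes the linear contribution, so higher-order odd terms can be absorbed into the Gaussian-shaped bound with a doubling of the variance parameter) yields $\E[e^{\lambda\rho(X)}]\leq e^{\lambda^2 C}$, i.e.\ sub-Gaussian parameter $\sqrt{2C}$.

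The main obstacle I anticipate is the technical justification of the vanishing boundary terms in the integration-by-parts step, as one must verify along a common sequence $(a_n,b_n)$ that $\rho^{2k-1}(x)p(x)$ tends to zero: the useful observation here is that if both $\int \rho^{2k-2}\rho'\,p$ and $\int \rho^{2k}\,p$ are integrable (the former by induction, the latter by a preliminary truncation giving upper bounds before taking limits), then $x\mapsto \rho^{2k-1}(x)p(x)$ is of bounded variation at infinity and must decay to zero along some subsequence because $p$ itself is integrable. A secondary obstacle lies in the general-case sub-Gaussian argument: a naive Cauchy--Schwarz bound on odd moments introduces an unwanted linear-in-$|\lambda|$ factor that must be absorbed into the Gaussian exponent, which requires using the mean-zero property of $\rho(X)$ to kill the leading odd contribution and reduce the error to $O(\lambda^3)$ so that the factor-of-two loss in the variance suffices.
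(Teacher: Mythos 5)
Your overall strategy coincides with the paper's: the moment bound is proved by induction via the identity $\big(\rho^{2k-1}p\big)' = \rho^{2k}p + (2k-1)\rho'\rho^{2k-2}p$, integration over $[a_n,b_n]$, and the uniform bound $|\rho'|\le C$ applied to the induction term. For the passage from moments to sub-Gaussianity the paper simply invokes \citet[Thm.~2.6]{WainwrightHighDim}; your direct MGF expansion (odd moments vanish in the symmetric case giving $\sum_k (\lambda^2C/2)^k/k! = e^{\lambda^2C/2}$, and a mean-zero-plus-absorption argument doubling the variance in general) is essentially a reproof of the relevant direction of that theorem, so nothing is lost there.

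The one place your sketch is genuinely shakier than the paper is the vanishing of the boundary terms. Your ``bounded variation at infinity'' argument presupposes that $\int \rho^{2k}p$ is finite in order to conclude that $(\rho^{2k-1}p)'$ is integrable --- but $\E[\rho^{2k}(X)]<\infty$ is exactly what the integration by parts is meant to establish, so as stated the argument is circular; the ``preliminary truncation'' you gesture at does not obviously break the circle, because a uniform bound on $\int_a^b\rho^{2k}p$ over finite intervals itself requires controlling $[\rho^{2k-1}p]_a^b$. The paper resolves this differently (Lemmas~\ref{lem:boundaryconvergent}, \ref{lem:pconvergent} and \ref{lem:scoreboundaryconvergent}): by continuity, either $\rho^{2k-1}p$ tends to zero along some sequences $a_n\to-\infty$, $b_n\to\infty$, or it is bounded away from zero with constant sign on a half-line; the latter forces $\int_B^\infty \rho^{2k-1}p=\pm\infty$, which is then contradicted by evaluating the integration-by-parts identity one power down along points $b_n$ where $\rho^{2k-2}(b_n)p(b_n)\le 1$ (such points exist since $\E[\rho^{2k-2}(X)]<\infty$ by induction). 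Note also that only one-sided bounds are needed: $\rho^{2k}p\ge 0$, so monotone convergence handles the left-hand side once the boundary terms vanish. With that lemma substituted for your boundary argument, your proof goes through.
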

	A Lipschitz assumption on the score is for example standard in the literature on sampling mentioned above. We also remark that given a distribution with a Lipschitz score, a family of distributions with Lipschitz score functions can  be generated via exponential tilting. Indeed, if a density $p_0$ has a Lipschitz score function, for any measurable function $s:\R \to \R$ with bounded second derivative, the family of densities
	\[
	p_{\vartheta} (x) := \frac{e^{s(x)\vartheta}p_0(x)}{\int_y e^{s(y)\vartheta}p_0(y) \, dy},
	\]
	where $\vartheta \in \R$ is such that the denominator above is finite, has a Lipschitz score.
	Nevertheless, as discussed following Theorem~\ref{thm:hrate}, densities with fast tail decay may fail to have Lipschitz scores, and in particular densities with bounded support will necessarily have scores that blow up approaching the boundary points.
	
	One straightforward implication of such sub-Gaussianity is that the moments of the score are bounded. More importantly however, this shows for example that the expectation of the exponential of the score is finite.
	As the following result shows, this quantity is particularly useful for bounding the expectation of a ratio of a density and a version shifted by a given amount.
	\begin{lemma}
		\label{lem:densityratiobound}
		If $p$ is a twice differentiable density on $\R$ with score $\rho$ defined everywhere such that $ \sup_{x\in\R} |\rho'(x)| \leq C$, then for any $x,u\in\R$ such that $p(x)>0$, \begin{equation*}
			\exp\bigg\{u\rho(x) - \frac{u^2}{2} C\bigg\} \leq \frac{p(x+u)}{p(x)} \leq \exp\bigg\{u\rho(x) + \frac{u^2}{2} C\bigg\}.
		\end{equation*}
	\end{lemma}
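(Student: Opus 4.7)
The plan is to take logarithms and reduce the claim to a single application of Taylor's theorem. Write $\phi := \log p$. First I would observe that since the score $\rho = p'/p$ is assumed to be defined (and finite) everywhere on $\R$, the density $p$ must in fact be strictly positive on all of $\R$; in particular the extra hypothesis $p(x)>0$ in the statement is automatic, and, more importantly, $\phi$ is well-defined on the entire line. Because $p$ is twice differentiable and positive, $\phi$ is twice differentiable with $\phi'(t) = \rho(t)$ and $\phi''(t) = \rho'(t)$, so the assumed uniform bound $|\rho'| \leq C$ translates directly into $|\phi''| \leq C$ on $\R$.

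Next I would apply Taylor's theorem with Lagrange remainder to $\phi$ on the interval with endpoints $x$ and $x+u$: there exists $\xi$ lying between $x$ and $x+u$ such that
\begin{equation*}
\phi(x+u) \;=\; \phi(x) + u\,\phi'(x) + \tfrac{u^2}{2}\,\phi''(\xi)
\;=\; \log p(x) + u\,\rho(x) + \tfrac{u^2}{2}\,\rho'(\xi).
\end{equation*}
Using $|\rho'(\xi)| \leq C$ gives the two-sided estimate
\begin{equation*}
u\,\rho(x) - \tfrac{u^2}{2}\,C \;\leq\; \log p(x+u) - \log p(x) \;\leq\; u\,\rho(x) + \tfrac{u^2}{2}\,C,
\end{equation*}
and exponentiating each of the three terms yields the lemma.

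The argument is direct enough that there is no real obstacle; the only conceptual subtlety is making sure $\phi = \log p$ is twice differentiable not just at $x$ but along the whole segment connecting $x$ to $x+u$, which is handled by the global positivity of $p$ noted above. If one prefers to avoid invoking a mean value point, the integral form of the remainder, $\phi(x+u) - \phi(x) - u\rho(x) = \int_0^u (u-s)\,\rho'(x+s)\,ds$, gives the same bound after using $|\rho'| \leq C$ and evaluating $\int_0^u (u-s)\,ds = u^2/2$.
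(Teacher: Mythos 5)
Your proof is correct and is essentially the paper's own argument: a second-order Taylor expansion of $\log p$ with Lagrange remainder, the bound $|\rho'|\leq C$ on the remainder term, and exponentiation. The additional remark that $\rho$ being defined everywhere forces $p>0$ on all of $\R$ is a nice clarification the paper leaves implicit, but it does not change the route.
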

	\begin{proof}
		The inequality is proved via a Taylor expansion on $\log p(x+u)$ around $u=0$. Indeed,
		\begin{equation*}
			\log p(x+u) = \log p(x) + u \rho(x) + \frac{u^2}{2} \rho'(\eta)
		\end{equation*}
		for some $\eta\in[x-|u|,x+|u|]$. Rearranging and taking absolute values gives the bound
		\begin{equation*}
			\bigg| \log \bigg(\frac{p(x+u)}{p(x)}\bigg) 
			- u \rho(x) \bigg| \leq    
			\frac{u^2}{2} C.
		\end{equation*}
		Since the exponential function is increasing, this suffices to prove the claim.
	\end{proof}
	This indicates that while score estimation may appear to be highly delicate given that it involves the derivative and inverse of a density, it does in fact have a certain intrinsic robustness.
	Estimation of the score based on data corrupted by a perturbation, which in our case here would be our estimates of the errors in the location--scale model, can still yield estimates whose quality is somewhat comparable with those obtained using the original uncorrupted data, as Theorems~\ref{thm:knownscoreestimation}, \ref{thm:subgscoreestimation} and \ref{thm:homscoreestimation} to follow formalise.
	
	Another useful consequence of Lemma~\ref{lem:densityratiobound} is that when random variable $X$ has a Lipschitz score, for an arbitrary non-negative measurable function $h$ satisfying $\E [h(X)^{1+\delta}] < \infty$ for some $\delta > 0 $, we may obtain a bound on $\E [h(X + u)]$ as follows:
	\begin{align*}
		\E [h(X + u)]  &= \int_{-\infty}^\infty h(x+u) p(x) \, dx  \\
		&= \int_{-\infty}^\infty h(x) \frac{p(x-u)}{p(x)} p(x) \, dx \\
		&\leq \E \big[ h(X) \exp\big(-u \rho(X) + u^2 C / 2 \big) \big] \\
		&\leq \left(\E [h(X)^{1+\delta}]\right)^{\frac{1}{1+\delta}} \, \exp\big( 3(\delta^{-1} + 1 ) u^2C / 2 \big), 
	\end{align*}
	using H\"older's inequality and then Theorem~\ref{thm:scoresubg} and a bound on the moment generating function for sub-Gaussian random variables. This sort of bound turns out to be useful in the proof of Theorem~\ref{thm:hrate} when we wish to control the estimation error of the convolved regression function estimate $\hat{f}^{(n,1)}$ without appealing to any smoothness in $\tilde{f}^{(n,1)}$.

	\subsection{Estimation for location--scale families}
	\label{sect:scorelsfamilies}
	In this section, we consider  a location--scale model for $X$ on $Z$.
	Our goal is to reduce the conditions of Theorem~\ref{thm:clt} to conditions based on regression, scale estimation and univariate score estimation alone. The former two tasks are more familiar to analysts and amenable to the full variety of flexible regression methods that are available.

	We assume that we have access to a dataset $D^{(n)} := \{ (x_i,z_i) \;:\; i=1,\ldots,n\}$ of $n$ i.i.d.\ observations, with which to estimate the score.
	Let us write $\mathcal{P}_{\text{ls}}$ for the class of location--scale models of the form \begin{equation}
		X = m_P(Z) + \sigma_P(Z)\varepsilon_P, \label{eqn:locscale}
	\end{equation}
	where $\varepsilon_P$ is mean-zero with unit variance and independent of $Z$, both $m_P(Z)$ and $\sigma_P(Z)$ are square-integrable, and $\varepsilon_P$ has a differentiable density on $\R$. This enables us to reduce the problem of estimating the score of $X\given  Z$ to that of estimating the score function of the univariate variable $\varepsilon_P$ alone. Note that $\sigma_P^2(Z) = \Var_P(X \given Z)$.

	We denote the density and score function (under $P$) of the residual $\varepsilon_P$ by $p_{e}$ and $\rho_{e}$ respectively. Using these we may write the conditional density and score function as \begin{align*}
		p_P(x\given  z) &= p_{e}\bigg(\frac{x-m_P(z)}{\sigma_P(z)}\bigg)\\
		\rho_P(x, z) &= \frac{1}{\sigma_P(z)}\; \rho_{e}\bigg(\frac{x-m_P(z)}{\sigma_P(z)}\bigg).
	\end{align*}
	Recall that Theorem~\ref{thm:hrate} relies on a Lipschitz assumption for $\rho_P(x, z)$. Under the location--scale assumption above, this property will be satisfied provided $\inf_{z} \sigma_P(z)$ is bounded away from zero and $\rho_e$ has bounded derivative.
	
	Given conditional mean and (non-negative) scale estimates $\hat m^{(n)}$ and $\hat\sigma^{(n)}$,
	trained on $D^{(n)}$,
	define estimated residuals
	\begin{align*}
		\hat\varepsilon^{(n)} &:= \frac{X-\hat m^{(n)}(Z)}{\hat\sigma^{(n)}(Z)}\\
		&= \frac{\sigma_P(Z)\;\varepsilon_P+m_P(Z)-\hat m^{(n)}(Z)}{\hat\sigma^{(n)}(Z)}.
	\end{align*}
We will use the estimated residuals $\hat\varepsilon^{(n)}$ to construct a (univariate) residual score estimator $\hat \rho^{(n)}_{\hat\varepsilon}$, also
trained on $D^{(n)}$, which we combine into  a final estimate of $\rho_P(x,z)$:
\begin{equation} \label{eq:rho_est}
	\hat\rho^{(n)}(x,z) = \frac{1}{\hat \sigma^{(n)}(z)}\; \hat \rho_{\hat\varepsilon}^{(n)}\bigg(\frac{x-\hat m^{(n)}(z)}{\hat\sigma^{(n)}(z)}\bigg).
\end{equation}
While there are a variety of univariate score estimators available (see Section~\ref{sec:intro}), these will naturally have been studied in settings when supplied with  i.i.d.\ data from the distribution whose associated score we wish to estimate. Our setting here is rather  different in that we  wish to apply such a technique to an estimated set of residuals. In order to study how existing performance guarantees for score estimation may be translated to our setting, we let $p_{\hat{\varepsilon}}$ be the density of the distribution of $\hat\varepsilon^{(n)}$, conditional on $D^{(n)}$, and write $\rho_{\hat{\varepsilon}}(\epsilon) := p'_{\hat{\varepsilon}}(\epsilon) / p_{\hat{\varepsilon}}(\epsilon)$ for the associated score function. With this, let us define the following quantities,
	which are random over the sampling of $D^{(n)}\sim P$.
\begin{align*}
	A_\rho^{(n)} &:= \E_P\Big[\big\{\rho_{P}(X,Z)-\hat \rho^{(n)}(X,Z)\big\}^2 \; \Big| \; D^{(n)}\Big],\\
	A_m^{(n)} &:= \E_P\Bigg[\bigg\{\frac{m_P(Z) - \hat m^{(n)}(Z)}{\sigma_P(Z)}\bigg\}^2 \;\Bigg|\; D^{(n)}\Bigg],\\
	A_\sigma^{(n)} &:= \E_P\Bigg[\bigg\{\frac{\sigma_P(Z) - \hat \sigma^{(n)}(Z)}{\sigma_P(Z)}\bigg\}^2 \;\Bigg|\; D^{(n)}\Bigg],\\
	A_{\hat\varepsilon}^{(n)} &:= \E_P\Big[\big\{\rho_{\hat\varepsilon}(\hat\varepsilon^{(n)}) - \hat\rho_{\hat\varepsilon}^{(n)}\big(\hat\varepsilon^{(n)}\big)\big\}^2\;\Big|\; D^{(n)}\Big].
\end{align*}
The first quantity $A_\rho^{(n)}$ is what we ultimately seek to bound to satisfy the requirements of Theorem~\ref{thm:clt}. The final quantity $A_{\hat\varepsilon}^{(n)}$ is the sort of mean squared error we might expect to have guarantees on: note that this is evaluated with respect to the distribution of $\hat\varepsilon^{(n)}$, from which we can access samples. Indeed, \citet{wibisono2024optimal} provide a score estimation procedure that when $\hat{\varepsilon}^{(n)}$ is sub-Gaussian with parameter bounded by $v$ say, and its density is $(L, s)$-H\"older continuous,  $A_{\hat\varepsilon}^{(n)} = O_P(n^{-\frac{2s}{3+2s}} \text{polylog}(n))$.

We will assume throughout that $\sigma_P(z)$ is bounded away from zero for all $z \in \mathcal{Z}$, so $	A_m^{(n)}$ and $A_\sigma^{(n)}$ may be bounded above by multiples of their counterparts unscaled by $\sigma_P(Z)$. The former versions however allow for the estimation of $m_P$ and $\sigma_P$ to be poorer in regions where $\sigma_P$ is large. We also introduce the quantities
	\begin{equation*}
	u^{(n)}_\sigma(z) := \frac{\hat \sigma^{(n)}(z) - \sigma_P(z)}{\sigma_P(z)};\quad 
	u^{(n)}_m(z) := \frac{\hat m^{(n)}(z) - m_P(z)}{\sigma_P(z)},
\end{equation*}
which will feature in our conditions in the results to follow. Before considering the case where $\rho_{e}$ may be estimated nonparametrically, we first consider the case where the distribution of $\varepsilon_P$ is known.


	\subsubsection{Known family}
	
	The simplest setting is where the distribution of $\varepsilon_P$, and hence the function $\rho_{e}$, is known.
	
	\begin{theorem}
		\label{thm:knownscoreestimation}
		Let $\mathcal{P}\subset\mathcal{P}_{\text{ls}}$ be such that all of the following hold. Under each $P\in\mathcal{P}$, the location--scale model \eqref{eqn:locscale} holds with $\varepsilon_P \overset{d}{=} \varepsilon$ fixed such that the density of $\varepsilon$ is twice differentiable 
	and the corresponding score satisfies $\sup_{\epsilon\in\R} |\rho_{e}'(\epsilon)| < \infty$ and $\E[\rho_{e}^2(\varepsilon)]<\infty$.
		Assume $\inf_{P\in\mathcal{P}} \inf_{z\in\mathcal Z} \sigma_P(z)>0$
		and the ratio $\sigma_P / \hat\sigma^{(n)}$ and the regression error $u_m^{(n)}$ are bounded with high probability:
		\begin{equation} \label{eq:sigma_m_cond}
			\sup_{z\in\mathcal Z}\frac{\sigma_P(z)}{\hat\sigma^{(n)}(z)} = O_{\mathcal{P}}(1)  \quad \text{and} \quad \sup_{z\in\mathcal Z}\big|u_m^{(n)}(z)\big| = O_{\mathcal{P}}(1).
		\end{equation} 
		Set 
		\begin{equation*}
			\hat\rho^{(n)}(x,z) := \frac{1}{\hat \sigma^{(n)}(z)}\; \rho_{e}\bigg(\frac{x-\hat m^{(n)}(z)}{\hat\sigma^{(n)}(z)}\bigg).
		\end{equation*}
		Then \begin{equation*}
			A_\rho^{(n)} = O_{\mathcal{P}}\big(A_m^{(n)} + A_\sigma^{(n)}\big).
		\end{equation*}
	\end{theorem}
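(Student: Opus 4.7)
The plan is to decompose $\rho_P(X,Z) - \hat\rho^{(n)}(X,Z)$ into a ``scale-only'' term and a ``residual'' term, and then bound each conditional expectation separately using the Lipschitz property of $\rho_e$ and the independence of $\varepsilon_P$ from $(Z, D^{(n)})$. Concretely, writing $\varepsilon_P = (X - m_P(Z))/\sigma_P(Z)$ and $\hat\varepsilon^{(n)} = (X - \hat m^{(n)}(Z))/\hat\sigma^{(n)}(Z)$, I would start from the algebraic identity
\begin{equation*}
\rho_P(X,Z) - \hat\rho^{(n)}(X,Z) \;=\; \rho_{e}(\varepsilon_P)\Bigl(\tfrac{1}{\sigma_P(Z)} - \tfrac{1}{\hat\sigma^{(n)}(Z)}\Bigr) + \tfrac{1}{\hat\sigma^{(n)}(Z)}\bigl\{\rho_{e}(\varepsilon_P) - \rho_{e}(\hat\varepsilon^{(n)})\bigr\},
\end{equation*}
and apply $(a+b)^2 \leq 2a^2 + 2b^2$.

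For the first squared piece, note that $\tfrac{1}{\sigma_P} - \tfrac{1}{\hat\sigma^{(n)}} = u_\sigma^{(n)}(Z)/\hat\sigma^{(n)}(Z)$. The assumption $\sup_z \sigma_P(z)/\hat\sigma^{(n)}(z) = O_{\mathcal{P}}(1)$ together with $\inf_{P,z}\sigma_P(z) > 0$ implies $\sup_z 1/\hat\sigma^{(n)}(z) = O_{\mathcal{P}}(1)$. Since $\varepsilon_P$ is independent of $Z$ (and of $D^{(n)}$), conditioning on $D^{(n)}$ gives
\begin{equation*}
\E_P\bigl[\rho_{e}^2(\varepsilon_P)\,u_\sigma^{(n)}(Z)^2/\hat\sigma^{(n)}(Z)^2 \,\big|\, D^{(n)}\bigr] \leq \E[\rho_{e}^2(\varepsilon)] \cdot \sup_{z}\tfrac{1}{\hat\sigma^{(n)}(z)^2}\cdot A_\sigma^{(n)} = O_{\mathcal{P}}\bigl(A_\sigma^{(n)}\bigr),
\end{equation*}
using the hypothesis $\E[\rho_{e}^2(\varepsilon)] < \infty$.

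For the second piece, I would first expand
\begin{equation*}
\varepsilon_P - \hat\varepsilon^{(n)} = \tfrac{\sigma_P(Z)}{\hat\sigma^{(n)}(Z)}\bigl(u_m^{(n)}(Z) + \varepsilon_P\, u_\sigma^{(n)}(Z)\bigr) \cdot (-1),
\end{equation*}
so by Lipschitzness of $\rho_{e}$ with constant $C := \sup|\rho_e'|$,
\begin{equation*}
\tfrac{1}{\hat\sigma^{(n)}(Z)^2}\bigl(\rho_{e}(\varepsilon_P) - \rho_{e}(\hat\varepsilon^{(n)})\bigr)^2 \leq 2 C^2 \tfrac{\sigma_P(Z)^2}{\hat\sigma^{(n)}(Z)^4}\bigl(u_m^{(n)}(Z)^2 + \varepsilon_P^2\, u_\sigma^{(n)}(Z)^2\bigr).
\end{equation*}
The factor $\sigma_P^2/\hat\sigma^4 = (\sigma_P/\hat\sigma)^2 \cdot 1/\hat\sigma^2$ is uniformly $O_{\mathcal{P}}(1)$ under the given hypotheses. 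Taking conditional expectation, again using independence of $\varepsilon_P$ from $(Z, D^{(n)})$ and the fact that $\E[\varepsilon_P^2] = 1$, yields an $O_{\mathcal{P}}(A_m^{(n)} + A_\sigma^{(n)})$ bound.

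Combining the two bounds gives $A_\rho^{(n)} = O_{\mathcal{P}}(A_m^{(n)} + A_\sigma^{(n)})$, as required. The only modest subtlety I anticipate is the bookkeeping around the $\sigma_P/\hat\sigma^{(n)}$ ratios: one must consistently rewrite $1/\hat\sigma^{(n)}$ as $(1/\sigma_P)(\sigma_P/\hat\sigma^{(n)})$ so that the ``$O_{\mathcal{P}}(1)$'' and ``$\sigma_P^{-1}$'' pieces line up correctly, after which the independence argument makes all cross terms involving $\varepsilon_P$ collapse to constants and leaves exactly the two target mean-squared error quantities $A_m^{(n)}$ and $A_\sigma^{(n)}$.
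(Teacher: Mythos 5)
Your proposal is correct, and it reaches the conclusion by a cleaner decomposition than the paper's. The paper introduces two intermediate score estimators --- $\bar\rho_P^{(n)}$ (true scale, estimated mean) and $\check\rho_P^{(n)}$ (true outer scale, estimated inner scale and mean) --- and controls three pairwise differences, each time working explicitly on a high-probability event where $\sup_z \sigma_P/\hat\sigma^{(n)}$ and $\sup_z |u_m^{(n)}|$ are bounded by a constant $M$. You instead split directly into the prefactor error $\rho_e(\varepsilon_P)(\sigma_P^{-1}-\hat\sigma^{(n),-1})$ and the argument error $\hat\sigma^{(n),-1}\{\rho_e(\varepsilon_P)-\rho_e(\hat\varepsilon^{(n)})\}$, then use exactly the same toolbox (Lipschitzness of $\rho_e$, the identity $\varepsilon_P-\hat\varepsilon^{(n)}=\tfrac{\sigma_P}{\hat\sigma^{(n)}}(\varepsilon_P u_\sigma^{(n)}+u_m^{(n)})$ up to an immaterial sign, independence of $\varepsilon_P$ from $(Z,D^{(n)})$, and $\E[\varepsilon_P^2]=1$) to land on $A_m^{(n)}+A_\sigma^{(n)}$. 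Your route buys two things: it avoids the auxiliary estimators entirely, and it never actually invokes the hypothesis $\sup_z|u_m^{(n)}(z)|=O_{\mathcal{P}}(1)$ --- the $u_m^{(n)}$ contribution enters only through $\E[u_m^{(n),2}(Z)\,|\,D^{(n)}]=A_m^{(n)}$ after the $D^{(n)}$-measurable supremum factors are pulled out --- so it in fact proves the statement under slightly weaker conditions. The one point worth making explicit in a write-up is the step $\E[g(Z)h(Z)\,|\,D^{(n)}]\le \sup_z g(z)\,\E[h(Z)\,|\,D^{(n)}]$ used to extract the $O_{\mathcal{P}}(1)$ ratio factors, which is the ``bookkeeping'' you flag at the end.
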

We see that in this case, the error $A_\rho^{(n)}$ we seek to control is bounded by mean squared errors in estimating $m_P$ and $\sigma_P$.

	\subsubsection{Sub-Gaussian family}
	\label{sect:subgscoreestimation}
	
	We now consider the case where $\varepsilon_P$ follows some unknown sub-Gaussian distribution and $\rho_{e}$ is Lipschitz. This for example encompasses the case where $\varepsilon_P$ has a Gaussian mixture distribution.

	\begin{theorem}
		\label{thm:subgscoreestimation}
		Let $\mathcal{P}\subset\mathcal{P}_{\text{ls}}$ and constants $C_\varepsilon, C_\rho, >0$ be such that all of the following hold. Under each $P\in\mathcal{P}$, the location--scale model \eqref{eqn:locscale} holds where $\varepsilon_P$ is sub-Gaussian with parameter $C_\varepsilon$. Furthermore the density $p_{e}$ of $\varepsilon_P$ is twice differentiable on $\R$, with $\sup_{\epsilon\in\R} |\rho_{e}'(\epsilon)|\leq C_\rho$
		and $p_{e}'$ and $ p_{e}''$ are both bounded.
		Assume $\inf_{P\in\mathcal{P}} \inf_{z\in\mathcal Z} \sigma_P(z)>0$, \eqref{eq:sigma_m_cond} holds and
		with high probability $D^{(n)}$ is such that the
		scale estimation error $u_\sigma^{(n)}$ is bounded in the sense that
		\begin{equation} \label{eq:u_cond}
			\lim_{n\to\infty} \sup_{P\in\mathcal{P}} \Pr_P\Big(\sup_{z\in\mathcal Z}\big|u_\sigma^{(n)}(z)\big| >  \{18 \sqrt{C_{\rho} }C_{\varepsilon}\}^{-1} \Big) = 0.
		\end{equation}
		Then \begin{equation*}
			A_\rho^{(n)} = O_{\mathcal{P}} \big(A_m^{(n)} + A_\sigma^{(n)} + A_{\hat\varepsilon}^{(n)}\big).
		\end{equation*}
	\end{theorem}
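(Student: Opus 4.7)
The plan is to bootstrap from Theorem~\ref{thm:knownscoreestimation} via a three-piece decomposition. Write $\hat\varepsilon := (X - \hat m^{(n)}(Z))/\hat\sigma^{(n)}(Z)$, let $p_{\hat\varepsilon}$ denote its density conditional on $D^{(n)}$ with score $\rho_{\hat\varepsilon}$, and introduce the intermediate quantities
\[
\tilde\rho(x,z) := \frac{\rho_{e}(\hat\varepsilon)}{\hat\sigma^{(n)}(z)}, \qquad \bar\rho(x,z) := \frac{\rho_{\hat\varepsilon}(\hat\varepsilon)}{\hat\sigma^{(n)}(z)}.
\]
Then $\hat\rho^{(n)} - \rho_P = (\hat\rho^{(n)} - \bar\rho) + (\bar\rho - \tilde\rho) + (\tilde\rho - \rho_P)$, so by $(a+b+c)^2 \leq 3(a^2+b^2+c^2)$, $A_\rho^{(n)}$ is bounded by three corresponding mean-squared errors.

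The outer two pieces are comparatively straightforward. The third is precisely the quantity controlled by Theorem~\ref{thm:knownscoreestimation}; its hypotheses hold since the Lipschitz bound $\sup|\rho_{e}'|\leq C_\rho$ combined with Theorem~\ref{thm:scoresubg} gives $\E[\rho_{e}^2(\varepsilon_P)]\leq C_\rho<\infty$, yielding an $O_\mathcal{P}(A_m^{(n)} + A_\sigma^{(n)})$ bound. For the first piece, the assumptions $\inf_{P,z}\sigma_P(z)>0$ and $\sup_z \sigma_P/\hat\sigma^{(n)} = O_\mathcal{P}(1)$ ensure $\hat\sigma^{(n)}$ is uniformly bounded away from zero with high probability, after which this piece equals $O_\mathcal{P}(A_{\hat\varepsilon}^{(n)})$ directly from the definition of $A_{\hat\varepsilon}^{(n)}$, using that $\hat\varepsilon$ has density $p_{\hat\varepsilon}$ given $D^{(n)}$.

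The middle piece, bounding $\E_P[\{\rho_{\hat\varepsilon}(\hat\varepsilon) - \rho_{e}(\hat\varepsilon)\}^2 \mid D^{(n)}]$ by $O(A_m^{(n)} + A_\sigma^{(n)})$, is the main obstacle. The key is to use the explicit representation $p_{\hat\varepsilon}(e) = \E_Z[(1+u_\sigma^{(n)}(Z))p_{e}(\varphi(e,Z))]$ with $\varphi(e,z) := (1+u_\sigma^{(n)}(z))e + u_m^{(n)}(z)$, justifying differentiation under the integral sign by boundedness of $p_{e}'$ and $p_{e}''$, to write
\[
\rho_{\hat\varepsilon}(e) - \rho_{e}(e) = \frac{\E_Z\big[(1+u_\sigma) p_{e}(\varphi)\{(1+u_\sigma)\rho_{e}(\varphi) - \rho_{e}(e)\}\big]}{p_{\hat\varepsilon}(e)}.
\]
Applying Cauchy--Schwarz to the numerator with respect to the non-negative weight $(1+u_\sigma)p_{e}(\varphi)$ (whose integral over $Z$ is exactly $p_{\hat\varepsilon}(e)$) cancels one power of $p_{\hat\varepsilon}(e)$, giving the pointwise bound
\[
\{\rho_{\hat\varepsilon}(e) - \rho_{e}(e)\}^2 \, p_{\hat\varepsilon}(e) \leq \E_Z\big[(1+u_\sigma)p_{e}(\varphi)\{(1+u_\sigma)\rho_{e}(\varphi) - \rho_{e}(e)\}^2\big].
\]
Integrating in $e$, swapping the order of integration by Fubini, and substituting $\tilde\epsilon = \varphi(e,Z)$ inside the $\E_Z$ (valid because $1+u_\sigma > 0$ with high probability by \eqref{eq:u_cond}) reduces the target to $\E_Z \E_{\varepsilon\sim p_{e}}[\{(1+u_\sigma)\rho_{e}(\varepsilon) - \rho_{e}(\xi)\}^2]$ with $\xi := (\varepsilon - u_m)/(1+u_\sigma)$. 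Splitting this as $[\rho_{e}(\varepsilon) - \rho_{e}(\xi)] + u_\sigma\rho_{e}(\varepsilon)$ and applying the Lipschitz property $|\rho_{e}'|\leq C_\rho$, the mean-zero unit-variance structure of $\varepsilon_P$, and the moment bound $\E[\rho_{e}^2(\varepsilon_P)]\leq C_\rho$ from Theorem~\ref{thm:scoresubg}, yields a pointwise bound of order $u_\sigma^2(Z) + u_m^2(Z)$, whose $\E_Z$ is $O(A_m^{(n)}+A_\sigma^{(n)})$.

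The genuine difficulty is the middle piece: the score of the mixture density $p_{\hat\varepsilon}$ depends highly nonlinearly on $u_m, u_\sigma$, and a direct Taylor expansion produces ratios that blow up where $p_{e}$ is small, which the Cauchy--Schwarz trick circumvents. I expect the explicit threshold $\{18\sqrt{C_\rho}C_\varepsilon\}^{-1}$ together with the sub-Gaussianity of $\varepsilon_P$ in \eqref{eq:u_cond} to be needed both to keep $(1+u_\sigma)^{-2}$ uniformly controlled and, via Lemma~\ref{lem:densityratiobound}, to validate the integrability required for the Fubini swap and for bounding ratios $p_{e}(\varphi)/p_{e}$ arising from the change of variables.
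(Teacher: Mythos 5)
Your proposal is correct, but it reaches the conclusion by a genuinely different route from the paper, and the difference is concentrated exactly where the difficulty is. The paper decomposes $\rho_P-\hat\rho^{(n)}$ into four pieces all anchored at the \emph{true} residual $\varepsilon_P$: a scale-mismatch term $u_\sigma^{(n)}\rho_{e}(\varepsilon_P)$, the function difference $\rho_{e}(\varepsilon_P)-\rho_{\hat\varepsilon}(\varepsilon_P)$ (Lemma~\ref{lem:estimatedresidualscore}), the argument shift $\rho_{\hat\varepsilon}(\varepsilon_P)-\rho_{\hat\varepsilon}(\hat\varepsilon^{(n)})$ (Lemma~\ref{lem:estimatedresidual}), and the estimation error at $\hat\varepsilon^{(n)}$. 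You instead anchor everything at $\hat\varepsilon^{(n)}$, outsource the mean/scale plug-in error to Theorem~\ref{thm:knownscoreestimation} (legitimate: its proof uses only the uniform constants $C_\rho$ and $\E[\rho_{e}^2(\varepsilon_P)]\leq C_\rho$, so it tolerates $P$-dependent $\varepsilon_P$), and are left with a single new term $\int\{\rho_{\hat\varepsilon}-\rho_{e}\}^2 p_{\hat\varepsilon}$. Your weighted Cauchy--Schwarz in the mixture representation --- choosing the weight $(1+u_\sigma)p_{e}(\varphi)$ whose $Z$-average is exactly $p_{\hat\varepsilon}(e)$, so that the denominator cancels before any pointwise ratio is taken --- is the substantive innovation. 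The paper instead bounds the ratio $(\E_Z[p_{e}^2(\varphi)])^{1/2}/\E_Z[p_{e}(\varphi)]$ pointwise via Lemma~\ref{lem:densityratiobound}, producing factors $\exp(c|\rho_{e}(\epsilon)|+c\epsilon^2)$ that must then be integrated using sub-Gaussianity of both $\rho_{e}(\varepsilon_P)$ and $\varepsilon_P$ together with Lemma~\ref{lem:subgsquaremgf}; the threshold $\{18\sqrt{C_\rho}C_\varepsilon\}^{-1}$ exists precisely to keep those exponents admissible. Your route needs none of this: after Tonelli (non-negative integrand, so no integrability to verify) and the affine substitution, only $\E[\varepsilon_P^2]=1$, $\E[\rho_{e}^2(\varepsilon_P)]\leq C_\rho$ and $1+u_\sigma^{(n)}$ bounded away from zero are used. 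Your closing paragraph therefore overstates where \eqref{eq:u_cond} and sub-Gaussianity enter your own argument --- they are consumed only in keeping $1+u_\sigma^{(n)}\geq 17/18$ (any fixed bound below $1$ would do), and the sub-Gaussianity of $\varepsilon_P$ is not needed for the middle piece at all, so your proof in fact establishes the result under weaker hypotheses. One small point in your favour: you carry the Jacobian $(1+u_\sigma)$ in $p_{\hat\varepsilon}$ correctly, whereas the paper's displayed change-of-variables omits it.
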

We see that in addition to requiring that $A_m^{(n)}$ and $A_\sigma^{(n)}$ are well-controlled, the mean squared error $ A_{\hat\varepsilon}^{(n)}$ associated with the univariate  score estimation problem also features in the upper bound.
We also have a  condition \eqref{eq:u_cond} on $\sup_{z}\big|u_\sigma^{(n)}(z)\big|$ that is stronger than $\sup_{z}\big|u_\sigma^{(n)}(z)\big| = O_{\mathcal{P}}(1)$, but weaker than $\sup_{z}\big|u_\sigma^{(n)}(z) \big| = o_{\mathcal{P}}(1)$.

	\subsection{Estimation for location families}
	\label{sec:location}
	Theorem \ref{thm:subgscoreestimation} assumes that $\varepsilon_P$ is sub-Gaussian, which we use to deal with the scale estimation error $u_\sigma^{(n)}$. If $X$ only depends on $Z$ through its location (i.e. $\sigma_P$ is constant) then the same proof approach works even for heavy-tailed $\varepsilon_P$. Consider the location only model
	\begin{equation}
		X = m_P(Z) + \varepsilon_P, \label{eqn:location}
	\end{equation}
	where $\varepsilon_P$ is independent of $Z$, $m_P(Z)$ is square-integrable, and $\varepsilon_P$ has a differentiable density on $\R$. Compared to Section~\ref{sect:subgscoreestimation}, we have assumed $\sigma_P$ does not depend on $Z$, and have relabelled $\sigma_P\varepsilon_P\mapsto \varepsilon_P$. We fix $\hat\sigma^{(n)}(z) \equiv 1$.

	\begin{theorem}
		\label{thm:homscoreestimation}
		Let $\mathcal{P}\subset\mathcal{P}_{\text{ls}}$ and uniform constant $C_\rho>0$ be such that all of the following hold. Under each $P\in\mathcal{P}$, the location model \eqref{eqn:location} holds. Assume furthermore that the density $p_{e}$ of $\varepsilon_P$ is twice differentiable on $\R$, with
		$\sup_{\epsilon\in\R} |\rho_{e}'(\epsilon)|\leq C_\rho$, $p_{e}'$ and $ p_{e}''$ both bounded, and $\sup_{z}\big|u_m^{(n)}(z)\big| = O_{\mathcal{P}}(1)$.
		Then \begin{equation*}
			A_\rho^{(n)} = O_{\mathcal{P}} \big(A_m^{(n)} + A_{\hat\varepsilon}^{(n)}\big).
		\end{equation*}
	\end{theorem}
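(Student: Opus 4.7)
The plan is to decompose the pointwise error
\[
\rho_P(X,Z) - \hat\rho^{(n)}(X,Z) = \rho_{e}(\varepsilon_P) - \hat\rho_{\hat\varepsilon}^{(n)}(\hat\varepsilon^{(n)}),
\]
using that $\hat\sigma^{(n)} \equiv 1$ and hence $\hat\varepsilon^{(n)} = \varepsilon_P - u_m^{(n)}(Z)$, into three pieces by inserting $\pm\,\rho_{e}(\hat\varepsilon^{(n)})$ and $\pm\,\rho_{\hat\varepsilon}(\hat\varepsilon^{(n)})$. Here $\rho_{\hat\varepsilon} := p_{\hat\varepsilon}'/p_{\hat\varepsilon}$ is the score of the conditional density of $\hat\varepsilon^{(n)}$ given $D^{(n)}$. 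Applying $(a+b+c)^2 \leq 3(a^2+b^2+c^2)$ reduces the task to bounding three conditional second moments.

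Two of these are immediate. The first, $\E_P[\{\rho_{e}(\varepsilon_P) - \rho_{e}(\hat\varepsilon^{(n)})\}^2 \mid D^{(n)}]$, is at most $C_\rho^2 A_m^{(n)}$ by the Lipschitz bound on $\rho_{e}$ applied to the shift $u_m^{(n)}(Z)$. The third, $\E_P[\{\rho_{\hat\varepsilon}(\hat\varepsilon^{(n)}) - \hat\rho_{\hat\varepsilon}^{(n)}(\hat\varepsilon^{(n)})\}^2 \mid D^{(n)}]$, equals $A_{\hat\varepsilon}^{(n)}$ by definition. A preliminary step is to check that $p_{\hat\varepsilon}$ is differentiable with $p_{\hat\varepsilon}'(\epsilon) = \E_P[p_{e}'(\epsilon + u_m^{(n)}(Z)) \mid D^{(n)}]$, which follows from dominated convergence using boundedness of $p_{e}'$ and $p_{e}''$ together with $\sup_z|u_m^{(n)}(z)| = O_\mathcal{P}(1)$, so that $\rho_{\hat\varepsilon}$ is well-defined on an event of $\mathcal{P}$-asymptotic probability one.

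The main work lies in bounding the middle piece $\E_P[\{\rho_{e}(\hat\varepsilon^{(n)}) - \rho_{\hat\varepsilon}(\hat\varepsilon^{(n)})\}^2 \mid D^{(n)}]$. Writing $T := u_m^{(n)}(Z)$ and using $p_{e}' = \rho_{e}\,p_{e}$, I express
\[
\rho_{\hat\varepsilon}(\epsilon) = \frac{\E_P[\rho_{e}(\epsilon + T)\,p_{e}(\epsilon + T) \mid D^{(n)}]}{\E_P[p_{e}(\epsilon + T) \mid D^{(n)}]}
\]
as an expectation of $\rho_{e}(\epsilon + T)$ under the probability measure on $T$ obtained by tilting the conditional law of $T$ given $D^{(n)}$ by $p_{e}(\epsilon + T)$. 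Then $\rho_{\hat\varepsilon}(\epsilon) - \rho_{e}(\epsilon)$ is the tilted expectation of $\rho_{e}(\epsilon + T) - \rho_{e}(\epsilon)$, and Jensen combined with $|\rho_{e}(\epsilon + T) - \rho_{e}(\epsilon)| \leq C_\rho|T|$ yields
\[
\{\rho_{\hat\varepsilon}(\epsilon) - \rho_{e}(\epsilon)\}^2\,p_{\hat\varepsilon}(\epsilon) \leq C_\rho^2\,\E_P[T^2\,p_{e}(\epsilon + T) \mid D^{(n)}].
\]
Integrating over $\epsilon$ and swapping integration and expectation by Fubini, the inner integral $\int p_{e}(\epsilon + t)\,d\epsilon = 1$ telescopes the right-hand side to $C_\rho^2\,\E_P[T^2 \mid D^{(n)}] = C_\rho^2 A_m^{(n)}$.

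The main obstacle I anticipate is precisely this last telescoping step: although $\rho_{\hat\varepsilon}$ involves a density ratio that would normally be delicate in low-density regions, integrating the squared error against the density $p_{\hat\varepsilon}$ itself cancels the problematic denominator, which is what makes the argument go through without additional tail conditions on $\varepsilon_P$. Summing the three bounds yields $A_\rho^{(n)} = O_\mathcal{P}(A_m^{(n)} + A_{\hat\varepsilon}^{(n)})$, as required.
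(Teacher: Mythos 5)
Your proof is correct, and it takes a genuinely different route from the paper's. The paper inserts $\pm\,\rho_{\hat\varepsilon}(\varepsilon_P)$ and $\pm\,\rho_{\hat\varepsilon}(\hat\varepsilon^{(n)})$, so its first piece compares $\rho_{e}$ and $\rho_{\hat\varepsilon}$ \emph{evaluated at $\varepsilon_P$}; controlling that requires the density-ratio bound of Lemma~\ref{lem:densityratiobound} together with the sub-Gaussianity of $\rho_{e}(\varepsilon_P)$ from Theorem~\ref{thm:scoresubg} (Lemma~\ref{lem:estimatedresidualscore_locationonly}), and its second piece compares $\rho_{\hat\varepsilon}$ at the two arguments, which forces a bound on $\rho_{\hat\varepsilon}'$ via the identity $p_{e}'' = (\rho_{e}'+\rho_{e}^2)p_{e}$, a Taylor expansion, and fourth moments of the score (Lemma~\ref{lem:estimatedresidual_locationonly}). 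You instead insert $\pm\,\rho_{e}(\hat\varepsilon^{(n)})$, which makes the first piece a one-line consequence of the Lipschitz property of $\rho_{e}$, and — crucially — places the comparison of $\rho_{e}$ with $\rho_{\hat\varepsilon}$ at the point $\hat\varepsilon^{(n)}$, whose own density is $p_{\hat\varepsilon}$. Your change-of-measure/Jensen step, followed by Tonelli and $\int p_{e}(\epsilon+t)\,d\epsilon = 1$, then cancels the denominator exactly and yields $C_\rho^2 A_m^{(n)}$ with no appeal to score sub-Gaussianity, no bound on $p_{e}''$ (bounded $p_{e}'$ already justifies differentiating under the integral), and no real use of $\sup_z|u_m^{(n)}(z)| = O_{\mathcal{P}}(1)$. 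The only regularity point worth stating explicitly is that $p_{e}>0$ everywhere (which follows from $\rho_{e}'$ being defined and bounded on all of $\R$), so that $p_{\hat\varepsilon}>0$ and the tilted measure in your Jensen step is a genuine probability measure; with that noted, your argument is complete and in fact slightly more economical in its hypotheses than the paper's.
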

As is to be expected, compared to Theorem~\ref{thm:subgscoreestimation}, here there is no $A_\sigma^{(n)}$ term in the upper bound on $A_\rho^{(n)}$.
		Note that $\varepsilon_P$ need not have any finite moments and $\varepsilon_P$ may follow a Cauchy distribution, for example.

\subsection{Local location--scale modelling} \label{sec:local}
	While we have seen that a location--scale assumption can make the problem of conditional score estimation much more tractable in the sense that the predictive power of modern flexible regression techniques can be brought to bear on the problem, the location--scale model is unlikely to hold precisely in practice. For large dataset sizes, this may result in a non-negligible bias in the final estimator. One possible approach to mitigating this issue involves only aiming to make use of the location--scale model locally, as we now briefly outline.
	
	Were the location--scale assumption to hold, we would expect $\hat{\varepsilon}^{(n)}$ and $Z$ to be independent; were it to hold only approximately, these should be close to independent. Furthermore, we might expect there to be a partition of $\mathcal{Z}$ into regions $R_1,\ldots,R_J$ such that we approximately have $\hat{\varepsilon}^{(n)} \independent Z \given \{Z \in R_j\}$ for each $j$. Trees constructed using a two-sample tests as a split criterion such as those that form the basis of distributional random forests \citep{cevid2022distributional} are specifically designed to seek out such partitions. One may then apply univariate score estimation separately to those estimated scaled residuals whose associated $Z$ values lie in a given region to arrive at a final conditional score estimate. We however leave further investigation of this and related approaches to future work.
	
	\section{Numerical experiments}
	\label{sect:numerical}
	
	We demonstrate that confidence intervals derived from the cross-fitted, doubly robust average partial effect estimator (\ref{eqn:est}) and associated variance estimator \eqref{eq:Sigma_est} constructed using the approaches of Sections~\ref{sect:resmooth} and \ref{sect:score} is able to maintain good coverage across a range of settings. As competing methods, we consider a version of (\ref{eqn:est}) using a simple numerical difference for the derivative estimate (as suggested in \citet[\S S5.2]{ChernozhukovRiesz}) and a quadratic basis approach for score estimation (similar to \cite{RothenhauslerICE}); the method of \cite{RothenhauslerICE}; and the doubly-robust partially linear regression (PLR) of \citet[\S 4.1]{ChernozhukovTreatment}. Theorem~\ref{thm:basisgaussian} suggests that the basis approaches of \citet[\S 2]{ChernozhukovRiesz} and \cite{RothenhauslerICE} are similar, and since the latter is easier to implement we use this as a reasonable proxy for the approach of \citet{ChernozhukovRiesz}. While our estimator may be used with any plug-in machine learning regression, here we make use of XGBoost \citep{xgboost} for its good predictive power, perform scale estimation via decision trees so that our estimates are bounded away from zero, and perform unconditional score estimation via a penalised smoothing spline \citep{CoxSpline, NgSpline, NgSplineImplementation}, which has the attractive property of smoothing towards a Gaussian in the sense of \citet[Thm.~4]{CoxSpline}. The precise implementation details are given in Section~\ref{sect:simulationimplementation}. We note in particular that as XGBoost is based on decision trees, it returns a piecewise constant regression surface.
	
	For a sanity check we also include the ordinary least squares (OLS), which is expected to do very poorly in general. Code to reproduce our experiments is available from \url{https://github.com/harveyklyne/drape}.

	\subsection{Settings}
	
	In all cases we generate $Y = f_P(X,Z) + N(0,1)$ using a known regression function $f_P$ and predictor distribution $(X,Z)\sim P$, so that we may compute the target parameter $\theta_P = \E_P[f'_P(X,Z)]$ to any degree of accuracy using Monte Carlo. The predictors $(X,Z) \in \R \times \R^p$ are either generated synthetically from a location--scale family or taken from a real data set.
	
	\subsubsection{Location--scale families}
	For these fully simulated settings, we fix $n=1000$ and 
	\begin{align*}
		Z &\sim N(0, \Sigma) \in \R^9 \text{ where } \Sigma_{jj}=1 \text{, } \Sigma_{jk} = 0.5 \text{ for } j\neq k;\\
		X &= m_P(Z) + \sigma_P(Z)\varepsilon_P,
	\end{align*}
	for the following choices of $m_P$, $\sigma_P$, $\varepsilon_P$. We use two step functions for $m_P, \sigma_P:\mathcal{Z}\to \R$.
	\begin{align*}
		m_{P}(z) &= \ind_{(0, \infty)}(z_1)\\
		\sigma_{P}(z) &= \begin{cases}
			\sqrt{\frac32} & \text{ if } z_3 < 0;\\
			\frac1{\sqrt2} & \text{ if } z_3 \geq 0.
		\end{cases}
	\end{align*}
	Note that $\E_P\big[\sigma_{P}^2(Z)\big]=1$. We use the following options for the noise $\varepsilon_P$.
	\begin{align}
		\varepsilon_{\text{norm}} &= N(0,1) \label{eqn:normsetting}\\
		\varepsilon_{\text{mix2}} &= N\bigg(\pm \frac{1}{\sqrt2}, \frac{1}{2}\bigg) \text{ equiprobably}; \label{eqn:mix2setting}\\
		\varepsilon_{\text{mix3}} &= N\bigg(\pm \frac{\sqrt3}{\sqrt2}, \frac{1}{3}\bigg) \text{ equiprobably}; \label{eqn:mix3setting}\\
		\varepsilon_{\text{log}} &= \text{Logistic} \bigg( 0, \frac{\sqrt3}{\pi} \bigg); \label{eqn:logsetting}\\
		\varepsilon_{\text{t4}} &= \frac1{\sqrt2} t_4. \label{eqn:t4setting}
	\end{align}
	In all cases $\varepsilon_P$ is independent of $(X,Z)$, and has zero mean and unit variance. Since $\sigma_P$ is not constant, the heavy-tailed settings $\varepsilon_{\text{log}}, \varepsilon_{\text{t4}}$ are not covered by the results in Section~\ref{sect:score}. The score functions for these random variables are plotted in Figure~\ref{fig:score}.
	
	\begin{figure}
		\centering
		\includegraphics[width=0.5\textwidth]{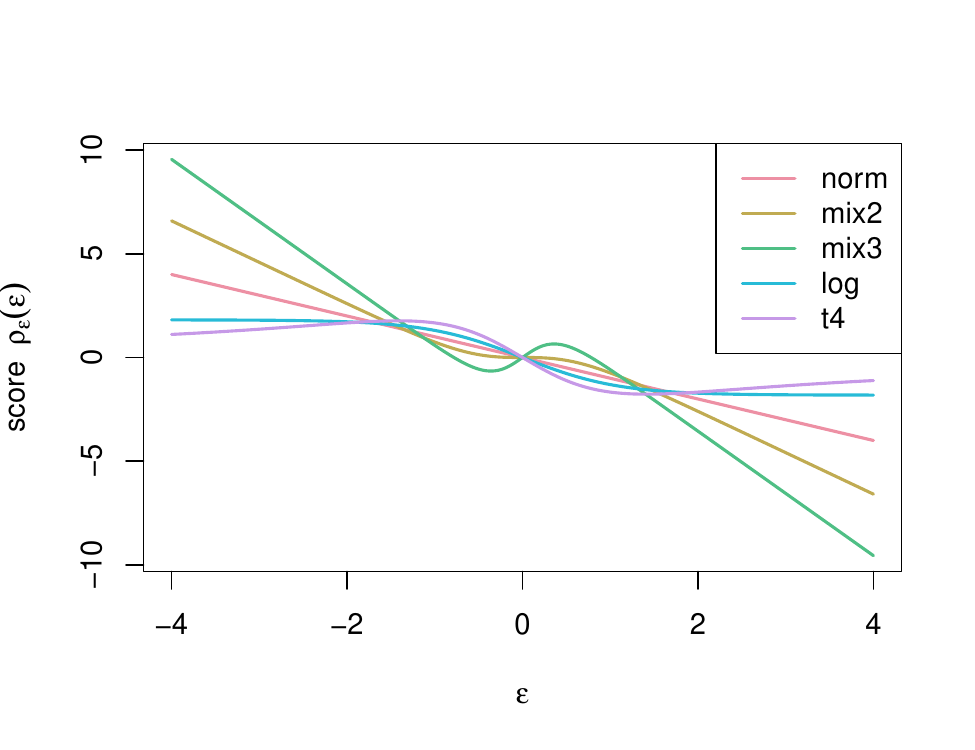}
		\caption{Score functions for the choices of distribution for $\varepsilon_P$ in our numerical experiments. All these distributions have mean zero and variance one. The pink line corresponds to a standard Gaussian \eqref{eqn:normsetting}, the gold and green lines to Gaussian mixtures \eqref{eqn:mix2setting} and \eqref{eqn:mix3setting}, the blue line to the logistic distribution \eqref{eqn:logsetting}, and the purple line to the Student's $t$ distribution with 4 degrees of freedom \eqref{eqn:t4setting}.  }
		\label{fig:score}
	\end{figure}

	\subsubsection{401k dataset}
	\label{sect:401k}
	To examine misspecification of the location--scale model for $(X,Z)$, we import the 401k data set from the \texttt{DoubleML} R package \citep{DoubleML}. We take $X$ to be the income feature, and $Z$ to be age, education, family size, marriage, two-earner household, defined benefit pension, individual retirement account, home ownership, and 401k availability, giving $p=10$. We make use of all the observations ($n=9915$), and centre and scale the predictors before generating the simulated response variables.

	\subsubsection{Simulated responses}
	
	For the choices of regression function $f_P$, first define the following sinusoidal and sigmoidal families of functions:
	\begin{align*}
		f_{\text{sino}}(u; a) &= \exp(-u^2/2)\sin(au);\\
		f_{\text{sigm}}(u; s) &= (1+\exp(-su))^{-1};
	\end{align*} for $u\in \R$, $a, s>0$. We use the following choices for $f_P:\R\times\mathcal{Z}\to \R$, giving partially linear, additive, and interaction settings:
	\begin{align}
		f_{\text{plm}}(x,z)&=x + f_{\text{sigm}}(z_2; s=1) + f_{\text{sino}}(z_2; a=1) \label{eqn:plmsetting};\\
		f_{\text{add}}(x,z)&=f_{\text{sigm}}(x; s=1) + f_{\text{sino}}(x; a=1) + f_{\text{sino}}(z_2; a=3) \label{eqn:addsetting};\\
		f_{\text{int}}(x,z)&=f_{\text{sigm}}(x; s=3) + f_{\text{sino}}(x; a=3) + f_{\text{sino}}(z_2; a=3) + x \times z_2.\label{eqn:intsetting}
	\end{align}
	
	\subsection{Results}
	
	We examine the coverage of the confidence intervals for each of the 5 methods in each of the 18 settings described. Figures~\ref{fig:plm}, \ref{fig:add}, and \ref{fig:int} show nominal 95\% confidence intervals from each of 1000 repeated experiments. We find that our method achieves at least 87\% coverage in each of the 18 settings trialed and in most settings coverage is close to the desired 95\%. Moreover, the estimator appears to be unbiased in most cases, with those confidence intervals not covering the parameter being equally likely to fall above or below the true parameter. In additional results which we do not include here, we find that our multivariate score estimation procedure reduces the bias as compared to the high-dimensional basis approach, and our resmoothing reduces the variance compared to numerical differencing. Taken together, our proposed estimator performs well in all settings considered.
	
	The numerical difference and quadratic basis approach also performs reasonably well: it typically has a slightly lower variance than the resmoothing and spline-based approach, as evidenced by the slightly shorter median confidence interval widths in the additive and partially linear model examples. However, this comes at the expense of introducing noticeable bias in most settings. Indeed, the bulk of the confidence intervals failing to cover the true parameter often lie on one side of it.
	 The confidence intervals also tend to undercover, in the worst case achieving a coverage of 78\%. In some of the settings where the error distribution has heavier tails, a proportion of the confidence intervals produced are very wide (extending far beyond the margins of the plot).
	 
	 As one would expect, the doubly-robust partially linear regression does very well when the partially linear model is correctly specified \eqref{eqn:plmsetting}, achieving full coverage with narrow confidence intervals, but risks completely losing coverage when the response is non-linear in $X$. Interestingly, the quadratic basis approach of \citet{RothenhauslerICE} displayed a similar tendency. As expected, the ordinary least squares approach did not achieve close to the specified coverage in any setting.

	
	\begin{figure}
		\centering
		\includegraphics[width=\textwidth]{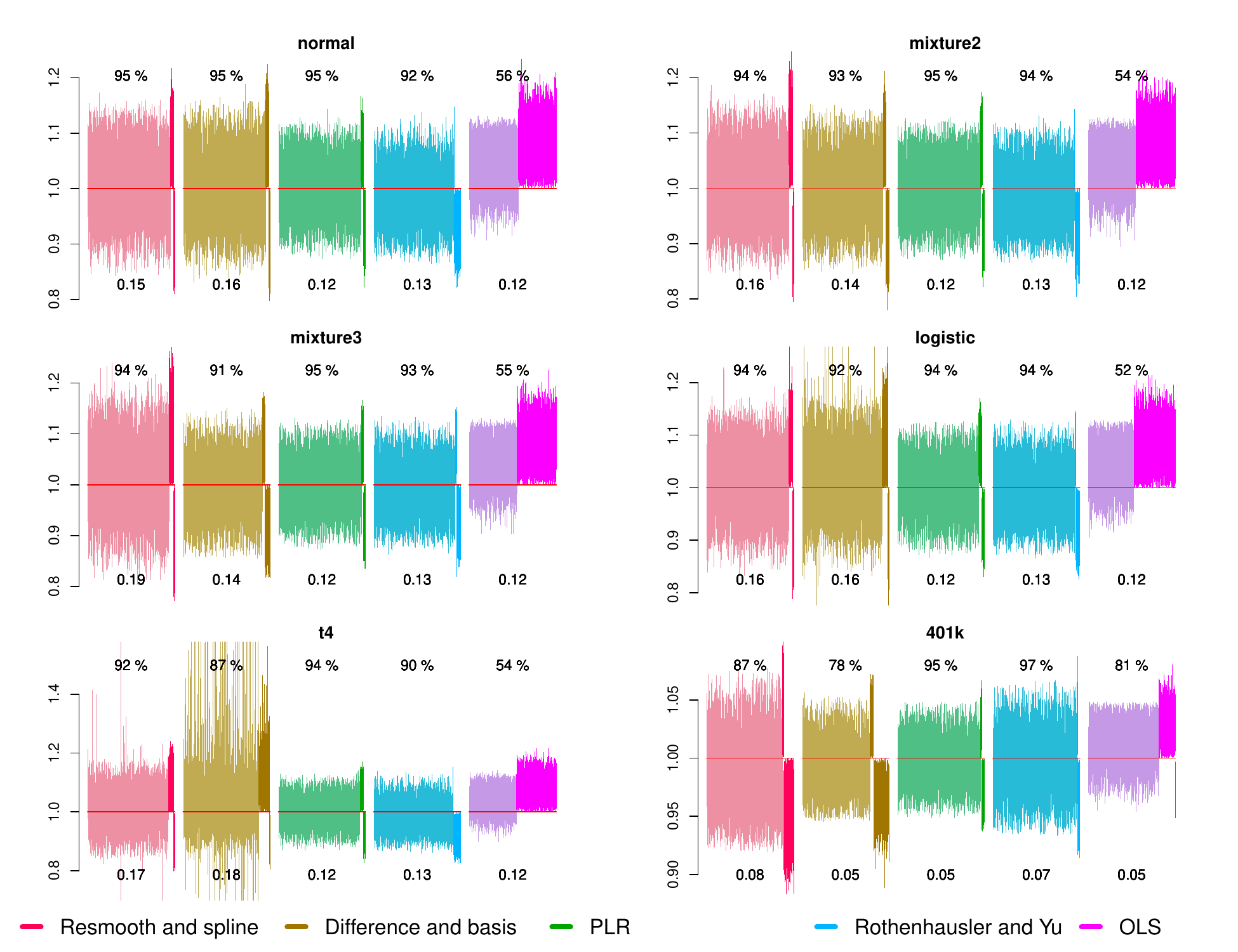}
		\caption{Estimated confidence intervals from the partially linear model experiment \eqref{eqn:plmsetting}. The subplots correspond to different settings for predictor $(X,Z)$ generation. The different colours refer to the different methods. The red horizontal lines correspond to the population level target parameter. The percentages above each subplot refer to the achieved coverage (specified level 95\%), and the decimals below give the median confidence interval width. In the bottom left subplot --- setting \eqref{eqn:t4setting} --- some of the confidence intervals for the numerical difference and basis score approach extend significantly beyond the plotting limits.}
		\label{fig:plm}
	\end{figure}
	
	\begin{figure}
		\centering
		\includegraphics[width=\textwidth]{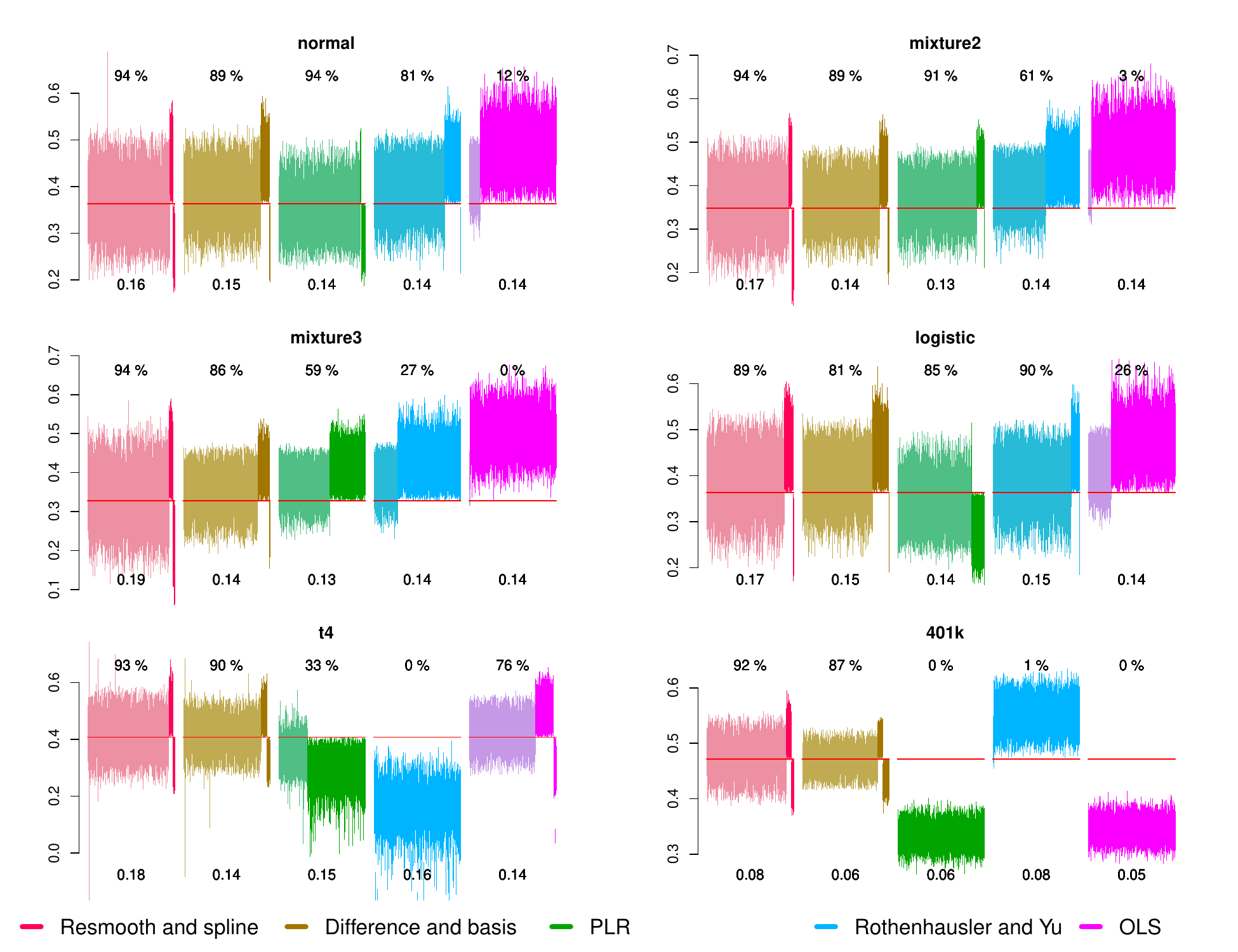}
		\caption{Estimated confidence intervals from the additive model experiment \eqref{eqn:addsetting}. The subplots correspond to different settings for predictor $(X,Z)$ generation. The different colours refer to the different methods. The red horizontal lines correspond to the population level target parameter. The percentages above each subplot refer to the achieved coverage (specified level 95\%), and the decimals below give the median confidence interval width. }
		\label{fig:add}
	\end{figure}
	
	\begin{figure}
		\centering
		\includegraphics[width=\textwidth]{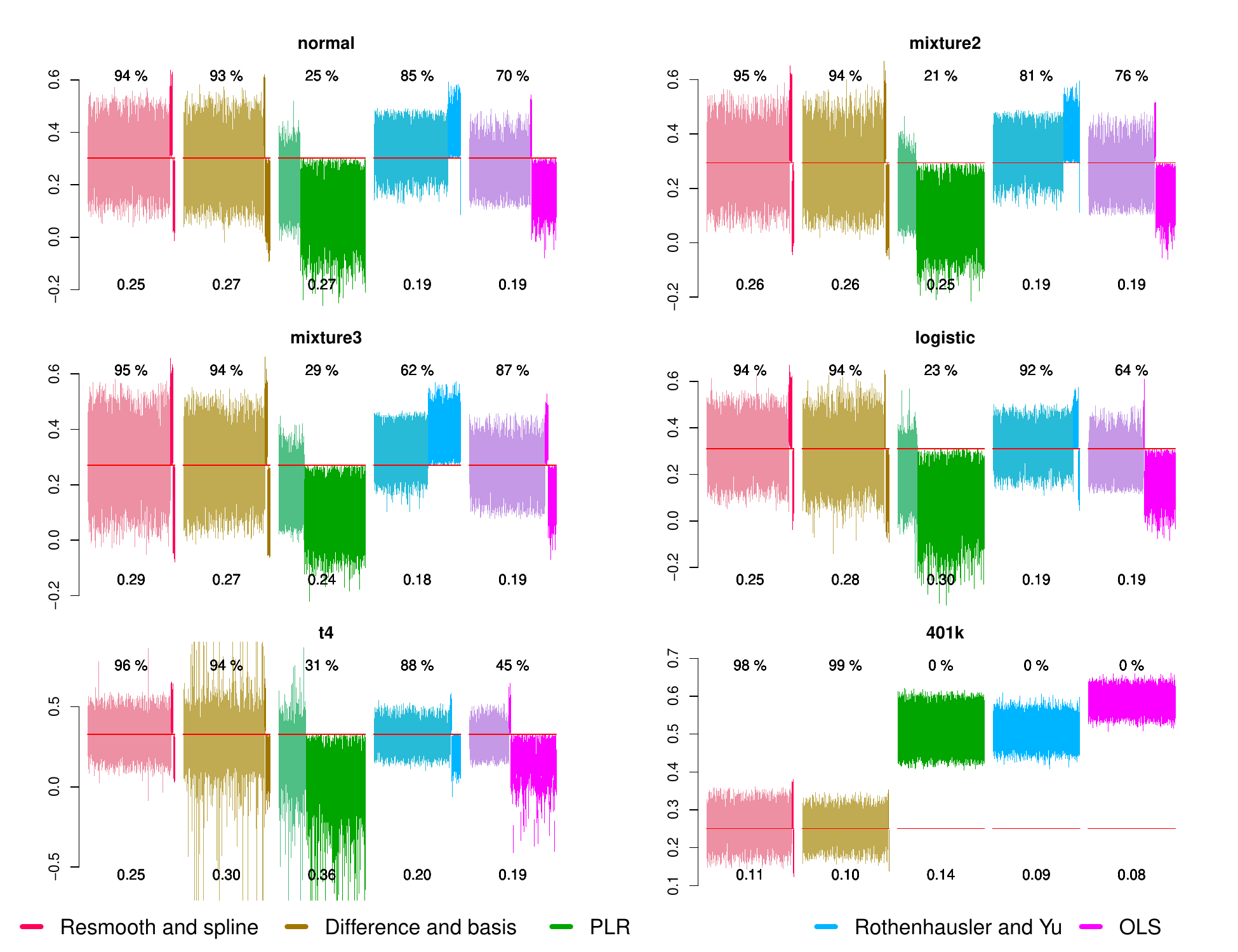}
		\caption{Estimated confidence intervals from the interaction model experiment \eqref{eqn:intsetting}. The subplots correspond to different settings for predictor $(X,Z)$ generation. The different colours refer to the different methods. The red horizontal lines correspond to the population level target parameter. The percentages above each subplot refer to the achieved coverage (specified level 95\%), and the decimals below give the median confidence interval width. In the bottom left subplot, setting \eqref{eqn:t4setting}, some of the confidence intervals for the numerical difference and basis score approach extend significantly beyond the plotting limits.}
		\label{fig:int}
	\end{figure}

    \subsection{401k data analysis} \label{sec:401k}
    We also apply all the methods to the 401k data set with the response variable taken as net financial assets (as opposed to a simulated response variable in Section~\ref{sect:401k}). We estimate the average partial effect of income on net financial assets, and find that all methods agree that the effect is significantly positive but disagree on the magnitude. The methods which assume a linear relationship (OLS and PLR) produce substantially higher estimates than the non-linear models. In the light of our theoretical and numerical results, we view this as suggesting that the partial effect of income on net financial assets may be non-linear. The estimates (standard errors) of each method are as follows: Resmooth and spline 0.46 (0.03); Difference and basis 0.39 (0.05); PLR 0.86 (0.11); \cite{RothenhauslerICE} 0.27 (0.07); OLS 0.93 (0.03).

	\section{Discussion}
	\label{sect:discussion}
	
	The average partial effect is of interest in nonparametric regression settings, giving a single parameter summary of the effect of a predictor. In this work we have suggested a framework to enable the use of arbitrary machine learning regression procedures when doing inference on average partial effects. We propose kernel resmoothing of a first-stage machine learning method to yield a new, differentiable regression estimate. Theorem~\ref{thm:hrate} demonstrates the attractive properties of this approach for a range of kernel bandwidths. We further advocate location--scale modelling for multivariate (conditional) score estimation, which we prove reduces this challenging problem to the better studied, univariate case (Theorems~\ref{thm:knownscoreestimation}, \ref{thm:subgscoreestimation}, and \ref{thm:homscoreestimation}). Our proofs rely on a novel result of independent interest: that Lipschitz score functions yield sub-Gaussian random variables (Theorem~\ref{thm:scoresubg}).
	
	Our work suggests several directions for further research. In Theorem~\ref{thm:clt}, the dimension $d$ of $X$ is considered fixed. It would be of interest to extend this to permit $d$ increasing with $n$. In such a setting, a Gaussian multiplier bootstrap approach (for example, as employed in Section~3.2 of \citet{ShahGCM}) should allow for simultaneous confidence intervals for all components of $\theta_P$. A It would also  be of interest to develop theoretical guarantees for the approach outlined in Section~\ref{sec:local} on using a local location--scale model.
	
	\subsection*{Acknowledgements}
	HK was supported by an EPSRC studentship and RDS was supported in part by the EPSRC programme grant EP/N031938/1.  The authors are grateful to three referees whose constructive comments helped to improve the paper.
	
	
	\bibliographystyle{apalike}	
	\bibliography{references}
	
	\appendix
	
	\newpage
	\section{Proofs in Section~\ref{sect:general}}
	
	\subsection{Proof of Proposition~\ref{prop:intbyparts}}
	
	\begin{proof}
		Fix some $(x_{-j},z) \in \R^{d-1} \times \mathcal{Z}$ where $z$ has positive marginal density. By the product rule,
		\begin{align*}
			\nabla_j \big(g(x,z)p_P(x\given  z)\big) &= \nabla_j g(x,z)p_P(x\given  z) + g(x,z)\nabla_j p_P(x\given  z)\\
			&= \nabla_j g(x,z)p_P(x\given  z) + \rho_{P,j}(x,z)g(x,z)p_P(x\given  z).
		\end{align*}
		Therefore writing
		\[
		q_{a,b}(x, z):= \{\nabla_j g(x,z)p_P(x\given  z) + \rho_{P,j}(x,z)g(x,z)p_P(x\given  z)\}\ind_{[a,b]}(x_j),
		\]
		for any $-\infty<a<b<\infty$ and noting that for almost every $(x_{-j}, z)$ the above must be integrable over $x_j$ by \eqref{eq:intbyparts}, we have by the Fundamental Theorem of Calculus that for such $(x_{-j}, z)$,
		\begin{align*}
			\int_{-\infty}^\infty q_{a,b}(x, z) \; dx_j &= \int_a^b \nabla_j \big(g(x,z)p_P(x\given  z)\big)\; dx_j\\
			& = g(b,x_{-j}, z)p_P(b, x_{-j}\given  z)-g(a,x_{-j},z)p_P(a, x_{-j}\given  z).
		\end{align*}   
Let sequences $a_n(x_{-j}, z)$ and $b_n(x_{-j}, z)$ satisfy the assumptions of the result. By the dominated convergence theorem we have that for almost every $(x_{-j}, z)$,
\begin{align*}
	& \int_{-\infty}^\infty \{ \nabla_j g(x,z)p_P(x \given  z) + \rho_{P,j}(x,z)g(x,z)p_P(x \given  z)\} \, dx_j  \\
	=&  \int_{-\infty}^\infty \lim_{n \to \infty}q_{a_n(x_{-j},z), b_n(x_{-j},z)} \, dx_j \\
	=&\lim_{n \to \infty} \int_{-\infty}^\infty q_{a_n(x_{-j},z), b_n(x_{-j},z)} \, dx_j  = 0.
\end{align*}
Now, by Fubini's theorem, writing $\mu$ for the marginal distribution of $Z$,
\begin{align*}
				&\E_P[\nabla_j g(X,Z) + \rho_{P,j}(X,Z)g(X,Z)] \\
				=& \int_{\mathcal{Z}} \int_{\R^{d-1}} \int_{\R} \{ \nabla_j g(x,z)p_P(x \given  z) + \rho_{P,j}(x,z)g(x,z)p_P(x \given  z)\} \, dx_j \, dx_{-j} \, d\mu(z) =0,
\end{align*}
as required.
	\end{proof}
	
	\subsection{Proof of Theorem~\ref{thm:clt}}
	\begin{proof}
		In an abuse of notation, we refer to the quantities \begin{align*}
			A_f^{(n,k)} := \E_P \big[\{f_P(X,Z)-\hat f^{(n,1)}(X,Z)\}^2 \; \big| \; D^{(n,1)}\big],
		\end{align*} for each fold $k=1,\ldots,K$. Each $A_f^{(n,k)}$ satisfies the same probabilistic assumptions as $A_f^{(n)} = A_f^{(n,1)}$ due to the equal partitioning and i.i.d.\ data. Likewise we define $A_\rho^{(n,k)}, E_f^{(n,k)}, E_\rho^{(n,k)}$.
		
		To show the first conclusion we first highlight the term which converges to a standard normal distribution, and then deal with the remainder. Note that the lower bound on the minimum eigenvalue of $\Sigma_P$ corresponds to an upper bound on the maximal eigenvalue of $(\Sigma_P)^{-1/2}$. Denote the random noise in $Y$ as\begin{align*}
			\xi_P &= Y - f_P(X,Z);\\
			\xi_{P,i} &= y_i - f_P(x_i, z_i),
		\end{align*}
		so that $\E_P(\xi_P\given  X,Z) = 0$.
		
		With these preliminaries, we have
		\begin{equation*}
			\sqrt{n}(\Sigma_P)^{-1/2}\Big(\hat\theta^{(n)}-\theta_P\Big) = \frac{1}{\sqrt{n}}\sum_{i=1}^n (\Sigma_P)^{-1/2} \psi_P(y_i,x_i,z_i) + (\Sigma_P)^{-1/2} \sum_{k=1}^K R_P^{(n,k)},
		\end{equation*} where the uniform central limit theorem (Lemma~\ref{lem:uniformCLT}) applies to the first term and \begin{equation*}
			R_P^{(n,k)} := \frac{1}{\sqrt{n}} \sum_{i\in I^{(n,k)}}  \nabla \hat f^{(n,k)}(x_i,z_i) - \hat\rho^{(n,k)}(x_i,z_i) \{y_i-\hat f^{(n,k)}(x_i,z_i)\}  - \nabla f_P(x_i,z_i) + \rho_P(x_i,z_i)\xi_{P,i}.
		\end{equation*}
		Note that, conditionally on $D^{(n,k)}$, each summand of $R_P^{(n,k)}$ is i.i.d. To show that $R_P^{(n,k)}=o_{\mathcal{P}}(1)$, we fix some element $j\in\{1,\ldots, d\}$ and decompose \begin{equation}
			\label{eqn:psidecomposition}
			R_{P,j}^{(n,k)} = a^{(n,k)} - b_f^{(n,k)} + b_\rho^{(n,k)},
		\end{equation}
		where
		\begin{align*}
			a^{(n,k)}&:= \frac{1}{\sqrt{n}} \sum_{i\in I^{(n,k)}} \{\rho_{P,j}(x_i,z_i)-\hat\rho_j^{(n,k)}(x_i,z_i)\}\{f_P(x_i,z_i)-\hat f^{(n,k)}(x_i,z_i)\};\\
			b_f^{(n,k)}&:= \frac{1}{\sqrt{n}} \sum_{i\in I^{(n,k)}} [\nabla_j f_P(x_i,z_i)-\nabla_j \hat f^{(n,k)}(x_i,z_i) +\rho_{P,j}(x_i,z_i)\{f_P(x_i,z_i)-\hat f^{(n,k)}(x_i,z_i)\}];\\
			b_\rho^{(n,k)}&:= \frac{1}{\sqrt{n}} \sum_{i\in I^{(n,k)}} \{\rho_{P,j}(x_i,z_i)-\hat\rho_j^{(n,k)}(x_i,z_i)\} \xi_{P,i}.
		\end{align*}
		We now show that each term is $o_\mathcal{P}(1)$, so Lemma \ref{lem:uniformSlutsky} yields the first conclusion.
		
		By the Cauchy--Schwarz inequality, we have
		\begin{align*}
			\E_P[|a^{(n,k)}|\given  D^{(n,k)}] &\leq  \sqrt{n} \E_P[|\rho_{P,j}(X,Z)-\hat\rho_j^{(n,k)}(X,Z)||f_P(X,Z)-\hat f^{(n,k)}(X,Z)|\given  D^{(n,k)}]\\
			&\leq \sqrt{n A_f^{(n,k)}A_\rho^{(n,k)}} =o_{\mathcal{P}}(1),
		\end{align*}
		so $a^{(n,k)}$ is $o_{\mathcal{P}}(1)$ by Lemma~\ref{lem:uniformMarkov}. Note that each summand of $b^{(n,k)}_\rho$ is mean-zero conditionally on $X$ and $Z$. This means that
		\begin{align*}
			\E_P[(b^{(n,k)}_\rho)^2\given  D^{(n,k)}]&=\E_P[\{\rho_{P,j}(X,Z)-\hat \rho^{(n,k)}_j(X,Z)\}^2 \E_P(\xi_P^2 \given  X,Z)\given  D^{(n,k)}]\\
			&\leq E_\rho^{(n,k)}=o_{\mathcal{P}}(1).
		\end{align*}
		Again using Lemma~\ref{lem:uniformMarkov} we have that $b^{(n,k)}_\rho = o_\mathcal{P}(1)$.
		
		We now apply a similar argument to $b^{(n,k)}_f$, using Proposition~\ref{prop:intbyparts} to show that each summand is mean zero. Given $\epsilon >0$, noting that both $A_f^{(n,k)}$ and $E_f^{(n,k)}$ are $O_\mathcal{P}(1)$, we have there exist $M$ and $N \in \mathbb{N}$ such that for sequences of $D^{(n,k)}$-measurable events $\Omega_{P,n}$ with $\pr_P(\Omega_{P,n}) \geq 1-\epsilon$, for all $n \geq N$,
		\begin{align}
			\E_P\Big[\big|f_{P}(X,Z) - \hat f^{(n,k)}(X,Z)\big|\;\Big|\;D^{(n,k)}  \Big] \ind_{\Omega_{P,n}} &< M;\label{eqn:fl1bound}\\
			\E_P\Big[\big|\nabla_j f_{P}(X,Z) - \nabla_j \hat f^{(n,k)}(X,Z) + \rho_{P,j}\big\{f_{P}(X,Z) - \hat f^{(n,k)}(X,Z)\big\}\big|\;\Big|\;D^{(n,k)} \Big]\ind_{\Omega_{P,n}} &< M.\label{eqn:dfrhol1bound}
		\end{align}
		Now fixing $z,x_{-j}$ (where $x_{-j} \in \R^{d-1}$), we have that the function
		\begin{equation*}
			\Big(f_{P}\big((\cdot,x_{-j}),z\big) - \hat f^{(n,k)}\big((\cdot,x_{-j}),z\big)\Big)p\big((\;\cdot\;,x{_-j}) \; \big| \; z\big)
		\end{equation*}
		is continuous, where we understand $(u,x_{-j}) = (x_1,\ldots, x_{j-1},u,x_{j+1},\ldots, x_d)$ for $u\in\R$. We may therefore apply Lemma~\ref{lem:boundaryconvergent}
		to both
		\begin{align*}
			t &\mapsto \Big(f_{P}\big((t,x_{-j}),z\big) - \hat f^{(n,k)}\big((t,x_{-j}),z\big)\Big)p\big((t,x_{-j}) \;\big|\;  z\big) \\
			t &\mapsto \Big(f_{P}\big((-t,x_{-j}),z\big) - \hat f^{(n,k)}\big((-t,x_{-j}),z\big)\Big)p\big((-t,x_{-j}) \;\big|\;  z\big).
		\end{align*}
		This, in combination with \eqref{eqn:fl1bound} implies that on $\Omega_{P,n}$ and for each $n \geq N$, for almost every $(x-j, z)$, there exist $D^{(n,k)}$-measurable sequences $a_{P,m}(x_{-j},z) \to - \infty$, $b_{P,m}(x_{-j},z) \to \infty$ such that
		\begin{align}
			\lim_{m\to\infty} \bigg\{\Big(f_{P}\big((b_{P,m}(x_{-j},z),x_{-j}),z\big) - \hat f^{(n,k)}\big((b_{P,m}(x_{-j},z),x_{-j}),z\big)\Big)p\big((b_{P,m}(x_{-j},z),x_{-j}) \;\big|\;  z\big) \phantom{\bigg\}}&\nonumber \\
			- \big(f_{P}\big((a_{P,m}(x_{-j},z),x_{-j}),z\big) - \hat f^{(n,k)}\big((a_{P,m}(x_{-j},z),x_{-j}),z\big)\Big)p\big((a_{P,m}(x_{-j},z),x_{-j}) \;\big|\;  z\big)\bigg\} &= 0. \label{eqn:ferrorboundaryconvergent}
		\end{align}
		Equations (\ref{eqn:dfrhol1bound} and \ref{eqn:ferrorboundaryconvergent}) verify that we may apply Proposition~\ref{prop:intbyparts} conditionally on $D^{(n,k)}$. Therefore, for all $n$ sufficiently large,
		\begin{equation*}
			\E_P\Big[\nabla_j f_{P}(X,Z) - \nabla_j \hat f^{(n,k)}(X,Z) + \rho_{P,j}\big\{f_{P}(X,Z) - \hat f^{(n,k)}(X,Z)\big\}\;\Big|\;D^{(n,k)} \Big]\ind_{\Omega_{P,n}}\ = 0
		\end{equation*}
		and hence
		\begin{align*}
			& \E_P[(b^{(n,k)}_f)^2\given  D^{(n,k)}]\ind_{\Omega_{P,n}} \\
			=\;& \E_P([ \nabla_j f_P(X,Z)-\nabla_j \hat f^{(n,k)}(X,Z) +\rho_{P,j}(X,Z)\{f_P(X,Z)-\hat f^{(n,k)}(X,Z)\}]^2\given  D^{(n,k)}) \ind_{\Omega_{P,n}}\\
			\leq \;  & E_f^{(n,k)}.
		\end{align*}
		Now
		\[
		\pr_P(b^{(n,k)}_f > \epsilon) \leq \pr_P(b^{(n,k)}_f\ind_{\Omega_{P,n}} > \epsilon) + \pr_P(\Omega_{P,n}^c) \leq \pr_P(b^{(n,k)}_f\ind_{\Omega_{P,n}} > \epsilon) + \epsilon.
		\]
		Lemma~\ref{lem:uniformMarkov} shows that the first term above converges to $0$, uniformly in $P$, and so 
		$b_f^{(n,k)}$ is $o_\mathcal{P}(1)$.
		
		Turning now to the second conclusion, we aim to show that $\hat \Sigma^{(n)}-\Sigma_P=o_{\mathcal{P}}(1)$. We introduce notation for the following random functions:
		\begin{equation*}
			\hat\psi^{(n,k)}(y,x,z) := \nabla \hat f^{(n,k)}(x,z) - \hat\rho^{(n,k)}(x,z)\{y-\hat f^{(n,k)}(x,z)\} -\hat\theta^{(n)}.
		\end{equation*}
		We will focus on an individual element $(\hat\Sigma^{(n)}-\Sigma_P)_{l,m}$, $1\leq l,m\leq d$, and make use of Lemma \ref{lem:uniformWLLN}. We first check that
		\begin{equation*}
			\sup_{P\in\mathcal{P}}\E_P\Big[\big|\psi_{P,l}(Y,X,Z)\psi_{P,m}(Y,X,Z) - \E_P[\psi_{P,l}(Y,X,Z)\psi_{P,m}(Y,X,Z)]\big|^{1+\tilde\eta}\Big]\leq \tilde c,
		\end{equation*} for some $\tilde \eta, \tilde c>0$. Indeed, due to the convexity of $x\mapsto |x|^{1+\tilde \eta}$,
		\begin{align*}
			\E_P\Big[\big|\psi_{P,l}(Y,X,Z)\psi_{P,m}(Y,X,Z) &- \E_P[\psi_{P,l}(Y,X,Z)\psi_{P,m}(Y,X,Z)]\big|^{1+\tilde\eta}\Big] \\
			&\leq 2^{\tilde \eta} \Big\{ \E_P\Big[\big|\psi_{P,l}(Y,X,Z)\psi_{P,m}(Y,X,Z)\big|^{1+\tilde\eta}\Big] \\
			&\phantom{\leq}  + \Big| \E_P[\psi_{P,l}(Y,X,Z)\psi_{P,m}(Y,X,Z)]\Big|^{1+\tilde\eta}\Big\}\\
			&\leq 2^{1+\tilde \eta} \E_P\Big[\big|\psi_{P,l}(Y,X,Z)\psi_{P,m}(Y,X,Z)\big|^{1+\tilde\eta}\Big]\\
			&\leq 2^{1+\tilde \eta}\E_P\Big[\|\psi_{P}(Y,X,Z)\|_2^{2+2\tilde\eta}\Big].
		\end{align*}
		The first inequality is $|(a+b)/2|^{1+\tilde \eta} \leq (|a|^{1+\tilde \eta} + |b|^{1+\tilde \eta})/2$, the second is Jensen's inequality, and the final inequality is $|ab|\leq (a^2 + b^2)/2$. Therefore the condition is satisfied for $\tilde \eta=\eta/2$, $\tilde c =2^{1+\eta/2} c_2$.
		
		We are now ready to decompose the covariance estimation error.
		\begin{align*}
			(\hat\Sigma^{(n)}-\Sigma_P)_{l,m} &= \frac1n\sum_{k=1}^K \sum_{i\in I^{(n,k)}} \hat\psi_l^{(n,k)}(y_i,x_i,z_i)\hat\psi_m^{(n,k)}(y_i,x_i,z_i) - \E_P[\psi_{P,l}(Y,X,Z)\psi_{P,m}(Y,X,Z)]\\
			&= \frac1n\sum_{i=1}^n\Big[ \psi_{P,l}(y_i,x_i,z_i)\psi_{P,m}(y_i,x_i,z_i) - \E_P[\psi_{P,l}(Y,X,Z)\psi_{P,m}(Y,X,Z)]\Big] + \frac1K\sum_{k=1}^K S_P^{(n,k)},
		\end{align*} 
		where the first term is $o_{\mathcal{P}}(1)$ by Lemma \ref{lem:uniformWLLN} and
		\begin{equation*}
			S_P^{(n,k)} := \frac{K}{n} \sum_{i\in I^{(n,k)}} \Big[ \hat\psi_l^{(n,k)}(y_i,x_i,z_i)\hat\psi_m^{(n,k)}(y_i,x_i,z_i) - \psi_{P,l}(y_i,x_i,z_i)\psi_{P,m}(y_i,x_i,z_i) \Big].
		\end{equation*}
		
		We show that $S_P^{(n,k)}=o_{\mathcal{P}}(1)$ using the following identity for $a_1, a_2, b_1, b_2 \in \R$,
		\begin{equation*}
			a_1b_1 - a_2b_2 = (a_1-a_2)(b_1-b_2) + a_2(b_1-b_2) + b_2(a_1-a_2),
		\end{equation*}
		and then applying the Cauchy--Schwarz inequality to each term.
		\begin{align*}
			\big|S_P^{(n,k)}\big| &= \bigg| \frac{K}{n} \sum_{i\in I^{(n,k)}} \hat\psi_l^{(n,k)}(y_i,x_i,z_i)\hat\psi_m^{(n,k)}(y_i,x_i,z_i) - \psi_{P,l}(y_i,x_i,z_i)\psi_{P,m}(y_i,x_i,z_i) \bigg|\\
			&\leq \bigg|\frac{K}{n} \sum_{i\in I^{(n,k)}} \Big\{\hat\psi_l^{(n,k)}(y_i,x_i,z_i)-\psi_{P,l}(y_i,x_i,z_i)\Big\}\Big\{\hat\psi_m^{(n,k)}(y_i,x_i,z_i) - \psi_{P,m}(y_i,x_i,z_i)\Big\}\bigg|\\
			&\phantom{=} + \bigg|\frac{K}{n} \sum_{i\in I^{(n,k)}} \Big\{\hat\psi_l^{(n,k)}(y_i,x_i,z_i)-\psi_{P,l}(y_i,x_i,z_i)\Big\} \psi_{P,m}(y_i,x_i,z_i)\bigg|\\
			&\phantom{=} + \bigg|\frac{K}{n} \sum_{i\in I^{(n,k)}} \Big\{\hat\psi_m^{(n,k)}(y_i,x_i,z_i)-\psi_{P,m}(y_i,x_i,z_i)\Big\} \psi_{P,l}(y_i,x_i,z_i)\bigg|\\
			&\leq \bigg[\frac{K}{n} \sum_{i\in I^{(n,k)}} \Big\{\hat\psi_l^{(n,k)}(y_i,x_i,z_i)-\psi_{P,l}(y_i,x_i,z_i)\Big\}^2\bigg]^{1/2} \\
			&\phantom{=} \cdot \bigg[\frac{K}{n} \sum_{i\in I^{(n,k)}}\Big\{\hat\psi_m^{(n,k)}(y_i,x_i,z_i) - \psi_{P,m}(y_i,x_i,z_i)\Big\}^2\bigg]^{1/2}\\
			&\phantom{=} + \bigg[\frac{K}{n} \sum_{i\in I^{(n,k)}} \Big\{\hat\psi_l^{(n,k)}(y_i,x_i,z_i)-\psi_{P,l}(y_i,x_i,z_i)\Big\}^2\bigg]^{1/2} \bigg[\frac{K}{n} \sum_{i\in I^{(n,k)}} \psi_{P,m}(y_i,x_i,z_i)^2\Bigg]^{1/2}\\
			&\phantom{=} + \bigg[\frac{K}{n} \sum_{i\in I^{(n,k)}} \Big\{\hat\psi_m^{(n,k)}(y_i,x_i,z_i)-\psi_{P,m}(y_i,x_i,z_i)\Big\}^2\bigg]^{1/2} \bigg[\frac{K}{n} \sum_{i\in I^{(n,k)}} \psi_{P,l}(y_i,x_i,z_i)^2\bigg]^{1/2}.
		\end{align*}
		Therefore it suffices to show that, for each $l=1,\ldots,d$,
		\begin{align*}
			T_{P,1}^{(n,k)}&:= \frac{K}{n} \sum_{i\in I^{(n,k)}} \psi_{P,l}(y_i,x_i,z_i)^2 = O_{\mathcal{P}}(1);\\
			T_{P,2}^{(n,k)}&:= \frac{K}{n} \sum_{i\in I^{(n,k)}} \Big\{\hat\psi_l^{(n,k)}(y_i,x_i,z_i)-\psi_{P,l}(y_i,x_i,z_i)\Big\}^2 = o_{\mathcal{P}}(1).
		\end{align*}
		To this end, Lemma \ref{lem:uniformWLLN} gives $T_{P,1}^{(n,k)} = (\Sigma_{P})_{(l,l)} + o_\mathcal{P}(1)$.
		Moreover, similarly to equation \eqref{eqn:psidecomposition} and using the inequality $(a+b+c+d)^2\leq 4(a^2+b^2+c^2+d^2)$,
		\begin{align*}
			T_{P,2}^{(n,k)} &= \frac{K}{n} \sum_{i\in I^{(n,k)}} \Big\{\hat\psi_l^{(n,k)}(y_i,x_i,z_i)-\psi_{P,l}(y_i,x_i,z_i)\Big\}^2 \\
			&= \frac{K}{n} \sum_{i\in I^{(n,k)}} \Big[ \Big\{\rho_{P,l}(x_i,z_i)-\hat\rho_l^{(n,k)}(x_i,z_i)\Big\}\Big\{f_{P}(x_i,z_i)-\hat f^{(n,k)}(x_i,z_i)\Big\} \\
			& \phantom{=} - \nabla_l f_{P}(x_i,z_i)+\nabla_l \hat f^{(n,k)}(x_i,z_i) -\rho_{P,l}(x_i,z_i)\Big\{f_{P}(x_i,z_i)-\hat f^{(n,k)}(x_i,z_i)\Big\}\\
			& \phantom{=} + \Big\{\rho_{P,l}(x_i,z_i)-\hat\rho_l^{(n,k)}(x_i,z_i)\Big\} \xi_{P,i}  - \hat\theta_l^{(n)}+\theta_{P,l}\Big]^2 \\
			&\leq 4\{\tilde a^{(n,k)} + \tilde b_f^{(n,k)} + \tilde b_\rho^{(n,k)} + (\hat\theta_l^{(n)}-\theta_{P,l})^2\},
		\end{align*}
		where
		\begin{align*}
			\tilde a^{(n,k)}&:= \frac{K}{n} \sum_{i\in I^{(n,k)}} \Big\{\rho_{P,l}(x_i,z_i)-\hat\rho_l^{(n,k)}(x_i,z_i)\Big\}^2\Big\{f_P(x_i,z_i)-\hat f^{(n,k)}(x_i,z_i)\Big\}^2;\\
			\tilde b_f^{(n,k)}&:= \frac{K}{n} \sum_{i\in I^{(n,k)}} \left[\nabla_l f_P(x_i,z_i)-\nabla_l \hat f^{(n,k)}(x_i,z_i) +\rho_{P,l}(x_i,z_i)\left\{f_P(x_i,z_i)-\hat f^{(n,k)}(x_i,z_i)\right\}\right]^2;\\
			\tilde b_\rho^{(n,k)}&:= \frac{K}{n} \sum_{i\in I^{(n,k)}} \Big\{\rho_{P,l}(x_i,z_i)-\hat\rho_l^{(n,k)}(x_i,z_i)\Big\}^2 \xi_{P,i}^2.
		\end{align*}
		
		Since $n^{-1/2}(\hat\theta^{(n)}-\theta_P)$ is uniformly asympotically Gaussian, we have that $(\hat\theta_l^{(n)}-\theta_{P,l})^2=O_\mathcal{P}(n^{-1})$. For $\tilde a^{(n,k)}$, $\tilde b_f^{(n,k)}$ and $\tilde b_\rho^{(n,k)}$ we use Lemma~\ref{lem:uniformMarkov}, noting that conditionally on $D^{(n,k)}$ each summand is i.i.d.
		
		Using the identity $\sum_i a_i b_i \leq (\sum_i a_i)(\sum_i b_i) $ for positive sequences $(a_i)$ and $(b_i)$, we have
		\begin{equation*}
			\big|\tilde a^{(n,k)}\big| \leq \frac{n}{K} \tilde a_\rho^{(n,k)} \tilde a_f^{(n,k)},
		\end{equation*}
		for \begin{align*}
			\tilde a_\rho^{(n,k)}&:= \frac{K}{n} \sum_{i\in I^{(n,k)}} \Big\{\rho_{P,l}(x_i,z_i)-\hat\rho_l^{(n,k)}(x_i,z_i)\Big\}^2;\\
			\tilde a_f^{(n,k)}&:= \frac{K}{n} \sum_{i\in I^{(n,k)}} \Big\{f_P(x_i,z_i)-\hat f^{(n,k)}(x_i,z_i)\Big\}^2.
		\end{align*}
		
		Finally,
		\begin{align*}
			\E_P\Big(\big|\tilde a_f^{(n,k)}\big|\;\Big|\; D^{(n,k)}\Big) &=  \E_P\Big[ \big\{f_P(X,Z)-\hat f^{(n,k)}(X,Z)\big\}^2 \;\Big|\; D^{(n,k)}\Big]  \leq  A_f^{(n,k)};\\
			\E_P\Big(\big|\tilde a_\rho^{(n,k)}\big|\;\Big|\; D^{(n,k)}\Big) &=  \E_P\Big[ \big\{\rho_{P,l}(X,Z)-\hat\rho_l^{(n,k)}(X,Z)\big\}^2 \;\Big|\; D^{(n,k)}\Big] \leq  A_\rho^{(n,k)};\\
			\E_P\Big(\big|\tilde b_f^{(n,k)}\big| \;\Big|\; D^{(n,k)}\Big) &=  \E_P\Big(\big[\nabla_l f_P(X,Z)-\nabla_l \hat f^{(n,k)}(X,Z) \\
			& \qquad+\rho_{P,l}(X,Z)\big\{f_P(X,Z)-\hat f^{(n,k)}(X,Z)\big\}\big]^2 \;\Big|\; D^{(n,k)}\Big)\\
			&\leq  E_f^{(n,k)};\\
			\E_P\Big(\big|\tilde b_\rho^{(n,k)}\big|\;\Big|\; D^{(n,k)}\Big) &=  \E_P\Big[ \big\{\rho_{P,l}(X,Z)-\hat\rho_l^{(n,k)}(X,Z)\big\}^2 \xi_P^2 \;\Big|\; D^{(n,k)}\Big] \leq  E_\rho^{(n,k)}.
		\end{align*}
		This suffices to show that $T^{(n,k)}_{P,2} = o_\mathcal{P}(1)$, so $\hat\Sigma^{(n)}-\Sigma_P = o_\mathcal{P}(1)$.
		
		It remains to show the final conclusion. By Lemma \ref{lem:uniformSlutsky}, it is enough to show that \begin{equation*}
			(\Sigma_P)^{-1/2}(\hat \Sigma^{(n)})^{1/2} = I+o_\mathcal{P}(1).
		\end{equation*}
		We have that the maximal eigenvalue of $(\Sigma_P)^{-1/2}$ is uniformly bounded above, and further that 
		\begin{align*}
			(\Sigma_P)^{-1/2}(\hat \Sigma^{(n)})^{1/2} &=(\Sigma_P)^{-1/2}\big\{(\hat \Sigma^{(n)})^{1/2} - (\Sigma_P)^{1/2} + (\Sigma_P)^{1/2}\big\}\\
			&=I+(\Sigma_P)^{-1/2}\big\{(\hat \Sigma^{(n)})^{1/2} - (\Sigma_P)^{1/2} \big\}.
		\end{align*}
		Therefore it remains to check that 
		\begin{equation*}
			(\hat \Sigma^{(n)})^{1/2} = (\Sigma_P)^{1/2}+o_\mathcal{P}(1).
		\end{equation*} By \citet[Eqn.~(7.2.13)]{HornMatrix}, \begin{align*}
			\big\|(\hat \Sigma^{(n)})^{1/2}  
			- (\Sigma_P)^{1/2} \big\|_2 &\leq \big\|(\Sigma_P)^{-1/2} \big\|_2  \|\hat \Sigma^{(n)} - \Sigma_P \|_2\\
			&\leq c_1^{-1/2}  \|\hat \Sigma^{(n)} - \Sigma_P \|_2.
		\end{align*}
		Hence $(\hat \Sigma^{(n)})^{1/2} - (\Sigma_P)^{1/2} = o_\mathcal{P}(1)$.
		This completes the proof.
	\end{proof}
	
	\subsection{Auxiliary lemmas}
	\begin{lemma}[{\citet[Supp. Lem.~18]{ShahGCM}}, vectorised]
		\label{lem:uniformCLT}
		Let $\mathcal{P}$ be a family of distributions for $\zeta\in\R^d$ and suppose $\zeta_1, \zeta_2,\dots$ are i.i.d.\ copies. For each $n\in\mathbb{N}$, let $S_n=n^{-1/2}\sum_{i=1}^n \zeta_i$. Suppose that for all $P\in\mathcal{P}$, we have $E_P(\zeta)=0$, $\Var_P(\zeta)=I$, and $\E_P(\|\zeta\|_2^{2+\eta})\leq c$ for some $c, \eta>0$. Then we have that \begin{equation*}
			\lim_{n\to\infty} \sup_{P\in\mathcal{P}} \sup_{t\in\R^d} |\Pr_P(S_n\leq t)-\Phi(t)| = 0.
		\end{equation*}
	\end{lemma}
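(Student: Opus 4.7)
The plan is to apply a quantitative multivariate Berry--Esseen inequality and then convert the uniform moment bound $\E_P[\|\zeta\|_2^{2+\eta}] \leq c$ into a uniform convergence rate. Since $d$ is fixed, any dimension-dependent constant appearing in such a bound contributes only an overall factor and does not interact with the uniformity in $P$. I would also exploit the fact that, under $\E_P[\zeta]=0$ and $\Var_P(\zeta)=I$, standardisation is already built in.

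The simplest case is $\eta \geq 1$, where H\"older's inequality yields $\E_P[\|\zeta\|_2^3] \leq (\E_P[\|\zeta\|_2^{2+\eta}])^{3/(2+\eta)} \leq c^{3/(2+\eta)}$ uniformly in $P$. A multivariate Berry--Esseen bound of the form
\begin{equation*}
\sup_{t \in \R^d} |\Pr_P(S_n \leq t) - \Phi(t)| \leq C(d)\, \E_P[\|\zeta\|_2^3] \big/ \sqrt{n}
\end{equation*}
(for instance Bentkus' bound, applied to the class of rectangular sets $\{s : s \leq t\}$) then gives the conclusion with rate $O(n^{-1/2})$, uniformly over $\mathcal{P}$.

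For $\eta \in (0, 1)$ the third moment may fail to be finite, so I would truncate at a level $M_n \to \infty$: set $\tilde\zeta_i := \zeta_i \ind_{\|\zeta_i\|_2 \leq M_n} - \mu_{P,n}$, where $\mu_{P,n} := \E_P[\zeta \ind_{\|\zeta\|_2 \leq M_n}] = -\E_P[\zeta \ind_{\|\zeta\|_2 > M_n}]$, and $\tilde S_n := n^{-1/2}\sum_{i=1}^n \tilde\zeta_i$. Applying Markov's inequality to $\|\zeta\|_2^{2+\eta}$ provides, all uniformly in $P$, the bounds
\begin{equation*}
\|\mu_{P,n}\|_2 \leq c / M_n^{1+\eta}, \quad \|\Var_P(\tilde\zeta_i) - I\|_{\mathrm{op}} \leq 2c / M_n^{\eta}, \quad \E_P[\|\tilde\zeta_i\|_2^3] \leq c M_n^{1-\eta},
\end{equation*}
together with $\Pr_P(\tilde S_n + \sqrt{n}\,\mu_{P,n} \neq S_n) \leq nc / M_n^{2+\eta}$ via a union bound. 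Applying the Berry--Esseen bound to the standardised $\tilde S_n$ and transferring back to $S_n$ using the deterministic shift $\sqrt{n}\,\mu_{P,n}$ and the uniform continuity of $\Phi$ (to absorb the near-identity covariance correction) yields the claim, provided $M_n$ is chosen in the admissible range $n^{1/(2+\eta)} \ll M_n \ll n^{1/(2(1-\eta))}$, which is non-empty exactly because $\eta > 0$.

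The main obstacle is the simultaneous bookkeeping in the truncation: the four error terms $\sqrt{n}\|\mu_{P,n}\|_2$, $\|\Var_P(\tilde\zeta_i) - I\|_{\mathrm{op}}$, $\E_P[\|\tilde\zeta_i\|_2^3]/\sqrt{n}$ and $nc/M_n^{2+\eta}$ must all vanish together as $n \to \infty$, and the shift $\sqrt{n}\mu_{P,n}$ and covariance perturbation must be pushed through the supremum over $t$ in a way that preserves uniformity. The key observation unlocking this is that every constant above depends only on $c$, $\eta$ and $d$ and not on the particular $P \in \mathcal{P}$, so all suprema commute with the limits and the convergence is genuinely uniform.
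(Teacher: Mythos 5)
Your argument is correct in outline, but it takes a genuinely different and more heavy-duty route than the paper. The paper's proof is purely qualitative: for each $n$ it selects a near-maximising distribution $P_n \in \mathcal{P}$, forms the triangular array $Y_{n,i} = n^{-1/2}\zeta_i$ under $P_n$, verifies the Lindeberg condition directly from the uniform $(2+\eta)$-moment bound via H\"older and Markov (obtaining a bound of order $n^{-\eta/(4+2\eta)}$ on the Lindeberg term), and invokes the Lindeberg--Feller CLT. This sidesteps Berry--Esseen bounds and truncation entirely, at the cost of giving no rate. Your approach buys an explicit uniform rate --- $O(n^{-1/2})$ when $\eta \geq 1$, and a polynomial rate determined by the choice of $M_n$ when $\eta \in (0,1)$ --- and your bookkeeping of the four truncation error terms and the admissible window $n^{1/(2+\eta)} \ll M_n \ll n^{1/(2(1-\eta))}$ is right. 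One step you should spell out more carefully: absorbing the covariance perturbation is not just ``uniform continuity of $\Phi$'', since $\{\Sigma_n^{1/2} Z \leq t\}$ is not a coordinatewise shift of $\{Z \leq t\}$ when $\Sigma_n^{1/2}$ is non-diagonal. You need a genuine Gaussian comparison over rectangles, e.g.\ combining $\|\Sigma_n^{1/2}Z - Z\|_\infty \leq \|\Sigma_n^{1/2} - I\|_{\mathrm{op}}\|Z\|_2$ with Gaussian anti-concentration for orthants (as in the paper's own Lemma on Slutsky-type arguments). This is standard but is an extra lemma, not a one-liner; with it included, your proof is complete and strictly more informative than the paper's, though considerably longer.
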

	\begin{proof}
		For each $n$, let $P_n\in\mathcal{P}$ satisfy \begin{equation*}
			\sup_{P\in\mathcal{P}} \sup_{t\in\R^d} |\Pr_P(S_n\leq t)-\Phi(t)| \leq \sup_{t\in\R^d} |\Pr_{P_n}(S_n\leq t)-\Phi(t)| + n^{-1}.
		\end{equation*}
		Let $Y_{n,i}$ be equal in distribution to $n^{-1/2}\zeta_i$ under $P_n$. We check the conditions to apply \citet[Prop.~2.27]{vanderVaartAsymptotic}. Indeed, $Y_{n,1},\dots, Y_{n,n}$ are i.i.d.\ for each $n$, and $\sum_{i=1}^n \Var(Y_{n,i}) = \sum_{i=1}^n n^{-1} \Var_{P_n}(\zeta)=I$. Finally, for any $\epsilon>0$ we have \begin{align*}
			\sum_{i=1}^n \E\big(\|Y_{n,1}\|_2^2 \ind_{\{\|Y_{n,i}\|_2>\epsilon\}} \big) &= \E_{P_n}\big(\|\zeta\|_2^2 \ind_{\{\|\zeta\|_2>\sqrt{n} \epsilon\}}\big)\\
			&\leq \big[\E_{P_n}(\|\zeta\|_2^{2+\eta})\big]^{2/(2+\eta)} \big[\E_{P_n}\big(\ind_{\{\|\zeta\|_2>\sqrt{n} \epsilon\}}^{(2+\eta)/\eta}\big)\big]^{\eta/(2+\eta)}\\
			&\leq c^{2/(2+\eta)} [\Pr_{P_n}(\|\zeta\|_2>\sqrt{n} \epsilon)]^{\eta/(2+\eta)}\\
			&\leq c^{2/(2+\eta)} [\E_{P_n}(\|\zeta\|_2)/(\sqrt{n}\epsilon)]^{\eta/(2+\eta)}\\
			&\leq c \epsilon^{-\eta/(2+\eta)}  n^{-\eta/(4+2\eta)} \to 0.
		\end{align*}
		Here the first inequality is due to H\"older, the third due to Markov and the second and fourth are applying the assumption $\E_P(\|\zeta\|_2^{2+\eta})\leq c$.
	\end{proof}
	
	\begin{lemma}[{\citet[Supp. Lem.~19]{ShahGCM}}]
		\label{lem:uniformWLLN}
		Let $\mathcal{P}$ be a family of distributions for $\zeta\in\R$ and suppose $\zeta_1, \zeta_2,\dots$ are i.i.d.\ copies. For each $n\in\mathbb{N}$, let $S_n=n^{-1}\sum_{i=1}^n \zeta_i$. Suppose that for all $P\in\mathcal{P}$, we have $E_P(\zeta)=0$, and $\E_P(|\zeta|^{1+\eta})\leq c$ for some $c, \eta>0$. Then we have that for all $\epsilon>0$,
		\begin{equation*}
			\lim_{n\to\infty} \sup_{P\in\mathcal{P}} \Pr_P(|S_n|>\epsilon) = 0.
		\end{equation*}
	\end{lemma}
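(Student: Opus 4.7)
\textbf{Proof proposal for Lemma~\ref{lem:uniformWLLN}.} The plan is to adapt the classical truncation proof of the weak law of large numbers under $(1+\eta)$-th moment bounds, tracking constants so that every intermediate bound depends on $P$ only through $c$ and $\eta$, which are uniform by assumption. Without loss of generality I would first reduce to the case $\eta \in (0,1)$: if $\eta \geq 1$, Lyapunov's inequality $(\E_P|\zeta|^{1+\eta'})^{1/(1+\eta')} \leq (\E_P|\zeta|^{1+\eta})^{1/(1+\eta)}$ applied to any $\eta' \in (0,1)$ (or the simple bound $|\zeta|^{1+\eta'} \leq 1 + |\zeta|^{1+\eta}$) shows that the hypothesis continues to hold with a new constant and with exponent $\eta'$, so I may freely assume $\eta < 1$.

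Next I would perform a truncation at level $T_n := \sqrt{n}$ and write $\zeta_i = \zeta_i^- + \zeta_i^+$ where $\zeta_i^- := \zeta_i \ind_{|\zeta_i| \leq T_n}$ and $\zeta_i^+ := \zeta_i \ind_{|\zeta_i| > T_n}$, giving the decomposition
\begin{equation*}
S_n = n^{-1}\sum_{i=1}^n (\zeta_i^- - \E_P \zeta^-) + \E_P \zeta^- + n^{-1}\sum_{i=1}^n \zeta_i^+,
\end{equation*}
so that $\Pr_P(|S_n|>\epsilon)$ is bounded by the probability that each term exceeds $\epsilon/3$. For the tail term I would use Markov's inequality together with $|\zeta|\ind_{|\zeta|>T_n} \leq |\zeta|^{1+\eta}/T_n^\eta$ to obtain $\Pr_P(|n^{-1}\sum \zeta_i^+| > \epsilon/3) \leq 3c/(\epsilon T_n^\eta)$. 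The mean-shift is handled by noting that $\E_P \zeta = 0$ implies $|\E_P \zeta^-| = |\E_P \zeta^+| \leq c/T_n^\eta$, so this deterministic term is eventually smaller than $\epsilon/3$ uniformly in $P$.

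For the centred, truncated average I would apply Chebyshev's inequality, using the bound
\begin{equation*}
\Var_P(\zeta^-) \leq \E_P[\zeta^2 \ind_{|\zeta|\leq T_n}] \leq T_n^{1-\eta} \E_P|\zeta|^{1+\eta} \leq c T_n^{1-\eta},
\end{equation*}
which gives $\Pr_P(|n^{-1}\sum(\zeta_i^- - \E_P\zeta^-)| > \epsilon/3) \leq 9cT_n^{1-\eta}/(n\epsilon^2)$. With $T_n = \sqrt{n}$ both $T_n^{-\eta} = n^{-\eta/2}$ and $T_n^{1-\eta}/n = n^{-(1+\eta)/2}$ tend to zero, and the bounds depend on $P$ only through $c$ and $\eta$; taking the supremum over $\mathcal{P}$ and then letting $n \to \infty$ delivers the conclusion.

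The argument is essentially routine, and I do not anticipate a substantive obstacle: the only thing to watch is that each tail and variance bound is expressed purely in terms of the uniform constants $c,\eta$, so that uniformity in $P\in\mathcal{P}$ is preserved when taking the supremum at the end. The mild preliminary reduction to $\eta<1$ is the one subtlety, ensuring the truncated second-moment bound $T_n^{1-\eta}\E_P|\zeta|^{1+\eta}$ remains applicable.
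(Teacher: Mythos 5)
Your proof is correct. Note that the paper itself does not prove this lemma at all: it is stated as a quotation of \citet[Supp.\ Lem.~19]{ShahGCM} and the proof is delegated entirely to that reference (in contrast to Lemma~\ref{lem:uniformCLT}, for which a proof via a near-extremal sequence $P_n$ and a triangular-array limit theorem is supplied). So your contribution is a self-contained elementary argument where the paper relies on citation. The truncation at $T_n=\sqrt{n}$, the three-way split with Markov for the tail sum, the cancellation $\E_P\zeta^-=-\E_P\zeta^+$ for the mean shift, and Chebyshev with $\Var_P(\zeta^-)\leq cT_n^{1-\eta}$ are all sound, every bound depends on $P$ only through $(c,\eta)$, and the preliminary reduction to $\eta<1$ correctly guards the truncated second-moment estimate. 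This is the standard route one would take to prove the cited result from scratch, and it buys the reader independence from the external reference at essentially no cost.
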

	
	\begin{lemma}
		\label{lem:uniformSlutsky}
		Let $\mathcal{P}$ be a family of distributions that determines the law of sequences $(V_n)_{n\in\mathbb{N}}$ and $(W_n)_{n\in\mathbb{N}}$ of random vectors in $\R^d$ and $(M_n)_{n\in\mathbb{N}}$ random matrices in $\R^{d\times d}$. Suppose \begin{equation*}
			\lim_{n\to\infty} \sup_{P\in\mathcal{P}} \sup_{t\in\R^d} |\Pr_P(V_n\leq t)-\Phi(t)| = 0.
		\end{equation*}
		Then we have the following.
		\begin{enumerate}[label=(\alph*)]
			\item If $W_n=o_\mathcal{P}(1)$ we have
			\begin{equation*}
				\lim_{n\to\infty} \sup_{P\in\mathcal{P}} \sup_{t\in\R^d} |\Pr_P(V_n + W_n\leq t)-\Phi(t)| = 0.
			\end{equation*}
			\item If $M_n= I + o_\mathcal{P}(1)$ we have \begin{equation*}\lim_{n\to\infty} \sup_{P\in\mathcal{P}} \sup_{t\in\R^d} |\Pr_P(M_n^{-1}V_n\leq t)-\Phi(t)| = 0.\end{equation*}
		\end{enumerate}
	\end{lemma}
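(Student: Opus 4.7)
\textbf{Proof proposal for Lemma~\ref{lem:uniformSlutsky}.}

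The plan is to prove (a) by a standard sandwich argument using the uniform continuity of $\Phi$, and then reduce (b) to (a). For part (a), fix $\epsilon > 0$ and note that
\[
\{V_n + W_n \leq t\} \subseteq \{V_n \leq t + \epsilon \mathbf{1}\} \cup \{\|W_n\|_\infty > \epsilon\},
\]
with a matching lower containment using $t - \epsilon \mathbf{1}$. This yields
\[
\Pr_P(V_n \leq t - \epsilon\mathbf{1}) - \Pr_P(\|W_n\|_\infty > \epsilon) \leq \Pr_P(V_n + W_n \leq t) \leq \Pr_P(V_n \leq t + \epsilon\mathbf{1}) + \Pr_P(\|W_n\|_\infty > \epsilon).
\]
Subtracting $\Phi(t)$ and taking absolute values, the right-hand side is bounded by $\sup_{s\in\R^d}|\Pr_P(V_n \leq s) - \Phi(s)| + |\Phi(t+\epsilon\mathbf{1}) - \Phi(t)| + \Pr_P(\|W_n\|_\infty > \epsilon)$, and similarly for the lower bound. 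Crucially, since each coordinate of a standard Gaussian has density bounded by $1/\sqrt{2\pi}$, a union bound gives $|\Phi(t \pm \epsilon \mathbf{1}) - \Phi(t)| \leq d\epsilon/\sqrt{2\pi}$ uniformly in $t$. Taking suprema over $t$ and $P$, the first and third terms vanish as $n\to\infty$ by the assumptions, leaving a bound of order $\epsilon$; since $\epsilon > 0$ was arbitrary, the conclusion follows.

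For part (b), the strategy is to write $M_n^{-1} V_n = V_n + (M_n^{-1} - I)V_n$ and apply part (a) with $W_n := (M_n^{-1} - I)V_n$. To make sense of $M_n^{-1}$, I would first use the Neumann series: on the event $\Omega_{P,n} := \{\|M_n - I\|_{\mathrm{op}} \leq 1/2\}$, the matrix $M_n$ is invertible with $\|M_n^{-1} - I\|_{\mathrm{op}} \leq 2\|M_n - I\|_{\mathrm{op}}$. Since $M_n - I = o_\mathcal{P}(1)$, we have $\sup_{P\in\mathcal{P}}\Pr_P(\Omega_{P,n}^c) \to 0$, and on $\Omega_{P,n}$ the bound shows $M_n^{-1} - I = o_\mathcal{P}(1)$. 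To handle $M_n^{-1}$ off $\Omega_{P,n}$, redefine $M_n^{-1} V_n$ arbitrarily (e.g.\ as $0$) there; this modification affects probabilities by at most $\Pr_P(\Omega_{P,n}^c)$, which is $o(1)$ uniformly in $P$.

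It remains to check $W_n = o_\mathcal{P}(1)$. The operator norm bound gives $\|W_n\|_2 \leq \|M_n^{-1} - I\|_{\mathrm{op}}\|V_n\|_2$ on $\Omega_{P,n}$. The first factor is $o_\mathcal{P}(1)$, and the uniform CLT-type assumption on $V_n$ implies $\|V_n\|_2 = O_\mathcal{P}(1)$: indeed, for any $\delta > 0$, choose $M_\delta$ so that $\Pr(\|Z\|_2 > M_\delta) < \delta/2$ for $Z \sim N(0, I_d)$, and then uniform convergence of $\Pr_P(V_n \leq t)$ to $\Phi(t)$ transfers this tail bound to $V_n$ for all $n$ large enough, uniformly in $P$ (by approximating the event $\{\|V_n\|_2 > M_\delta\}$ by finitely many orthant-type events, or directly via a Portmanteau-style argument). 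The product of an $O_\mathcal{P}(1)$ and an $o_\mathcal{P}(1)$ family is $o_\mathcal{P}(1)$, so part (a) applied to $V_n + W_n$ concludes the proof.

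The main obstacle I anticipate is not the algebra but the bookkeeping needed to ensure every step remains uniform in $P \in \mathcal{P}$: in particular, transferring stochastic boundedness of $V_n$ from its limiting distribution to a uniform tail bound, and correctly handling the exceptional event where $M_n$ fails to be well-approximated by the identity. Both issues are resolved by the observation that uniform convergence in Kolmogorov distance upgrades to a uniform Portmanteau statement on quadrants, combined with the uniform smallness of $\Pr_P(\Omega_{P,n}^c)$ coming from $M_n - I = o_\mathcal{P}(1)$.
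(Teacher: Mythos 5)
Your proposal is correct and follows essentially the same route as the paper's proof: the sandwich argument with the uniform Lipschitz bound $\sup_t|\Phi(t+\delta)-\Phi(t)|\leq d\delta$ for part (a), and for part (b) the reduction to (a) via a matrix-inversion perturbation bound (the paper uses $\|M_n^{-1}-I\|_\infty \leq \|M_n-I\|_\infty/(1-\|M_n-I\|_\infty)$ where you invoke the Neumann series on a high-probability event) together with uniform tightness of $V_n$ deduced from the uniform CDF convergence. The only differences are cosmetic bookkeeping choices of norm and of how the exceptional event is handled.
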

	\begin{proof}
		We first show that for any $\delta>0$, $\sup_{t\in\R^d} |\Phi(t+\delta)-\Phi(t)|\leq d\delta$. Indeed, letting $Z\sim N(0,I)$ in $\R^d$, \begin{align*}
			|\Phi(t+\delta)-\Phi(t)| &= \Pr(\cap_j \{Z_j\leq t_j + \delta\}) - \Pr(\cap_j \{Z_j\leq t_j\})\\
			&= \Pr(\cup_j\{Z_j\in(t_j,t_j+\delta]\})\\
			&\leq \sum_j \Pr(Z_j \in (t_j,t_j+\delta])\\
			&\leq d\delta.
		\end{align*}
		The final line follows because the univariate standard normal c.d.f.\ has Lipschitz constant $1/\sqrt{2\pi}~<~1$.
		
		Now consider the setup of (a). Given $\epsilon>0$ let $N$ be such that for all $n\geq N$ and for all $P\in \mathcal{P}$,
		\begin{equation*}
			\sup_{t\in\R^d} |\Pr_P(V_n\leq t)-\Phi(t)|<\epsilon/3 \text{\;\; and \;\;} \Pr_P[\|W_n\|_\infty > \epsilon/(3d)] < \epsilon/3.
		\end{equation*}
		Then \begin{align*}
			\Pr_P(V_n+W_n\leq t)-\Phi(t) &= \Pr_P(\cap_j \{V_{nj}+W_{nj} \leq t_j\}) - \Phi(t)\\
			&\leq \Pr_P[(\cap_j \{V_{nj}\leq t_j + \epsilon/(3d)\})\cup (\cup_j \{W_{nj} < -\epsilon/(3d)\})] - \Phi(t)\\
			&\leq \Pr[V_n\leq t+\epsilon/(3d)] + \Pr[\|W_n\|_\infty > \epsilon/(3d)] - \Phi(t)\\
			&< \epsilon/3 + \Phi[t+\epsilon/(3d)] - \Phi(t) + \epsilon/3 < \epsilon,
		\end{align*}
		and \begin{align*}
			\Pr_P(V_n+W_n\leq t)-\Phi(t) &= 1-\Phi(t) - \Pr_P(\cup_j\{V_{nj}+W_{nj} > t_j\})\\
			&\geq 1-\Phi(t) - \Pr_P[\cup_j(\{V_{nj} > t_j-\epsilon/(3d)\}\cup\{W_{nj} > \epsilon/(3d)\})]\\
			&= 1-\Phi(t) - \Pr_P[(\cup_j\{V_{nj} > t_j-\epsilon/(3d)\})\cup\{\|W_{n}\|_\infty > \epsilon/(3d)\}]\\
			&\geq 1-\Phi(t) - \Pr_P(\cup_j\{V_{nj} > t_j-\epsilon/(3d)\}) - \Pr_P[\|W_{n}\|_\infty > \epsilon/(3d)]\\
			&> \Pr_P[V_n\leq t-\epsilon/(3d)] - \Phi(t) - \epsilon/3\\
			&> -\epsilon/3 + \Phi[t-\epsilon/(3d)] - \Phi(t) - \epsilon/3 > -\epsilon.
		\end{align*}
		Thus for all $n\geq N$ and $P\in\mathcal{P}$,
		\begin{equation*}
			\sup_{t\in\R^d} |\Pr_P(V_n + W_n\leq t)-\Phi(t)|<\epsilon.
		\end{equation*}
		
		To prove (b), it suffices to show that $(M_n^{-1}-I)V_n = o_\mathcal{P}(1)$ and then apply (a). We have that $M_n-I$ is $o_\mathcal{P}(1)$, and so the sequence
		\begin{equation*}
			\|M_n-I\|_\infty := \sup_{x:\|x\|_\infty=1} \|(M_n-I)x\|_\infty = o_\mathcal{P}(1).
		\end{equation*}
		By \citet[Thm.~2.3.4]{GolubMatrix}, when $\|M_n-I\|_\infty <1$, then $M_n$ is nonsingular and \begin{equation*}
			\|M_n^{-1}-I\|_\infty \leq \frac{\|M_n-I\|_\infty}{1-\|M_n-I\|_\infty}.
		\end{equation*}
		Now \begin{align*}
			\sup_{P\in\mathcal{P}} \Pr_P(\|M_n^{-1}-I\|_\infty>\epsilon) & \leq \sup_{P\in\mathcal{P}} \Pr_P\bigg(\frac{\|M_n-I\|_\infty}{1-\|M_n-I\|_\infty}>\epsilon\bigg)\\
			&= \sup_{P\in\mathcal{P}}\Pr_P(\|M_n-I\|_\infty>\epsilon/(1+\epsilon)) \to 0,
		\end{align*}
		so $\|M_n^{-1}-I\|_\infty$ is also $o_\mathcal{P}(1)$. 
		
		Now we can show that the sequence $\|(M_n^{-1}-I)V_n\|_\infty$ is $o_\mathcal{P}(1)$. Indeed given $\epsilon>0$, let $\delta>0$ be such that $\Phi(\epsilon/\delta)>1-\epsilon/3$, and let $N$ be such that for all $n\geq N$ and for all $P\in\mathcal{P}$,
		\begin{equation*}
			\sup_{t\in\R^d} |\Pr_P(V_n\leq t)-\Phi(t)|<\epsilon/3 \text{\;\; and \;\;} \Pr_P(\|M_n^{-1}-I\|_\infty>\delta) < \epsilon/3.
		\end{equation*}
		Then 
		\begin{align*}     
			\Pr_P(\|(M_n^{-1}-I)V_n\|_\infty>\epsilon)&\leq \Pr_P(\|M_n^{-1}-I\|_\infty\|V_n\|_\infty>\epsilon)\\
			&\leq \Pr_P(\{\|M_n^{-1}-I\|_\infty>\delta\}\cup \{\|V_n\|_\infty>\epsilon/\delta\})\\
			&\leq \Pr_P(\|M_n^{-1}-I\|_\infty>\delta) +  1-\Pr_P(V_n\leq\epsilon/\delta)\\
			&< \epsilon/3 + 1-\Phi(\epsilon/\delta) + \epsilon/3 < \epsilon.
		\end{align*}
		This suffices to show that the sequence of random vectors $(M_n^{-1}-I)V_n$ is $o_\mathcal{P}(1)$, so we are done by (a).
	\end{proof}

	\begin{lemma}
		\label{lem:uniformMarkov}
		Let $X_m$ and $Y_m$ be sequences of random vectors governed by laws in some set $\mathcal{P}$, let $\|\cdot\|$ be any norm and $q\geq 1$.
		\begin{enumerate}[label=(\alph*)]
			\item If $\E_P(\|X_m\|^q\given  Y_m) = o_{\mathcal{P}}(1)$, then $\|X_m\|=o_{\mathcal{P}}(1)$.
			\item If $\E_P(\|X_m\|^q\given  Y_m) = O_{\mathcal{P}}(1)$, then $\|X_m\|=O_{\mathcal{P}}(1)$.
		\end{enumerate}
	\end{lemma}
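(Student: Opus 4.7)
The plan is to prove both parts via a conditional Markov inequality combined with a careful union-bound argument that respects the uniformity over $\mathcal{P}$. Since $\|X_m\|$ is a non-negative scalar random variable (and the norm equivalence noted in Section~\ref{sec:notation} lets us choose any convenient norm), it suffices to control tail probabilities of $\|X_m\|$ uniformly in $P\in\mathcal{P}$.

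For part (a), fix $\epsilon>0$ and a desired tolerance $\delta>0$. I would introduce the $Y_m$-measurable event
\[
A_m(\eta) := \big\{\E_P(\|X_m\|^q \mid Y_m) \leq \eta\big\}
\]
for a threshold $\eta>0$ to be chosen, and decompose
\[
\Pr_P(\|X_m\|>\epsilon) \leq \Pr_P\big(\{\|X_m\|>\epsilon\}\cap A_m(\eta)\big) + \Pr_P(A_m(\eta)^c).
\]
On the first event, the conditional Markov inequality gives $\Pr_P(\|X_m\|>\epsilon\mid Y_m)\leq \E_P(\|X_m\|^q\mid Y_m)/\epsilon^q\leq \eta/\epsilon^q$; taking expectations inside the indicator bounds the first term by $\eta/\epsilon^q$. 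Choosing $\eta = \delta\epsilon^q/2$ and using the hypothesis $\E_P(\|X_m\|^q\mid Y_m)=o_\mathcal{P}(1)$ to find $N$ with $\sup_{P\in\mathcal{P}}\Pr_P(A_m(\eta)^c)<\delta/2$ for all $m\geq N$, the full sum is at most $\delta$, uniformly in $P$. Since $\delta$ was arbitrary, this shows $\sup_{P\in\mathcal{P}}\Pr_P(\|X_m\|>\epsilon)\to 0$.

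Part (b) proceeds analogously. Given $\epsilon>0$, the hypothesis $\E_P(\|X_m\|^q\mid Y_m)=O_\mathcal{P}(1)$ yields $K$ and $N$ such that $\sup_{m\geq N}\sup_{P\in\mathcal{P}}\Pr_P(\E_P(\|X_m\|^q\mid Y_m)>K)<\epsilon/2$. Setting $M = (2K/\epsilon)^{1/q}$ and repeating the decomposition with $\eta=K$ and threshold $M$ in place of $\epsilon$, the conditional Markov step bounds the ``good'' event contribution by $K/M^q = \epsilon/2$, so $\sup_{m\geq N}\sup_{P\in\mathcal{P}}\Pr_P(\|X_m\|>M)<\epsilon$, which is exactly the definition of $\|X_m\|=O_\mathcal{P}(1)$.

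There is no real obstacle here; the only subtlety is ensuring that the threshold $A_m(\eta)$ is chosen measurably with respect to $Y_m$ so that the tower property applies inside the indicator, and that all constants ($N$, $K$, $\eta$, $M$) are chosen from the uniform-in-$P$ control provided by the hypotheses before invoking the Markov bound, so that the final tail control is itself uniform in $P$.
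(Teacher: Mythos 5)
Your proof is correct, and it takes a somewhat different route from the paper's. The paper truncates: it bounds $\Pr_P(\|X_m\|>\epsilon)$ by $\epsilon^{-q}\E_P[\|X_m\|^q\wedge 2\epsilon^q]$, pulls the conditional expectation inside the minimum via the tower property, and then invokes an external uniform bounded convergence lemma (\citet[Supp.\ Lem.~25]{ShahGCM}) to conclude that $\sup_{P\in\mathcal{P}}\E_P[W_m]\to 0$ for the truncated variable $W_m=\E_P(\|X_m\|^q\given Y_m)\wedge 2\epsilon^q$, which is $o_\mathcal{P}(1)$ and almost surely bounded. You instead split on the $Y_m$-measurable event $A_m(\eta)=\{\E_P(\|X_m\|^q\given Y_m)\leq\eta\}$, apply the conditional Markov inequality on $A_m(\eta)$ (where the tower property applies because the indicator is $Y_m$-measurable), and control $\Pr_P(A_m(\eta)^c)$ directly from the hypothesis. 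The two arguments do the same work --- both convert a statement about the conditional expectation being small in probability into a uniform tail bound on $\|X_m\|$ --- but your event decomposition replaces the truncation-plus-uniform-bounded-convergence step with an explicit union bound, making the proof self-contained and arguably more transparent about where the uniformity in $P$ enters. Part (b) is handled the same way in both, with the threshold $M=(2K/\epsilon)^{1/q}$ chosen after the level $K$ from the $O_\mathcal{P}(1)$ hypothesis, matching the paper's choice of $M$ in terms of $\tilde M$.
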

	\begin{proof}
		In both cases we work with a bounded version of $\|X_m\|$, and apply Markov's inequality. 
		
		Let $\E_P(\|X_m\|^q\given  Y_m) = o_{\mathcal{P}}(1)$. Given $\epsilon>0$,
		\begin{align*}
			\Pr_P[\|X_m\|>\epsilon] &= \Pr_P[\|X_m\|^q>\epsilon^q]\\
			&= \Pr_P[(\|X_m\|^q\wedge 2\epsilon^q)>\epsilon^q]\\
			&\leq \epsilon^{-q} \E_P[\|X_m\|^q\wedge 2\epsilon^q]\\
			&= \epsilon^{-q} \E_P[\E_P(\|X_m\|^q\given  Y_m) \wedge 2\epsilon^q].
		\end{align*}
		Writing $W_m =\E_P(\|X_m\|^q\given  Y_m) \wedge 2\epsilon^q$, we have that $W_m=o_\mathcal{P}(1)$ and $|W_m|\leq2\epsilon^q$ almost surely. Taking supremum over $\mathcal{P}$ and applying \citet[Supp. Lem.~25]{ShahGCM} (uniform bounded convergence), we have \begin{equation*}
			\sup_{P\in\mathcal{P}} \Pr_P(\|X_m\|>\epsilon) \leq \epsilon^{-q} \sup_{P\in\mathcal{P}} \E_P(W_m) \to 0.
		\end{equation*}
		
		The second conclusion is similar. Let $\E_P(\|X_m\|^q\given  Y_m) = O_{\mathcal{P}}(1)$. Given $\epsilon>0$ and for $M$ to be fixed later, we have
		\begin{align*}
			\Pr_P[\|X_m\|>M] &= \Pr_P[\|X_m\|^q>M^q]\\
			&= \Pr_P[(\|X_m\|^q\wedge 2M^q)>M^q]\\
			&\leq M^{-q} \E_P[\|X_m\|^q\wedge 2M^q]\\
			&= M^{-q} \E_P[\E_P(\|X_m\|^q\given  Y_m) \wedge 2M^q].
		\end{align*}
		Now let $W_m:= \E_P(\|X_m\|^q\given  Y_m) \wedge 2M^q$. Note that for any $\tilde M>0$,
		\begin{align*}
			W_m &= W_m \ind_{\{\E_P(\|X_m\|^q\given  Y_m)\leq \tilde M\}} + W_m \ind_{\{\E_P(\|X_m\|^q\given  Y_m)> \tilde M\}}\\
			&\leq \tilde M + 2M^q \ind_{\{\E_P(\|X_m\|^q\given  Y_m)> \tilde M\}}
		\end{align*}
		almost surely. Since $\E_P(\|X_m\|^q\given  Y_m) = O_{\mathcal{P}}(1)$, we may choose $\tilde M$ so that \begin{equation*}
			\sup_{m\in\mathbb{N}} \sup_{P\in\mathcal{P}} \Pr_P[\E_P(\|X_m\|^q\given  Y_m)>\tilde M] < \epsilon/3,
		\end{equation*}
		and then choose $M>(3\tilde M/\epsilon)^{1/q}$. Again applying \citet[Supp. Lem.~25]{ShahGCM}, we have \begin{align*}
			\sup_{m\in\mathbb{N}} \sup_{P\in\mathcal{P}} \Pr_P(\|X_m\|>M) &\leq M^{-q} \sup_{m\in\mathbb{N}}\sup_{P\in\mathcal{P}} \E_P(W_m)\\
			&\leq M^{-q}(\tilde M + 2M^q \epsilon/3)\\
			&<\epsilon/3 + 2\epsilon/3 = \epsilon. \qedhere
		\end{align*}
	\end{proof}

	\section{Proof of Theorem~\ref{thm:hrate}}
	\begin{proof}
		Let \begin{equation*}
			\sup_{P\in \mathcal{P}} \sup_{x,z} \big| \rho_P'(x,z)\big| =:C < \infty.
		\end{equation*}
		Using the inequality $(a+b)^2\leq 2\big(a^2+b^2\big)$, we decompose the quantities of interest \eqref{eqn:hratetarget} as follows:
		\begin{align}
			A_f^{(n)} &= \E_P \Big(\big[f_P(X,Z)-\{f_P(\cdot,Z)*K_h\}(X) + \{f_P(\cdot,Z)*K_h\}(X) - \hat f^{(n,1)}(X,Z)\big]^2 \; \Big| \; D^{(n,1)}\Big)\nonumber\\
			&\leq 2 \E_P \big([f_P(X,Z)-\{f_P(\cdot,Z)*K_h\}(X)]^2\big)\nonumber\\
			&\phantom{\leq} + 2\E_P\Big(\big[\{f_P(\cdot,Z)*K_h\}(X) - \big\{\tilde f^{(n,1)}(\cdot,Z)*K_h\big\}(X)\big]^2 \; \Big| \; D^{(n,1)}\Big)\nonumber\\
			&= 2 \E_P ([f_P(X,Z)-\{f_P(\cdot,Z)*K_h\}(X)]^2)\nonumber\\
			&\phantom{\leq} + 2\E_P\Big(\big[\big\{f_P(\cdot,Z) - \tilde f^{(n,1)}(\cdot,Z)\big\}*K_h\big]^2(X)  \; \Big| \; D^{(n,1)}\Big)\nonumber\\
			&\leq 2 \E_P\Big(\sup_{x} [f_P(x,Z)-\{f_P(\cdot,Z)*K_h\}(x)]^2\Big)\nonumber\\
			&\phantom{\leq} + 2\E_P\Big(\big[\big\{f_P(\cdot,Z) - \tilde f^{(n,1)}(\cdot,Z)\big\}*K_h\big]^2(X)  \; \Big| \; D^{(n,1)}\Big). \label{eqn:Afbound}
		\end{align}
		Similarly,
		\begin{align}
			E_f^{(n)} &\leq 2 \E_P\Big[ \big( f_P'(X,Z)- \{f_P(\cdot,Z)*K_h\}'(X)\nonumber\\
			&\phantom{\leq 2 \E_P\Big[}+\rho_{P}(X,Z)[f_P(X,Z)-\{f_P(\cdot,Z)*K_h\}(X)]\big)^2\Big]\nonumber\\
			&\phantom{\leq} + 2 \E_P\bigg[ \Big(  \big[\big\{f_P(\cdot,Z) - \tilde f^{(n,1)}(\cdot,Z)\big\}*K_h\big]'(X)\nonumber\\
			&\phantom{\leq \leq 2 \E_P\bigg[}+\rho_{P}(X,Z) \big[\big\{f_P(\cdot,Z) - \tilde f^{(n,1)}(\cdot,Z)\big\}*K_h\big](X)\Big)^2 \; \bigg| \; D^{(n,1)}\bigg]\nonumber\\
			&\leq 4 \E_P\Big(\sup_{x} [ f_P'(x,Z)-\{f_P(\cdot,Z)*K_h\}'(x)]^2\Big)\nonumber\\
			&\phantom{\leq }+ 4\E_P\Big(\E_P\big[\rho^2_{P}(X,Z) \; \big| \; Z \big] \sup_{x} [f_P(x,Z)-\{f_P(\cdot,Z)*K_h\}(x)]^2\Big)\nonumber\\
			&\phantom{\leq} + 4 \E_P\bigg[ \big[\big\{f_P(\cdot,Z) - \tilde f^{(n,1)}(\cdot,Z)\big\}*K_h\big]'^2(X) \; \bigg| \; D^{(n,1)}\bigg]\nonumber\\
			&\phantom{\leq }+ 4 \E_P\Big(\rho^2_{P}(X,Z) \big[\big\{f_P(\cdot,Z) - \tilde f^{(n,1)}(\cdot,Z)\big\}*K_h\big]^2(X)\; \Big| \; D^{(n,1)}\Big).\label{eqn:Bfbound}
		\end{align}
		
		By Theorem~\ref{thm:scoresubg}, $\E_P\big[\rho^2_P(X,Z) \; \big|\; Z=z\big]$ is bounded by $C$ for almost every $z\in\mathcal{Z}$. We wish to apply Lemma~\ref{lem:convolution} to the quantities \begin{equation*}
			f_P(x,z)-\{f_P(\cdot,z)*K_h\}(x); \quad f_P'(x,z)-\{f_P(\cdot,z)*K_h\}'(x).
		\end{equation*}
		To this end, note that by a Taylor expansion
		\begin{equation}
			|f_P(x+hw,z)| \leq |f_P(x,z)| + h|w||f_P'(x,z)|+\frac{C_P(z)}{2}w^2 h^2. \label{eqn:fptaylor}
		\end{equation}
		Since $f_P,f_P'$ are real-valued, both $|f_P(x,z)|$ and $|f_P'(x,z)|$ are finite for any fixed $x,z$. The conditions for Lemma~\ref{lem:convolution} follow. Now we have that
		\begin{align*}
			\big|\{f_P(\cdot,z)*K_h\}(x) - f_P(x,z)\big|&= \big|\E[f_P(x+hW,z) - f_P(x,z)]\big|\\
			&\leq  \bigg|\E\bigg[ hW f_P'(x,z) + \frac{h^2W^2}{2} \sup_{t\in\R} \big| f_P''(t,z)\big|\bigg]\bigg|\\
			&\leq \frac{C_P(z)}{2}h^2. 
		\end{align*}
		In the second line we have applied equation \eqref{eqn:fptaylor} and the third line $\E(W)=0$, $\E\big(W^2\big)=1$.
		Similarly, 
		\begin{align*}
			\big| \{f_P(\cdot,z)*K_h\}'(x) -  f_P'(x,z)\big|&= \bigg|\frac1h \E[Wf_P(x+hW,z) - h f_P'(x,z)]\bigg|\\
			&\leq  \bigg|\frac1h\E\bigg[ W f_P(x,z) + h\big(W^2-1\big)  f_P'(x,z) + \frac{h^2|W^3|}{2} \sup_{t\in\R} \big| f_P''(t,z)\big|\bigg]\bigg|\\
			&\leq \frac{\sqrt{2}C_P(z)}{\sqrt{\pi}}h, 
		\end{align*}
		noting that $\E\big(|W|^3\big)=2\sqrt{2/\pi}$.
		The choice of $h = cn^{-\gamma}$ for any \begin{equation}
			\gamma \geq \alpha/4 \label{eqn:gammaalpha4}
		\end{equation} yields the desired rates on the respective terms in equations (\ref{eqn:Afbound},~\ref{eqn:Bfbound}).

		Write \begin{equation*}
			\Delta_{P,n}(x,z) := f_P(x,z) - \tilde f^{(n,1)}(x,z).
		\end{equation*} It remains to demonstrate the following:
		\begin{align}
			\E_P\big[\{\Delta_{P,n}(\cdot,Z)*K_h\}^2(X) \;\big|\; D^{(n,1)} \big] = O_{\mathcal{P}}\big(\tilde{A}_f^{(n)}\big); \label{eqn:g*K}\\
			\E_P\big[\{\Delta_{P,n}(\cdot,Z)*K_h\}'^2(X) \;\big|\; D^{(n,1)} \big] = o_{\mathcal{P}}(1); \label{eqn:g*Kderiv}\\
			\E_P\big[\rho_P^2(X,Z)\{\Delta_{P,n}(\cdot,Z)*K_h\}^2(X) \;\big|\; D^{(n,1)}\big] = o_{\mathcal{P}}(1) ;\label{eqn:g*Krho2}
		\end{align}
		which we do by proving bounds in terms of
		\begin{align}
			\E_P\big[\Delta_{P,n}^2(X,Z)\;\big|\; D^{(n,1)}\big] &= \tilde{A}_f^{(n)}; \nonumber\\
			\big(\E_P\big[|\Delta_{P,n}(X,Z)|^{4}\;\big|\; D^{(n,1)}\big]\big)^{\frac{1}{2}} &=: \tilde{B}_f^{(n)}.\label{eqn:Bgdefn}
		\end{align}
		Lemma~\ref{lem:eta_exists} below shows that $\tilde{B}_f^{(n)} = O_{\mathcal{P}}(n^{-\alpha/2})$. 
		We work on the event that $\Delta_{P,n}$ is bounded over $(x,z)$, which happens with high probability by assumption. This will enable us to use dominated convergence to exchange various limits below. Recall that we are not assuming any smoothness of $\tilde f^{(n,1)}$ or $\Delta_{P,n}$. Since we are working under the event that $\Delta_{P,n}$ is bounded, we have that $\Delta_{P,n}*K_h$ is bounded by the same bound as $\Delta_{P,n}$, and also due to Lemma~\ref{lem:convolution} we have that $(\Delta_{P,n}*K_h)'$ exists and is bounded. By assumption and Theorem~\ref{thm:scoresubg} we have that $\rho_P'(x,z)$ and all moments of $\rho_P(X,Z)$ are bounded.

		We first show that \eqref{eqn:g*Krho2} follows from (\ref{eqn:g*K}, \ref{eqn:g*Kderiv}). Due to the aforementioned bounds and Lemma~\ref{lem:pconvergent}, we can apply Proposition~\ref{prop:intbyparts} as follows.
		\begin{align*}
			&\E_P\big[\rho_P^2(X,Z)\{\Delta_{P,n}(\cdot,Z)*K_h\}^2(X) \;\big|\; D^{(n,1)} \big]\\ &\qquad = -\E_P\big[\rho_P'(X,Z)\{\Delta_{P,n}(\cdot,Z)*K_h\}^2(X) \\
			&\qquad\phantom{= -\E_P\big[}+ 2\rho_P(X,Z)\{\Delta_{P,n}(\cdot,Z)*K_h\}'(X)\{\Delta_{P,n}(\cdot,Z)*K_h\}(X) \;\big|\; D^{(n,1)} \big]\\
			&\qquad\leq \sup_{x,z}|\rho_P'(x,z)| \E_P\big[\{\Delta_{P,n}(\cdot,Z)*K_h\}^2(X)\;\big|\; D^{(n,1)} \big]\\
			&\qquad\phantom{=} + \sup_{x,z} | \Delta_{P,n}(x,z)|\big(\E_P[\rho^2_P(X,Z)]\big)^{1/2}\big(\E_P\big[\{\Delta_{P,n}(\cdot,Z)*K_h\}'^2(X) \;\big|\; D^{(n,1)} \big]\big)^{1/2}
		\end{align*}
		The second line is due to the H\"older and Cauchy--Schwarz inequalities. All the random quantities above are integrable due to the stated bounds.
		It remains to show (\ref{eqn:g*K}, \ref{eqn:g*Kderiv}).
		
		We start with \eqref{eqn:g*K}. By Lemma~\ref{lem:convolution}, conditional Jensen's inequality, and the Fubini theorem,
		\begin{align*}
			\E_P\big[\{\Delta_{P,n}(\cdot,Z)*K_h\}^2(X)\;\big|\; D^{(n,1)}\big] &=\E_P\big[\E\{\Delta_{P,n}(X+hW,Z)\given  X,Z,D^{(n,1)}\}^2\;\big|\; D^{(n,1)}\big]\\
			&\leq \E_P\big[\E\big\{\Delta_{P,n}^2(X+hW,Z)\given  X,Z,D^{(n,1)}\big\}\;\big|\; D^{(n,1)}\big]\\
			&= \E\big[\E_P\big\{\Delta_{P,n}^2(X+hW,Z)\;\big|\; W, D^{(n,1)}\big\} \;\big|\; D^{(n,1)} \big].
		\end{align*}
		Define a new function $\phi_{P,n}:\R\to\R$ by $\phi_{P,n}(t) =  \E_P\big[\Delta_{P,n}^2(X+t,Z)\;\big|\; D^{(n,1)}\big]$, so $\phi_{P,n}(0)=\tilde{A}_f^{(n)}$. We will show later in the proof that $\phi_{P,n}$ is twice differentiable, which we
		assume to be true for now.
		By a Taylor expansion, for each fixed $h>0$, $w\in \R$ we have
		\begin{equation*}
			\phi_{P,n}(hw) \leq \phi_{P,n}(0)+hw \phi_{P,n}'(0) + \frac{h^2w^2}{2} \sup_{|t|\leq h|w|} \big|\phi_{P,n}''(t)\big|.
		\end{equation*}
		We will also show later that the remainder term is integrable with respect to the Gaussian density. Taking expectations over $W$ yields
		\begin{align}
			\E\big[\phi_{P,n}(hW)\;\big|\; D^{(n,1)}\big] &\leq \phi_{P,n}(0) + h\E(W) \phi_{P,n}'(0) + \frac{h^2}{2}\int_\R w^2K(w) \sup_{|t|\leq h|w|} \big|\phi_{P,n}''(t)\big| \; dw \nonumber\\
			&= \phi_{P,n}(0) + \frac{h^2}{2}\int_\R w^2K(w) \sup_{|t|\leq h|w|} \big|\phi_{P,n}''(t)\big| \; dw. \label{eqn:phihwtaylor}
		\end{align}
		In the final line we have used $\E(W)=0$.
		
		Now considering the quantity \eqref{eqn:g*Kderiv}, Lemma \ref{lem:convolution} implies that \begin{equation*}
			\{\Delta_{P,n}(\cdot,z)*K_h\}'(x) = \frac1h \E\big[W\Delta_{P,n}(x+hW,z)\;\big|\; D^{(n,1)}\big].
		\end{equation*}
		Similarly to the above,
		\begin{align*}
			\E_P\big([\{\Delta_{P,n}(\cdot,Z)*K_h\}'(X)]^2\;\big|\; D^{(n,1)}\big) &= h^{-2}\E_P\big[\E\big\{W \Delta_{P,n}(X+hW,Z)\;\big|\; X,Z, D^{(n,1)}\}^2\;\big|\; D^{(n,1)}\big]\\
			&\leq h^{-2}\E_P\big[\E\big\{W^2 \Delta_{P,n}^2(X+hW,Z)\;\big|\; X,Z,D^{(n,1)} \big\}\;\big|\; D^{(n,1)}\big]\\
			&= h^{-2}\E\big[W^2 \E_P\big\{\Delta_{P,n}^2(X+hW,Z)\;\big|\; W, D^{(n,1)}\big\}\;\big|\; D^{(n,1)}\big].
		\end{align*}
		Moreover, 
		\begin{align}
			h^{-2}\E\big[W^2\phi_{P,n}(hW)\;\big|\; D^{(n,1)}\big] &= h^{-2} \E\big(W^2\big)\phi_{P,n}(0) + h^{-1}\E\big(W^3\big) \phi_{P,n}'(0) \nonumber\\
			&\phantom{=} +  \frac12 \int_\R w^4K(w) \sup_{|t|\leq h|w|} \big|\phi_{P,n}''(t)\big| \; dw \nonumber\\
			&= h^{-2}\phi_{P,n}(0) + \frac12 \int_\R w^4K(w) \sup_{|t|\leq h|w|} \big|\phi_{P,n}''(t)\big| \; dw. \label{eqn:w2phihwtaylor}
		\end{align}
		In the final line we have used $\E\big(W^2\big)=1$, $\E\big(W^3\big)=0$.
		
		It remains to check that $\phi_{P,n}$ is twice differentiable and compute its derivatives. By a change of variables $u=x + t$,
		\begin{align*}
			\phi_{P,n}(t) &= \E_P\Big[\E_P\big\{\Delta_{P,n}^2(X+t,Z) \;\big|\; Z, D^{(n,1)} \big\}\;\Big|\; D^{(n,1)}\Big]\\
			&= \E_P\bigg[\int_{\R} \Delta_{P,n}^2 (x+t,Z) p_P(x\given  Z) \; dx  \;\bigg|\; D^{(n,1)}\bigg]\\
			&= \E_P\bigg[\int_{\R} \Delta_{P,n}^2 (u,Z) p_P(u-t\given  Z) \; du\;\bigg|\; D^{(n,1)}\bigg].
		\end{align*}
		The conditional density $p_P$ is assumed twice differentiable, so the integrand is twice differentiable with respect to $t$. The bound on $\Delta_{P,n}$ and conclusion of Lemma~\ref{lem:pderivlocallybounded} allow us to interchange the differentiation and expectation operators using \citet[Thm.~20.4]{AliprantisAnalysis}. Differentiating $\phi_{P,n}$ twice gives \begin{align}
			\phi_{P,n}''(t) &= \partial_{t}^2\E_P\bigg[\int_{\R} \Delta_{P,n}^2 (u,Z) p_P(u-t\given  Z) \; du\;\bigg|\; D^{(n,1)}\bigg] \nonumber\\
			&= \E_P\bigg[ \int_{\R} \Delta_{P,n}^2 (u,Z) p_P''(u-t\given  Z) \; du\;\bigg|\; D^{(n,1)}\bigg].\label{eqn:phi''int}
		\end{align}
		
		Note that 
		\begin{align}
			\rho_{P}'(x,z) &=  \bigg(\frac{p_P'(x\given  z)}{p_P(x\given  z)}\bigg)' \nonumber\\
			&= \frac{p_P''(x\given  z)}{p_P(x\given  z)} - \bigg(\frac{p_P'(x\given  z)}{p_P(x\given  z)}\bigg)^2 \nonumber\\
			&= \frac{p_P''(x\given  z)}{p_P(x\given  z)} - \rho^2_{P}(x, z). \label{eqn:p''identity}
		\end{align} 
		Applying equation \eqref{eqn:p''identity}, the Lipschitz property of $\rho_P$, and Lemma~\ref{lem:densityratiobound} to the interior of \eqref{eqn:phi''int} yields \begin{align*}
			\bigg| \int_\R &\Delta_{P,n}^2 (u,z) p_P''(u-t\given  z) \; du  \bigg| \\
			&= \bigg| \int_\R \Delta_{P,n}^2 (u,z) \big\{\rho_{P}'(u-t, z) + \rho_{P}^2(u-t, z)\big\} \;p_P(u-t\given  z)  \; du \bigg|\\
			&= \bigg| \int_\R \Delta_{P,n}^2 (u,z) \big[\rho_{P}'(u-t, z) + \{\rho_{P}(u-t, z) - \rho_{P}(u, z) + \rho_{P}(u, z)\}^2\big] \\
            &\phantom{=\bigg| \int_\R} \cdot \frac{p_P(u-t\given  z)}{p_P(u\given  z)}p_P(u\given  z)  \; du \bigg| \\
			&\leq \int_\R \Delta_{P,n}^2 (u,z) \big[C + \{C|t| + \rho_{P}(u, z)\}^2\big] \;\exp\bigg(-t\rho_P(u,z)+\frac{C}{2}t^2\bigg)p_P(u\given  z)  \; du\\
			&\leq \int_\R \Delta_{P,n}^2 (u,z) \big\{C + 2C^2t^2 + 2\rho^2_{P}(u, z)\big\} \;\exp\bigg(-t\rho_P(u,z)+\frac{C}{2}t^2\bigg)p_P(u\given  z)  \; du\\
			&= \E_P \Bigg[ \Delta_{P,n}^2 (X,z) \big\{C + 2C^2t^2 + 2\rho^2_{P}(X, z)\big\} \\
            & \phantom{=\E_P \Bigg[} \cdot \exp\bigg(-t\rho_P(X,z)+\frac{C}{2}t^2\bigg) \; \Bigg|\; Z=z, D^{(n,1)}\Bigg].
		\end{align*}
		The penultimate line uses $(a+b)^2\leq 2(a^2+b^2)$. Plugging this in to \eqref{eqn:phi''int} and using the Fubini theorem gives
		\begin{align}
			\big| \phi_{P,n}''(t) \big| &\leq \E_P \Bigg[ \Delta_{P,n}^2 (X,Z) \big\{C + 2C^2t^2 + 2\rho^2_{P}(X, Z)\big\} \\ & \phantom{= \E_P \Bigg[}\cdot\exp\bigg(-t\rho_P(X,Z)+\frac{C}{2}t^2\bigg) \; \Bigg| D^{(n,1)}\Bigg] \nonumber\\
			&= \big(C + 2C^2t^2\big)\exp\bigg(\frac{C}{2}t^2\bigg) \E_P \Big[ \Delta_{P,n}^2 (X,Z) \;\exp\big(-t\rho_P(X,Z)\big) \; \Big| D^{(n,1)}\Big] \nonumber\\
			&\phantom{=} + 2 \exp\bigg(\frac{C}{2}t^2\bigg) \E_P \Big[ \Delta_{P,n}^2 (X,Z)\rho^2_P(X,Z) \;\exp\big(-t\rho_P(X,Z)\big) \; \Big| D^{(n,1)}\Big]. \label{eqn:phi''bound}
		\end{align}
	Applying the Cauchy--Schwarz inequality twice and appealing to the monotonicity of $L_p$ norms, we obtain
		\begin{align*}
			\big| \phi_{P,n}''(t) \big| &\leq \big(C + 2C^2t^2\big)\exp\bigg(\frac{C}{2}t^2\bigg) \Big(\E_P \big[ \exp\big(-2 t \rho_P(X,Z)\big) \big]\Big)^{\frac{1}{2}} \tilde{B}_f^{(n)} \\
			&\phantom{=} + 2 \exp\bigg(\frac{C}{2}t^2\bigg) \Big(\E_P \big[ |\rho_P(X,Z)|^4 \exp\big(-2 t \rho_P(X,Z)\big) \big]\Big)^{\frac{1}{2}} \tilde{B}_f^{(n)}\\
                &\leq \big(C + 2C^2t^2\big)\exp\bigg(\frac{C}{2}t^2\bigg) \Big(\E_P \big[ \exp\big(-2 t \rho_P(X,Z)\big) \big]\Big)^{\frac{1}{2}} \tilde{B}_f^{(n)} \\
			&\phantom{=} + 2 \exp\bigg(\frac{C}{2}t^2\bigg) \Big(\E_P \big[ \rho^8_P(X,Z)\big]\Big)^{1/4} \Big(\E_P\big[ \exp\big(-4 t \rho_P(X,Z)\big) \big]\Big)^{\frac{1}{4}} \tilde{B}_f^{(n)}\\
			&\leq \exp\bigg(\frac{C}{2}t^2\bigg) \Bigg\{ C + 2C^2t^2+  2 \Big(\E_P \big[ \rho^{8}_P(X,Z)\big]\Big)^{\frac{1}{4}} \Bigg\} \\
   &\phantom{=} \cdot \Bigg(\E_P \bigg[ \exp\bigg(- 4 t \rho_P(X,Z)\bigg) \bigg]\Bigg)^{\frac{1}{4}} \tilde{B}_f^{(n)}.
		\end{align*}
		We are now in a position to apply Theorem~\ref{thm:scoresubg}. Recalling the moment generating function bound for sub-Gaussian random variables, we have 
		\begin{align*}
			|\phi_{P,n}''(t)| &\leq \exp\bigg(\frac{C}{2}t^2\bigg) \Bigg\{ C + 2C^2t^2+  2 \Big(C^{4}7!!\Big)^{\frac{1}{4}} \Bigg\}  \Big(\exp\big(4^2 C^2 t^2\big)\Big)^{\frac{1}{4}} \tilde{B}_f^{(n)} \\
			&\leq c_1 \big(1 + t^2 \big) \exp\big(c_2 t^2 \big) \tilde{B}_f^{(n)},
		\end{align*}
		for some constants $c_1,c_2>0$ depending on $C$ (but not on $P$ or $n$).
		
		Returning to equations (\ref{eqn:phihwtaylor}, \ref{eqn:w2phihwtaylor}), we have \begin{align*}
			\E\big[\phi_{P,n}(hW)\;\big|\; D^{(n,1)}\big]
			&\leq \tilde{A}_f^{(n)} + c_1 h^2 \tilde{B}_f^{(n)} \int_\R w^2(1+h^2w^2)\exp(c_2h^2w^2)K(w) \; dw.\\
			h^{-2}\E\big[W^2\phi_{P,n}(hW)\;\big|\; D^{(n,1)}\big]
			&\leq h^{-2}\tilde{A}_f^{(n)} + c_1 \tilde{B}_f^{(n)} \int_\R w^4(1+h^2w^2)\exp(c_2h^2w^2)K(w) \; dw.
		\end{align*}
		For all $0<h<\frac1{2\sqrt{c_2}}$, the final integrals are bounded by a constant.
		
		Putting everything together, we have that \begin{align*}
		  \E_P\big[\{\Delta_{P,n}(\cdot,Z)*K_h\}^2(X) \;\big|\; D^{(n,1)} \big] &= O_{\mathcal{P}}\big(\tilde{A}_f^{(n)} + h^2 \tilde{B}_f^{(n)}\big) = O_{\mathcal{P}}\big(n^{-\alpha} + h^2 n^{-\alpha/2}\big);\\
			\E_P\big[\{\Delta_{P,n}(\cdot,Z)*K_h\}'^2(X) \;\big|\; D^{(n,1)} \big] &= O_{\mathcal{P}}\big(h^{-2}\tilde{A}_f^{(n)} + \tilde{B}_f^{(n)}\big)= O_{\mathcal{P}}\big(h^{-2} n^{-\alpha} + n^{-\alpha/2}\big). 
		\end{align*}
		Hence the choice of $h = cn^{-\gamma}$ for any $\gamma \in [\alpha/4, \alpha/2)$ yields the desired rates on (\ref{eqn:g*K}, \ref{eqn:g*Kderiv}). Since this choice also satisfies the bound \eqref{eqn:gammaalpha4}, we also achieve the desired rates in (\ref{eqn:Afbound},~\ref{eqn:Bfbound}). This completes the proof.     
	\end{proof}
	\subsection{Auxiliary lemmas}
	\begin{lemma}
		\label{lem:convolution}
		Let $W\sim N(0,1)$ be a standard Gaussian random variable independent of $(X,Z)$, and fix $h>0$. Let $g:\R \times \mathcal{Z} \to \R$ be such that $\E|g(x+hW,z)| < \infty$ for all $(x,z)$. Then we have that for each $z$,
		\begin{equation*}
			x \mapsto \{g(\cdot,z)*K_h\}(x) =  \E[g(x+hW,z)]
		\end{equation*}
		is differentiable.
		Moreover, if $\E\big[|g(x+hW,z)|^{1+\eta}\big]~<~\infty$ for some $\eta>0$ then the derivative is given by \begin{equation*}
			\{g(\cdot,z)*K_h\}'(x) =  \frac1h\E[Wg(x+hW,z)].
		\end{equation*}
	\end{lemma}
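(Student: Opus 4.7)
The strategy is to apply Leibniz's rule for differentiation under the integral sign directly to $F(x) := \int g(v,z)K_h(x-v)\,dv$ (the equality with $\E[g(x+hW,z)]$ being the change of variables $v = x+hw$). Using the Gaussian kernel identity $K_h'(t) = -h^{-2}tK_h(t)$, the formal derivative under the integral is $-h^{-2}\int g(v,z)(x-v)K_h(x-v)\,dv$, which reverts under the inverse change of variables to $h^{-1}\E[Wg(x+hW,z)]$. Thus both the differentiability assertion and the explicit formula will follow once the interchange of $\partial_x$ with $\int$ is justified.

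Applying Leibniz on a neighborhood $[x_0-\delta,x_0+\delta]$ of an arbitrary $x_0$ requires an integrable dominating function for $|g(v,z)|\,|x-v|\,K_h(x-v)$, independent of $x$ in that neighborhood. The factorisation $K_h(x-v) = K_h(x_0-v)\exp\bigl((v-x_0)(x-x_0)/h^2 - (x-x_0)^2/(2h^2)\bigr)$ yields the clean pointwise bound
\[
\sup_{|x-x_0|\leq\delta}|x-v|\,K_h(x-v) \;\leq\; C_\delta\,(|v-x_0|+\delta)\,K_h(x_0-v)\,e^{|v-x_0|\delta/h^2},
\]
so the remaining task is to show $v$-integrability of the right-hand side against $|g(v,z)|$ under each of the two moment hypotheses.

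For the second assertion, under $\E|g(x+hW,z)|^{1+\eta}<\infty$, the clean route is H\"older's inequality with conjugate exponents $1+\eta$ and $(1+\eta)/\eta$: the $|g|^{1+\eta}$ factor is finite by the assumption applied at $x_0$, while the remaining polynomial-times-exponential factor integrates against the Gaussian density $K_h(x_0-\cdot)$ to a finite Gaussian moment, furnishing an integrable dominating function immediately.

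For the first assertion, with only $\E|g(x+hW,z)|<\infty$, H\"older is unavailable and a more delicate argument is needed. The plan is to use the elementary bound $|v-x_0|+\delta \leq C_\mu(e^{\mu(v-x_0)}+e^{-\mu(v-x_0)})$ for an arbitrary $\mu>0$ to reduce the dominating function to a sum of terms proportional to $|g(v,z)|K_h(x_0-v)e^{\pm\nu(v-x_0)}$ with $\nu := \mu + \delta/h^2$, and then invoke the Gaussian-completion identity $K_h(x_0-v)e^{\nu(v-x_0)} = e^{\nu^2 h^2/2}\,K_h\bigl((x_0+\nu h^2)-v\bigr)$ to rewrite each integral as $e^{\nu^2 h^2/2}\,\E|g(x_0\pm\nu h^2+hW,z)|$, which is finite by the Part~1 assumption \emph{applied at the shifted point} $x_0\pm\nu h^2$. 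The main obstacle is spotting this Gaussian-semigroup reduction: pointwise finiteness of $\E|g(\cdot+hW,z)|$ at \emph{every} $x$, not just at $x_0$, is essential, since it allows any polynomial growth factor to be absorbed into an exponential reweighting of the Gaussian kernel rather than demanding a widening of its bandwidth (which pointwise first-moment finiteness does not supply).
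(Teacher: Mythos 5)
Your strategy is correct and, for the second assertion, coincides with the paper's: both proofs differentiate under the integral sign via a dominated-convergence theorem, both use the factorisation $K_h(x-v)=K_h(x_0-v)\,\exp\bigl(-(x-x_0)(x_0-v)/h^2-(x-x_0)^2/(2h^2)\bigr)$ to reduce the dominating function to $|g(v,z)|$ times a polynomial-times-exponential weight against $K_h(x_0-\cdot)$, and both dispose of that weight with H\"older's inequality and Gaussian moments under the $(1+\eta)$-moment hypothesis (the paper splits the conjugate exponent once more into $q_1,q_2$, a cosmetic difference). Where you genuinely depart from the paper is in the first assertion: the paper's written proof constructs its integrable dominating function only via the H\"older step, i.e.\ only under the $(1+\eta)$-moment hypothesis, and gives no separate argument for differentiability under the first-moment hypothesis alone. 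Your Gaussian-completion device --- bounding $|v-x_0|+\delta$ by $C_\mu(e^{\mu(v-x_0)}+e^{-\mu(v-x_0)})$ and absorbing each exponential tilt into a shift of the kernel centre, $K_h(x_0-v)e^{\nu(v-x_0)}=e^{\nu^2h^2/2}K_h((x_0+\nu h^2)-v)$, so that integrability follows from $\E|g(x_0\pm\nu h^2+hW,z)|<\infty$ --- is a correct and self-contained way to close exactly that case, and it makes essential (and legitimate) use of the hypothesis holding at \emph{every} $x$ rather than only at $x_0$. In short: same machinery for the derivative formula, plus an extra argument that actually covers the weaker-moment differentiability claim more carefully than the paper does.
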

	\begin{proof}
		Recall that the convolution operator is \begin{equation*}
			\{g(\cdot,z)*K_h\}(x) = \int_\R g(u,z) K_h(x-u)\; du.
		\end{equation*}
		We check the conditions for interchanging differentiation and integration operators \citep[Thm.~20.4]{AliprantisAnalysis}. The integrand $g(u,z) K_h(x-u)$ is integrable in $u$ with respect to the Lebesgue measure for each $(x,z)$, since \begin{align*}
			\int_\R |g(u,z)| K_h(x-u)\; du &= \int_{\R} |g(u,z)| \;\frac1h\; K\bigg(\frac{x-u}{h}\bigg) \; du\\
			&= \int_{\R} |g(x+hw,z)| K(-w) \; dw\\
			&= \int_{\R} |g(x+hw,z)| K(w) \; dw\\
			&= \E[|g(x+hW,z)|] < \infty.
		\end{align*} 
		Due to the smoothness of the Gaussian kernel, $g(u,z) K_h(x-u)$ is differentiable in $x$ with derivative
		absolutely continuous in $x$ for each $(u,z)$. Furthermore it has $x$-derivative 
		\begin{equation*}
			g(u,z) K_h'(x-u) = - g(u,z) \bigg(\frac{x-u}{h^2}\bigg) K_h(x-u).
		\end{equation*}
		
		Fix $x_0$ and $V=[x_0-h, x_0+h]$. It remains to find a Lebesgue integrable function $G:\R\to\R$ such that \begin{equation*}
			\bigg| g(u,z) \bigg(\frac{x-u}{h^2}\bigg) K_h(x-u) \bigg| \leq G(u)
		\end{equation*}
		for all $x\in V$ and $u\in\R$. Now for any $x\in V$, $u\in\R$,
		\begin{align*}
			\bigg| g(u,z) \bigg(\frac{x-u}{h^2}\bigg) &K_h(x-u) \bigg| \\ &=\bigg| g(u,z) \bigg(\frac{x-x_0+x_0-u}{h^2}\bigg) \frac{K_h(x-u)}{K_h(x_0-u)}K_h(x_0-u) \bigg|\\
			&= | g(u,z)| \bigg|\frac{x-x_0+x_0-u}{h^2}\bigg| \exp\bigg(-\frac{(x-x_0)^2}{2} - (x-x_0)(x_0-u)\bigg) K_h(x_0-u) \\
			&\leq |g(u,z)| \frac{h+|x_0-u|}{h^2} \exp\bigg(-\frac{h^2}2 + h|x_0-u|\bigg) K_h(x_0-u)\\
			&=:G(u).
		\end{align*}
		Moreover, recalling the symmetry of $K$ and using a change of variables $w=(u-x_0)/h$, \begin{equation*}
			\int_\R G(u)\; du = \frac1h \exp\bigg(-\frac{h^2}{2}\bigg) \int_\R |g(x_0+hw,z)| (1+|w|) \exp\big(|w|h^2\big)K(w)\;dw
		\end{equation*}
		We now apply H\"older's inequality twice. Pick $q_1, q_2 > 1$ be such that $1/q_1 + 1/q_2 = \eta/(1+\eta)$. Now,
		\begin{align*}
			\int_\R G(u)\; du &\leq \frac1h \exp\bigg(-\frac{h^2}{2}\bigg) \Big(\E\big[|g(x_0+hW,z)|^{1+\eta}\big]\Big)^{\frac1{1+\eta}} \\
			& \phantom{\leq} \cdot \bigg(\int_\R (1+|w|)^{\frac{1+\eta}{\eta}} \exp\bigg({\frac{1+\eta}{\eta}}|w|h^2\bigg)K(w)\;dw\bigg)^{\frac{\eta}{1+\eta}}\\
			&\leq \frac1h  \exp\bigg(-\frac{h^2}{2}\bigg) \Big(\E\big[|g(x_0+hW,z)|^{1+\eta}\big]\Big)^{\frac1{1+\eta}} \Big(\E\big[(1+|W|)^{q_1}\big]\Big)^{\frac1{q_1}} \Big(\E\big[ \exp\big(q_2|w|h^2\big)\big]\Big)^{\frac{1}{q_2}}.
		\end{align*}
		We have that $\E\big[|g(x_0+hW,z)|^{1+\eta}\big]$ is finite by assumption, $\E\big[(1+|W|)^{q_1}\big]$ is a Gaussian moment so is finite, and $(\E\big[ \exp\big(q_2|w|h^2\big)\big]$ is bounded in terms of the Gaussian moment generating function. Hence $G$ is Lebesgue integrable.

		Finally we check the claimed identities. Using a change of variables $u=x+hw$, and recalling the symmetry of $K$, we have that
		\begin{align*}
			\{g(\cdot,z)*K_h\}(x) &= \int_{\R} g(u,z) \;\frac1h\; K\bigg(\frac{x-u}{h}\bigg) \; du\\
			&= \int_{\R} g(x+hw,z) K(w) \; dw\\
			&= \E[g(x+hW,z)],
		\end{align*} 
		and
		\begin{align*}
			\{g(\cdot,z)*K_h\}'(x) &= \int_{\R} g(u,z)  K_h'(x-u) \; du\\
			&= -\frac1{h^{2}}\int_{\R} g(u,z) \bigg(\frac{x-u}{h}\bigg) K\bigg(\frac{x-u}{h}\bigg) \; du\\
			&= \frac1h\int_{\R} g(x+hw,z) w K(w) \; dw\\
			&= \frac1h \E[Wg(x+hW,z)].
		\end{align*}
	\end{proof}
	
	\begin{lemma}
		\label{lem:pderivlocallybounded}
		Let $p$ be a twice differentiable density on $\R$, with $\sup_{x\in\R} |\partial_x \log p(x)| =: \sup_{x\in\R} |\rho'(x)|\leq C<\infty$. Then for every $t_0\in\R$ there exists a Lebesgue integrable function $g$ such that 
		\begin{equation*}
			|p'(x-t)|,\; |p''(x-t)| \leq g(x)
		\end{equation*}
		for all $x\in\mathcal\R$ and $t\in [t_0-1, t_0+1]$. 
	\end{lemma}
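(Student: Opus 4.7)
The plan is to express $p'$ and $p''$ algebraically in terms of the score $\rho$, its derivative $\rho'$, and the density $p$ itself, then use the Lipschitz assumption on $\rho$ together with Lemma~\ref{lem:densityratiobound} to pull everything back to an evaluation at the fixed point $x$ (instead of the shifted point $x-t$), and finally verify integrability using the sub-Gaussianity of $\rho(X)$ provided by Theorem~\ref{thm:scoresubg}.

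Concretely, note that $p'=\rho p$ and hence $p''=(\rho'+\rho^2) p$. Thus, for any $t$,
\begin{equation*}
|p'(x-t)| = |\rho(x-t)|\, p(x-t),\qquad |p''(x-t)| \leq \bigl(C + \rho^2(x-t)\bigr)\, p(x-t),
\end{equation*}
using $|\rho'|\leq C$. The Lipschitz bound on $\rho$ gives $|\rho(x-t)|\leq |\rho(x)| + C|t|$, so for $t\in[t_0-1,t_0+1]$ (write $T:=|t_0|+1$) we have $|\rho(x-t)|\leq |\rho(x)|+CT$. For the density factor, apply Lemma~\ref{lem:densityratiobound} with shift $u=-t$:
\begin{equation*}
p(x-t) \leq p(x)\exp\!\Bigl(-t\rho(x)+\tfrac{t^2 C}{2}\Bigr) \leq p(x)\exp\!\Bigl(T|\rho(x)| + \tfrac{T^2 C}{2}\Bigr).
\end{equation*}
Combining these bounds, both $|p'(x-t)|$ and $|p''(x-t)|$ are dominated, uniformly in $t\in[t_0-1,t_0+1]$, by
\begin{equation*}
g(x) := K_{T,C}\,\bigl(1 + \rho^2(x)\bigr)\, p(x)\, \exp\!\bigl(T|\rho(x)|\bigr),
\end{equation*}
for some constant $K_{T,C}$ depending only on $T$ and $C$.

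It remains to show $\int g(x)\,dx <\infty$, i.e.\ $\mathbb{E}\bigl[(1+\rho^2(X))\exp(T|\rho(X)|)\bigr]<\infty$. Bounding $\exp(T|\rho(X)|)\leq \exp(T\rho(X)) + \exp(-T\rho(X))$ and applying the Cauchy--Schwarz inequality,
\begin{equation*}
\mathbb{E}\bigl[(1+\rho^2(X))\exp(T|\rho(X)|)\bigr] \leq \bigl(\mathbb{E}[(1+\rho^2(X))^2]\bigr)^{1/2}\bigl(\mathbb{E}[\exp(2T|\rho(X)|)]\bigr)^{1/2}.
\end{equation*}
By Theorem~\ref{thm:scoresubg}, $\rho(X)$ is sub-Gaussian with parameter $\sqrt{2C}$, so all polynomial moments $\mathbb{E}[\rho^{2k}(X)]$ are finite and the moment generating function $\mathbb{E}[\exp(\lambda \rho(X))]$ is finite for every $\lambda\in\mathbb{R}$; both factors above are therefore finite, establishing $\int g < \infty$. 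The main (mild) subtlety is simply tracking the exponential-in-$|\rho(x)|$ factor that arises from the density-ratio bound and checking that sub-Gaussianity of the score is strong enough to absorb it, which it is.
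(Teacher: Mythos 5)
Your proposal is correct and follows essentially the same route as the paper: express $p'$ and $p''$ via $\rho$ and $\rho'$, use the Lipschitz property and Lemma~\ref{lem:densityratiobound} to transfer the bound to a fixed reference point, and verify integrability via the sub-Gaussianity of $\rho(X)$ from Theorem~\ref{thm:scoresubg}. The only (immaterial) difference is that you anchor the comparison at $x$ with shift $|t|\leq |t_0|+1$, whereas the paper anchors at $x-t_0$ with shift $|t-t_0|\leq 1$, which makes its exponential factor free of the constant $T$; both yield an integrable dominating function.
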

	\begin{proof}
		Fix $t_0$ and set $V:=[t_0-1, t_0+1]$. We will make use of Lemma~\ref{lem:densityratiobound}. Indeed for any $t\in V$, \begin{align*}
			|p'(x-t)| &= |\rho(x-t)| p(x-t)\\
			&=|\rho(x-t)-\rho(x-t_0)+\rho(x-t_0)| \frac{p(x-t)}{p(x-t_0)} p(x-t_0)\\
			&\leq \big\{C|t-t_0| + |\rho(x-t_0)|\big\} \frac{p(x-t)}{p(x-t_0)} p(x-t_0)\\
			&\leq \big\{C|t-t_0| + |\rho(x-t_0)|\big\} \exp\bigg(|t-t_0|\;|\rho(x-t_0)| + \frac{(t-t_0)^2C}{2}\bigg) p(x-t_0)\\
			&\leq \big\{C + |\rho(x-t_0)|\big\} \exp\bigg(|\rho(x-t_0)| + \frac{C}{2}\bigg) p(x-t_0).
		\end{align*}
		Similarly,
		\begin{align*}
			|p''(x-t)| &= |\rho'(x-t)+\rho^2(x-t)| p(x-t)\\
			&\leq \big[C + \{\rho(x-t)-\rho(x-t_0)+\rho(x-t_0)\}^2 \big]\frac{p(x-t)}{p(x-t_0)} p(x-t_0)\\
			&\leq \big[C + 2C^2(t-t_0)^2 + 2\rho^2(x-t_0) \big]\frac{p(x-t)}{p(x-t_0)} p(x-t_0)\\
			&\leq \big[C + 2C^2(t-t_0)^2 + 2\rho^2(x-t_0) \big] \exp\bigg(|t-t_0|\;|\rho(x-t_0)| + \frac{(t-t_0)^2C}{2}\bigg) p(x-t_0)\\
			&\leq \big[C + 2C^2 + 2\rho^2(x-t_0) \big] \exp\bigg(|\rho(x-t_0)| + \frac{C}{2}\bigg) p(x-t_0).
		\end{align*}
		In the third line we have used the inequality $(a+b)^2 \leq 2(a^2+b^2)$.
		
		Taking $g$ to be the maximum of the two bounds, it suffices to check that the function $|\rho(x-t_0)|^k\exp(|\rho(x-t_0)|)p(x-t_0)$ is Lebesgue integrable with respect to $x$ for $k=0,1,2$. Using the change of variables $y=x-t_0$ and the Cauchy--Schwarz inequality,
		\begin{align*}
			\int_\R |\rho(x-t_0)|^k\exp(|\rho(x-t_0)|)p(x-t_0)\; dx &= \int_\R |\rho(y)|^k\exp(|\rho(y)|)p(y)\; dx\\
			&= \E\big[|\rho(X)|^k\exp(|\rho(X)|)\big]\\
			&\leq \Big(\E\big[\rho^{2k}(X)\big] \Big)^{\frac12} \Big(\E\big[\exp(2|\rho(X)|)\big] \Big)^{\frac12},
		\end{align*} 
		where $X\sim p$. By Theorem~\ref{thm:scoresubg}, \begin{equation*}
			\E\big[\rho^{2k}(X)\big] \leq C^k (2k-1)!!
		\end{equation*} for $k=1,2$ and moreover $\rho(X)$ is sub-Gaussian with parameter $\sqrt{2C}$, so \begin{align*}
			\E\big[\exp(2|\rho(X)|)\big] &\leq \E\big[\exp(2\rho(X))\big] + \E\big[\exp(-2\rho(X))\big]\\
			&\leq 2\exp(4C).
		\end{align*}
		This completes the proof.
	\end{proof}
	
	\begin{lemma} \label{lem:eta_exists}
		Consider the setup of Theorem~\ref{thm:hrate}. We have
		\[
		\tilde{B}_f^{(n)} := \Big(\E_P\Big[ \big|f_P(X,Z)-\tilde f^{(n,1)}(X,Z)\big|^{4} \;\Big| \; D^{(n,1)} \Big]\Big)^{\frac{1}{2}} = O_{\mathcal{P}}(n^{-\alpha/2}).
		\]
	\end{lemma}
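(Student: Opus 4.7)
The plan is to combine the uniform sup-norm bound on the regression error $\Delta_{P,n}(x,z) := f_P(x,z) - \tilde f^{(n,1)}(x,z)$ from assumption (i) of Theorem~\ref{thm:hrate} with the mean-square rate $\tilde A_f^{(n)} = O_{\mathcal{P}}(n^{-\alpha})$, via the elementary interpolation $|\Delta_{P,n}|^4 \leq \big(\sup_{x,z}|\Delta_{P,n}(x,z)|\big)^2 \cdot |\Delta_{P,n}|^2$. Since $\sup_{x,z}|\Delta_{P,n}(x,z)|$ is $D^{(n,1)}$-measurable, taking conditional expectations commutes cleanly with the sup factor.

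More precisely, assumption (i) yields, for every $\epsilon > 0$, constants $M_\epsilon, N_\epsilon$ such that the $D^{(n,1)}$-measurable events $\Omega_{P,n} := \{\sup_{x,z}|\Delta_{P,n}(x,z)| \leq M_\epsilon\}$ satisfy $\sup_{n \geq N_\epsilon}\sup_{P\in\mathcal{P}} \Pr_P(\Omega_{P,n}^c) < \epsilon$. On $\Omega_{P,n}$ we have pointwise $|\Delta_{P,n}(X,Z)|^4 \leq M_\epsilon^2 \, |\Delta_{P,n}(X,Z)|^2$ almost surely, so
\begin{equation*}
\E_P\!\left[|\Delta_{P,n}(X,Z)|^4 \,\big|\, D^{(n,1)}\right] \ind_{\Omega_{P,n}} \leq M_\epsilon^2 \, \tilde A_f^{(n)} \ind_{\Omega_{P,n}}.
\end{equation*}
Taking square roots gives $\tilde B_f^{(n)} \ind_{\Omega_{P,n}} \leq M_\epsilon \,\sqrt{\tilde A_f^{(n)}}\,\ind_{\Omega_{P,n}}$.

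To finish, note that $\tilde A_f^{(n)} = O_{\mathcal{P}}(n^{-\alpha})$ means $n^{\alpha} \tilde A_f^{(n)} = O_{\mathcal{P}}(1)$, so for any $\epsilon > 0$ we can choose $M'_\epsilon$ and $N'_\epsilon$ (with $N'_\epsilon \geq N_\epsilon$) large enough that $\sup_{n \geq N'_\epsilon}\sup_{P\in\mathcal{P}} \Pr_P\big(M_\epsilon \sqrt{\tilde A_f^{(n)}} > M'_\epsilon n^{-\alpha/2}\big) < \epsilon$. Combining via a union bound,
\begin{equation*}
\sup_{n \geq N'_\epsilon}\sup_{P\in\mathcal{P}} \Pr_P\!\left(n^{\alpha/2} \tilde B_f^{(n)} > M'_\epsilon\right) \leq \epsilon + \epsilon = 2\epsilon,
\end{equation*}
which is exactly the definition of $\tilde B_f^{(n)} = O_{\mathcal{P}}(n^{-\alpha/2})$.

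There is no real obstacle here: the argument is a direct two-line interpolation between the $L^\infty$ and $L^2$ bounds already available. The only thing worth being careful about is that the high-probability sup-norm event must be $D^{(n,1)}$-measurable so that the interpolation survives conditioning on $D^{(n,1)}$, and that the uniform-in-$\mathcal{P}$ probabilistic control is preserved when combining the two $O_{\mathcal{P}}(1)$ statements, which is handled by the union bound above.
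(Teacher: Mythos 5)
Your proof is correct and follows essentially the same route as the paper: both combine the high-probability sup-norm bound from assumption (i) with the conditional $L^2$ rate $\tilde A_f^{(n)}$ on a $D^{(n,1)}$-measurable event and finish with a union bound. The only cosmetic difference is that you use the direct interpolation $|\Delta|^4 \leq (\sup|\Delta|)^2|\Delta|^2$, whereas the paper splits on $\{\Delta_{P,n}\leq 1\}$ and applies H\"older and Markov to reach the equivalent bound $\E_P(\Delta_{P,n}^4\given D^{(n,1)}) \leq \E_P(\Delta_{P,n}^2\given D^{(n,1)})(1+\sup_{x,z}|\Delta_{P,n}|^4)$; your version is, if anything, slightly cleaner.
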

	\begin{proof}
		Let us write $\Delta_{P,n} :=|f_P(X,Z)-\tilde f^{(n,1)}(X,Z)|$. Note that $\Delta_{P,n}$ is a non-negative random variable which, conditionally on $D^{(n,1)}$, is bounded above by $\sup_{x,z}|f_P(x,z)-\tilde f^{(n,1)}(x,z)|$.
		Given $\epsilon > 0$, we know by assumption that there exist $M_1, M_2, N \in \mathbb{N}$ such that for all $n \geq N$ and for all $P \in \mathcal{P}$ both of the following hold:
        \begin{align*}
            \pr_P\big(\sup_{x,z} |f_P(x,z)-\tilde f^{(n,1)}(x,z)| > M_1\big) &< \epsilon/2;\\
            \pr_P\big(n^{\alpha} \E_P(\Delta_{P,n}^2 \given D^{(n,1)}) > M_2 \big) &< \epsilon/2.
        \end{align*}
        Taking $M= \max\big\{1+M_1^4, M_2\big\}$, a union bound gives that
		\begin{align*}
		 \pr_P\big(\big\{\sup_{x,z} |f_P(x,z)-\tilde f^{(n,1)}(x,z)| > (M-1)^{1/4}\big\}\, \cup \,  \big\{n^{\alpha} \E_P(\Delta_{P,n}^2 \given D^{(n,1)}) > M \big\}\big) < \epsilon.
		\end{align*}
		Now we have
		\begin{align*}
		(\tilde{B}_f^{(n)})^{2}  &= \E_P [\Delta_{P,n}^{4}(\ind_{\{\Delta_{P,n} \leq 1\}} +  \ind_{\{\Delta_{P,n} > 1\}}) \given D^{(n,1)} ]   \\
        &\leq \E_P [\Delta_{P,n}^{2}\ind_{\{\Delta_{P,n} \leq 1\}} \given D^{(n,1)} ] + \E_P[ \Delta_{P,n}^{4} \ind_{\{\Delta^2_{P,n} > 1\}} \given D^{(n,1)} ]   \\
		&\leq \E_P(\Delta_{P,n}^2 \given D^{(n,1)})  + \sup_{x,z} |f_P(x,z)-\tilde f^{(n,1)}(x,z)|^{4} \,\pr_P( \Delta_{P,n}^2 > 1 \given D^{(n,1)}) \\
		&\leq  \E_P(\Delta_{P,n}^2 \given D^{(n,1)}) ( 1 + \sup_{x,z} |f_P(x,z)-\tilde f^{(n,1)}(x,z)|^{4}).
		\end{align*}
		The second line uses the observation that for any $t\geq 0$ we either have $t^2\leq t\leq 1$ or $1\leq t\leq t^2$. third line uses the H\"older inequality and the fourth line the Markov inequality. Thus, for $P \in \mathcal{P}$ and $n \geq N$,
		\begin{align*}
			&\pr_P( n^{\alpha/2} \tilde{B}_f^{(n)} > M) =  \pr_P( n^{\alpha} (\tilde{B}_f^{(n)})^2 > M^2) \\
                &\qquad \leq \pr_P\Big(n^{\alpha}\E_P(\Delta_{P,n}^2 \given D^{(n,1)}) ( 1 + \sup_{x,z} |f_P(x,z)-\tilde f^{(n,1)}(x,z)|^{4}) > M^2 \Big)\\
                &\qquad \leq \pr_P\big( \big\{1 + \sup_{x,z} |f_P(x,z)-\tilde f^{(n,1)}(x,z)|^4 > M\big\}\, \cup \,  \big\{n^{\alpha} \E_P(\Delta_{P,n}^2 \given D^{(n,1)}) > M\big\} \big)\\
			&\qquad = \pr_P\big( \sup_{x,z} |f_P(x,z)-\tilde f^{(n,1)}(x,z)| > (M-1)^{1/4}\, \cup \,  n^{\alpha} \E_P(\Delta_{P,n}^2 \given D^{(n,1)}) > M \big) < \epsilon,
		\end{align*}
		as required.
	\end{proof}
	
	\section{Proofs relating to Section~\ref{sect:score}}
	Our proofs make use of the following representations of $	p_{\hat\varepsilon}(\epsilon)$ and $\rho_{\hat\varepsilon}(\epsilon)$. We first note that
	\begin{equation*}
		\varepsilon_P = \hat\varepsilon^{(n)} + u^{(n)}_\sigma(Z)\;\hat\varepsilon^{(n)} + u^{(n)}_m(Z),
	\end{equation*}
	where we recall
	\begin{equation*}
		u^{(n)}_\sigma(z) := \frac{\hat \sigma^{(n)}(z) - \sigma_P(z)}{\sigma_P(z)};\quad 
		u^{(n)}_m(z) := \frac{\hat m^{(n)}(z) - m_P(z)}{\sigma_P(z)}.
	\end{equation*}
	Recall that since we do not have access to samples of $\varepsilon_P$, only $\hat\varepsilon^{(n)}$, our goal is to show that the score functions of these two variables are similar. Conditionally on $D^{(n)}$, and for each fixed $\epsilon\in\R$ and $z\in\mathcal Z$, the estimated residual $\hat\varepsilon^{(n)}$ and covariates $Z$ have joint density
	\begin{align*}
		p_{\hat\varepsilon, Z}(\epsilon, z)   &= p_{e, Z}\Big(\epsilon + u_\sigma^{(n)}(z)\epsilon + u_m^{(n)}(z), z\Big)\\
		&= p_{e}\Big(\epsilon + u_\sigma^{(n)}(z)\epsilon + u_m^{(n)}(z)\Big)\;p_Z(z),
	\end{align*}
	where the first equality is via a change-of-variables and the second is using the independence of $\varepsilon$ and $Z$. Integrating over $z$, we have that the marginal density of $\hat\varepsilon^{(n)}$, conditionally on $D^{(n)}$, is
	\begin{equation*}
		p_{\hat\varepsilon}(\epsilon) = \E_P\Big[p_{e}\Big(\epsilon + u_\sigma^{(n)}(Z)\epsilon + u_m^{(n)}(Z)\Big) \;\Big|\; D^{(n)}\Big].
	\end{equation*}
	If $p_\epsilon'$ is bounded and $\E[|u_\sigma(Z)| \given  D^{(n)}]<\infty$ then the estimated residual score function is
	\begin{align*}
		\rho_{\hat\varepsilon}(\epsilon) &= \frac{p'_{\hat\varepsilon}(\epsilon)}{p_{\hat\varepsilon}(\epsilon)}\\
		&= \frac{\E_P\Big[\Big\{1+u_\sigma^{(n)}(Z)\Big\}p_{e}'\Big(\epsilon + u_\sigma^{(n)}(Z)\epsilon + u_m^{(n)}(Z)\Big) \;\Big|\; D^{(n)}\Big]}{\E_P\Big[p_{e}\Big(\epsilon + u_\sigma^{(n)}(Z)\epsilon + u_m^{(n)}(Z)\Big) \;\Big|\; D^{(n)}\Big]},
	\end{align*}
	by differentiating under the integral sign (see, for example, \citet[Thm.~20.4]{AliprantisAnalysis}). 
	\subsection{Proof of Theorem~\ref{thm:scoresubg}}
	\begin{proof}
		By \citet[Thm.~2.6]{WainwrightHighDim}, the moment bound is sufficient to show sub-Gaussianity. Note that when $X$ is symmetrically distributed, its density $p(\cdot)$ is anti-symmetric. Thus its score function $\rho(\cdot)$ is anti-symmetric, and so the random variable $\rho(X)$ is symmetrically distributed.
		
		We prove the moment bound by induction. Suppose it is true for all $1\leq j<k$ for some $k\geq 1$. By the product rule, \begin{align*}
			\big(\rho^{2k-1}(x)p(x)\big)' &= \rho^{2k-1}(x)p'(x) + (2k-1) \rho'(x)\rho^{2k-2}(x)p(x)\\
			&= \rho^{2k}(x)p(x) + (2k-1) \rho'(x)\rho^{2k-2}(x)p(x).
		\end{align*}
		Therefore for any $-\infty<a<b<\infty$ we have \begin{equation}
			\int_a^b \rho^{2k}(x)p(x)\; dx = \rho^{2k-1}(b)p(b)-\rho^{2k-1}(a)p(a)  - (2k-1)\int_a^b  \rho'(x)\rho^{2k-2}(x)p(x)\; dx. \label{eqn:scoreintbyparts}
		\end{equation}
		
		We have that $\E[\rho^{2k-2}(X)]<\infty$ by the induction hypothesis if $k\geq 2$ and trivially if $k=1$. By Lemma~\ref{lem:scoreboundaryconvergent} we can choose sequences $a_n\to-\infty$, $b_n\to\infty$ such that \begin{equation*}
			\lim_{n\to\infty} \big\{\rho^{2k-1}(b_n)p(b_n)-\rho^{2k-1}(a_n)p(a_n)\big\} = 0.
		\end{equation*} By H\"older's inequality, we have that \begin{align*}
			\int_\R  \big|\rho'(x)\rho^{2k-2}(x)p(x) \big|\; dx &\leq C \int_\R  \rho^{2k-2}(x)p(x) \; dx\\
			&\leq \begin{cases}
				C^k (2k-3)!! \text{ if $k\geq 2$ by the induction hypothesis};\\
				C \text{ if $k=1$}.
			\end{cases}
		\end{align*}
		Therefore dominated convergence gives \begin{align*}
			\lim_{n\to\infty} \bigg|(2k-1)\int_{a_n}^{b_n}  \rho'(x)\rho^{2k-2}(x)p(x)\; dx\bigg| &= \bigg|(2k-1)\int_\R  \rho'(x)\rho^{2k-2}(x)p(x)\; dx\bigg|\\
			&\leq C^k(2k-1)!!.
		\end{align*}
		
		Finally, we can assume without loss of generality that the sequences $(a_n)$ and $(b_n)$ are both monotone, for example by relabelling their monotone sub-sequences. Now, for each $x\in\R$ the sequence $\ind_{[a_n,b_n]}(x)\rho^{2k}(x)p(x)$ is increasing in $n$. The monotone convergence theorem thus gives \begin{equation*}
			\lim_{n\to\infty} \int_{a_n}^{b_n} \rho^{2k}(x)p(x)\; dx = \E\big[\rho^{2k}(X)\big].
		\end{equation*}
		Taking the limit in equation \eqref{eqn:scoreintbyparts} yields \begin{align*}
			\E\big[\rho^{2k}(X)\big] &= \lim_{n\to\infty} \int_{a_n}^{b_n} \rho^{2k}(x)p(x)\; dx \\
			&= \lim_{n\to\infty} \bigg\{\rho^{2k-1}(b_n)p(b_n)-\rho^{2k-1}(a_n)p(a_n) - (2k-1)\int_{a_n}^{b_n}  \rho'(x)\rho^{2k-2}(x)p(x)\; dx\bigg\}\\
			&\leq C^k(2k-1)!!,
		\end{align*}
		as claimed.
	\end{proof}

	
	\subsection{Proof of Theorem~\ref{thm:knownscoreestimation}}
	\begin{proof}
		Let us write $C_\rho := \sup_{\epsilon \in \R} |\rho_{e}'(\epsilon)|$.
		Define \begin{equation*}
			\bar \rho^{(n)}_P (x,z) := \frac{1}{\sigma_P(z)}\; \rho_{e}\bigg(\frac{x-\hat m^{(n)}(z)}{\sigma_P(z)}\bigg).
		\end{equation*}
		Using the inequality $(a+b)^2\leq 2(a^2+b^2)$, we have
		\begin{align*}
			A_\rho^{(n)} &= \E_P\Big[\big\{\rho_P(X,Z)-\bar \rho ^{(n)}_P(X,Z) + \bar \rho ^{(n)}_P(X,Z) - \hat\rho^{(n)}(X,Z)\big\}^2 \;\Big|\; D^{(n)}\Big]\\
			&\leq 2 \E_P\Big[\big\{ \rho_P(X,Z)-\bar \rho ^{(n)}_P(X,Z)\big\}^2 \;\Big|\; D^{(n)}\Big] + 2 \E_P\Big[\big\{\bar \rho ^{(n)}_P(X,Z)-\hat\rho^{(n)}(X,Z)\big\}^2 \;\Big|\; D^{(n)}\Big].
		\end{align*}
		The first term readily simplifies using H\"older's inequality and the Lipschitz property of $\rho_{e}$. Writing $c_{\sigma} := \inf_{P \in \mathcal{P}} \inf_z \sigma_P(z)$, we have 
		\begin{align*}
			\E_P\Big[\big\{\rho_P(X,Z)-\bar \rho ^{(n)}_P(X,Z)\big\}^2 \;\Big|\; D^{(n)}\Big] & = \E_P\Bigg[\frac{1}{\sigma_P^2(Z)}\bigg\{\rho_{e}\bigg(\frac{X-m_P(Z)}{\sigma_P(Z)}\bigg) - \rho_{e}\bigg(\frac{X-\hat m^{(n)}(Z)}{\sigma_P(Z)}\bigg)\bigg\}^2 \;\Bigg|\; D^{(n)}\Bigg]\\
			&\leq \bigg(\frac{C_{\rho}}{c_{\sigma}^2}\bigg)^2 \; \E_P\big[ u_m^{(n)2}(Z) \;\big|\; D^{(n)}\big]\\
			&= \bigg(\frac{C_{\rho}}{c_{\sigma}^2}\bigg)^2 \; A_m^{(n)}.
		\end{align*}

		Let us write
		\[
		\check{\rho}_P^{(n)} (x, z) := \frac{1}{\sigma_P(z)}\; \rho_{e}\bigg(\frac{x-\hat m^{(n)}(z)}{\hat{\sigma}^{(n)}(z)}\bigg).
		\]
		Then
		\begin{align*}
			& \E_P\Big[\big\{\bar \rho ^{(n)}_P(X,Z)-\hat\rho^{(n)}(X,Z)\big\}^2 \;\Big|\; D^{(n)}\Big] \\
			& \qquad \leq 2\E_P\Big[\big\{\bar \rho ^{(n)}_P(X,Z)-\check\rho^{(n)}(X,Z)\big\}^2 \;\Big|\; D^{(n)}\Big] + 2\E_P\Big[\big\{\check \rho ^{(n)}_P(X,Z)-\hat\rho^{(n)}(X,Z)\big\}^2 \;\Big|\; D^{(n)}\Big].
		\end{align*}
		Now, given any $\epsilon > 0$, let $M>0$ and $N$ be such that for all $n \geq N$,
		\[
		\sup_{P \in \mathcal{P}} \pr_P\left(\sup_z \frac{\sigma(z)}{\hat{\sigma}(z)} > M \right) < \epsilon/2
		\]
		and
		\[
		\sup_{P \in \mathcal{P}} \pr_P(\sup_{z} u_m^{(n)}(z) >M ) < \epsilon/2.
		\]
		In the following, we work on the $D^{(n)}$ measurable event
		\[
		\Omega_n := \left\{ \sup_z \frac{\sigma_P(z)}{\hat{\sigma}^{(n)}(z)} \leq  M \right\} \cap \{\sup_{z}u_m^{(n)}(z) \leq M\}
		\]
		which by the above, has probability at least $1-\epsilon$ under all $P \in \mathcal{P}$ and for all $n \geq N$.
		Note that on this event, for each $P \in \mathcal{P}$
		\[
		M \geq \frac{\inf_{z} \sigma_P(z)}{ \inf_{z} \hat{\sigma}^{(n)}(z)} \geq \frac{c_\sigma}{\inf_{z} \hat{\sigma}^{(n)}(z)}, 
		\]
		so $\sup_z \hat{\sigma}^{(n),-1} (z) \leq M / c_{\sigma}$. 
		Now, noting that
		\[
		\frac{1}{\sigma_P(Z)} - \frac{1}{\hat{\sigma}^{(n)}(Z)} = \frac{1}{\hat{\sigma}^{(n)}(Z)}u_{\sigma}^{(n)},
		\]
		we have
		\begin{align*}
			\{\check{\rho}_P^{(n)}(X,Z) - \hat{\rho}^{(n)}(X, Z)\}^2 
            &=\left(\frac{1}{\sigma_P(Z)} - \frac{1}{\hat{\sigma}^{(n)}(Z)}\right)^2 \rho_{e}^2\left( \frac{\sigma_P(Z)}{\hat{\sigma}^{(n)}(Z)} \varepsilon - u_m^{(n)}(Z) \frac{\sigma_P(Z)}{\hat{\sigma}^{(n)}(Z)} \right)   \\
			& = \frac{u_{\sigma}^{(n),2}(Z)}{\hat{\sigma}^{(n),2}(Z)} \left\{ \rho_{e}\left( \frac{\sigma_P(Z)}{\hat{\sigma}^{(n)}(Z)} \varepsilon + u_m^{(n)}(Z) \frac{\sigma_P(Z)}{\hat{\sigma}^{(n)}(Z)} \right) - \rho_{e}(\varepsilon) + \rho_{e}(\varepsilon) \right\}^2 \\
				&\leq 2\frac{M^2}{c_{\sigma}^2} u_{\sigma}^{(n),2}(Z)\left\{2(M + 1)^2(\varepsilon^2 + M^2)C_\rho^2 + \rho_{e}^2 (\varepsilon)\right\}.
		\end{align*}
		Thus since $\epsilon$ was arbitrary, and using that $\varepsilon \independent Z$, we have that
		\[
		\E_P \left( 	\{\check{\rho}_P^{(n)}(X,Z) - \hat{\rho}^{(n)}(X, Z)\}^2 \given D^{(n)}\right) = O_{\mathcal{P}}(A_{\sigma}^{(n)}).
		\]
		
		Next,
		\begin{align*}
			\{\check{\rho}^{(n)}_P(X, Z) - \bar{\rho}^{(n)}_P(X, Z)\}^2 &= \frac{1}{\sigma_P^2(Z)}\left\{ \rho_{e}\left( \frac{X - \hat{m}^{(n)}(Z)}{\hat{\sigma}^{(n)}(Z)}\right) - \rho_{e}\left( \frac{X - \hat{m}^{(n)}(Z)}{\sigma_P(Z)}\right)\right\}^2 \\
			&\leq \frac{C_\rho^2}{\sigma_P^2(Z) }\left(\frac{1}{\hat{\sigma}^{(n)}(Z)} - \frac{1}{\sigma_P(Z)} \right)^2 \{m_P(Z) - \hat{m}^{(n)}(Z) + \sigma_{P}(Z) \varepsilon \}^2\\
			&\leq \frac{2C_\rho^2}{\hat{\sigma}^{(n),2}(Z)} u_{\sigma}^{(n),2}(Z) ( M^2 + \varepsilon^2) \\
			&\leq \frac{2 C_\rho M^2}{c_{\sigma}^2} u_{\sigma}^{(n),2}(Z) ( M^2 + \varepsilon^2).
		\end{align*}
		Thus using  that $\varepsilon \independent Z$, we have that
		\[
		\E_P \left( 	\{\check{\rho}^{(n)}_P(X, Z) - \bar{\rho}^{(n)}_P(X, Z)\}^2 \given D^{(n)}\right) = O_{\mathcal{P}}(A_{\sigma}^{(n)}).
		\]
		Putting things together gives the result.
	\end{proof}

	\subsection{Proof of Theorem~\ref{thm:subgscoreestimation}}
	
	\begin{proof}
		Let us set $C_{\sigma} := \{18 C_{\varepsilon} \sqrt{C_{\rho}}\}^{-1}$.
		The assumptions on $u_m^{(n)}$ and $u_\sigma^{(n)}$ mean that for any $\epsilon > 0$ we can find $N \in  \mathbb{N}$ and $M, C_m >0 $ such that for any $n\geq N$, with uniform probability at least $1-\epsilon$, the data $D^{(n)}$ is such that
		\begin{equation}
			\label{eqn:locscaleoundederrors}
			\sup_{z}\big|u_m^{(n)}(z)\big| \leq C_m, \quad \sup_{z}\big|u_\sigma^{(n)}(z)\big| \leq C_\sigma \quad \text{and} \quad \sup_z \hat{\sigma}^{(n),-1}(z) \leq M;
		\end{equation} 
		see the proof of Theorem~\ref{thm:knownscoreestimation}.
		It suffices to show that on this sequence  of events, we can find a uniform constant $C$ (not depending on $P$ or $n$) such that \begin{equation*}
			A_\rho^{(n)} \leq C\big(A_m^{(n)} + A_\sigma^{(n)} + A_{\hat\varepsilon}^{(n)}\big).
		\end{equation*}
		
		Fix $P\in\mathcal{P}$ and $D^{(n)}$ such that \eqref{eqn:locscaleoundederrors} holds. We decompose $A_\rho^{(n)}$ so as to consider the various sources of error separately.
		\begin{align*}
			A_\rho^{(n)} &= \E_P\Big[\big\{\rho_{P}(X,Z)-\hat \rho^{(n)}(X,Z)\big\}^2 \; \Big| \; D^{(n)}\Big]\\
			&=\E_P\Bigg[\bigg\{\frac1{\sigma_P(Z)}\rho_{e}(\varepsilon_P)-\frac{1}{\hat\sigma^{(n)}(Z)} \hat \rho_{\hat\varepsilon}^{(n)}\big(\hat\varepsilon^{(n)}\big)\bigg\}^2 \; \Bigg| \; D^{(n)}\Bigg]\\
			&=\E_P\Bigg[\frac1{\hat\sigma^{(n)2}(Z)}\bigg\{\bigg(\frac{\hat\sigma^{(n)}(Z)}{\sigma_P(Z)}-1\bigg) \rho_{e}(\varepsilon_P)+ \rho_{e}(\varepsilon_P) - \hat \rho_{\hat\varepsilon}^{(n)}\big(\hat\varepsilon^{(n)}\big)\bigg\}^2 \; \Bigg| \; D^{(n)}\Bigg]\\
			&=\E_P\Bigg[\frac1{\hat\sigma^{(n)2}(Z)}\bigg\{u_\sigma^{(n)}(Z) \rho_{e}(\varepsilon_P)+ \rho_{e}(\varepsilon_P) - \rho_{\hat\varepsilon}(\varepsilon_P) + \rho_{\hat\varepsilon}(\varepsilon_P) - \rho_{\hat\varepsilon}(\hat\varepsilon^{(n)}) \\
			&\phantom{=} + \rho_{\hat\varepsilon}(\hat\varepsilon^{(n)}) -\hat \rho_{\hat\varepsilon}^{(n)}\big(\hat\varepsilon^{(n)}\big)\bigg\}^2 \; \Bigg| \; D^{(n)}\Bigg].
		\end{align*}
		Applying H\"older's inequality and $(a+b+c+d)^2\leq 4(a^2+b^2+c^2+d^2)$, we deduce
		\begin{align}
			A_\rho^{(n)} &\leq 4 M^2 \bigg\{\E_P\big[u_\sigma^{(n)2}(Z) \rho_{e}^2(\varepsilon_P) \; \big|\; D^{(n)}\big] \nonumber\\
			&\phantom{\leq 4 M^2 \bigg\{} + \E_P\Big[ \big\{ \rho_{e}(\varepsilon_P) - \rho_{\hat\varepsilon}(\varepsilon_P) \big\}^2 \; \Big| \; D^{(n)} \Big] \nonumber\\
			& \phantom{\leq 4 M^2 \bigg\{} + \E_P\Big[ \big\{  \rho_{\hat\varepsilon}(\varepsilon_P) - \rho_{\hat\varepsilon}(\hat\varepsilon^{(n)}) \big\}^2 \; \Big| \; D^{(n)} \Big] \nonumber\\
			&\phantom{\leq 4 M^2 \bigg\{} + \E_P\Big[ \big\{  \rho_{\hat\varepsilon}(\hat\varepsilon^{(n)}) -\hat \rho_{\hat\varepsilon}^{(n)}\big(\hat\varepsilon^{(n)}\big)\big\}^2 \; \Big| \; D^{(n)} \Big]\bigg\}\label{eqn:locscalesubgdecomp}.
		\end{align}
		
		We consider the expectations in \eqref{eqn:locscalesubgdecomp} separately. For the first term, the independence of $\varepsilon_P$ and $Z$ and Theorem~\ref{thm:scoresubg} imply \begin{equation*}
			\E_P\big[u_\sigma^{(n)2}(Z) \rho_{e}^2(\varepsilon_P) \; \big|\; D^{(n)}\big] = \E_P\big[\rho_{e}^2(\varepsilon_P)\big] \; A_\sigma^{(n)} \leq C_\rho A_\sigma^{(n)}. 
		\end{equation*} Lemma~\ref{lem:estimatedresidualscore} applies to the second term. To apply Lemma~\ref{lem:estimatedresidual} to the third term, we note that \begin{equation*}
			\big(\E_P(\varepsilon_P^8)\big)^{\frac18} \leq (768 C_\varepsilon^8)^{\frac18} < 3C_\varepsilon
		\end{equation*}
		by Lemma~\ref{lem:subgmoment}. The fourth term is equal to $A_{\hat\varepsilon}^{(n)}$ by definition. This completes the proof.
	\end{proof}

    \subsection{Proof of Theorem~\ref{thm:homscoreestimation}}
	
	\begin{proof}
		The assumption on $u_m^{(n)}$ means that for any $\epsilon > 0$ we can find $N, C_m$ such that for any $n\geq N$, with uniform probability at least $1-\epsilon$, the data $D^{(n)}$ is such that \begin{equation}
			\label{eqn:locationerrors}
			\sup_{z}\big|u_m^{(n)}(z)\big| \leq C_m.
		\end{equation} 
		It suffices to show that under this event, we can find a uniform constant $C$ (not depending on $P$ or $n$) such that \begin{equation*}
			A_\rho^{(n)} \leq C\big(A_m^{(n)} + A_{\hat\varepsilon}^{(n)}\big).
		\end{equation*}
		
		Fix $P\in\mathcal{P}$ and $D^{(n)}$ such that \eqref{eqn:locationerrors} holds. We decompose $A_\rho^{(n)}$ so as to consider the various sources of error separately.
		\begin{align}
			A_\rho^{(n)} &= \E_P\Big[\big\{\rho_{e}(\varepsilon_P)-\hat \rho_{\hat\varepsilon}^{(n)}\big(\hat\varepsilon^{(n)}\big)\big\}^2 \; \Big| \; D^{(n)}\Big] \nonumber\\
			&= \E_P\Big[\big\{\rho_{e}(\varepsilon_P) - \rho_{\hat\varepsilon}(\varepsilon_P) + \rho_{\hat\varepsilon}(\varepsilon_P) - \rho_{\hat\varepsilon}(\hat\varepsilon^{(n)}) + \rho_{\hat\varepsilon}(\hat\varepsilon^{(n)}) -\hat \rho_{\hat\varepsilon}^{(n)}\big(\hat\varepsilon^{(n)}\big)\big\}^2 \; \Big| \; D^{(n)}\Big] \nonumber\\
			&= 3 \E_P\Big[\big\{\rho_{e}(\varepsilon_P)-\rho_{\hat\varepsilon}(\varepsilon_P)\big\}^2 \; \Big| \; D^{(n)}\Big] \nonumber\\
			&\phantom{=} + 3 \E_P\Big[\big\{\rho_{\hat\varepsilon}(\varepsilon_P)-\rho_{\hat\varepsilon}(\hat\varepsilon^{(n)})\big\}^2 \; \Big| \; D^{(n)}\Big]  \nonumber\\
			&\phantom{=} + 3 \E_P\Big[ \big\{  \rho_{\hat\varepsilon}(\hat\varepsilon^{(n)}) -\hat \rho_{\hat\varepsilon}^{(n)}\big(\hat\varepsilon^{(n)}\big)\big\}^2 \; \Big| \; D^{(n)} \Big] \label{eqn:locationdecomp},
		\end{align}
		where the final inequality is $(a+b+c)^2\leq 3(a^2+b^2+c^2)$.
		
		We consider the expectations in \eqref{eqn:locationdecomp} separately. Lemma~\ref{lem:estimatedresidualscore_locationonly} applies to the first term. Lemma~\ref{lem:estimatedresidual_locationonly} applies to the second term. The third term is equal to $A_{\hat\varepsilon}^{(n)}$ by definition. This completes the proof.
	\end{proof}
	
	\subsection{Auxiliary lemmas}

	\begin{lemma}
		\label{lem:estimatedresidualscore}
		Let $P$ be such that $ p_{e}$ is twice differentiable on $\R$, with \begin{equation*}
			\sup_{\epsilon\in\R} |\partial_\epsilon^2 \log p_{e}(\epsilon)| = \sup_{\epsilon\in\R} |\rho_{e}'(\epsilon)|\leq C_\rho,
		\end{equation*}
		$p_{e}'$ is bounded, and $\varepsilon_P$ is sub-Gaussian with parameter $C_\varepsilon$. Further assume that $D^{(n)}$ is such that  $\sup_{z\in\mathcal{Z}} |u_m^{(n)}(z)|\leq C_m$  and $\sup_{z\in\mathcal{Z}} |u_\sigma^{(n)}(z)|\leq C_\sigma$. If
		\begin{equation*}
			\sqrt{C_\rho}C_\sigma C_\varepsilon \leq \frac1{18}
		\end{equation*}
		then there exists a constant $C$, depending only on $C_\rho, C_m, C_\sigma, C_\varepsilon$, such that
		\begin{equation*}
			\E_P\Big[\big\{\rho_{e}(\varepsilon_P)-\rho_{\hat\varepsilon}(\varepsilon_P)\big\}^2 \; \Big| \; D^{(n)}\Big] \leq C \big(A_m^{(n)} + A_\sigma^{(n)}\big).
		\end{equation*}
	\end{lemma}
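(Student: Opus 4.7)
The plan is to express $\rho_{\hat\varepsilon}(\epsilon)-\rho_e(\epsilon)$ as a ratio whose numerator isolates the estimation errors $u_m^{(n)}(Z)$ and $u_\sigma^{(n)}(Z)$, and then to bound its squared $L^2$-norm (with respect to $p_e$) by a constant multiple of $A_m^{(n)}+A_\sigma^{(n)}$. Writing $\tilde\epsilon := \epsilon(1+u_\sigma^{(n)}(Z)) + u_m^{(n)}(Z)$, the integral representations for $p_{\hat\varepsilon}$ and $\rho_{\hat\varepsilon}$ given just before the lemma, together with the identity $p_e' = \rho_e p_e$, yield
\[
\bigl\{\rho_{\hat\varepsilon}(\epsilon) - \rho_e(\epsilon)\bigr\}\, p_{\hat\varepsilon}(\epsilon) \;=\; \E_P\bigl[\,p_e(\tilde\epsilon)\bigl\{u_\sigma^{(n)}(Z)\rho_e(\tilde\epsilon) + (\rho_e(\tilde\epsilon) - \rho_e(\epsilon))\bigr\}\,\big|\,D^{(n)}\bigr].
\]

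Next I would invoke the Lipschitz property $|\rho_e(\tilde\epsilon)-\rho_e(\epsilon)|\leq C_\rho(|u_\sigma^{(n)}|\,|\epsilon|+|u_m^{(n)}|)$, square the expression, and apply Jensen's inequality with the $Z$-probability weight $w(\epsilon,Z):=p_e(\tilde\epsilon)/p_{\hat\varepsilon}(\epsilon)$ (which integrates to $1$ in $Z$). Using $(a+b)^2\leq 2a^2+2b^2$ twice then gives the pointwise bound
\[
\bigl\{\rho_{\hat\varepsilon}(\epsilon) - \rho_e(\epsilon)\bigr\}^2 \;\leq\; 2\,\E_P\bigl[w\,u_\sigma^{(n),2}\rho_e^2(\tilde\epsilon)\,\big|\,D^{(n)}\bigr] + 4C_\rho^2\,\E_P\bigl[w\bigl(u_\sigma^{(n),2}\epsilon^2 + u_m^{(n),2}\bigr)\,\big|\,D^{(n)}\bigr].
\]
Integrating against $p_e(\epsilon)\,d\epsilon$ and applying Fubini then produces three contributions, each of the form $\E_P[u_\star^{(n),2}(Z)\,I_\star(Z)\,|\,D^{(n)}]$ where $I_\star(Z)$ is an integral over $\epsilon$ of $p_e(\tilde\epsilon)\,p_e(\epsilon)/p_{\hat\varepsilon}(\epsilon)$ multiplied by a low-degree polynomial in $\rho_e(\tilde\epsilon)$ and $\epsilon$.

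The main obstacle will be bounding each $I_\star(Z)$ by an absolute constant on the event $|u_m^{(n)}|\leq C_m$, $|u_\sigma^{(n)}|\leq C_\sigma$. To lower bound the denominator $p_{\hat\varepsilon}(\epsilon)$ pointwise, I would apply Lemma~\ref{lem:densityratiobound} to obtain $p_e(\tilde\epsilon)/p_e(\epsilon)\geq \exp\{\delta\rho_e(\epsilon) - C_\rho\delta^2/2\}$ with $\delta := u_\sigma^{(n)}\epsilon + u_m^{(n)}$, and then apply Jensen in the reverse direction to the outer $Z$-expectation to arrive at
\[
\frac{p_e(\epsilon)}{p_{\hat\varepsilon}(\epsilon)} \;\leq\; \exp\bigl\{|\rho_e(\epsilon)|(C_\sigma|\epsilon|+C_m) + C_\rho(C_\sigma^2\epsilon^2+C_m^2)\bigr\}.
\]
Substituting this into $I_\star(Z)$ and performing the change of variables $\epsilon'=\tilde\epsilon$ (Jacobian $(1+u_\sigma^{(n)})^{-1}$, bounded by our event), finiteness of $I_\star$ reduces to bounding moments of the form $\E_P[\exp(c_1|\rho_e(\varepsilon_P)\varepsilon_P|+c_2\varepsilon_P^2)\,q(\rho_e(\varepsilon_P),\varepsilon_P)]$ for an explicit low-degree polynomial $q$ and constants $c_1,c_2$ depending on $C_\rho,C_\sigma,C_m$. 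This is precisely where the smallness assumption $\sqrt{C_\rho}\,C_\sigma C_\varepsilon\leq 1/18$ becomes crucial: combined with Young's inequality $|\rho_e\varepsilon|\leq \tfrac{1}{2}\rho_e^2 + \tfrac{1}{2}\varepsilon^2$, the sub-Gaussianity of $\rho_e(\varepsilon_P)$ at parameter $\sqrt{2C_\rho}$ (Theorem~\ref{thm:scoresubg}) and of $\varepsilon_P$ at parameter $C_\varepsilon$, the constants $c_1,c_2$ fall strictly inside the range where the corresponding chi-squared-type exponential moments are finite and uniformly bounded over $\mathcal{P}$. Once each $I_\star(Z)$ is controlled by a constant depending only on $C_\rho,C_m,C_\sigma,C_\varepsilon$, pulling it outside the outer expectation leaves $\E_P[u_\sigma^{(n),2}\,|\,D^{(n)}]=A_\sigma^{(n)}$ and $\E_P[u_m^{(n),2}\,|\,D^{(n)}]=A_m^{(n)}$, yielding the claim.
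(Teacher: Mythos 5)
Your overall strategy coincides with the paper's: the same integral representation of $\rho_{\hat\varepsilon}$, the same splitting of the numerator into a $u_\sigma^{(n)}\rho_{e}(\tilde\epsilon)$ term plus a Lipschitz-controlled increment $\rho_{e}(\tilde\epsilon)-\rho_{e}(\epsilon)$, the same use of Lemma~\ref{lem:densityratiobound} to control the ratio $p_{e}/p_{\hat\varepsilon}$, and the same final appeal to the sub-Gaussianity of $\rho_{e}(\varepsilon_P)$ and of $\varepsilon_P$ (Theorem~\ref{thm:scoresubg} and Lemma~\ref{lem:subgsquaremgf}), with the $1/18$ condition entering exactly where you say it does. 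The tactical differences — Jensen's inequality with the weight $w=p_{e}(\tilde\epsilon)/p_{\hat\varepsilon}(\epsilon)$ in place of the paper's Cauchy--Schwarz split into $R_1(\epsilon)R_2(\epsilon)$, and a one-sided rather than a squared sup/inf density-ratio bound — are legitimate and do not change the substance.

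The one step that does not work as written is the change of variables $\epsilon'=\tilde\epsilon$. The Jacobian $(1+u_\sigma^{(n)}(Z))^{-1}$ is \emph{not} bounded by the event $|u_\sigma^{(n)}|\leq C_\sigma$ unless $C_\sigma<1$, which the lemma's hypotheses do not impose (it does hold in the downstream application, where $\varepsilon_P$ has unit variance so that $C_\varepsilon\geq 1$ and $C_\rho\geq 1$, but it is not available inside the lemma as stated). Even granting $C_\sigma<1$, the weight $\exp\{|\rho_{e}(\epsilon)|(C_\sigma|\epsilon|+C_m)+C_\rho(C_\sigma^2\epsilon^2+C_m^2)\}$ must then be re-expressed in terms of $\epsilon'=(1+u_\sigma^{(n)})\epsilon+u_m^{(n)}$, which inflates the exponential constants by powers of $(1-C_\sigma)^{-1}$ and forces a re-check that the moments stay within the $1/18$ budget. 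The paper avoids this entirely by never changing variables: the outer expectation stays over $\varepsilon_P\sim p_{e}$ and the shifted arguments are transferred back via $|\rho_{e}(\tilde\epsilon)|\leq|\rho_{e}(\epsilon)|+C_\rho(C_\sigma|\epsilon|+C_m)$ together with Lemma~\ref{lem:densityratiobound}; substituting this device for your change of variables repairs the argument. Two smaller points: the equal-weight Young inequality $|\rho_{e}\varepsilon|\leq\tfrac12\rho_{e}^2+\tfrac12\varepsilon^2$ puts too much mass on $\rho_{e}^2$ (whose exponential moment requires a coefficient below $1/(4C_\rho)$), so a tunable split $ab\leq a^2/(4c)+cb^2$ is needed as in the paper; and the assertion that the resulting constants ``fall strictly inside the range'' must actually be verified — the paper's explicit Hölder exponents $(c_1,c_2,c_3)=(9,1/16,1)$ show this bookkeeping is tight enough that it cannot be waved through.
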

	\begin{proof}
		For ease of notation, write $Q^{(n)}$ for the distribution of $\big(u_m^{(n)}(Z), u_\sigma^{(n)}(Z)\big)$ conditionally on $D^{(n)}$, and let $(U_m, U_\sigma)\sim Q^{(n)}$.
		Therefore
		\begin{equation*}
			A_m^{(n)} = \E_{Q^{(n)}} \big(U_m^2\big) \quad \text{and} \quad A_\sigma^{(n)} = \E_{Q^{(n)}} \big(U_\sigma^2\big).
		\end{equation*}

		The conditions on $p_{e}$ and $U_\sigma$ are sufficient to interchange differentiation and expectation operators as follows \citep[Thm.~20.4]{AliprantisAnalysis}. 
		\begin{align*}
			\rho_{\hat\varepsilon}(\epsilon)  &= \frac{\frac{\partial}{\partial \epsilon}\E_{Q^{(n)}}\big[p_{e}(\epsilon + U_\sigma\epsilon + U_m) \big]}{\E_{Q^{(n)}}\big[p_{e}(\epsilon + U_\sigma\epsilon + U_m) \big]}\\
			&=  \frac{\E_{Q^{(n)}}\big[(1+U_\sigma)p_{e}'(\epsilon + U_\sigma\epsilon + U_m) \big]}{\E_{Q^{(n)}}\big[p_{e}(\epsilon + U_\sigma\epsilon + U_m) \big]}.
		\end{align*}
		We may then decompose the approximation error as follows.
		\begin{align*}
			|\rho_{\hat\varepsilon}(\epsilon) - \rho_{e}(\epsilon)| &= \bigg|\frac{\E_{Q^{(n)}} [(1+U_\sigma)p_{e}'(\epsilon + U_\sigma\epsilon + U_m)] }{\E_{Q^{(n)}} [p_{e}(\epsilon + U_\sigma\epsilon + U_m)]} - \rho_{e}(\epsilon) \bigg| \\
			&=\bigg|\frac{\E_{Q^{(n)}} [(1+U_\sigma)\{\rho_{e}(\epsilon + U_\sigma\epsilon + U_m) - \rho_{e}(\epsilon) \}\;p_{e}(\epsilon + U_\sigma\epsilon + U_m)] }{\E_{Q^{(n)}} [p_{e}(\epsilon + U_\sigma\epsilon + U_m)]} \\
			&\phantom{=\bigg|} + \frac{\E_{Q^{(n)}} [U_\sigma\;p_{e}(\epsilon + U_\sigma\epsilon + U_m)] }{\E_{Q^{(n)}} [p_{e}(\epsilon + U_\sigma\epsilon + U_m)]} \rho_{e}(\epsilon)\bigg| \\
			&\leq C_\rho \frac{\E_{Q^{(n)}} [|(1+U_\sigma)( U_\sigma\epsilon + U_m)| \;p_{e}(\epsilon + U_\sigma\epsilon + U_m)] }{\E_{Q^{(n)}} [p_{e}(\epsilon + U_\sigma\epsilon + U_m)]} \\
			&\phantom{\leq} + \frac{\E_{Q^{(n)}} [|U_\sigma|\;p_{e}(\epsilon + U_\sigma\epsilon + U_m)] }{\E_{Q^{(n)}} [p_{e}(\epsilon + U_\sigma\epsilon + U_m)]} |\rho_{e}(\epsilon)| \\
			&\leq \bigg\{C_\rho \Big(\E_{Q^{(n)}} \big[(1+U_\sigma)^2( U_\sigma\epsilon + U_m)^2\big]\Big)^{1/2} + |\rho_{e}(\epsilon)|\Big( \E_{Q^{(n)}}\big(U_\sigma^2\big)\Big)^{1/2} \bigg\} \\
			&\phantom{\leq} \cdot \frac{\Big(\E_{Q^{(n)}} \big[p^2_\varepsilon(\epsilon + U_\sigma\epsilon + U_m)\big]\Big)^{1/2} }{\E_{Q^{(n)}} [p_{e}(\epsilon + U_\sigma\epsilon + U_m)]}\\
			&=: R_1(\epsilon) R_2(\epsilon).
		\end{align*}
		The first inequality uses the Lipschitz property of $\rho_{e}$. The second applies the Cauchy--Schwarz inequality.
		
		We will show that the first term in the product is dominated by $\E_{Q^{(n)}}\big(U_\sigma^2\big) + \E_{Q^{(n)}}\big(U_m^2\big)$, and that the second term is bounded. Indeed,
		\begin{align*}
			R_1^2(\epsilon) &\leq 2C_\rho^2 \E_{Q^{(n)}} \big[(1+U_\sigma)^2( U_\sigma\epsilon + U_m)^2\big] + 2\rho_{e}^2(\epsilon)\E_{Q^{(n)}}\big(U_\sigma^2\big)\\
			&\leq 2C_\rho^2(1+C_\sigma)^2 \E_{Q^{(n)}} \big[( U_\sigma\epsilon + U_m)^2\big] + 2\rho_{e}^2(\epsilon)\E_{Q^{(n)}}\big(U_\sigma^2\big)\\
			&\leq  4C_\rho^2(1+C_\sigma)^2 \E_{Q^{(n)}} \big( U_\sigma^2\big)\epsilon^2 + 4C_\rho^2(1+C_\sigma)^2 \E_{Q^{(n)}} \big(U_m^2\big) + 2\rho_{e}^2(\epsilon)\E_{Q^{(n)}}\big(U_\sigma^2\big).
		\end{align*}
		The first and third inequalities are $(a+b)^2\leq 2(a^2+b^2)$, and the second uses the bound $|U_\sigma|\leq C_\sigma$.
		
		For any $\epsilon\in\R$ 
		and for any constant $c_1>0$ (to be chosen later),
		\begin{align*}
			R_2^2(\epsilon) &\leq \bigg(\frac{\sup_{|u_m|\leq C_m\;, \; |u_\sigma|\leq C_\sigma} p_{e}(\epsilon+u_\sigma \epsilon + u_m) / p_{e}(\epsilon)}{\inf_{|u_m|\leq C_m\;, \; |u_\sigma|\leq C_\sigma} p_{e}(\epsilon+u_\sigma \epsilon + u_m) / p_{e}(\epsilon)}\bigg)^2\\
			&\leq \exp\big\{4|C_m+|C_\sigma\epsilon|| \;|\rho_{e}(\epsilon)| + 2C_\rho(C_m+|C_\sigma\epsilon|)^2\big\}\\
			&\leq \exp\bigg\{ \frac{\rho_{e}^2(\epsilon)}{C_\rho c_1} + (4c_1+2)C_\rho(C_m+|C_\sigma\epsilon|)^2\bigg\}.
		\end{align*}
		The first line is a supremum bound for the ratio of expectations, the second is the application of Lemma~\ref{lem:densityratiobound}, and the third uses that for all $c>0$,
		\begin{equation*}
			0\leq \bigg(\frac{a}{\sqrt{c}} - 2\sqrt{c}b \bigg)^2 \implies 4ab = 2\bigg(\frac{a}{\sqrt{c}}\bigg)(2\sqrt{c}b) \leq \frac{a^2}{c} + 4cb^2.
		\end{equation*}
		
		Using the above and H\"older's inequality, we have that for any $c_2>0$ (to be chosen later),
		\begin{align*}
			\E_P\Big[\big\{\rho_{e}(\varepsilon_P)-\rho_{\hat\varepsilon}(\varepsilon_P)\big\}^2 \; \Big| \; D^{(n)}\Big] &\leq \E_P\big[R_1^2(\varepsilon_P)R_2^2(\varepsilon_P) \; \big| \; D^{(n)}\big]\\
			&\leq \bigg(\E_P\bigg[R_1^{\frac{2(1+c_2)}{c_2}}(\varepsilon_P)  \; \bigg| \; D^{(n)} \bigg]\bigg)^{\frac{c_2}{1+c_2}} \Big(\E_P\big[R_2^{2(1+c_2)}(\varepsilon_P)  \; \big| \; D^{(n)} \big]\Big)^{\frac1{1+c_2}}.
		\end{align*}
		
		By the triangle inequality (for the $L_{(1+c_2)/c_2}(P)$ norm), 
		\begin{align*}
			\bigg(\E_P\bigg[R_1^{\frac{2(1+c_2)}{c_2}}(\varepsilon_P)  \; \bigg| \; D^{(n)} \bigg]\bigg)^{\frac{c_2}{1+c_2}} &\leq 4C_\rho^2(1+C_\sigma)^2 \E_{Q^{(n)}} \big( U_\sigma^2\big)\bigg(\E_P\bigg[\varepsilon_P^{\frac{2(1+c_2)}{c_2}}\bigg]\bigg)^{\frac{c_2}{1+c_2}} \\
			&\phantom{\leq} + 4C_\rho^2(1+C_\sigma)^2 \E_{Q^{(n)}} \big(U_m^2\big) \\
			&\phantom{\leq} +  2\E_{Q^{(n)}}\big(U_\sigma^2\big)\bigg(\E_P\bigg[\rho_{e}^{\frac{2(1+c_2)}{c_2}}(\varepsilon_P)\bigg]\bigg)^{\frac{c_2}{1+c_2}}.
		\end{align*}
		
		By H\"older's inequality, for any $c_3>0$ (to be chosen later),
		\begin{align*}
			\E_P\big[R_2^{2(1+c_2)}(\varepsilon_P)  \; \big| \; D^{(n)} \big] & \leq \E_P\bigg[\exp\bigg\{ \frac{(1+c_2)\rho_{e}^2(\varepsilon_P)}{C_\rho c_1} + (1+c_2)(4c_1+2)C_\rho(C_m+|C_\sigma\varepsilon_P|)^2\bigg\}\bigg]\\
			&\leq \bigg(\E_P\bigg[\exp\bigg\{ \frac{(1+c_3)(1+c_2)\rho_{e}^2(\varepsilon_P)}{C_\rho c_1} \bigg\}\bigg]\bigg)^{\frac1{1+c_3}} \\
			&\phantom{=} \cdot \bigg(\E_P\bigg[ \exp\bigg\{\frac{(1+c_3)(1+c_2)(4c_1+2)C_\rho(C_m+|C_\sigma\varepsilon_P|)^2}{c_3}\bigg\}\bigg]\bigg)^{\frac{c_3}{1+c_3}}\\
			&\leq \bigg(\E_P\bigg[\exp\bigg\{ \frac{(1+c_3)(1+c_2)\rho_{e}^2(\varepsilon_P)}{C_\rho c_1} \bigg\}\bigg]\bigg)^{\frac1{1+c_3}} \\
			&\phantom{=} \cdot \bigg(\E_P\bigg[ \exp\bigg\{\frac{2(1+c_3)(1+c_2)(4c_1+2)C_\rho(C_m^2+C_\sigma^2\varepsilon_P^2)}{c_3}\bigg\}\bigg]\bigg)^{\frac{c_3}{1+c_3}}\\
			&=: \Bigg(\E_P\Bigg[\exp\Bigg\{ \frac{\lambda_\rho \rho_{e}^2(\varepsilon_P)}{2\big(\sqrt{2C_\rho}\big)^2} \Bigg\} \Bigg]\Bigg)^{\frac1{1+c_3}}  \exp\big\{2(1+c_2)(4c_1+2)C_\rho C_m^2)\big\}\\
			&\phantom{=} \cdot  \bigg(\E_P\bigg[ \exp\bigg\{\frac{\lambda_\varepsilon \varepsilon_P^2}{2C_\varepsilon^2}\bigg\}\bigg]\bigg)^{\frac{c_3}{1+c_3}};
		\end{align*}
		the final inequality uses $(a+b)^2 \leq 2(a^2+b^2)$ and the monotonicity of the exponential function; and in the final equality the newly defined quantities are
		\begin{align*}
			\lambda_\rho &:= \frac{4(1+c_3)(1+c_2)}{c_1},\\
			\lambda_\varepsilon &:= \frac{4C_\varepsilon^2 (1+c_3)(1+c_2)(4c_1+2)C_\rho C_\sigma^2}{c_3}.
		\end{align*}
		To apply Lemma~\ref{lem:subgsquaremgf}, we must choose $c_1,c_2,c_3>0$ such that both $\lambda_\rho, \lambda_\varepsilon \in [0,1)$. The choice
		\begin{equation*}
			(c_1, c_2, c_3) = \bigg(9, \frac{1}{16}, 1\bigg)
		\end{equation*}
		suffices for 
		\begin{equation*}
			\lambda_\rho = \frac{17}{18},\quad \lambda_\varepsilon \leq 1-\frac1{18^2}.
		\end{equation*}
		Hence \begin{align*}
			\E_P\Big[\big\{\rho_{e}(\varepsilon_P)-\rho_{\hat\varepsilon}(\varepsilon_P)\big\}^2 \; \Big| \; D^{(n)}\Big] &\leq 
			\Bigg\{ 4C_\rho^2(1+C_\sigma)^2 \E_{Q^{(n)}} \big( U_\sigma^2\big)\Big(\E_P\big(\varepsilon_P^{34}\big)\Big)^{\frac{1}{17}} \\
			&\phantom{\leq \bigg\{ }+ 4C_\rho^2(1+C_\sigma)^2 \E_{Q^{(n)}} \big(U_m^2\big) \\
			&\phantom{\leq \bigg\{ }+ 2\E_{Q^{(n)}}\big(U_\sigma^2\big)\Big(\E_P\big[\rho_{e}^{34}(\varepsilon_P)\big]\Big)^{\frac{1}{17}} \bigg\} \\
			&\phantom{\leq \Bigg\{ } \cdot \Bigg(\E_P\Bigg[\exp\Bigg\{ \frac{\lambda_\rho \rho_{e}^2(\varepsilon_P)}{2\big(\sqrt{2C_\rho}\big)^2} \Bigg\} \Bigg]\Bigg)^{\frac8{17}}  \exp\big(76 C_\rho C_m^2\big)  \\
			&\phantom{\leq \Bigg\{ } \cdot\bigg(\E_P\bigg[ \exp\bigg\{\frac{\lambda_\varepsilon \varepsilon_P^2}{2C_\varepsilon^2}\bigg\}\bigg]\bigg)^{\frac8{17}}.
		\end{align*}
		Finally, by Theorem~\ref{thm:scoresubg} and Lemmas~\ref{lem:subgmoment} and \ref{lem:subgsquaremgf} we have the bounds \begin{align*}
			\Big(\E_P\big[\rho_{e}^{34}(\varepsilon_P)\big]\Big)^{\frac{1}{17}} &\leq C_\rho (33!!)^\frac{1}{17} < 13 C_\rho;\\
			\Big(\E_P\big(\varepsilon_P^{34}\big)\Big)^{\frac{1}{17}}  &\leq \big(34 \cdot 2^{17} C_\varepsilon^{34} \Gamma(17)\big)^\frac1{17} = 2C_\varepsilon^2(34\cdot 16!)^\frac1{17} < 15C_\varepsilon^2;\\
			\Bigg(\E_P\Bigg[\exp\Bigg\{ \frac{\lambda_\rho \rho_{e}^2(\varepsilon_P)}{2\big(\sqrt{2C_\rho}\big)^2} \Bigg\} \Bigg]\Bigg)^{\frac8{17}} &\leq \bigg(\frac{1}{\sqrt{1-\lambda_\rho}}\bigg)^{\frac8{17}} <4;\\
			\bigg(\E_P\bigg[ \exp\bigg\{\frac{\lambda_\varepsilon \varepsilon_P^2}{2C_\varepsilon^2}\bigg\}\bigg]\bigg)^{\frac8{17}} & \leq \bigg(\frac{1}{\sqrt{1-\lambda_\varepsilon}}\bigg)^{\frac8{17}} <2.
		\end{align*}
		This gives the final bound 
		\begin{align*}
			\E_P\Big[\big\{\rho_{e}(\varepsilon_P)-\rho_{\hat\varepsilon}(\varepsilon_P)\big\}^2 \; \Big| \; D^{(n)}\Big] &\leq 
			\big\{480C_\rho^2(1+C_\sigma)^2C_\varepsilon^2 + 208 C_\rho\big\} \exp\big(76 C_\rho C_m^2\big) \;\E_{Q^{(n)}}\big(U_\sigma^2\big)\\
			&\phantom{\leq} + 32C_\rho^2(1+C_\sigma)^2 \exp\big(76 C_\rho C_m^2\big) \;\E_{Q^{(n)}} \big(U_m^2\big)\\
			&=
			\big\{480C_\rho^2(1+C_\sigma)^2C_\varepsilon^2 + 208C_\rho\big\} \exp\big(76 C_\rho C_m^2\big) \;A_\sigma^{(n)}\\
			&\phantom{\leq} + 32C_\rho^2(1+C_\sigma)^2 \exp\big(76 C_\rho C_m^2\big) \;A_m^{(n)}.
		\end{align*}
		
	\end{proof}

	\begin{lemma}
		\label{lem:estimatedresidual}
		Let $P$ be such that $p_{e}$ is twice differentiable on $\R$, with \begin{equation*}
			\sup_{\epsilon\in\R} |\partial_\epsilon^2 \log p_{e}(\epsilon)| = \sup_{\epsilon\in\R} |\rho_{e}'(\epsilon)|\leq C_\rho,
		\end{equation*}
		$p_{e}'$ and $ p_{e}''$ are both bounded, and $\big(\E_P(\varepsilon_P^8)\big)^{\frac18} = C_\varepsilon < \infty$. Further assume that $D^{(n)}$ is such that $\sup_{z\in\mathcal{Z}} |u_m^{(n)}(z)|\leq C_m$  and $\sup_{z\in\mathcal{Z}} |u_\sigma^{(n)}(z)|\leq C_\sigma$ for almost every $z\in\mathcal{Z}$. 
		Then there exists a constant $C$, depending only on $C_\rho, C_m, C_\sigma, C_\varepsilon$, such that
		\begin{equation*}
			\E_P\Big[\big\{\rho_{\hat\varepsilon}(\varepsilon_P)-\rho_{\hat\varepsilon}(\hat\varepsilon^{(n)})\big\}^2 \; \Big| \; D^{(n)}\Big] \leq C \big(A_m^{(n)} + A_\sigma^{(n)}\big).
		\end{equation*}
	\end{lemma}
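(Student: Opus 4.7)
The strategy is to apply the mean value theorem to $\rho_{\hat\varepsilon}$; the main obstacle is that $\rho_{\hat\varepsilon}$ is not a priori globally Lipschitz, so I first aim to establish a quadratic-growth bound of the form $|\rho_{\hat\varepsilon}'(\epsilon)| \leq a_0 + b_0 \epsilon^2$ with constants $a_0, b_0$ depending only on $C_\rho, C_m, C_\sigma, C_\varepsilon$.

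To obtain this bound, I would differentiate under the integral sign in the formula for $p_{\hat\varepsilon}$ given in Section~\ref{sect:scorelsfamilies} (justified by the boundedness of $p_\varepsilon'$ and $p_\varepsilon''$), and combine the identities $\rho_{\hat\varepsilon}' = p_{\hat\varepsilon}''/p_{\hat\varepsilon} - \rho_{\hat\varepsilon}^2$ and $p_\varepsilon''/p_\varepsilon = \rho_\varepsilon' + \rho_\varepsilon^2$. Writing $\tilde q(\cdot\mid\epsilon)$ for the posterior density over $(U_m, U_\sigma)$ proportional to $p_{e}((1+U_\sigma)\epsilon + U_m)$ under $Q^{(n)}$, this yields
\[
\rho_{\hat\varepsilon}'(\epsilon) = \E_{\tilde q(\cdot\mid\epsilon)}\big[(1+U_\sigma)^2\rho_{e}'((1+U_\sigma)\epsilon + U_m)\big] + \Var_{\tilde q(\cdot\mid\epsilon)}\big[(1+U_\sigma)\rho_{e}((1+U_\sigma)\epsilon + U_m)\big].
\]
The first term is bounded in absolute value by $(1+C_\sigma)^2 C_\rho$ by the Lipschitz property of $\rho_{e}$. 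For the variance term I would bound it by the second moment and use $|\rho_{e}(y)| \leq |\rho_{e}(0)| + C_\rho|y|$; here $|\rho_{e}(0)| \leq C_\rho C_\varepsilon$ follows from $\E_P[\rho_{e}(\varepsilon_P)] = 0$ (integration by parts) and Jensen's inequality, which gives $\E_P|\varepsilon_P| \leq C_\varepsilon$. The elementary bound $|(1+U_\sigma)\epsilon + U_m| \leq (1+C_\sigma)|\epsilon| + C_m$ then delivers the desired quadratic-growth control.

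By the mean value theorem applied to $\rho_{\hat\varepsilon}$ on the interval between $\hat\varepsilon^{(n)}$ and $\varepsilon_P$, and using $\xi^4 \leq \varepsilon_P^4 + (\hat\varepsilon^{(n)})^4$ for any intermediate $\xi$, one obtains
\[
\big(\rho_{\hat\varepsilon}(\varepsilon_P) - \rho_{\hat\varepsilon}(\hat\varepsilon^{(n)})\big)^2 \leq 2\big(a_0^2 + b_0^2(\varepsilon_P^4 + (\hat\varepsilon^{(n)})^4)\big)(\varepsilon_P - \hat\varepsilon^{(n)})^2.
\]
Substituting the identity $\varepsilon_P - \hat\varepsilon^{(n)} = (U_\sigma\varepsilon_P + U_m)/(1+U_\sigma)$, with $|1+U_\sigma|$ bounded below by a positive constant (implicit in the ambient setting of Theorem~\ref{thm:subgscoreestimation} via $\sup_z \sigma_P/\hat\sigma^{(n)} = O_\mathcal{P}(1)$), controls $(\varepsilon_P - \hat\varepsilon^{(n)})^2$ by a constant multiple of $U_\sigma^2\varepsilon_P^2 + U_m^2$.

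To take conditional expectations given $D^{(n)}$, I would condition on $Z$ and exploit $\varepsilon_P \independent Z$, hence $\varepsilon_P \independent (U_m, U_\sigma)$. Each term such as $\E_P[(\hat\varepsilon^{(n)})^4 U_\sigma^2 \varepsilon_P^2 \mid D^{(n)}]$ reduces, after expanding $(\hat\varepsilon^{(n)})^4 = (\varepsilon_P - U_m)^4/(1+U_\sigma)^4$ and integrating out $\varepsilon_P$ given $Z$, to a polynomial in the bounded $U_m, U_\sigma$ with coefficients involving moments of $\varepsilon_P$ up to sixth order, all bounded by powers of $C_\varepsilon$ via Jensen's inequality from $\E_P[\varepsilon_P^8] \leq C_\varepsilon^8$, multiplied by $\E_P[U_\sigma^2 \mid D^{(n)}] = A_\sigma^{(n)}$; analogous reasoning applied to the $U_m^2$ terms produces $A_m^{(n)}$. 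Collecting the contributions yields the required bound $C(A_m^{(n)} + A_\sigma^{(n)})$.
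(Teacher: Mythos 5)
Your proposal is correct and follows essentially the same route as the paper's proof: differentiate under the integral sign to control $\rho_{\hat\varepsilon}'$ (the paper bounds the same expression by $(1+C_\sigma)^2\{C_\rho + 2\sup\rho_{e}^2\}$ and then invokes the moment bounds of Theorem~\ref{thm:scoresubg}, rather than isolating the posterior-variance term and converting to a polynomial envelope via $|\rho_{e}(y)|\leq|\rho_{e}(0)|+C_\rho|y|$), then apply the mean value theorem and integrate out $\varepsilon_P$ using its independence from $(U_m,U_\sigma)$ together with the eighth-moment assumption. The only step requiring care is the division by $1+U_\sigma$ in your identity for $\varepsilon_P-\hat\varepsilon^{(n)}$, since the lemma's hypotheses alone do not bound $1+U_\sigma=\hat\sigma^{(n)}(Z)/\sigma_P(Z)$ away from zero unless $C_\sigma<1$; as you note, this is supplied by the ambient setting of Theorem~\ref{thm:subgscoreestimation} (where $C_\sigma\leq 1/18$), so the argument goes through.
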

	\begin{proof}
		For ease of notation, write $Q^{(n)}$ for the distribution of $\big(u_m^{(n)}(Z), u_\sigma^{(n)}(Z)\big)$ conditionally on $D^{(n)}$, and let $(U_m, U_\sigma)\sim Q^{(n)}$. Therefore \begin{equation*}
			A_m^{(n)} = \E_{Q^{(n)}} \big(U_m^2\big); \quad A_\sigma^{(n)} = \E_{Q^{(n)}} \big(U_\sigma^2\big).
		\end{equation*}
		
		The proof proceeds by first bounding the derivative of $\rho_{\hat\varepsilon}$. The conditions on $p_{e}$ and $U_\sigma$ are sufficient to interchange differentiation and expectation operators as follows \citep[Thm.~20.4]{AliprantisAnalysis}. 
		\begin{align*}
			\rho_{\hat\varepsilon}(\epsilon)  &= \frac{\frac{\partial}{\partial \epsilon}\E_{Q^{(n)}}\big[p_{e}(\epsilon + U_\sigma\epsilon + U_m) \big]}{\E_{Q^{(n)}}\big[p_{e}(\epsilon + U_\sigma\epsilon + U_m) \big]}\\
			&=  \frac{\E_{Q^{(n)}}\big[(1+U_\sigma)p_{e}'(\epsilon + U_\sigma\epsilon + U_m) \big]}{\E_{Q^{(n)}}\big[p_{e}(\epsilon + U_\sigma\epsilon + U_m) \big]},
		\end{align*}
		and further,
		\begin{align*}
			\rho_{\hat\varepsilon}'(\epsilon)  &=  \frac{\partial}{\partial \epsilon} \frac{\E_{Q^{(n)}}\big[(1+U_\sigma)p_{e}'(\epsilon + U_\sigma\epsilon + U_m) \big]}{\E_{Q^{(n)}}\big[p_{e}(\epsilon + U_\sigma\epsilon + U_m) \big]}\\
			&=  \frac{\frac{\partial}{\partial \epsilon} \E_{Q^{(n)}}\big[(1+U_\sigma)p_{e}'(\epsilon + U_\sigma\epsilon + U_m) \big]}{\E_{Q^{(n)}}\big[p_{e}(\epsilon + U_\sigma\epsilon + U_m) \big]} - \rho_{\hat\varepsilon}^2(\epsilon)\\
			&= \frac{\E_{Q^{(n)}}\big[ (1+U_\sigma)^2p_{e}''(\epsilon+U_\sigma \epsilon + U_m) \big]}{\E_{Q^{(n)}}\big[ p_{e}(\epsilon+U_\sigma \epsilon + U_m) \big]}\\
			&\phantom{=} - \frac{\Big(\E_{Q^{(n)}}\big[ (1+U_\sigma)p_{e}'(\epsilon+U_\sigma \epsilon + U_m) \big]\Big)^2}{\Big(\E_{Q^{(n)}}\big[ p_{e}(\epsilon+U_\sigma \epsilon + U_m) \big]\Big)^2} \\
			&=\frac{\E_{Q^{(n)}}\big[ (1+U_\sigma)^2\big\{\rho_{e}'(\epsilon+U_\sigma \epsilon + U_m) + \rho_{e}^2(\epsilon+U_\sigma \epsilon + U_m)\big\} p_{e}(\epsilon+U_\sigma \epsilon + U_m) \big]}{\E_{Q^{(n)}}\big[ p_{e}(\epsilon+U_\sigma \epsilon + U_m) \big]}\\
			&\phantom{=} - \frac{\Big(\E_{Q^{(n)}}\big[ (1+U_\sigma)\rho_{e}(\epsilon+U_\sigma \epsilon + U_m) p_{e}(\epsilon+U_\sigma \epsilon + U_m) \big]\Big)^2}{\Big(\E_{Q^{(n)}}\big[ p_{e}(\epsilon+U_\sigma \epsilon + U_m) \big]\Big)^2} 
		\end{align*}
		In the third line we have made use of the identities \begin{align*}
			p'_\varepsilon(\epsilon) &= \rho_{e}(\epsilon)p_{e}(\epsilon);\\
			p_{e}''(\epsilon)&= \big\{\rho_{e}'(\epsilon)  +\rho_{e}^2(\epsilon)\big\} p_{e}(\epsilon).
		\end{align*}
		We now apply both the triangle and H\"older inequalities to deduce
		\begin{align*}
			\big| \rho_{\hat\varepsilon}'(\epsilon) \big|&\leq \sup_{|u_m|\leq C_m, |u_\sigma|\leq C_\sigma} (1+u_\sigma)^2\big\{ \big|\rho_{e}'(\epsilon+u_\sigma \epsilon + u_m)\big| + 2\rho_{e}^2 (\epsilon+u_\sigma \epsilon + u_m)\big\}\\
			&\leq (1+C_\sigma)^2\Bigg\{C_\rho + 2 \sup_{|u_m|\leq C_m, |u_\sigma|\leq C_\sigma} \rho_{e}^2 (\epsilon+u_\sigma \epsilon + u_m)\Bigg\}.
		\end{align*}
		
		Now we apply a Taylor expansion as follows.
		\begin{align*}
			\E_P\Big[\big\{\rho_{\hat\varepsilon}(\varepsilon_P)-\rho_{\hat\varepsilon}(\hat\varepsilon)\big\}^2 \; \Big| \; D^{(n)}\Big] &= \E_P\Big(\E_{Q^{(n)}}\Big[\big\{\rho_{\hat\varepsilon}(\varepsilon_P)-\rho_{\hat\varepsilon}(\varepsilon_P + U_\sigma\varepsilon_P + U_m)\big\}^2 \; \Big| \; \varepsilon_P, \; D^{(n)}\Big] \; \Big| \; D^{(n)} \Big)\\
			&\leq \E_P\Bigg(\E_{Q^{(n)}}\big[(U_\sigma\varepsilon_P+U_m)^2\; \big| \; \varepsilon_P\big] \\
			&\phantom{=\E_P\Bigg(}\cdot\bigg\{\sup_{|u_m|\leq C_m, |u_\sigma|\leq C_\sigma} \rho'_{\hat\varepsilon} (\varepsilon_P+u_\sigma \varepsilon_P + u_m)\bigg\}^2  \; \Bigg| \; D^{(n)} \Bigg)\\
			&\leq (1+C_\sigma)^2\E_P\Bigg(\E_{Q^{(n)}}\big[(U_\sigma\varepsilon_P+U_m)^2\; \big| \; \varepsilon_P \big] \\
			&\phantom{
				\leq \E_P\Bigg(
			}  \cdot\bigg\{C_\rho + 2\sup_{|\eta_m|\leq 2C_m+C_\sigma C_m, |\eta_\sigma|\leq 2C_\sigma+C_\sigma^2} \rho_{e}^2 (\varepsilon_P+\eta_\sigma \varepsilon_P + \eta_m)\bigg\}^2  \; \Bigg| \; D^{(n)} \Bigg)\\
			&\leq 2(1+C_\sigma)^2\E_P\Bigg[\Big\{\E_{Q^{(n)}}\big(U_\sigma^2\big)\varepsilon_P^2+\E_{Q^{(n)}}\big(U_m^2\big)\Big\} \\
			&\phantom{ \leq \E_P\Bigg( } \cdot \bigg\{C_\rho  + 2\sup_{|\eta_m|\leq 2C_m+C_\sigma C_m, |\eta_\sigma|\leq 2C_\sigma+C_\sigma^2}  \rho_{e}^2 (\varepsilon_P+\eta_\sigma \varepsilon_P + \eta_m)\bigg\}^2  \; \Bigg| \; D^{(n)} \Bigg]\\
			&\leq 2(1+C_\sigma)^2\bigg(\E_P\bigg[\Big\{\E_{Q^{(n)}}\big(U_\sigma^2\big)\varepsilon_P^2+\E_{Q^{(n)}}\big(U_m^2\big)\Big\}^2 \; \bigg|\; D^{(n)} \bigg]\bigg)^{\frac12}\\
			&\phantom{\leq}\cdot \Bigg(\E_P\Bigg[\bigg\{C_\rho + 2\sup_{|\eta_m|\leq 2C_m+C_\sigma C_m, |\eta_\sigma|\leq 2C_\sigma+C_\sigma^2} \rho_{e}^2 (\varepsilon_P+\eta_\sigma \varepsilon_P + \eta_m)\bigg\}^4 \Bigg]\Bigg)^{\frac12}\\
			&\leq 2(1+C_\sigma)^2\Big[\E_{Q^{(n)}}\big(U_\sigma^2\big)\big\{\E_P\big(\varepsilon_P^4\big)\big\}^{\frac12}+\E_{Q^{(n)}}\big(U_m^2\big)\Big]\\
			&\phantom{\leq}\cdot \Bigg(\E_P\Bigg[\bigg\{C_\rho + 2\sup_{|\eta_m|\leq 2C_m+C_\sigma C_m, |\eta_\sigma|\leq 2C_\sigma+C_\sigma^2}  \rho_{e}^2 (\varepsilon_P+\eta_\sigma \varepsilon_P + \eta_m)\bigg\}^4 \Bigg]\Bigg)^{\frac12}\\
			&\leq 2(1+C_\sigma)^2\Big[\E_{Q^{(n)}}\big(U_\sigma^2\big)\big\{\E_P\big(\varepsilon_P^4\big)\big\}^{\frac12}+\E_{Q^{(n)}}\big(U_m^2\big)\Big]\\
			&\phantom{\leq}\cdot \bigg(\E_P\bigg[\Big\{C_\rho + 2\big(|\rho_{e} (\varepsilon_P)|  + C_\rho(2 C_m+C_\sigma C_m) + C_\rho (2C_\sigma+C_\sigma^2)\varepsilon_P\big)^2\Big\}^4 \bigg]\bigg)^{\frac12}\\
			&\leq 2(1+C_\sigma)^2\Big[\E_{Q^{(n)}}\big(U_\sigma^2\big)\big\{\E_P\big(\varepsilon_P^4\big)\big\}^{\frac12}+\E_{Q^{(n)}}\big(U_m^2\big)\Big]\\
			&\phantom{\leq}\cdot \Big(\E_P\Big[\big\{C_\rho + 6\rho^2_\varepsilon (\varepsilon_P)  + 6C_\rho^2(2 C_m+C_\sigma C_m)^2 + 6C_\rho^2 (2C_\sigma+C_\sigma^2)^2\varepsilon_P^2\big\}^4 \Big]\Big)^{\frac12}\\
			&\leq 2(1+C_\sigma)^2\Big[\E_{Q^{(n)}}\big(U_\sigma^2\big)\big\{\E_P\big(\varepsilon_P^4\big)\big\}^{\frac12}+\E_{Q^{(n)}}\big(U_m^2\big)\Big]\\
			&\phantom{\leq}\cdot \bigg(C_\rho + 6\Big(\E_P\big[\rho^8_\varepsilon (\varepsilon_P)\big]\Big)^\frac14  \\
			&\phantom{\leq}+ 6C_\rho^2(2 C_m+C_\sigma C_m)^2 + 6C_\rho^2 (2C_\sigma+C_\sigma^2)^2\Big(\E_P\big[\varepsilon_P^8\big]\Big)^{\frac14}\bigg)^2
		\end{align*}
		where we have made use of the triangle inequalities for $L_2(P)$ and $L_4(P)$, and also the inequalities, $\{(a+b)/2\}^2 \leq (a^2+b^2)/2$ and $\{(a+b+c)/3\}^2 \leq (a^2+b^2+c^2)/3$.
		
		Finally, by Theorem~\ref{thm:scoresubg} and the assumed eighth moment of $\varepsilon_P$, we have that \begin{align*}
			6\Big(\E_P\big[\rho^8_\varepsilon (\varepsilon_P)\big]\Big)^\frac14 &\leq 6\cdot 105^\frac14C_\rho < 20 C_\rho;\\
			\big\{\E_P\big(\varepsilon_P^4\big)\big\}^{\frac12} & \leq \{\E_P\big(\varepsilon_P^8\big)\big\}^{\frac14} = C_\varepsilon^2.
		\end{align*}
		Hence \begin{align*}
			\E_P\Big[\big\{\rho_{\hat\varepsilon}(\varepsilon_P)-\rho_{\hat\varepsilon}(\hat\varepsilon)\big\}^2 \; \Big| \; D^{(n)}\Big] &\leq 2(1+C_\sigma)^2 C_\rho^2\big\{21  + C_\rho(2+C_\sigma)^2(C_m^2 +C_\sigma^2C_\varepsilon^2)\big\}^2 \\
			&\phantom{\leq} \cdot \Big\{4C_\varepsilon^2 \E_{Q^{(n)}}\big(U_\sigma^2\big)+\E_{Q^{(n)}}\big(U_m^2\big)\Big\}.
		\end{align*}
	\end{proof}

	\begin{lemma}
		\label{lem:estimatedresidualscore_locationonly}
		Let $P$ be such that $p_{e}$ is twice differentiable on $\R$, with \begin{equation*}
			\sup_{\epsilon\in\R} |\partial_\epsilon^2 \log p_{e}(\epsilon)| = \sup_{\epsilon\in\R} |\rho_{e}'(\epsilon)|\leq C_\rho
		\end{equation*}
		and $p_{e}'$ bounded. Further assume that $D^{(n)}$ is such that  $\sup_{z\in\mathcal{Z}} |u_m^{(n)}(z)|\leq C_m$. Then there exists a constant $C$, depending only on $C_\rho, C_m$, such that
		\begin{equation*}
			\E_P\Big[\big\{\rho_{e}(\varepsilon_P)-\rho_{\hat\varepsilon}(\varepsilon_P)\big\}^2 \; \Big| \; D^{(n)}\Big] \leq C A_m^{(n)} .
		\end{equation*}
	\end{lemma}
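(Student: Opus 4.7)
The plan is to mirror the proof of Lemma~\ref{lem:estimatedresidualscore}, exploiting the crucial simplification that in the location-only setting $u_\sigma^{(n)} \equiv 0$, so that the sub-Gaussian tail assumption on $\varepsilon_P$ needed there is no longer required. Writing $Q^{(n)}$ for the conditional distribution of $u_m^{(n)}(Z)$ given $D^{(n)}$ and $U_m \sim Q^{(n)}$, so that $A_m^{(n)} = \E_{Q^{(n)}}(U_m^2)$, boundedness of $p_{e}'$ together with $|U_m| \leq C_m$ justifies differentiating under the integral exactly as in Lemma~\ref{lem:estimatedresidualscore}, yielding the representation
\[
\rho_{\hat\varepsilon}(\epsilon) = \frac{\E_{Q^{(n)}}[p_{e}'(\epsilon + U_m)]}{\E_{Q^{(n)}}[p_{e}(\epsilon + U_m)]}.
\]

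Next I would rewrite
\[
\rho_{\hat\varepsilon}(\epsilon) - \rho_{e}(\epsilon) = \frac{\E_{Q^{(n)}}[\{\rho_{e}(\epsilon + U_m) - \rho_{e}(\epsilon)\}\, p_{e}(\epsilon + U_m)]}{\E_{Q^{(n)}}[p_{e}(\epsilon + U_m)]},
\]
apply the Lipschitz bound $|\rho_{e}(\epsilon + U_m) - \rho_{e}(\epsilon)| \leq C_\rho |U_m|$ and Cauchy--Schwarz in $Q^{(n)}$ to peel off a factor $C_\rho (\E_{Q^{(n)}} U_m^2)^{1/2}$, and control the residual ratio $(\E_{Q^{(n)}}[p_{e}(\epsilon + U_m)^2])^{1/2}/\E_{Q^{(n)}}[p_{e}(\epsilon + U_m)]$ by the pointwise bound $(\sup_{|u_m|\leq C_m} p_{e}(\epsilon + u_m)/\inf_{|u_m|\leq C_m} p_{e}(\epsilon + u_m))^{1/2}$. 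Lemma~\ref{lem:densityratiobound} then collapses this ratio to an explicit exponential, giving
\[
|\rho_{\hat\varepsilon}(\epsilon) - \rho_{e}(\epsilon)|^2 \leq C_\rho^2\, \E_{Q^{(n)}}(U_m^2)\, \exp\bigl(2C_m|\rho_{e}(\epsilon)| + C_\rho C_m^2\bigr).
\]

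The last step, and the point where everything comes together, is to take expectation with respect to $\varepsilon_P \sim p_{e}$ and invoke Theorem~\ref{thm:scoresubg}: since $\rho_{e}(\varepsilon_P)$ is sub-Gaussian with parameter $\sqrt{2C_\rho}$, the standard sub-Gaussian moment generating function bound gives $\E_P[\exp(2C_m|\rho_{e}(\varepsilon_P)|)] \leq 2\exp(4C_m^2 C_\rho)$, producing a finite constant $C$ depending only on $C_\rho$ and $C_m$. I do not expect any substantial obstacle here: this is strictly easier than Lemma~\ref{lem:estimatedresidualscore} because the $U_\sigma\epsilon$ term that previously introduced an unbounded factor of $\varepsilon_P$ inside $p_{e}$, and which forced the sub-Gaussianity assumption on $\varepsilon_P$ itself, is simply absent. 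The Lipschitz score assumption alone, through Theorem~\ref{thm:scoresubg}, supplies all the exponential integrability required.
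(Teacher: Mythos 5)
Your proposal is correct and follows essentially the same route as the paper's proof: the same representation of $\rho_{\hat\varepsilon}$ via differentiation under the integral, the same Lipschitz-plus-Cauchy--Schwarz decomposition peeling off $C_\rho (A_m^{(n)})^{1/2}$, the same control of the density ratio via Lemma~\ref{lem:densityratiobound}, and the same final appeal to the sub-Gaussianity of $\rho_{e}(\varepsilon_P)$ from Theorem~\ref{thm:scoresubg}. Your bound on the ratio term is in fact marginally sharper (using $\E_{Q^{(n)}}[p_{e}^2] \leq \sup p_{e} \cdot \E_{Q^{(n)}}[p_{e}]$ rather than the cruder $\sup/\inf$ squared), but this only changes constants.
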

	\begin{proof}
		For ease of notation, write $Q^{(n)}$ for the distribution of $u_m^{(n)}(Z)$ conditionally on $D^{(n)}$, and let $U\sim Q^{(n)}$. Therefore \begin{equation*}
			A_m^{(n)} = \E_{Q^{(n)}} \big(U^2\big).
		\end{equation*}

		The condition on $p_{e}$ is sufficient to interchange differentiation and expectation operators as follows \citep[Thm.~20.4]{AliprantisAnalysis}. 
		\begin{align*}
			\rho_{\hat\varepsilon}(\epsilon)  &= \frac{\frac{\partial}{\partial \epsilon}\E_{Q^{(n)}}[p_{e}(\epsilon + U) ]}{\E_{Q^{(n)}}[p_{e}(\epsilon + U)]}\\
			&=  \frac{\E_{Q^{(n)}}\big[p_{e}'(\epsilon + U) \big]}{\E_{Q^{(n)}}[p_{e}(\epsilon + U) ]}.
		\end{align*}
		We may decompose the approximation error as follows.
		\begin{align*}
			|\rho_{\hat\varepsilon}(\epsilon) - \rho_{e}(\epsilon)| &= \bigg|\frac{\E_{Q^{(n)}} \big[p_{e}'(\epsilon+U)\big] }{\E_{Q^{(n)}} [p_{e}(\epsilon+U)]} - \rho_{e}(\epsilon) \bigg| \\
			&=\bigg|\frac{\E_{Q^{(n)}} [\{\rho_{e}(\epsilon+U)-\rho_{e}(\epsilon)\}\;p_{e}(\epsilon+U)] }{\E_{Q^{(n)}} [p_{e}(\epsilon+U)]}\bigg| \\
			&\leq C_\rho \frac{\E_{Q^{(n)}} [|U|p_{e}(\epsilon+U)] }{\E_{Q^{(n)}} [p_{e}(\epsilon+U)]}\\
			&\leq C_\rho \Big(\E_{Q^{(n)}} \big(U^2\big)\Big)^{1/2} \frac{\Big( \E_{Q^{(n)}}\big[p_{e}^2(\epsilon+U)\big]\Big)^{1/2}}{\E_{Q^{(n)}} [p_{e}(\epsilon+U)]}\\
			&=: C_\rho \big(A_m^{(n)}\big)^{1/2} R(\epsilon).
		\end{align*}
		The first inequality uses the Lipschitz property of $\rho_{e}$. The second applies the Cauchy--Schwarz inequality.
		
		Now, for every $\epsilon\in\R$ with $p_{e}(\epsilon)>0$,
		\begin{align*}
			R^2(\epsilon) &\leq \bigg(\frac{\sup_{|u|\leq C_m} p_{e}(\epsilon + u) / p_{e}(\epsilon)}{\inf_{|u|\leq C_m} p_{e}(\epsilon + u) / p_{e}(\epsilon)}\bigg)^2\\
			&\leq \exp\big\{4C_m|\rho_{e}(\epsilon)| + 2C_\rho C_m^2\big\}.
		\end{align*}
		The first line is a supremum bound for the ratio of expectations, the second is the application of Lemma~\ref{lem:densityratiobound}. Since $\exp(|x|)\leq \exp(x)+\exp(-x)$, this yields the bound
		\begin{equation*}
			\E_P\Big[\big\{\rho_{e}(\varepsilon_P)-\rho_{\hat\varepsilon}(\varepsilon_P)\big\}^2 \; \Big| \; D^{(n)}\Big] \leq C_\rho^2 \exp\big(2C_\rho C_m^2\big) \;\big(\E_P[\exp\{4C_m \rho_{e}(\varepsilon_P)\}] + \E_P[\exp\{-4C_m \rho_{e}(\varepsilon_P)\}]\big) \;A_m^{(n)}.
		\end{equation*}
		By Theorem \ref{thm:scoresubg}, $\rho_{e}(\varepsilon_P)$ is sub-Gaussian with parameter $\sqrt{2C_\rho}$, so for all $\lambda\in\R$ we have \begin{equation*}
			\E_P[\exp\{\lambda \rho_{e}(\varepsilon_P)\}] \leq \exp(\lambda^2 C_\rho).
		\end{equation*}
		Thus
		\begin{equation*}
			\E_P\Big[\big\{\rho_{e}(\varepsilon_P)-\rho_{\hat\varepsilon}(\varepsilon_P)\big\}^2 \; \Big| \; D^{(n)}\Big] \leq 2C_\rho^2 \exp(18C_\rho C_m^2)  \;A_m^{(n)}.
		\end{equation*}
	\end{proof}

	\begin{lemma}
		\label{lem:estimatedresidual_locationonly}
		Let $P$ be such that $p_{e}$ is twice differentiable on $\R$, with \begin{equation*}
			\sup_{\epsilon\in\R} |\partial_\epsilon^2 \log p_{e}(\epsilon)| = \sup_{\epsilon\in\R} |\rho_{e}'(\epsilon)|\leq C_\rho,
		\end{equation*}
		and $p_{e}'$ and $ p_{e}''$ both bounded. Further assume that $D^{(n)}$ is such that $\sup_{z\in\mathcal{Z}} |u_m^{(n)}(z)|\leq C_m$ for almost every $z\in\mathcal{Z}$. 
		Then there exists a constant $C$, depending only on $C_\rho, C_m$, such that
		\begin{equation*}
			\E_P\Big[\big\{\rho_{\hat\varepsilon}(\varepsilon_P)-\rho_{\hat\varepsilon}(\hat\varepsilon^{(n)})\big\}^2 \; \Big| \; D^{(n)}\Big] \leq C \big(A_m^{(n)} + A_\sigma^{(n)}\big).
		\end{equation*}
	\end{lemma}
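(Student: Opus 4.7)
The plan is to closely follow the proof of Lemma~\ref{lem:estimatedresidual}, exploiting the substantial simplification offered by the location-only setting where $\hat\sigma^{(n)} \equiv 1 \equiv \sigma_P$. This gives $u_\sigma^{(n)}(z) \equiv 0$ for all $z$ (in particular $A_\sigma^{(n)} = 0$, so the target bound $C(A_m^{(n)}+A_\sigma^{(n)})$ reduces to $CA_m^{(n)}$) and the clean identity $\hat\varepsilon^{(n)} = \varepsilon_P - u_m^{(n)}(Z)$. Writing $Q^{(n)}$ for the conditional law of $u_m^{(n)}(Z)$ given $D^{(n)}$ and $U \sim Q^{(n)}$, the boundedness of $p_{e}'$ and $p_{e}''$ justifies differentiating under the integral sign (via \citet[Thm.~20.4]{AliprantisAnalysis}) to obtain closed-form expressions for both $\rho_{\hat\varepsilon}(\epsilon) = \E_{Q^{(n)}}[p_{e}'(\epsilon+U)]/\E_{Q^{(n)}}[p_{e}(\epsilon+U)]$ and its derivative.

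I would next derive a pointwise bound on $\rho_{\hat\varepsilon}'$. Applying the quotient rule and substituting the identities $p_{e}' = \rho_{e} p_{e}$ and $p_{e}'' = (\rho_{e}' + \rho_{e}^2)p_{e}$, the resulting ratios of expectations can be controlled by a combination of the Lipschitz hypothesis $|\rho_{e}'| \leq C_\rho$ and Cauchy--Schwarz, exactly as in the derivation of the analogous bound in Lemma~\ref{lem:estimatedresidual} with $C_\sigma = 0$. This yields
\[
|\rho_{\hat\varepsilon}'(\epsilon)| \leq C_\rho + 2\sup_{|u|\leq C_m} \rho_{e}^2(\epsilon+u).
\]

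A one-dimensional mean value theorem then gives $\rho_{\hat\varepsilon}(\varepsilon_P) - \rho_{\hat\varepsilon}(\hat\varepsilon^{(n)}) = u_m^{(n)}(Z) \rho_{\hat\varepsilon}'(\eta)$ for some $\eta$ between $\varepsilon_P$ and $\hat\varepsilon^{(n)}$ satisfying $|\eta - \varepsilon_P| \leq |u_m^{(n)}(Z)| \leq C_m$. Combining with the previous display, the supremum in the derivative bound becomes one of $\rho_{e}^2$ over a window of radius $2C_m$ around $\varepsilon_P$. Invoking the Lipschitz hypothesis once more, $|\rho_{e}(\varepsilon_P + u)| \leq |\rho_{e}(\varepsilon_P)| + 2C_\rho C_m$ for $|u|\leq 2C_m$, so $\rho_{e}^2(\varepsilon_P+u) \leq 2\rho_{e}^2(\varepsilon_P) + 8C_\rho^2 C_m^2$ pointwise.

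Squaring the Taylor expansion, using $|u_m^{(n)}(Z)|\leq C_m$, and exploiting the independence of $\varepsilon_P$ and $Z$ to factor the conditional expectation, the $\E_P[u_m^{(n),2}(Z)\mid D^{(n)}] = A_m^{(n)}$ term comes out and the residual factor is a constant plus $\E_P[\rho_{e}^4(\varepsilon_P)]$, which Theorem~\ref{thm:scoresubg} bounds by $3C_\rho^2$. This delivers a constant $C$ depending only on $C_\rho$ and $C_m$ as required. The main obstacle is purely bookkeeping: tracking constants through the Taylor expansion and the two successive applications of the Lipschitz bound on $\rho_{e}$; no new analytic ideas beyond those deployed in Lemma~\ref{lem:estimatedresidual} are needed.
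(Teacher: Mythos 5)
Your proposal is correct and follows essentially the same route as the paper's proof: differentiate under the integral to get $\rho_{\hat\varepsilon}'$, use $p_{e}'=\rho_{e}p_{e}$ and $p_{e}''=(\rho_{e}'+\rho_{e}^2)p_{e}$ to bound $|\rho_{\hat\varepsilon}'(\epsilon)|\leq C_\rho + 2\sup_{|u|\leq C_m}\rho_{e}^2(\epsilon+u)$, then a mean-value/Taylor step, the Lipschitz bound $|\rho_{e}(\varepsilon_P+u)|\leq|\rho_{e}(\varepsilon_P)|+2C_\rho C_m$ over the widened window, conditional independence of $\varepsilon_P$ and $u_m^{(n)}(Z)$ to extract $A_m^{(n)}$, and Theorem~\ref{thm:scoresubg} for $\E_P[\rho_{e}^4(\varepsilon_P)]$. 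No gaps; only the constants differ in bookkeeping.
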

	\begin{proof}
		For ease of notation, write $Q^{(n)}$ for the distribution of $u_m^{(n)}(Z)$ conditionally on $D^{(n)}$, and let $U\sim Q^{(n)}$. Therefore \begin{equation*}
			A_m^{(n)} = \E_{Q^{(n)}} \big(U^2\big).
		\end{equation*}
		
		The proof proceeds by first bounding the derivative of $\rho_{\hat\varepsilon}$. The conditions on $p_{e}$ are sufficient to interchange differentiation and expectation operators as follows \citep[Thm.~20.4]{AliprantisAnalysis}:
		\begin{align*}
			\rho_{\hat\varepsilon}(\epsilon)  &= \frac{\frac{\partial}{\partial \epsilon}\E_{Q^{(n)}}[p_{e}(\epsilon + U)]}{\E_{Q^{(n)}}[p_{e}(\epsilon + U)]}\\
			&=  \frac{\E_{Q^{(n)}}\big[p_{e}'(\epsilon + U) \big]}{\E_{Q^{(n)}}[p_{e}(\epsilon + U) \big]},
		\end{align*}
		and further,
		\begin{align*}
			\rho_{\hat\varepsilon}'(\epsilon)  &=  \frac{\partial}{\partial \epsilon} \frac{\E_{Q^{(n)}}\big[p_{e}'(\epsilon + U) \big]}{\E_{Q^{(n)}}[p_{e}(\epsilon + U) \big]}\\
			&=  \frac{\frac{\partial}{\partial \epsilon} \E_{Q^{(n)}}\big[p_{e}'(\epsilon + U) \big]}{\E_{Q^{(n)}}[p_{e}(\epsilon + U) ]} - \rho_{\hat\varepsilon}^2(\epsilon)\\
			&= \frac{\E_{Q^{(n)}}\big[ p_{e}''(\epsilon + U) \big]}{\E_{Q^{(n)}}[ p_{e}(\epsilon + U)]} - \frac{\Big(\E_{Q^{(n)}}\big[ p_{e}'(\epsilon + U) \big]\Big)^2}{\Big(\E_{Q^{(n)}}\big[ p_{e}(\epsilon+U_\sigma \epsilon + U_m) \big]\Big)^2} \\
			&=\frac{\E_{Q^{(n)}}\big[ \big\{\rho_{e}'(\epsilon + U) + \rho_{e}^2(\epsilon + U)\big\} p_{e}(\epsilon + U) \big]}{\E_{Q^{(n)}}\big[ p_{e}(\epsilon + U) \big]} - \frac{\big(\E_{Q^{(n)}}[ \rho_{e}(\epsilon + U) p_{e}(\epsilon + U)]\big)^2}{\big(\E_{Q^{(n)}}[ p_{e}(\epsilon + U) ]\big)^2}.
		\end{align*}
		In the third line we have made use of the identities \begin{align*}
			p'_\varepsilon(\epsilon) &= \rho_{e}(\epsilon)p_{e}(\epsilon);\\
			p_{e}''(\epsilon)&= \big\{\rho_{e}'(\epsilon)  +\rho_{e}^2(\epsilon)\big\} p_{e}(\epsilon).
		\end{align*}
		We now apply both the triangle and H\"older inequalities to deduce
		\begin{align*}
			\big| \rho_{\hat\varepsilon}'(\epsilon) \big|&\leq \sup_{|u|\leq C_m} \big\{ \big|\rho_{e}'(\epsilon + u)\big| + 2\rho_{e}^2 (\epsilon + u)\big\}\\
			&\leq C_\rho + 2 \sup_{|u|\leq C_m} \rho_{e}^2 (\epsilon + u).
		\end{align*}
		
		Now we apply a Taylor expansion as follows, noting that $\varepsilon_P$ is independent of $U$ conditionally on $D^{(n)}$.
		\begin{align*}
			\E_P\Big[\big\{\rho_{\hat\varepsilon}(\varepsilon_P)-\rho_{\hat\varepsilon}(\hat\varepsilon^{(n)})\big\}^2 \; \Big| \; D^{(n)}\Big] &= \E_P\big(\E_{Q^{(n)}}\big[\{\rho_{\hat\varepsilon}(\varepsilon_P)-\rho_{\hat\varepsilon}(\varepsilon_P + U)\}^2 \; \big| \; \varepsilon_P, \; D^{(n)}\big] \; \big| \; D^{(n)} \big)\\
			&\leq \E_P\Bigg(\E_{Q^{(n)}}(U^2) \bigg\{\sup_{|u|\leq C_m} \rho'_{\hat\varepsilon} (\varepsilon_P + u)\bigg\}^2  \; \Bigg| \; D^{(n)} \Bigg)\\
			&= A_m^{(n)} \E_P\Bigg( \bigg\{\sup_{|u|\leq C_m} \rho'_{\hat\varepsilon} (\varepsilon_P + u)\bigg\}^2  \; \Bigg| \; D^{(n)} \Bigg)\\
			&\leq A_m^{(n)} \E_P\Bigg(\bigg\{C_\rho + 2\sup_{|\eta|\leq 2C_m} \rho_{e}^2 (\varepsilon_P + \eta)\bigg\}^2  \; \Bigg| \; D^{(n)} \Bigg)\\
			&\leq A_m^{(n)} \E_P\bigg[\Big\{C_\rho + 2\big(|\rho_{e} (\varepsilon_P)|  + 2C_\rho C_m \big)^2\Big\}^2 \bigg]\\
			&\leq  A_m^{(n)} \; \E_P\Big[\big\{C_\rho + 4\rho^2_\varepsilon(\varepsilon_P) + 16C_\rho^2C_m^2\big\}^2 \Big]\\
			&\leq  A_m^{(n)} \; \Big(3C_\rho^2 + 48 \E_P\big[\rho^4_\varepsilon(\varepsilon_P)\big] + 768 C_\rho^4C_m^4\Big),
		\end{align*}
		where we have made use of the inequalities $(a+b)^2 \leq 2(a^2+b^2)$ and $(a+b+c)^2 \leq 3(a^2+b^2+c^2)$. Finally, by Theorem~\ref{thm:scoresubg}, $\E_P\big[\rho^4_\varepsilon (\varepsilon_P)\big] \leq 4 C_\rho^2$. Hence 
		\begin{equation*}
			\E_P\Big[\big\{\rho_{\hat\varepsilon}(\varepsilon_P)-\rho_{\hat\varepsilon}(\hat\varepsilon^{(n)})\big\}^2 \; \Big| \; D^{(n)}\Big] \leq   A_m^{(n)} \; \big(147 C_\rho^2  + 768 C_\rho^4C_m^4\big). \qedhere
		\end{equation*}
	\end{proof}

	\section{Auxiliary lemmas} \label{app:aux_Lemmas}
	\begin{lemma}
		\label{lem:pbounded}
		If $p$ is a twice differentiable density function on $\R$ with score $\rho$ defined everywhere and $\sup_{x\in\R} |\rho'(x)| \leq C$, then $\sup_{x\in\R} p(x) \leq 2\sqrt{2C}$. 
	\end{lemma}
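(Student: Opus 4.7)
The plan is to exploit the Lipschitz bound on $\rho'$ to obtain a Gaussian lower bound on the density $p$ around any chosen point, and then use the fact that $p$ integrates to one to obtain the desired upper bound.

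First, I would fix an arbitrary $x_0 \in \R$ and apply the left-hand inequality of Lemma~\ref{lem:densityratiobound}, which gives
\[
p(x_0 + u) \;\geq\; p(x_0) \exp\!\bigl\{ u\rho(x_0) - \tfrac{u^2}{2} C \bigr\}
\]
for every $u \in \R$ (noting that $p(x_0)>0$ is required; the bound is trivial on $\{p=0\}$, and by continuity of $p$ one may restrict attention to $x_0$ with $p(x_0)>0$).

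Integrating this inequality over $u \in \R$ and using $\int_{\R} p(x_0 + u)\,du = 1$ yields
\[
1 \;\geq\; p(x_0) \int_{-\infty}^{\infty} \exp\!\bigl\{ u\rho(x_0) - \tfrac{u^2}{2}C \bigr\}\, du
\;=\; p(x_0)\, \sqrt{\tfrac{2\pi}{C}}\, \exp\!\Bigl( \tfrac{\rho^2(x_0)}{2C} \Bigr),
\]
where the last equality is a standard Gaussian computation (completing the square). Since $\exp(\rho^2(x_0)/(2C)) \geq 1$, rearranging gives $p(x_0) \leq \sqrt{C/(2\pi)}$, which is strictly stronger than the claimed bound $2\sqrt{2C}$; the result then follows by taking the supremum over $x_0$. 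There is no real obstacle here, since Lemma~\ref{lem:densityratiobound} does all the work and the remaining step is just the Gaussian integral identity.
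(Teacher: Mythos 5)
Your proof is correct, and it takes a genuinely different route from the paper's. The paper argues by contradiction via local analysis: assuming $\sup_x p(x) = M > 2\sqrt{2C}$, it locates a maximiser, uses $\int p = 1$ to force a steep rise of $p$ over a short interval, extracts a point where $p'' \leq -M^3/8$, and contradicts the lower bound $p'' = (\rho' + \rho^2)p \geq -CM$. Your argument instead integrates the Gaussian lower bound $p(x_0+u) \geq p(x_0)\exp\{u\rho(x_0) - Cu^2/2\}$ from Lemma~\ref{lem:densityratiobound} over $u \in \R$ and compares with $\int_{\R} p(x_0+u)\,du = 1$; the Gaussian integral computation is standard, and the hypothesis that $\rho$ is defined everywhere forces $p>0$ everywhere, so the Taylor expansion underlying Lemma~\ref{lem:densityratiobound} is valid on every segment and your positivity caveat is automatic. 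Your approach is shorter, avoids the delicate extraction of intermediate points, and yields the sharper constant $\sqrt{C/(2\pi)}\,\bigl(\leq 2\sqrt{2C}\bigr)$ — in fact it gives the even sharper $\sqrt{C/(2\pi)}\exp\{-\rho^2(x_0)/(2C)\}$ pointwise. The only structural cost is that it leans on Lemma~\ref{lem:densityratiobound}, whereas the paper's proof of Lemma~\ref{lem:pbounded} is self-contained; since Lemma~\ref{lem:densityratiobound} is proved independently (by Taylor expansion of $\log p$) and does not use Lemma~\ref{lem:pbounded}, there is no circularity.
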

	\begin{proof}
		Suppose, for a contradiction, that $\sup_{x\in\R} p(x) > 2\sqrt{2C}$. Pick $x_0 < x_1'$ such that $0<p(x_0) < \sqrt{2C}$ and $p(x_1')>2\sqrt{2C}$, and further that $x_1'$ is not the maximiser of $p$ on the interval $[x_0,x_1']$. We set $x_1$ to be the maximiser of $p$ in $(x_0, x_1')$, and observe that $p(x_1)=:M > 2\sqrt{2C}$, $p'(x_1) = 0$, and  $p''(x_1) < 0$.
		
		Now let $x_- = \sup\{x<x_1: p(x)\leq M/2\} > x_0$, the final inequality following from the intermediate value theorem. Note that as
		\[
		1 \geq \int_{x_-}^{x_1} p(x) \, dx \geq \frac{M}{2} (x_1 - x_-),
		\]
		$x_1 - x_- \leq 2/M$.
		We also have that $p(x_1)-p(x_-)\geq M/2$, so there must be a point $\tilde x \in [x_-, x_1]$ where $p'(\tilde x) \geq M^2/4$. Now because $p'(x_1)=0$ there must also be a point $x_* \in [\tilde x, x_1]$ with $p''(x_*)\leq -M^3/8$.
		
		Finally we may employ the assumption on $|\rho'|$ to bound $p''(x_*)$ from below. Noting that $p(x_*)\leq M$ as $x_* \in [x_0, x_1]$, we have
		\begin{align*}
			-M^3/8 \geq p''(x_*) &= \rho'(x_*)p(x_*) + \rho^2(x_*)p(x_*)\\
			&\geq \rho'(x_*)p(x_*) \geq -CM. \qedhere
		\end{align*}
	\end{proof}
	
	\begin{corollary}
		\label{lem:p''bounded}
		If $p$ is a twice differentiable density function on $\R$ with score $\rho$ defined everywhere and $\sup_{x\in\R} |\rho'(x)| \leq C$ then $\inf_{x\in\R} p''(x) \geq -2\sqrt{2}C^{3/2}$. 
	\end{corollary}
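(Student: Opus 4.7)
The plan is to use the identity $p''(x) = \{\rho'(x) + \rho^2(x)\} p(x)$, which follows from writing $\rho = p'/p$, differentiating to obtain $\rho' = p''/p - (p'/p)^2 = p''/p - \rho^2$, and rearranging. This identity was already used earlier in the paper (see equation \eqref{eqn:p''identity} in the proof of Theorem~\ref{thm:hrate}), so no new work is needed there.

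Given this identity, I would bound $p''(x)$ from below for each fixed $x \in \R$ as follows. Since $\rho^2(x) p(x) \geq 0$, we have
\begin{equation*}
    p''(x) \geq \rho'(x) p(x) \geq -|\rho'(x)| \, p(x) \geq -C \, p(x),
\end{equation*}
using $|\rho'(x)| \leq C$ and $p(x) \geq 0$. Applying Lemma~\ref{lem:pbounded}, which gives $\sup_{x} p(x) \leq 2\sqrt{2C}$, yields
\begin{equation*}
    p''(x) \geq -C \cdot 2\sqrt{2C} = -2\sqrt{2}\, C^{3/2},
\end{equation*}
and taking the infimum over $x$ gives the claimed bound. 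There is no real obstacle here since both ingredients --- the $p''$ identity and the uniform bound on $p$ --- are already available. The entire argument is essentially two inequalities chained together.
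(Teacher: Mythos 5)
Your argument is exactly the paper's: the corollary is proved there by combining the identity $p''(x) = \{\rho'(x) + \rho^2(x)\}p(x)$, the nonnegativity of $\rho^2(x)p(x)$, and the bound $\sup_x p(x) \leq 2\sqrt{2C}$ from Lemma~\ref{lem:pbounded}. The proposal is correct and complete.
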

	\begin{proof}
		This follows from Lemma~\ref{lem:pbounded} and       $p''(x) = \rho'(x)p(x) + \rho^2(x)p(x)\geq \rho'(x)p(x)$.
	\end{proof}
	
	
	\begin{lemma}
		\label{lem:pconvergent}
		If $p$ is a twice differentiable density function on $\R$ and $\sup_{x\in\R} |\rho'(x)| \leq C$, then $p(x) \to 0$ as $|x|\to\infty$.
	\end{lemma}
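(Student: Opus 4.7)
I would prove this by contradiction: suppose $p(x) \not\to 0$ as $x \to \infty$ (the case $x \to -\infty$ being symmetric). Then there exists $\delta > 0$ and a sequence $x_n \to \infty$ with $p(x_n) \geq \delta$. The aim is to use the Lipschitz-score property, via the density-ratio bound already established in Lemma~\ref{lem:densityratiobound}, to convert each such point $x_n$ into a uniformly positive contribution to $\int p$, and then pass to a well-separated subsequence to contradict $\int p = 1$.

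Specifically, Lemma~\ref{lem:densityratiobound} gives, for any $u \in \R$,
\[
p(x_n + u) \geq p(x_n) \exp\!\left\{u\,\rho(x_n) - \tfrac{C}{2}u^2\right\} \geq \delta\, e^{-1/2} \exp\!\left\{u\,\rho(x_n)\right\}
\]
whenever $|u| \leq 1/\sqrt{C}$. Integrating over this fixed interval and noting that $\int_{-a}^{a} e^{bu}\,du \geq 2a$ (because $e^{bu} + e^{-bu} \geq 2$), I obtain
\[
\int_{x_n - 1/\sqrt{C}}^{x_n + 1/\sqrt{C}} p(t)\,dt \geq \frac{2\delta e^{-1/2}}{\sqrt{C}} =: c > 0,
\]
independently of $n$. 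Crucially, the lower bound on the interval length does \emph{not} depend on $\rho(x_n)$, because the symmetric integration kills the possibly-large linear term.

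Now extract a subsequence $x_{n_k}$ with $x_{n_{k+1}} \geq x_{n_k} + 2/\sqrt{C}$, which is possible because $x_n \to \infty$. The corresponding intervals $[x_{n_k} - 1/\sqrt{C},\, x_{n_k} + 1/\sqrt{C}]$ are then disjoint, so
\[
1 = \int_\R p \geq \sum_{k=1}^{\infty} \int_{x_{n_k} - 1/\sqrt{C}}^{x_{n_k} + 1/\sqrt{C}} p(t)\,dt \geq \sum_{k=1}^{\infty} c = \infty,
\]
a contradiction. The same argument applied to the reflected density handles $x \to -\infty$.

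There is no real obstacle here: once one sees the density-ratio estimate of Lemma~\ref{lem:densityratiobound} as the correct tool, the only mildly delicate point is handling the fact that $\rho(x_n)$ might blow up, and this is resolved by the symmetric-interval trick $\int_{-a}^{a} e^{bu}\,du \geq 2a$, which is why the argument needs to integrate over a neighbourhood symmetric about $x_n$ rather than one-sided.
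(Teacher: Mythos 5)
Your proof is correct, but it takes a genuinely different route from the paper's. The paper also argues by contradiction via infinitely many disjoint intervals each carrying a fixed amount of mass, but its local estimate is different: at a point $x_0$ with $p(x_0)\geq\epsilon$ it fits an inverted parabola beneath $p$ using the second-derivative bound $\inf_x p''(x)\geq -2\sqrt{2}C^{3/2}$ (Corollary~\ref{lem:p''bounded}), which in turn rests on the uniform bound $\sup_x p(x)\leq 2\sqrt{2C}$ (Lemma~\ref{lem:pbounded}). You instead apply the density-ratio bound of Lemma~\ref{lem:densityratiobound} directly, and the symmetric-interval observation $\int_{-a}^{a}e^{bu}\,du\geq 2a$ is exactly the right device to neutralise the possibly unbounded linear term $u\rho(x_n)$ --- this is the one genuinely delicate point and you handle it cleanly. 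Your argument is shorter and self-contained given Lemma~\ref{lem:densityratiobound}, bypassing the Lemma~\ref{lem:pbounded}/Corollary~\ref{lem:p''bounded} chain entirely; it also yields intervals of fixed width $2/\sqrt{C}$ rather than width depending on $\epsilon$. The only caveats, shared with the paper's proof, are that both implicitly take $C>0$ (harmless, since $C=0$ admits no density on $\R$, and one may always enlarge $C$) and that the hypothesis $\sup_x|\rho'(x)|\leq C$ presumes $\rho$ is defined everywhere, i.e.\ $p>0$ on $\R$, which is what legitimises invoking Lemma~\ref{lem:densityratiobound} on the full interval $[x_n-1/\sqrt{C},\,x_n+1/\sqrt{C}]$.
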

	\begin{proof}
		Note first that by Lemma~\ref{lem:pbounded} we know that $p(x)$ is uniformly bounded.
		Suppose then, for contradiction, that $\limsup_{|x|\to\infty} p(x)=:2\epsilon>0$. Then for any $M\geq 0$ we can find $x_0$ with $|x_0|>M$ and $p(x_0) \geq \epsilon$. We will show that the integral of $p(x)$ over a finite interval containing $x_0$ is bounded below. This means that we can choose non-overlapping intervals $I_1,\ldots, I_N$ such that \begin{equation*}
			\int_{\R}p(x)\;dx \geq \sum_{n=1}^N \int_{I_n} p(x)\;dx >1,
		\end{equation*}
		a contradiction.

		Since $p'$ is continuous, we have that $|p'(x_0)| < \infty$. By Corollary~\ref{lem:p''bounded}, $\inf_{x\in\R} p''(x) \geq  -2\sqrt{2}C^{3/2}$. Using a Taylor expansion, we can fit a negative quadratic beneath the curve $p$ at $x_0$. Integrating this quadratic over the region where it is positive gives the bound. Indeed,  
		\begin{align*}
			p(x) &\geq p(x_0) + (x-x_0)p'(x_0) - \sqrt{2} C^{3/2}(x-x_0)^2\\
			&= p(x_0) + \frac{\big(p'(x_0)\big)^2}{4\sqrt2 C^{3/2}} - \sqrt{2} C^{3/2}\bigg(x-x_0-\frac{p'(x_0)}{2\sqrt{2} C^{3/2}}\bigg)^2\\
			&\geq \epsilon - \sqrt{2} C^{3/2}\bigg(x-x_0-\frac{p'(x_0)}{2\sqrt{2} C^{3/2}}\bigg)^2 =: f(x).
		\end{align*}
		The quadratic $f(x)$ has roots
		\begin{align*}
			a &:= x_0 + \frac{p'(x_0)}{2\sqrt{2} C^{3/2}} - \frac{\sqrt{\epsilon}}{2^{1/4}C^{3/4}};\\
			b &:= x_0 + \frac{p'(x_0)}{2\sqrt{2} C^{3/2}} + \frac{\sqrt{\epsilon}}{2^{1/4}C^{3/4}}.
		\end{align*}
		Thus $(a,b)$ is a finite interval containing $x_0$ and
		\begin{equation*}
			\int_a^b p(x)\;dx \geq \int_a^b f(x)\,dx = \frac{2^{7/4} \epsilon^{3/2}}{3C^{3/4}}. \qedhere
		\end{equation*}
	\end{proof}
	
	\begin{lemma}
		\label{lem:boundaryconvergent}
		Let $f:\R\to\R$ be a continuous function. Then at least one of the following holds.
		\begin{enumerate}[label=(\alph*)]
			\item There exists a sequence $a_n \to \infty$ such that $f(a_n) \to 0$.
			\item There exists $A\in\R$ and $\epsilon > 0$ such that $f(x) >\epsilon$ for all $x\geq A$, and in particular $\int_A^\infty f(x)\;dx = \infty$.
			\item There exists $A\in\R$ and $\epsilon > 0$ such that $f(x) <-\epsilon $ for all $x\geq A$, and in particular $\int_A^\infty (-f(x))\;dx = \infty$.
		\end{enumerate}
	\end{lemma}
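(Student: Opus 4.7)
The plan is to argue by contradiction on the negation of (a) and use the intermediate value theorem to pin down a definite sign. Specifically, suppose (a) fails, so there is no sequence $a_n \to \infty$ with $f(a_n) \to 0$. I would first observe that this forces $\liminf_{x \to \infty} |f(x)| > 0$: otherwise one could extract a sequence $a_n \to \infty$ along which $|f(a_n)| \to 0$ and hence $f(a_n) \to 0$, contradicting the assumed failure of (a). So there exist $\epsilon > 0$ and $A \in \R$ with $|f(x)| > \epsilon$ for all $x \geq A$.

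Next I would use continuity of $f$ together with the intermediate value theorem to show that $f$ has a fixed sign on $[A,\infty)$. Indeed, if there were points $x_1 < x_2$ in $[A,\infty)$ with $f(x_1)$ and $f(x_2)$ of opposite signs, continuity would force $f$ to vanish somewhere between them, contradicting $|f| > \epsilon$ on $[A,\infty)$. So either $f(x) > \epsilon$ for all $x \geq A$, placing us in case (b), or $f(x) < -\epsilon$ for all $x \geq A$, placing us in case (c).

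Finally, the divergence of the integrals in (b) and (c) is immediate: if $f > \epsilon$ on $[A, \infty)$ then $\int_A^\infty f(x)\,dx \geq \int_A^\infty \epsilon \, dx = \infty$, and symmetrically for (c). There is no real obstacle here; the only subtle step is the first reduction from negation of (a) to a uniform lower bound on $|f|$, which just requires being careful that ``no sequence tends to $0$'' is equivalent to a positive $\liminf$ of $|f|$ at infinity for a real-valued continuous function.
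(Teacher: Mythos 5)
Your proof is correct and takes essentially the same route as the paper's: both hinge on the observation that if $f$ is eventually bounded away from zero in absolute value, the intermediate value theorem forces a fixed sign on a tail interval, giving (b) or (c). The only cosmetic difference is that you argue by contraposition from the failure of (a) via $\liminf_{x\to\infty}|f(x)|>0$, whereas the paper runs a direct case analysis on $\liminf_{x\to\infty} f(x)$ and $\limsup_{x\to\infty} f(x)$.
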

	\begin{proof}
		If $\liminf_{x \to \infty} f(x) > 0$ then clearly (b) occurs while if $\limsup_{x \to \infty} f(x) < 0$ then (c) occurs. Thus we may assume that $\liminf_{x \to \infty} f(x) \leq 0 \leq \limsup_{x \to \infty} f(x)$. If either of these inequalities are equalities, then (a) occurs, so we may assume they are both strict. However in this case, as $\{f(x) : x \geq A\}$ has infinitely many positive points and negative points for all $A \geq 0$, by the intermediate value theorem, we must have that (a) occurs.
	\end{proof}

	\begin{lemma}
		\label{lem:scoreboundaryconvergent}
		Let $p$ be a twice differentiable density function on $\R$ with score $\rho$ defined everywhere, and let $k$ be a non-negative integer. If $\sup_{x\in\R} |\rho'(x)| \leq C$ and $\E[\rho^{2k}(X)]<\infty$, then there exist sequences $a_n \to -\infty$ and $b_n \to \infty$ such that $\rho^{2k+1}(a_n)p(a_n) \to 0$ and $\rho^{2k+1}(b_n)p(b_n) \to 0$.
	\end{lemma}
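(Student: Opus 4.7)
The plan is to apply Lemma~\ref{lem:boundaryconvergent} to $f(x) := \rho^{2k+1}(x)p(x)$ to obtain the sequence $b_n \to \infty$, and then to apply it to $\tilde{f}(x) := f(-x)$ to obtain $a_n \to -\infty$. In each case, the key task is to rule out cases (b) and (c) of the lemma, which would require $\int_A^\infty |\rho(x)|^{2k+1}p(x) \, dx$ (or its analogue for $x \to -\infty$) to be infinite.

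First I would verify that $f$ is continuous: since $p$ is twice differentiable and $p \geq 0$, either $p(x) > 0$ or $p(x) = 0$; at the latter points $p'(x) = 0$ by local minimality, and so although $\rho$ is technically $0/0$ there, for the purposes of the definition given in the paper we may interpret $\rho(x)p(x) = p'(x)$, giving $f(x) = \rho^{2k}(x) p'(x)$ on some neighbourhoods; in any case $\rho$ is defined everywhere by hypothesis and continuity of $f$ follows from twice differentiability of $p$ and the bound on $\rho'$.

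The heart of the argument is showing that $\E[|\rho(X)|^{2k+1}] < \infty$. By Theorem~\ref{thm:scoresubg}, the hypothesis $\sup_{x\in\R} |\rho'(x)| \leq C$ implies $\E[\rho^{2(k+1)}(X)] \leq C^{k+1}(2k+1)!! < \infty$. Combined with the assumed $\E[\rho^{2k}(X)] < \infty$, Cauchy--Schwarz (or H\"older) yields
\begin{equation*}
\E[|\rho(X)|^{2k+1}] \leq \sqrt{\E[\rho^{2k}(X)] \cdot \E[\rho^{2k+2}(X)]} < \infty.
\end{equation*}
Hence $\int_{-\infty}^{\infty} |f(x)| \, dx = \E[|\rho(X)|^{2k+1}] < \infty$, so in particular $\int_A^{\infty} |f(x)| \, dx < \infty$ for every $A \in \R$. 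This rules out both (b) and (c) of Lemma~\ref{lem:boundaryconvergent}, so option (a) must hold, producing $b_n \to \infty$ with $f(b_n) \to 0$.

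For the $-\infty$ side, I would apply the same argument to the continuous function $\tilde{f}(x) := f(-x)$: integrability of $|\tilde{f}|$ on $[A, \infty)$ is equivalent to integrability of $|f|$ on $(-\infty, -A]$, which again follows from $\E[|\rho(X)|^{2k+1}] < \infty$. This yields $\tilde{a}_n \to \infty$ with $\tilde{f}(\tilde{a}_n) \to 0$, and setting $a_n := -\tilde{a}_n$ completes the proof. The only mildly delicate point is the integrability bound, but given Theorem~\ref{thm:scoresubg} this is immediate; everything else is a routine application of the earlier lemma.
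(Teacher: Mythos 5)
Your overall strategy (apply Lemma~\ref{lem:boundaryconvergent} to $f(x)=\rho^{2k+1}(x)p(x)$ and to $x\mapsto f(-x)$, and rule out cases (b) and (c)) matches the paper's, but the way you rule out those cases contains a genuine circularity. You exclude (b) and (c) by showing $\int_{\R}|f(x)|\,dx=\E[|\rho(X)|^{2k+1}]<\infty$, and you obtain this from $\E[\rho^{2k+2}(X)]<\infty$, which you get by citing Theorem~\ref{thm:scoresubg}. But Theorem~\ref{thm:scoresubg} is proved by induction on $k$, and the induction step establishing $\E[\rho^{2k}(X)]\leq C^k(2k-1)!!$ invokes precisely Lemma~\ref{lem:scoreboundaryconvergent} (at level $k-1$) to kill the boundary terms $\rho^{2k-1}(b_n)p(b_n)$ in the integration by parts. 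So Lemma~\ref{lem:scoreboundaryconvergent} at level $k$ cannot be deduced from the $(k+1)$-st moment bound of Theorem~\ref{thm:scoresubg}: that bound is only available once the lemma is already proved. The difficulty the lemma is designed to overcome is exactly that, at the point where it is needed, one only controls the $2k$-th moment of $\rho(X)$, one order too low to make $\rho^{2k+1}p$ integrable by your Cauchy--Schwarz step.

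The paper's proof works only with the hypotheses as given. Supposing for contradiction that case (b) (or (c)) of Lemma~\ref{lem:boundaryconvergent} holds on $[B,\infty)$, so that $\int_B^\infty f=\infty$, it writes (for $k\geq 1$) the integration-by-parts identity
\begin{equation*}
\rho^{2k}(b)p(b)-\rho^{2k}(B)p(B)=\int_B^b \rho^{2k+1}(x)p(x)\,dx+2k\int_B^b \rho'(x)\rho^{2k-1}(x)p(x)\,dx,
\end{equation*}
chooses $b_n\to\infty$ with $\rho^{2k}(b_n)p(b_n)\leq 1$ (possible because $\E[\rho^{2k}(X)]<\infty$), and bounds the second integral using $|\rho'|\leq C$ together with $\E[|\rho(X)|^{2k-1}]<\infty$, which follows from the \emph{assumed} $2k$-th moment by Jensen. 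This forces $\int_B^{b_n}f$ to remain bounded, contradicting divergence; the case $k=0$ is handled separately via $f=p'$ and Lemma~\ref{lem:pconvergent}. To repair your argument you would need to either reproduce this contradiction argument or give an independent proof of the $(2k+2)$-nd moment bound that does not pass through this lemma; as written, the proposal assumes what is to be proved.
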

	\begin{proof}
		Write $f(x)=\rho^{2k+1}(x)p(x)$. Since $f$ is continuous, we may apply Lemma~\ref{lem:boundaryconvergent} to both $f$ and $x \mapsto f(-x)$ to conclude that either the statement of the lemma holds, or one of the following hold for some $B \in \R$ and $\epsilon > 0$:
		\begin{enumerate}[label=(\alph*)]
			\item $f(x) > \epsilon$ for all $x \geq B$,
			\item $f(x) < - \epsilon$ for all $x \geq B$
		\end{enumerate}
		or one of the above with $x \geq B$ replaced with $x \leq B$. Let us suppose for a contradiction that (a) occurs (the other cases are similar), so in particular
		\begin{equation}
			\int_{B}^\infty f(x)\; dx = \infty. \label{eqn:rho2k+1contradictioninfinite}
		\end{equation}
		
		
		If $k=0$, then \begin{equation*}
			\int_B^\infty f(x)\;dx = \int_B^\infty p'(x)\;dx = \lim_{b \to \infty} \int_B^b p'(x)\;dx = \lim_{b\to\infty} p(b) - p(B);
		\end{equation*}
		here the penultimate equality follows from monotone convergence and the final equality follows from the fundamental theorem of calculus.
		By Lemma~\ref{lem:pconvergent} however, this is finite, a contradiction. If instead $k\geq 1$, then for any $b\geq B$ we have that \begin{align}
			\rho^{2k}(b)p(b)-\rho^{2k}(B)p(B) &= \int_B^b \rho^{2k}(x)p'(x)\; dx + 2k\int_B^b \rho'(x)\rho^{2k-1}(x)p(x)\;dx.\nonumber\\
			&= \int_B^b \rho^{2k+1}(x)p(x)\; dx + 2k\int_B^b \rho'(x)\rho^{2k-1}(x)p(x)\;dx.\label{eqn:rho2k+1contradictionfinite}
		\end{align}
		We will take the limit as $b\to\infty$. Since $\rho^{2k}(x)p(x)$ is non-negative and we have that $\E[\rho^{2k}(X)]<\infty$, we can choose an increasing sequence $b_n\to\infty$ satisfying $\rho^{2k}(b_n)p(b_n) \leq 1$ for every $n$. 
		
		Note that for each $n$ and for every $x\in\R$,
		\begin{equation*}
			\big|\ind_{[B, b_n]}(x)\rho'(x) \rho^{2k-1}(x)p(x)\big|
			\leq C |\rho(x)|^{2k-1}p(x).
		\end{equation*}
		By Jensen's inequality, $\E\big[|\rho(X)|^{2k-1}\big]< \infty$. Thus, by dominated convergence theorem,
		\begin{align*}
			\lim_{n \to \infty} 2k \int_B^{b_n} \rho'(x) \rho^{2k-1}(x) p(x)\;dx &= 2k \int_B^{\infty} \rho'(x) \rho^{2k-1}(x) p(x)\;dx\\
			&\leq 2kC \int_B^{\infty} |\rho(x)|^{2k-1} p(x)\;dx\\
			&\leq 2kC \;\E\big[|\rho(X)|^{2k-1}\big]< \infty.
		\end{align*}
		Now \eqref{eqn:rho2k+1contradictionfinite} implies that \begin{equation*}
			\lim_{n\to\infty} \int_B^{b_n}  f(x)\; dx < \infty.
		\end{equation*}
		But we assumed that $f(x) \geq \epsilon > 0$ for all $x\geq B$, so for each fixed $x\in\R$ the integrand $\ind_{[B,b_n]}(x) f(x) $ is increasing as a function of $n$. Therefore monotone convergence implies that \begin{equation*}
			\int_B^{\infty}  f(x)\; dx < \infty,
		\end{equation*}
		contradicting \eqref{eqn:rho2k+1contradictioninfinite}.
	\end{proof}


	\begin{lemma}
	\label{lem:subgmoment}
	Let $X$ be mean-zero and sub-Gaussian with parameter $\sigma>0$. Then for any $p>0$,\begin{align*}
		\E\big(|X|^p\big)\leq p2^{\frac{p}2}\sigma^p \Gamma\Big(\frac{p}2\Big),
	\end{align*}
	where $\Gamma(x) = \int_0^\infty u^{x-1} \exp(-u)\;du$ is the gamma function.
\end{lemma}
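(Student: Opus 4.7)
The plan is to combine the standard sub-Gaussian tail bound with the layer cake representation of $\E(|X|^p)$, and then reduce the resulting integral to the gamma function via a change of variables.

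First I would establish the Chernoff-style tail bound: for any $t > 0$, Markov's inequality applied to $\exp(\lambda X)$ combined with the sub-Gaussian moment generating function bound $\E[\exp(\lambda X)]\le \exp(\lambda^2\sigma^2/2)$ gives $\Pr(X>t)\le \exp(-\lambda t+\lambda^2\sigma^2/2)$; optimising over $\lambda>0$ yields $\Pr(X>t)\le \exp(-t^2/(2\sigma^2))$. Applying the same argument to $-X$ (which is also sub-Gaussian with parameter $\sigma$) and taking a union bound gives $\Pr(|X|>t)\le 2\exp(-t^2/(2\sigma^2))$.

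Next, I would use the layer cake formula,
\[
\E(|X|^p) = \int_0^\infty p t^{p-1}\Pr(|X|>t)\,dt \le 2p\int_0^\infty t^{p-1}\exp\!\left(-\frac{t^2}{2\sigma^2}\right)\,dt,
\]
and substitute $u=t^2/(2\sigma^2)$, so $t=\sigma\sqrt{2u}$ and $dt=\sigma/\sqrt{2u}\,du$. The integrand becomes $(2\sigma^2)^{(p-1)/2}u^{(p-1)/2}\exp(-u)\cdot \sigma/\sqrt{2u}$, which after collecting constants gives
\[
2p\cdot 2^{(p-2)/2}\sigma^p\int_0^\infty u^{p/2-1}\exp(-u)\,du = p\,2^{p/2}\sigma^p\,\Gamma(p/2),
\]
as required.

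There is no real obstacle here; the proof is entirely routine once the sub-Gaussian tail bound is in hand. The only point that needs mild care is ensuring the change-of-variables constants are tracked correctly, and noting that mean-zero is not actually used beyond defining ``sub-Gaussian with parameter $\sigma$'' as in Section~\ref{sec:notation}.
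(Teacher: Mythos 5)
Your proposal is correct and follows essentially the same route as the paper's proof: the Chernoff tail bound $\Pr(|X|>t)\le 2\exp(-t^2/(2\sigma^2))$, the layer cake (tail probability) representation of $\E(|X|^p)$, and the substitution $u=t^2/(2\sigma^2)$ to produce $\Gamma(p/2)$, with the constants tracked correctly.
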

\begin{proof}
	By the Chernoff bound we have that \begin{equation*}
		\Pr(|X|>t)\leq 2\exp\bigg(-\frac{t^2}{2\sigma^2}\bigg).
	\end{equation*}
	We are now able to make use of the tail probability formula for expectation.
	\begin{align*}
		\E\big(|X|^p\big) &= \int_0^\infty \Pr\big(|X|^p>s)\;ds\\
		&= \int_0^\infty \Pr\big(|X|>s^{-p})\;ds\\
		&= \int_0^\infty p t^{p-1} \Pr\big(|X|>t)\;dt\\
		&\leq \int_0^\infty p t^{p-1} 2 \exp\bigg(-\frac{t^2}{2\sigma^2}\bigg)\;dt\\
		&= \int_0^\infty \sigma^2 p (2\sigma^2 u)^{\frac{p}2-1} 2\exp(-u)\;du\\
		&= p2^{\frac{p}2}\sigma^p \int_0^\infty u^{\frac{p}2-1} \exp(-u)\;du.
	\end{align*}
	The third line makes the substitution $t=s^{-p}$, the fifth $u=t^2/2\sigma^2$. Recalling the definition of the Gamma function, we are done.
\end{proof}

\begin{lemma}[\cite{WainwrightHighDim} Thm.~2.6]
	\label{lem:subgsquaremgf}
	Let $X$ be mean-zero and sub-Gaussian with parameter $\sigma>0$. Then \begin{align*}
		\E\bigg[\exp\bigg(\frac{\lambda X^2}{2\sigma^2}\bigg)\bigg]\leq \frac1{\sqrt{1-\lambda}} \text{ for all $\lambda\in[0,1)$.}
	\end{align*}
\end{lemma}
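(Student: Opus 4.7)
The plan is to decouple the quadratic $X^2$ by introducing an independent standard Gaussian variable and then interchange the order of expectation so that the sub-Gaussian hypothesis becomes directly applicable. Let $W \sim N(0,1)$ be independent of $X$. The key identity is that $\E[\exp(tW)] = \exp(t^2/2)$ for any $t \in \R$, which, applied with $t = \sqrt{\lambda}\,X/\sigma$ conditional on $X$, gives
\[
\exp\!\left(\frac{\lambda X^2}{2\sigma^2}\right) = \E\!\left[\exp\!\left(\tfrac{\sqrt{\lambda}}{\sigma}\, X W\right) \,\Big|\, X\right].
\]

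First I would take expectation of this identity over $X$ and apply Fubini's theorem (all integrands are non-negative, so no integrability issue arises) to swap the order of the two expectations, yielding
\[
\E\!\left[\exp\!\left(\frac{\lambda X^2}{2\sigma^2}\right)\right] = \E_W \E_X\!\left[\exp\!\left(\tfrac{\sqrt{\lambda}\,W}{\sigma} X\right)\right].
\]
Conditioning on $W$ in the inner expectation, the argument becomes the moment generating function of $X$ evaluated at the deterministic point $\sqrt{\lambda}\,W/\sigma$; by the sub-Gaussian hypothesis this is bounded above by $\exp(\lambda W^2 \sigma^2 / (2\sigma^2)) = \exp(\lambda W^2/2)$, uniformly in $W$.

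The final step is a direct Gaussian integral: for $\lambda \in [0,1)$, completing the square gives
\[
\E_W\!\left[\exp(\lambda W^2/2)\right] = \int_\R \frac{1}{\sqrt{2\pi}} \exp\!\left(-\frac{(1-\lambda)w^2}{2}\right) dw = \frac{1}{\sqrt{1-\lambda}},
\]
which delivers the stated bound. No step is subtle; the crux is the Gaussian decoupling identity, which trades the quadratic in $X$ for a linear function at the cost of an additional Gaussian expectation, and the restriction $\lambda < 1$ is precisely what ensures convergence of the final integral.
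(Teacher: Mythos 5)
Your proof is correct. The paper gives no proof of this lemma, simply citing \citet[Thm.~2.6]{WainwrightHighDim}, and your Gaussian decoupling argument (introduce an independent $W\sim N(0,1)$, use $\E[\exp(tW)]=\exp(t^2/2)$ to linearise the quadratic, apply Tonelli and the sub-Gaussian MGF bound, then evaluate the resulting Gaussian integral for $\lambda<1$) is precisely the standard argument underlying that cited result.
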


	\section{Additional points}
	\subsection{On the semiparametric efficient variance bound and sub-Gaussian scores} \label{sec:var_bound}
	Considering the setting of Section~\ref{sect:general} in the case where $d=1$, for simplicity (and dropping the subscript $P$ for notational ease), if $\rho(X, Z)$ is a sub-Gaussian random variable, then for any $\eta > 0$, the semiparametric efficient variance bound is satisfies
	\begin{align*}
		\E\left[\left(f'(X, Z) - \theta - \rho(X, Z)\{Y - f(X, Z)\}\right)^2 \right] &= \Var[f'(X, Z)^2] + \E \left[ \rho(X, Z)^2(Y - f(X, Z))^2\right] \\
		&\leq \Var[f'(X, Z)^2] + C \E|Y - f(X, Z)|^{2+\eta}
	\end{align*}
	for some $C > 0$. Here we have used H\"older's inequality and appealed to the fact that sub-Gaussian random variables have moments of all orders. Recall from Theorem~\ref{thm:scoresubg}, $\rho(X, Z)$ will be sub-Gaussian under a uniform bound on $\rho'(x, z)$.
	
	When the tails of the $p(x|z)$ density are lighter than that of a Gaussian, it may be the case that the efficiency bound above is finite, but no uniform bound on $\rho'(x, z)$ will exist. Consider, for example, the case where $p(x|z) \propto \exp(-x^{4})$. Then $\rho(x, z) = -4x^3$ and $\rho'(x, z) = -12 x^2$. On the other hand, it is clear that $\rho(X, Z)$ will have moments of all orders, so a version of the bound above will hold.
	
	Returning to the previous display, in the case that the errors $Y - f(X, Z)$ are independent of $(X, Z)$, then the only assumption on the score $\rho(X, Z)$ in order to obtain a semiparametric efficient variance bound, is $\E[\rho(X, Z)^2] < \infty$. In this case, if $p(x|z)$ has support $(a, b)$ and $p(x|z) \propto |x-c|^m \ind_{(a, b)}(x)$ for all $x$ sufficiently close to $c$, $c \in \{a,b\}$ and $m > 1$, then we will have $\E[\rho(X, Z)^2] < \infty$. Indeed, then we will have $(p'(x|z))^2 / p'(x|z) \propto |x-c|^{m-2}\ind_{(a, b)}(x)$ which has a finite integral in an interval around $c$.

	\subsection{Linear score functions}
	Some works have made the simplifying assumption that
	\begin{equation}
				\rho_P(x,z) = \beta_P ^{\top} b(x,z) \label{eqn:linearscore}
			\end{equation} for some known basis $b(x,z)$. This has some theoretical appeal, since any $\rho_P$ can be represented in this way for some bases, and the score estimation problem is made parametric. Practically, however, even with domain knowledge it can be hard to choose a good basis. When $(X,Z)$ are of moderate to large dimension, there are limited interactions that one can practically allow --- for instance a quadratic basis may be feasible, but a multivariate kernel basis not. If the chosen basis contains the vector $x$, then it transpires that the linearity assumption \eqref{eqn:linearscore} is equivalent to assuming a certain conditional Gaussian linear model for $x$ given the other basis elements (see Theorem~\ref{thm:basisgaussian} below). This provides additional insight into the method of \cite{RothenhauslerICE}, which is based on the debiased Lasso \citep{ZhangDebiased, vandeGeerDebiased}.
	\begin{theorem}
		\label{thm:basisgaussian}
		Let $b(x,z)=\big(x,g^{\top}(x,z)\big)^{\top} \in \R^m$ for some $g:\R \times \mathcal{Z}\to \R^{m-1}$ be such that $\E\big[b(X,Z)b(X,Z)^{\top}\big]$ is positive definite, $\E\big[\|b(X,Z)\|^2_2\big]<\infty$, $\E|\partial_x b(X,Z)|<\infty$, and for almost every $z\in\mathcal{Z}$ we have that $b(\cdot,z)$ and $\partial_x b(\cdot,z)$ are absolutely continuous and $\lim_{|x|\to\infty}b(x,z)p(x\given  z)=0$. Define the linearly transformed basis functions
		\begin{equation*}
			\bar g(x,z) := g(x,z) - x \big(\E\big[ \partial_x g(X,Z)\big]\big) \in \R^{m-1}.
		\end{equation*}
		We have that $\rho(x,z)=\beta^{\top} b(x,z)$ for some $\beta \in \R^{m}$ if and only if $\rho(x,z)=\tilde \rho(x,\bar g(x,z))$, where $\tilde \rho$ is the score function corresponding to the related multivariate Gaussian linear model:
		\begin{equation*}
			(X,g) \eqdist \Big(X, \bar g(X,Z) \Big);\quad X\given  g \sim N\big(\gamma^{\top} g, S).
		\end{equation*}
		Here $\gamma\in \R^{(m-1)}$ and $S>0$ do not depend on $(X,g)$. 
	\end{theorem}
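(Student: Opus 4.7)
The plan is to establish the equivalence by matching coefficients between the two parameterisations of a linear score in $b$, with the main nontrivial step being the positivity of the implied conditional variance $S$, which will come from an application of Proposition~\ref{prop:intbyparts}.

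First, I will handle the easier direction (Gaussian $\Rightarrow$ linear). Under $X \mid g \sim N(\gamma^{\top} g, S)$, the conditional density is an explicit Gaussian, so $\tilde{\rho}(x,g) = (\gamma^{\top} g - x)/S$. Writing $c := \E[\partial_x g(X,Z)]$ and substituting $g = \bar g(x,z) = g(x,z) - x c$ gives
\[
\tilde{\rho}(x, \bar g(x,z)) = -\frac{1 + c^{\top}\gamma}{S}\, x + \frac{\gamma^{\top}}{S}\, g(x,z),
\]
which is a linear function of the components of $b(x,z) = (x, g(x,z)^{\top})^{\top}$, as required.

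For the converse, suppose $\rho(x,z) = \beta^{\top} b(x,z) = \beta_1 x + \beta_{-1}^{\top} g(x,z)$. Comparing with the display above suggests the candidate parameters $\gamma := S \beta_{-1}$ and $S := -(\beta_1 + c^{\top}\beta_{-1})^{-1}$, so it suffices to prove $\beta_1 + c^{\top}\beta_{-1} < 0$ (which both makes $S$ well-defined and positive). To obtain this, I will apply Proposition~\ref{prop:intbyparts} componentwise with $g$ (in the sense of that proposition) taken as $b_k$: the integrability condition \eqref{eq:intbyparts} holds since $\E|\partial_x b| < \infty$ by hypothesis and $\E|\rho(X,Z) b_k(X,Z)| \leq \|\beta\|_2 \E\|b(X,Z)\|_2^2 < \infty$ by Cauchy--Schwarz, while the boundary condition \eqref{eq:boundary} follows from the assumed $\lim_{|x|\to\infty} b(x,z)p(x \given z) = 0$ and the absolute continuity hypotheses. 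The proposition then yields $\E[\partial_x b(X,Z)] + \E[\rho(X,Z) b(X,Z)] = 0$, which after stacking becomes the linear system
\[
v + M \beta = 0, \qquad v := \E[\partial_x b(X,Z)] = (1, c^{\top})^{\top}, \qquad M := \E[b(X,Z) b(X,Z)^{\top}].
\]
Since $M \succ 0$ by assumption, $\beta = -M^{-1} v$, and hence
\[
\beta_1 + c^{\top}\beta_{-1} = v^{\top}\beta = -v^{\top} M^{-1} v < 0,
\]
using that $v \neq 0$ (its first coordinate is $1$). This gives $S > 0$, and plugging $\gamma$ and $S$ back into the display for $\tilde{\rho}(x, \bar g(x,z))$ reproduces $\beta^{\top} b(x,z) = \rho(x,z)$ by construction.

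The main obstacle will be verifying the conditions of Proposition~\ref{prop:intbyparts} cleanly for each coordinate of the vector-valued basis $b$; once that is in hand, the positivity of $S$ is immediate from positive definiteness of $M$, and the rest is algebraic bookkeeping on matching coefficients.
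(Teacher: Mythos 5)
Your proof is correct and follows essentially the same strategy as the paper's: derive the normal equations for $\beta$, match coefficients with the Gaussian score $-S^{-1}(x-\gamma^{\top}\bar g)$, and get $S>0$ from positive definiteness. The one genuine difference is in how the normal equations are obtained: you apply Proposition~\ref{prop:intbyparts} componentwise to $b_k$ to get $\E[\partial_x b(X,Z)]+\E[\rho(X,Z)b(X,Z)]=0$ and hence $\beta=-M^{-1}v$ directly, whereas the paper first passes to the transformed basis $\bar b$ (so that $\E[\partial_x\bar b]=e_1$), invokes the variational characterization of the score projection from \citet[Prop.~1]{CoxSpline}, and reads off $\gamma$ and $S$ via a Schur complement. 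The two are equivalent (Cox's characterization is itself an integration-by-parts identity, and your $S=(v^{\top}M^{-1}v)^{-1}$ and $\gamma=S\beta_{-1}$ coincide with the paper's Schur-complement expressions), and your route is arguably more self-contained since it reuses the paper's own proposition. The only point you leave implicit is that your $\gamma$ and $S$ are in fact the population least-squares coefficient and residual variance of $X$ regressed on $\bar g(X,Z)$, which is what makes the Gaussian model ``related'' to $P$ rather than an arbitrary choice of parameters; this follows immediately from the last $m-1$ of your normal equations, which give $\E[\bar g(X,Z)\{X-\gamma^{\top}\bar g(X,Z)\}]=-S\,\E[\bar g(X,Z)\rho(X,Z)]=0$, and is worth stating to fully match the theorem as formulated.
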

	\begin{proof}
		First assume that $X\given  g$ has the stated conditional distribution. Then
		\begin{align*}
			\tilde \rho(x,g) &= \partial_x \log \tilde p(x\given  g)\\
			&= -S^{-1}(x - \gamma^{\top}g)\\
			&= \begin{pmatrix}
				-S^{-1} & S^{-1} \gamma
			\end{pmatrix}^{\top} \begin{pmatrix}
				x \\ g
			\end{pmatrix},
		\end{align*}
		so indeed $\tilde \rho(x,\bar g(x,z))$ is in the linear span of $\{x,\bar g(x,z)\}$, and hence that of $b(x,z)$.
		
		Now let $\rho(x,z)=\beta^{\top} b(x,z)$, and denote by $\beta_x$ and $\beta_g$ the first and last $m-1$ components of $\beta$ respectively. Define the transformed variables
		\begin{align}
			\bar b(x,z) &:= \begin{pmatrix}
				x\\\bar g(x,z)
			\end{pmatrix} = \begin{pmatrix}
				1 & 0_{1 \times (m-1)}\\
				- \E\big[ \partial_x g(X,Z)\big] & I_{(m-1)\times (m-1)}
			\end{pmatrix}b(x,z)\label{eqn:bbardefinition};\\
			\bar \beta_x &:= \beta_x +  \beta_g^{\top} \E\big[\partial_x g(X,Z)\big];\nonumber\\
			\bar \beta &:= \begin{pmatrix}
				\bar \beta_x \\ \beta_g
			\end{pmatrix}.\nonumber
		\end{align}
		By the decomposition \eqref{eqn:bbardefinition} we see that $\E\big[\bar b(X,Z) \bar b^{\top}(X,Z)\big]$ inherits the positive definiteness of $\E\big[ b(X,Z) b^{\top}(X,Z)\big]$. Then we have that
		\begin{equation*}
			\rho(x,z) = \bar\beta^{\top} \bar b(x,z);\quad 
			\E\big[ \partial_x \bar b^{\top}(X,Z)\big]=\begin{pmatrix}
				1 & 0_{(m-1)\times 1}
			\end{pmatrix}.\label{eqn:scorebasiscentering}
		\end{equation*}

		The conditions on $b$ mean that $\rho$ satisfies the conditions of \citet[Prop.~1]{CoxSpline} conditionally on $Z$, so $\bar \beta$ minimises
		\begin{equation*}
			\E\big[(\bar \beta^{\top} \bar b(X,Z))^2+2\partial_x \bar\beta^{\top} \bar b(X,Z)\big]= \bar \beta^{\top} \E \big[\bar b(X,Z)\bar b^{\top}(X,Z)\big] \bar \beta + 2\bar \beta^{\top} \Big(\E\big[\partial_x \bar b^{\top}(X,Z)\big]\Big)^{\top}.
		\end{equation*}
		Hence \begin{equation*}
			\E \big[\bar b(X,Z)\bar b^{\top}(X,Z)\big] \bar\beta + \begin{pmatrix}
				1 \\ 0_{(m-1)\times 1}
			\end{pmatrix} = 0 \label{eqn:betadefiningequation}.
		\end{equation*}
		Using the Schur complement identity for the inverse, we find that $\bar\beta$ takes the following form:
		\begin{align*}
			\bar \beta &= -\begin{pmatrix}
				1 \\ \gamma  
			\end{pmatrix}S^{-1},\\
			\gamma &= \Big(\E\big[\bar g(X,Z)\bar g^{\top}(X,Z)\big]\Big)^{-1} \E\big[\bar g(X,Z)X^{\top}\big],\\
			S &= \E\big[\big\{X-\gamma^{\top} \bar g(X,Z)\big\}X^{\top}\big].
		\end{align*}
		Therefore we have that
		\begin{equation*}
			\rho(x,z) = - S^{-1} \{x - \gamma^{\top} \bar g(x,z)\}.
		\end{equation*}
		
		Finally, note that $\gamma\in \R^{m-1}$ satisfies
		\begin{equation*}
			\E\big[\big\{X-\gamma^{\top}\bar g(X,Z)\big\}\bar g^{\top}(X,Z)\big] = 0.
		\end{equation*}
		This implies that $\gamma$ minimises $\E\big[\{X-\gamma^{\top}\bar g(X,Z)\}^2\big] = S$.
		This suffices to prove that
		\begin{equation*}
			\rho(x,z)= - S^{-1} \{x - \gamma^{\top} \bar g(x,z)\} = \tilde \rho(x,\bar g(x,z)). \qedhere
		\end{equation*} 
	\end{proof}

	\subsection{Explicit estimators for numerical experiments}
	\label{sect:simulationimplementation}
	
	In order reduce the computational burden, we pre-tune all hyperparameters on 1000 datasets, each of which we split into training and testing. This includes all gradient boosting regression parameters, the various spline degrees of freedom and the Lasso tuning parameters of the basis approaches.
	
	\subsubsection{Resmooth and spline}
	\label{sect:methodus}
	Let $\tilde f^{(n,k)}$ and $\hat m^{(n,k)}$ be gradient boosting regressions (\texttt{xgboost} package \citep{xgboost}) of $Y$ on $(X,Z)$ and $X$ on $Z$ respectively, using the out-of-fold data $D^{(n,k)}$. Further let $\hat\sigma^{(n,k)}$ be the a decision tree (\texttt{partykit} package \citep{partykit}) regression of the squared in-sample residuals of $X$ on $Z$, and $\hat \rho_{\hat\varepsilon}^{(n,k)}$ be a univariate spline score estimate (our implementation) using the scaled in-sample residuals.
	
	Let $\hat \theta^{(n)}, \hat\Sigma^{(n)}$ be as in \eqref{eqn:est} where \begin{align}
		\hat f^{(n,k)}(x,z) &= \sum_{j=1}^J\tilde f^{(n,k)}(x+hw_j,z)q_j; \label{eqn:resmooth}\\
		\nabla \hat f^{(n,k)}(x,z) &= \frac1h \sum_{j=1}^J w_j\tilde f^{(n,k)}(x+hw_j,z)q_j ; \label{eqn:resmoothderiv}\\
		\hat \rho^{(n,k)}(x,z) &= \frac{1}{\hat \sigma^{(n,k)}(z)}\; \hat \rho_{\hat\varepsilon}^{(n,k)}\bigg(\frac{x-\hat m^{(n,k)}(z)}{\hat\sigma^{(n,k)}(z)}\bigg). \label{eqn:splinescore}
	\end{align}
	Here we approximate Gaussian expectations via numerical integration, using a deterministic set of pairs $(w_j, q_j)$ such that, for functions $g$, \begin{equation*}
		\E [g(W)] \approx \sum_{j=1}^J g(w_j)q_j.
	\end{equation*}
	We have used $J=101$, $\{w_j\}$ to be an evenly spaced grid on $[-5,5]$, and $q_j$ to be proportional to the standard normal density at $w_j$, scaled so that $\sum_{j=1}^J q_j=1$.
	
	We took the set of bandwidths $\mathcal{H}$ in Algorithm~\ref{alg:resmooth} to be
	\[
	\frac{\exp(-5)}{2\sqrt{3}} \hat{\sigma}_X, \, \frac{\exp(-4.8)}{2\sqrt{3}} \hat{\sigma}_X, \ldots, \frac{\exp(2)}{2\sqrt{3}} \hat{\sigma}_X,
	\]
	where $\hat{\sigma}_X$ denotes the empirical standard deviation of the $X$-variable.
	
	\subsubsection{Difference and basis}
	\label{sect:methodchern}
	
	We form an estimator as in \eqref{eqn:est}. Let $\tilde f^{(n,k)}$ be a gradient boosting regression (\texttt{xgboost} package \citep{xgboost}) of $Y$ on $(X,Z)$ using the out-of-fold data $D^{(n,k)}$. Set basis $b$ to the quadratic basis for $(X,Z)\in\R^{p+1}$, omitting the $X$ term:\begin{equation*}
		b(x,z) = (1, x^2, x z_1, \ldots,x z_{p}, z_1, z_1^2, z_1 z_2,\ldots, z_1 z_{p}, z_2, z_2^2, z_2 z_3,\ldots,\ldots, z_{p}, z_{p}^2).
	\end{equation*}
	Let $\hat \beta^{(n,k)}$ be the Lasso coefficient (\texttt{glmnet} package) when regressing $X$ on $b(X,Z)$ using $D^{(n,k)}$, and $\hat\sigma^{(n,k)}$ be the in-sample variance estimate, computed using the product of $X$ and the $X$ on $Z$ residuals.
	
	Let $\hat \theta^{(n)}, \hat\Sigma^{(n)}$ be as in \eqref{eqn:est} where \begin{align}
		\hat f^{(n,k)}(x,z) &= \tilde f^{(n,k)}(x,z); \label{eqn:original}\\
		\nabla \hat f^{(n,k)}(x,z) &= \frac{\tilde f^{(n,k)}\big(x+\frac D2,z\big)- \tilde f^{(n,k)}\big(x-\frac D2,z\big)}{D} ; \label{eqn:difference}\\
		\hat \rho^{(n,k)}(x,z) &= -\frac1{\big(\hat\sigma^{(n,k)}\big)^2}\Big(x_i - b(x_i,z_i)^{\top} \hat\beta^{(n,k)}\Big). \label{eqn:basisscore}
	\end{align}
	Here $D$ is set to one quarter of the (population) marginal standard deviation of $X$.

	\subsubsection{Partially linear regression}
	\label{sect:methodpl}
	
	We consider a doubly-robust partially linear regression as in \citet[\S 4.1]{ChernozhukovTreatment}, implemented in the \texttt{DoubleMLPLR} function of the \texttt{DoubleML} R package. The partially linear regression makes the simplifying assumption that $\E_P(Y\given  X, Z)  =  \theta_P X + g_P(Z)$. When this relationship is misspecified, the target is given by
	\begin{equation}
		\theta_P^* = \frac{\E_P[\Cov_P\{X, Y \given  Z\}]}{\E_P[\Var_P(X\given  Z)]} \label{eqn:partiallylinearparameter},
	\end{equation}
	\citep{VansteelandtLean}; this does not equal the average partial effect $\theta_P = \E_P[f_P'(X,Z)]$ in general.
	
	The nuisance functions $g_P$ and $\E_P(X\given  Z)$ may be modelled via plug-in machine learning, so again we use gradient boosting (\texttt{xgboost} package \citep{xgboost}). Hyperparameter pre-tuning for $g_P$ estimation is done by regressing $Y-\theta_PX$ on $Z$. Here we have used $\theta_P$ instead of the unknown $\theta_P^*$ for convenience, but we do not expect this to be critical.
	
	\subsubsection{\cite{RothenhauslerICE}}
	\label{sect:methodroth}
	The estimator of \cite{RothenhauslerICE} is based on the debiased Lasso \citep{ZhangDebiased, vandeGeerDebiased}. As they recommend, we use a quadratic basis for $Z\in\R^{p}$,\begin{equation*}
		b(z) = (1, z_1, z_1^2, z_1 z_2,\ldots, z_1 z_{p-1}, z_2, z_2^2, z_2 z_3,\ldots,\ldots, z_{p}, z_{p}^2).
	\end{equation*} 
	We perform the Lasso regressions using \texttt{glmnet} \citep{glmnet}.

	\subsubsection{Spline score estimation}
	We use the univariate estimator of \cite{CoxSpline}, which we implemented according to \cite{NgSpline, NgSplineImplementation}.

\subsection{Additional numerical results}
	\label{sect:nuisancemse}

 We report mean-square error results for the nuisance function estimators (Section~\ref{sect:simulationimplementation}) used in our numerical experiments (Section~\ref{sect:numerical}). We find that our location-scale spline estimator \eqref{eqn:splinescore} achieves lower mean-squared error to the truth $\rho_P$ than the basis estimator \eqref{eqn:basisscore} in all of our simulation settings (Table~\ref{tab:scoremse}). We further find that our resmoothing procedure \eqref{eqn:resmoothderiv} achieves lower mean-squared error as a derivative estimator to the truth $f_P'$ than numerical differencing \eqref{eqn:basisscore} in all of our simulation settings (Table~\ref{tab:derivmse}), at little cost in terms of mean-squared regression error to $f_P$.

 \begin{table}[h]
     \centering
     \begin{tabular}{c|c|c}
        $\varepsilon_P$ & basis & spline \\
        \hline
           normal  & 0.27 (0.01) & \textbf{0.16 (0.01)} \\
           mixture2  & 0.36 (0.01)  & \textbf{0.22 (0.01)} \\
           mixture3  & 0.88 (0.01)  & \textbf{0.41 (0.02)} \\
           logistic & 0.34 (0.01)  & \textbf{0.26 (0.01)} \\
           t4 & 0.70 (0.02) & \textbf{0.23 (0.01)}
     \end{tabular}
     \caption{Out-of-sample mean-squared error estimates (standard errors) for the score function estimates $\hat\rho$ in our simulation settings, based on 100 repeats with training size $800$ and test size $1000$. The basis approach is \eqref{eqn:basisscore}, the spline approach is \eqref{eqn:splinescore}.}
     \label{tab:scoremse}
 \end{table}

 \begin{table}[h]
     \centering
     \begin{tabular}{c|c|c|c||c|c|c}
        $f_P$ & OLS & original & resmoothing & OLS & difference & resmoothing \\
        \hline
    plm & 0.089 (0.001) & 0.055 (0.001) & \textbf{0.047 (0.001)} & \textbf{0.005 (0.001)} & 0.501 (0.011) & 0.026 (0.001) \\
    additive & 0.398 (0.002) & \textbf{0.062 (0.001)} & 0.068 (0.002) & 0.333 (0.001) & 0.189 (0.007) & \textbf{0.106 (0.033)} \\
    interaction & 1.624 (0.012) & 0.227 (0.003) & \textbf{0.225 (0.004)} & 3.596 (0.014) & 1.334 (0.019) & \textbf{0.843 (0.017)}
\end{tabular}
     \caption{Out-of-sample mean-squared error estimates (standard errors) for the regression estimates $\hat f_P$ (first three columns) and derivative estimates $\hat f'$ (last three columns) in our simulation settings, based on 100 repeats with training size $800$ and test size $1000$. For regression the original approach is \eqref{eqn:original} and the resmoothing approach is \eqref{eqn:resmooth}, for derivative estimation the difference approach is \eqref{eqn:difference} and the resmoothing approach is \eqref{eqn:resmoothderiv}.}
     \label{tab:derivmse}
 \end{table}
	
\end{document}